\tikzset{
    vertex/.style = {
        circle,
        fill=black,
        outer sep=3pt,
        inner sep=1.5pt
    }
}
\tikzset{
    frvertex/.style = {
       inner sep=2pt, outer sep=3pt, fill=blue, draw=none}
    }
\mathchardef\mhyphen="2D
\def\on{\operatorname}
\providecommand{\leftsquigarrow}{%
  \mathrel{\mathpalette\reflect@squig\relax}%
}
\newcommand{\reflect@squig}[2]{%
  \reflectbox{$\m@th#1\rightsquigarrow$}%
}
\definecolor{ao}{rgb}{0.0, 0.5, 0.0}
\newtheorem{theorem}{Theorem}[section]
\newtheorem{lemma}[theorem]{Lemma}
\newtheorem{proposition}[theorem]{Proposition}
\newtheorem{corollary}[theorem]{Corollary}
\newtheorem{introthm}{Theorem}
\newtheorem{introprop}{Proposition}
\theoremstyle{definition}
\newtheorem{construction}[theorem]{Construction}
\newtheorem{definition}[theorem]{Definition}
\newtheorem{remark}[theorem]{Remark}
\newtheorem{example}[theorem]{Example}
\newcommand\noloc{%
  \nobreak
  \mspace{6mu plus 1mu}
  {:}
  \nonscript\mkern-\thinmuskip
  \mathpunct{}
  \mspace{2mu}
}
\newcommand\cocolon{%
  \nobreak
  \mspace{6mu plus 1mu}
  {:}
  \nonscript\mkern-\thinmuskip
  \mathpunct{}
  \mspace{2mu}
}
\newcommand{\rgraph}{{\bf G}}
\newcommand{\srgraph}{{\bf H}}
\newcommand{\cwt}{\gamma^\circlearrowright}
\newcommand{\ccwt}{\gamma^\circlearrowleft}
\newcommand{\indL}{\on{ind}^L}
\newcommand{\indR}{\on{ind}^R}
\newcommand{\C}{\mathcal{C}}
\newcommand{\D}{\mathcal{D}}
\newcommand{\V}{\mathcal{V}}
\newcommand{\N}{\mathcal{N}}
\newcommand{\T}{\mathcal{T}}
\newcommand{\ADE}{I}
\newcommand{\glsec}{\mathcal{H}}
\newcommand{\losec}{\mathcal{L}}
\title{Induction in perverse schobers and cluster tilting theory}
\author{Merlin Christ}
\date{\today}
\begin{document}
\maketitle

\abstract{We exhibit gluing properties of cluster tilting subcategories in exact $\infty$-categories within the framework of perverse schobers on surfaces with boundary. These results are based on a study of the restriction functors from global sections of perverse schobers to local sections and their adjoint induction functors. New examples include cluster tilting subcategories in higher rank topological Fukaya categories related to higher Teichm\"uller theory, and cluster tilting subcategories arising from marked surfaces with punctures.}

\tableofcontents

\section{Introduction}

Cluster algebras are a class of commutative algebras equipped with special generators called clusters, that are related with each other via a combinatorial process called mutation \cite{FZ02}. Cluster tilting objects in triangulated or extriangulated categories are special generators that can be mutated in a similar way to clusters in a cluster algebra \cite{IY08}. Cluster tilting objects are a central concept in the additive categorification of cluster algebras, as they correspond to the clusters in the cluster algebra. Typical examples of categories with cluster tilting objects include the triangulated cluster categories \cite{BMRRT06,Kel05,Ami09}, classes of Frobenius exact $1$-categories \cite{GLS06,JKS16}, and more recently the extriangulated Higgs categories \cite{Wu21}. The existence of cluster tilting subcategories can have strong structural implications for the ambient categories \cite{JM22,KL23}. 
  
In this paper, we exhibit a connection between cluster tilting theory and the notion of a perverse schober on a surface parametrized by a spanning ribbon graph, introduced in \cite{Chr22}. Such perverse schobers are constructible sheaves of stable $\infty$-categories on a ribbon graph, satisfying local properties, categorifying the local perversity properties of perverse sheaves. The notion of a perverse
schober, proposed by Kapranov--Schechtman \cite{KS14}, remains conjectural for general stratified spaces.  We show that within the framework of parametrized perverse schobers on surfaces, cluster tilting subcategories can be glued from local cluster tilting subcategories. 

To go from a local section, meaning an element in the stalk at a vertex $v$ of the ribbon graph, to a global section, we consider \textit{induction functors}. There is left induction $\indL_v$ and right induction $\indR_v$, defined as the left, respectively, right adjoint of the restriction functor $\on{ev}_v$ from global sections to local sections at $v$. We emphasize that this type of induction is internal to a given perverse schober and thus distinct from the question of induction/extension of perverse schobers along inclusions of stratified spaces.  We undertake a detailed study of these induction functors, describing them in terms of the combinatorial geometry of the marked surface  with spanning graph, and more specifically in terms of curve and web trajectories in the surface. We use this to obtain the following gluing result for cluster tilting subcategories:

\begin{introthm}\label{introthm:1}
Let $\rgraph$ be a spanning ribbon graph of a marked surface without $1$-valent vertices. Let $\mathcal{F}$ be a $\rgraph$-parametrized perverse schober. Given for every vertex $v\in \rgraph_0$ a cluster tilting subcategory $\mathcal{T}_v\subset \mathcal{F}(v)$, the additive closure of the union of left inductions
\[ \bigcup_{v\in \rgraph_0} \indL_v(\mathcal{T}_v)\]
defines a cluster tilting subcategory of the Frobenius exact $\infty$-category $\mathcal{H}(\rgraph,\mathcal{F})$ of global sections of $\mathcal{F}$. 

The additive closure of the union of right inductions $\bigcup_{v\in \rgraph_0} \indR_v(\mathcal{T}_v)$ also defines a cluster tilting subcategory.
\end{introthm}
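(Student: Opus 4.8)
The plan is to treat the left-induction statement in detail; the right-induction statement then follows by the same argument applied to the opposite perverse schober, under the duality exchanging $\indL$ and $\indR$. Write $\mathcal{E}=\glsec(\rgraph,\F)$ and let $\T=\on{add}\bigl(\bigcup_{v\in\rgraph_0}\indL_v(\T_v)\bigr)$. I would verify the three defining properties of a cluster tilting subcategory of the Frobenius exact $\infty$-category $\mathcal{E}$: that $\T$ contains the projective--injective objects, that $\T$ is functorially finite, and that
\[ \T=\{X\in\mathcal{E}\mid \on{Ext}^1_{\mathcal{E}}(\T,X)=0\}=\{X\in\mathcal{E}\mid \on{Ext}^1_{\mathcal{E}}(X,\T)=0\}. \]

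The key input is a computation of the mapping complexes between inductions. By adjunction $\on{Hom}_{\mathcal{E}}(\indL_v(A),G)\simeq \on{Hom}_{\F(v)}(A,\on{ev}_v G)$, so taking $G=\indL_w(B)$ reduces everything to the composites $\on{ev}_v\circ\indL_w$, and these are exactly what the curve- and web-trajectory description from the body of the paper computes: $\on{ev}_v\indL_w(B)$ is built from contributions indexed by the trajectories in the marked surface running from $w$ to $v$, together with, when $v=w$, a contribution of $B$ itself modified by the local monodromy at $v$. I expect the hypothesis that $\rgraph$ has no $1$-valent vertices to guarantee that each trajectory contributes a term in homological degrees $\le 0$, so that $\on{Ext}^{>0}_{\mathcal{E}}(\indL_v(A),\indL_w(B))$ receives nothing from trajectories with $v\neq w$, while for $v=w$ it is controlled by $\on{Ext}^{>0}_{\F(v)}$ of $A$ against the monodromy-modification of $B$.

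From this the perpendicularity inclusions should follow. Rigidity $\on{Ext}^1_{\mathcal{E}}(\T,\T)=0$ reduces to rigidity of each $\T_v$ in $\F(v)$ together with compatibility of $\T_v$ with the local monodromy (a feature of cluster tilting subcategories of the local models that I would record separately), the cross-terms vanishing by the trajectory analysis. Conversely, if $\on{Ext}^1_{\mathcal{E}}(X,\T)=0$ then by adjunction $\on{Ext}^1_{\F(v)}(\on{ev}_v X,\T_v)=0$ for every vertex $v$, hence $\on{ev}_v X\in\T_v$ since $\T_v$ is cluster tilting in $\F(v)$; realizing $X$ as a finite iterated extension of the inductions $\indL_v(\on{ev}_v X)$ along the canonical filtration of global sections by vertex- and edge-stalks — the extension classes lying in $\on{Ext}^1$ groups that vanish by the rigidity just established — would then give $X\in\T$. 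The inclusion $\{X\mid \on{Ext}^1_{\mathcal{E}}(\T,X)=0\}\subseteq\T$ is symmetric. For functorial finiteness, given $X\in\mathcal{E}$ I would approximate each $\on{ev}_v X$ by $\T_v$ inside $\F(v)$ and assemble these into a $\T$-approximation of $X$ by induction along the same canonical filtration, the obstruction at each stage again lying in a group killed by rigidity; left and right approximations are obtained the same way. Finally, since $\mathcal{E}$ is Frobenius its projective--injective objects are summands of inductions $\indL_v(P)$ of projective--injectives $P$ of $\F(v)$ (equivalently, of coinductions), and these lie in $\T$ because each $\T_v$ contains the projective--injectives of $\F(v)$.

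The main obstacle, I expect, is the mapping-complex computation: extracting from the trajectory description the exact homological amplitude of $\on{ev}_v\indL_w$, and in particular confirming that excluding $1$-valent vertices is precisely what kills all positive self-extensions coming from non-constant trajectories. A secondary subtlety, present already at a single vertex, is that $\on{ev}_v\indL_v$ is not the identity but a cone built from the local monodromy, so that even same-vertex rigidity rests on the interaction of $\T_v$ with that monodromy; and a smaller bookkeeping point is identifying the projective--injective objects of $\mathcal{E}$ explicitly enough to see that they lie in $\T$.
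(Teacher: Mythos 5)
Your outline has the right ingredients (adjunction plus the trajectory description of $\on{ev}\circ\on{ind}$, reduction of rigidity to local data, gluing of resolutions), but there are genuine gaps at the points where the actual work happens. First, you are proving the wrong set of conditions: the paper's notion of cluster tilting is ``rigid with respect to the exact structure, plus two-sided exact $2$-term (co)resolutions,'' not the perpendicularity characterization together with functorial finiteness; these coincide only under extra hypotheses, so your target statement is not the theorem as stated. Second, and more seriously, your rigidity argument rests on a claim that is false in the paper's central examples: you expect $\on{Ext}^{>0}$ between inductions from distinct vertices to vanish for reasons of homological amplitude. In the $1$-periodic and $2$-periodic stalks ($\D^{\on{perf}}(k[t_1^\pm])$, $\C_I$) everything is periodic and these Ext groups are typically nonzero. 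What the paper proves is vanishing of the \emph{exact} extensions $\on{Ext}^{1,\on{ex}}$: by the splitting $\on{ev}_v\circ\indL_v\simeq \on{id}\oplus(\text{injective-projectives})$ (which is where the no-$1$-valent-vertices hypothesis enters, via the split (co)units, not via any degree bound), an extension class decomposes into a local class plus classes against injective-projectives, and one shows the latter cannot be exact because their evaluation at the boundary edge where the corresponding clockwise trajectory ends is the image under a transport \emph{equivalence}, hence nonzero. Your ``monodromy-modification'' reading of $\on{ev}_v\indL_v$ (a cone) is also off: it is a direct sum, and the issue is exactness of the extra classes, not compatibility of $\T_v$ with monodromy. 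Relatedly, your converse step ``$\on{Ext}^1_{\mathcal E}(X,\T)=0$ implies $\on{Ext}^1_{\F(v)}(\on{ev}_vX,\T_v)=0$'' uses the adjunction in the wrong direction and again conflates plain Ext with exact Ext.

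The second gap is in the resolution step. The paper does not realize $X$ as an iterated extension of $\indL_v(\on{ev}_vX)$ along a filtration; it glues $2$-term coresolutions one vertex at a time using a biCartesian square of adjunction units for a pullback decomposition of global sections, and the nontrivial point is that the resulting map $X\to\bigoplus j_i^R(T_0^i)$ is an inflation, which is exactly where the split-deflation property of the counits (again the no-$1$-valent-vertices hypothesis) is used. Moreover there is an asymmetry you gloss over with ``left and right approximations are obtained the same way'': gluing the \emph{right} $2$-term resolution property along \emph{left} induction does not follow from the same argument; the paper deduces it from the left case via the global twist identity $C_{\bf S}\circ\indL_v\simeq\indR_v\circ C_v$, which requires a separate comparison of left and right induction (an entire section of the paper). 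Without substitutes for these two mechanisms — boundary-evaluation exactness criteria for rigidity, and the unit-square/twist argument for resolutions — your plan does not go through.
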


Under mild assumptions, the above lifts the well-known amalgamation procedure for ice quivers of Fock--Goncharov \cite{FG06a} to cluster tilting objects, see also further below. 

We remark that both $\mathcal{H}(\rgraph,\mathcal{F})$ and $\mathcal{F}(v)$ are understood to be equipped with $\infty$-categorical Frobenius exact structures (which induce Frobenius extriangulated structures on the homotopy $1$-categories in the sense of \cite{NP19}). The origin of these $\infty$-categorical exact structures was discussed in \cite{Chr22b}, they arise as the relative exact structure induced by the boundary restriction functors of the perverse schober. The framework of perverse schobers allows to conveniently glue these exact $\infty$-categories, ensuring that they remain Frobenius.

The articles \cite{Chr22b,Chr25b} discuss gluing results for Higgs categories of marked surfaces. The present work differs from \cite{Chr22b,Chr25b} in that we consider ambient exact $\infty$-categories glued via a perverse schober, and present a general recipe for obtaining global cluster tilting subcategories given local ones. 

The main new examples to which we apply \Cref{introthm:1} include cluster tilting objects in higher rank topological Fukaya categories, whose corresponding cluster algebras conjecturally arise from higher Teichm\"uller theory. These examples will be explored in \cite{Chr25b}. Other examples include categorifications of cluster algebras arising from marked surfaces with punctures.

We proceed in \Cref{introsec:induction} by discussing the description of the induction functors. In \Cref{introsec:gluingCT}, we further discuss \Cref{introthm:1} and some related results. Examples are discussed in \Cref{introsec:examples}. In \Cref{introsec:outlook}, we mention some open questions.

\subsection{Induction and restriction functors for perverse schobers}\label{introsec:induction}

Let ${\bf S}$ be a marked surface with boundary and spanning ribbon graph $\rgraph$. The exit path category $\on{Exit}(\rgraph)$ is given by the $1$-category whose objects are the vertices and edges of $\rgraph$, and whose morphisms go from vertices to edges according to incidence. A $\rgraph$-parametrized perverse schober refers to a functor $\on{Exit}(\rgraph)\to \on{St}$ with target the $\infty$-category of stable $\infty$-categories, satisfying local conditions. Such a functor encodes a constructible sheaf of stable $\infty$-categories on $\rgraph$. Towards a discussion of the restriction and induction functors, we first inspect these local conditions for perverse schobers.\\ 

\noindent {\bf Induction on an $n$-gon}

Fix a vertex $v$ of $\rgraph$ of valency $m$, with incident edges $e_1,\dots,e_m$. In the case $m=1$, near $v$, a functor $\on{Exit}(\rgraph)\to \on{St}$ amounts to an exact functor $\mathcal{F}(v\to e)\colon \mathcal{F}(v)\to \mathcal{F}(e)$ between stable $\infty$-categories. The local perversity condition is in this case that the functor $\mathcal{F}(v\to e)$ is spherical. For valency $m\geq 2$, the local perversity condition takes a different form: there are $m$ functors $\mathcal{F}(v\to e_i)\colon \mathcal{F}(v)\to \mathcal{F}(e_i)$, which are each assumed to admit right adjoints $\mathcal{F}(v\to e_i)^R$. Up to some additional technical conditions, see \Cref{def:schobernspider}, the perversity condition at $v$ states that
\begin{itemize}
\item the functor $\mathcal{F}(v\to e_i)^R$ is fully faithful,
\item the functor $\mathcal{F}(v\to e_{i})\circ \mathcal{F}(v\to e_{i+1})^R\colon \mathcal{F}(e_{i+1})\to \mathcal{F}(e_{i})$ is an equivalence, and
\item the functor $\mathcal{F}(v\to e_i)\circ \mathcal{F}(v\to e_j)^R$ vanishes for $j\not=i,i+1$. 
\end{itemize} 

The functor $\mathcal{F}(v\to e_{i+1})$ can be seen as restriction of local sections supported near $v$ to local sections supported only at the edge $e_{i+1}$. Its right adjoint $\mathcal{F}(v\to e_{i+1})^R$ thus describes induction from the edge to the vertex. It is productive to reinterpret the above algebraic conditions geometrically. Namely, we attach to each induction functor $\mathcal{F}(v\to e_{i+1})^R$ an embedded curve $\gamma^\circlearrowleft$ (in blue) considered up to suitable homotopy in the $m$-gon ($m=5$ for illustration), as follows.

\begin{center}
    \begin{tikzpicture}[decoration={markings, 
	mark= at position 0.6 with {\arrow{stealth}}}, scale=0.6]

            \draw[ao, very thick] (0,0) circle(3);    
            \fill[black] (0,0) circle(0.15);
        \foreach \n in {0,...,4}
        {
			\fill[orange] (360/5*\n:3) circle(0.12);
            \draw[very thick, black] (0,0) -- (360/5*\n+36:3);
		 }
  \draw[color=blue][very thick][postaction={decorate}] plot [smooth] coordinates { (33:3) (360/5*4.72+36:1.6) (360/5*4.28+36:1.6)   (360/5*4+3+36:3)};
  \node (1) at (47:2.3) {$e_{i+1}$};
  \node (2) at (-46:2.3) {$e_{i}$};
  \node () at (0:0.95){$\gamma^\circlearrowleft$};
      \end{tikzpicture}
 \end{center}

The support of the curve encodes the support of the sections in the image of the functor $\mathcal{F}(v\to e_{i})^R$: the restriction to $e_j$ is non-trivial if and only if the curve intersects the $j$-th boundary component of the $m$-gon. The direction of the curve encodes the direction of the so-called transport of the perverse schober along the curve, which simply amounts to the equivalence 
\[ \mathcal{F}^\rightarrow(\gamma^\circlearrowleft)\coloneqq \mathcal{F}(v\to e_{i})\circ\mathcal{F}(v\to e_{i+1})^R\colon \mathcal{F}(e_{i+1})\to \mathcal{F}(e_{i})\,.\]

\noindent {\bf Induction for general surfaces}

The above simple local geometric interpretation generalizes to other induction functors, such as induction from local sections at edges or vertices to global sections. This uses more general trajectories, denoted $\ccwt_e$ or $\ccwt_v$, with respect to a line field on  ${\bf S}\backslash \rgraph_0$ induced by the ribbon graph $\rgraph$, and corresponding transport equivalences. In general, the trajectories will however not be curves: when inducing from a vertex $v$, the trajectory $\ccwt_v$ branches at $v$. We call such trajectories web trajectories, as opposed to curve trajectories. See \Cref{fig:trajectories} for an illustration.

The geometry of a curve or web trajectory controls the construction of the induction functors, which we formulate in terms of a gluing of local sections of the perverse schober following the trajectory. As a consequence, the support of the global section obtained from induction is given by the trajectory. For instance, for the right induction $\indR_v$ from a vertex $v$ to global sections, defined as the right adjoint to the restriction functor $\on{ev}_v$ to $v$, we obtain the following concrete description of its evaluations. 

\begin{introprop}[See \Cref{prop:evv}]\label{introprop:splitting}
Assume that $\rgraph$ has no vertices of valency $1$. Let $v,v'$ be vertices of $\rgraph$ and let $f$ be an edge of $\rgraph$. 
\begin{enumerate}[(1)]
\item There is an equivalence in $\on{Fun}(\mathcal{F}(v),\mathcal{F}(f))$
\[ \on{ev}_f\circ \indR_v\simeq \bigoplus_{h\in H(v),\,c:h\overset{\circlearrowleft}{\shortrightarrow}f}\mathcal{F}^\rightarrow(c)\circ \mathcal{F}(h)
\,,\]
where the sum runs over all halfedges $h$ incident to $v$ and counterclockwise trajectories $c$ from $h$ to $f$. This counts the number of times that the web trajectory $\ccwt_v$ passes by $f$. 
\item There is an equivalence in $\on{Fun}(\mathcal{F}(v),\mathcal{F}(v'))$
\[ \on{ev}_{v'}\circ \indR_v\simeq  \begin{cases} \on{id}_{\mathcal{F}(v)}\oplus \bigoplus_{h,h'\in H(v),c:h\overset{\circlearrowleft}{\shortrightarrow}h',c\not = \ast} \mathcal{F}(h')^L\circ \mathcal{F}^\rightarrow(c)\circ \mathcal{F}(h) & \text{if }v'=v\\ \bigoplus_{h\in H(v),h'\in H(v'),c:h\overset{\circlearrowleft}{\shortrightarrow}h'} \mathcal{F}(h')^R\circ \mathcal{F}^\rightarrow(c)\circ \mathcal{F}(h) & \text{if }v'\not=v\,.\end{cases}\]
The above sums count the number of times that $\ccwt_v$ passes by the vertex $v'$.
\item The counit $\on{ev}_v\indR_v\to \on{id}_{\mathcal{F}(v)}$ is a split projection.
\end{enumerate}
\end{introprop}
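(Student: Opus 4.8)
The plan is to reduce everything to understanding the right adjoint of the restriction functor $\on{ev}_v\colon \mathcal{H}(\rgraph,\mathcal{F}) \to \mathcal{F}(v)$ in terms of the combinatorial transport along web trajectories, exploiting the local perversity conditions recalled in the excerpt. First I would set up the description of global sections: an object of $\mathcal{H}(\rgraph,\mathcal{F})$ is a limit-type gluing of stalks $\mathcal{F}(v)$ and edge-categories $\mathcal{F}(e)$ along the diagram $\on{Exit}(\rgraph)$, so $\on{ev}_f$ and $\on{ev}_{v'}$ are just the projections. The right adjoint $\indR_v$ then sends $X \in \mathcal{F}(v)$ to the \emph{cofreely coinduced} global section concentrated at $v$; the key is that because each $\mathcal{F}(v\to e_i)$ has a fully faithful right adjoint, the value of $\indR_v(X)$ at any edge or vertex can be computed by propagating $X$ outward, one halfedge/edge at a time, and the local perversity conditions (the middle functor is an equivalence, the others vanish) force this propagation to \emph{split off} a direct summand at each step rather than producing a genuine iterated limit. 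This is the mechanism that turns an a priori limit into the direct sum appearing in (1) and (2), indexed precisely by the counterclockwise trajectories $c\colon h \overset{\circlearrowleft}{\shortrightarrow} f$ (resp.\ to $h'$), i.e.\ by the branches of the web trajectory $\ccwt_v$ and the number of times it revisits $f$ or $v'$.

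For (1) and (2) concretely, I would argue by following the web trajectory $\ccwt_v$: starting from a halfedge $h \in H(v)$, the transport equivalence $\mathcal{F}^\rightarrow$ governs how the section is carried across each successive vertex (using the local equivalence $\mathcal{F}(v\to e_i)\circ\mathcal{F}(v\to e_{i+1})^R$), and at the terminal edge $f$ (resp.\ terminal halfedge $h'$ at $v'$) one applies $\mathcal{F}(h)$ at the start and $\mathcal{F}(h')^R$ (or $\mathcal{F}(h')^L$ when $v'=v$, since the trajectory returns from the other side) at the end. The direct sum over all such $(h,c)$ is forced because distinct branches of $\ccwt_v$ pass through disjoint parts of the ribbon graph near $f$ (resp.\ $v'$), so their contributions to $\on{ev}_f\indR_v$ are orthogonal and assemble additively; the $\on{id}_{\mathcal{F}(v)}$ summand in the $v'=v$ case is the "length-zero" trajectory, i.e.\ the counit itself. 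The cleanest route is probably to first establish this for an $n$-gon (where $\mathcal{H}$ is literally $\mathcal{F}(v)$ and the trajectory picture is the blue curve $\ccwt$ of the figure) and then globalize by a Mayer--Vietoris/descent argument along the edges of $\rgraph$, cutting the surface into polygons and using that $\indR$ is compatible with the gluing since it is defined as an adjoint.

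Part (3) should then be nearly formal given (2): the $v'=v$ case of (2) exhibits $\on{ev}_v\indR_v$ as $\on{id}_{\mathcal{F}(v)} \oplus (\text{something})$, and I would check that under this splitting the counit $\on{ev}_v\indR_v \to \on{id}_{\mathcal{F}(v)}$ is precisely the projection onto the $\on{id}_{\mathcal{F}(v)}$ summand. The triangle identity for the adjunction $(\on{ev}_v, \indR_v)$ shows the unit-then-counit composite is the identity on $\mathcal{F}(v)$, which pins down that the counit restricted to the identity summand is an isomorphism, hence the counit is a split epimorphism; one then identifies its kernel with the complementary direct sum of trajectory terms. The main obstacle I anticipate is making the "propagation splits off a summand at each step" argument fully rigorous at the $\infty$-categorical level: one must check that the relevant limits genuinely degenerate into direct sums, which requires carefully tracking the (co)units of the spherical/adjoint structure and verifying the vanishing conditions $\mathcal{F}(v\to e_i)\circ\mathcal{F}(v\to e_j)^R \simeq 0$ for $j \neq i, i+1$ kill all the potential "cross terms" in the gluing diagram — essentially a bookkeeping argument about which trajectories contribute, but one that needs the combinatorial geometry of the line field and the ribbon structure to be set up precisely, presumably the content of the referenced \Cref{prop:evv} and its supporting lemmas.
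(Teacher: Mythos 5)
Your geometric picture (summands indexed by passes of $\ccwt_v$, the identity summand as the constant trajectory, the reduction between left/right induction by duality) matches the paper, but two steps of your plan have genuine gaps. First, the globalization. You propose to prove the statement on an $n$-gon and then ``globalize by a Mayer--Vietoris/descent argument along the edges \dots using that $\indR$ is compatible with the gluing since it is defined as an adjoint.'' That compatibility is exactly what is not automatic: a right adjoint to a restriction functor has no a priori commutation with further restrictions (this is a Beck--Chevalley-type issue), and indeed the whole content of the proposition is to compute how $\indR_v$ interacts with the other evaluations. Moreover, on the $n$-gon with a single vertex one has $\mathcal{H}\simeq\mathcal{F}(v)$ and $\indR_v\simeq\on{id}$, so the local statement is vacuous; the real content is global, including trajectories that wind through the surface and return to $v$ (which is why the $v'=v$ case has summands beyond $\on{id}$), and your descent sketch does not address this. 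The paper avoids the issue by \emph{constructing} an explicit candidate: for each halfedge one builds a functor $U^L_h$ as a colimit over a zigzag diagram of local sections glued along the clockwise trajectory $\cwt_h$ (\Cref{constr:Uh}), assembles $\indL_v$ as a pushout of these over $H(v)$ (\Cref{lem:constrofevvL}), verifies adjointness by a mapping-space computation, and then obtains the splitting for free because (co)limits of sections of the Grothendieck construction are computed pointwise over $\on{Exit}(\rgraph)$ and the defining diagram vanishes at almost all places (\Cref{lem:evUh}, \Cref{prop:eve}, \Cref{prop:evv}); the statements for $\indR_v$ then follow by passing to the opposite schober (\Cref{lem:dualschober}). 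So the missing idea is the explicit trajectory-indexed construction of the adjoint itself, not a degeneration of limits propagated step by step.

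Second, part (3) does not follow from (2) plus the triangle identity in the way you describe. The triangle identity $(\epsilon\,\on{ev}_v)\circ(\on{ev}_v\,\eta)=\on{id}_{\on{ev}_v}$ only constrains the counit after evaluating the unit on global sections; it does not by itself identify the counit $\on{ev}_v\indR_v\to\on{id}_{\mathcal{F}(v)}$ with the projection onto the $\on{id}$-summand of the abstract splitting in (2). One must show that the canonical summand inclusion/projection produced by the construction actually \emph{is} the (co)unit of the adjunction, and in the paper this is a separate argument: a diagram of natural transformations over the zigzag whose limit of mapping spaces exhibits the candidate map as the unit (proof of part (3) of \Cref{lem:evUh}, transported to \Cref{prop:evv} and dualized). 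Your plan would need to supply this identification explicitly.
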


A main feature of the above result is that the composite of restriction with induction splits into a direct sum of functors. Further, these functors are themselves composed of induction and restriction functors (in the opposite order), together with transport equivalences between them. To have such splitting results, the restriction to ribbon graphs without $1$-valent vertices is necessary. Given a spherical functor $F\colon \V\to \N$ with right adjoint $G$, the counit $FG\to \on{id}_\N$ in general does not split.

The story is entirely analogous for left adjoint induction instead of right adjoint induction, replacing counterclockwise trajectories with clockwise trajectories. The left and right induction functors in fact differ from each other by composition with certain spherical twist functors, corresponding geometrically to the rotation of the boundary of the marked surface, see \Cref{prop:twist_indL_indR}.

The construction of the induction functors is inspired by constructions of Lagrangians in symplectic manifolds equipped with Lefschetz fibrations to surfaces. Induction should correspond to parallel transport constructions of Lagrangians in the fiber along trajectories. Curve trajectories themselves also form Lagrangians in the surface. For the partially wrapped Fukaya category of the surface, the induction functors of the corresponding perverse schobers produce the objects given by the curve trajectories.

\subsection{Gluing cluster tilting subcategories}\label{introsec:gluingCT}

We fix a marked surface ${\bf S}$ with a spanning ribbon graph $\rgraph$ without $1$-valent vertices and a $\rgraph$-parametrized perverse schober $\mathcal{F}$.\\ 

\noindent {\bf $\infty$-categorical  Frobenius exact structures from perverse schobers}

Consider a vertex $v$ of $\rgraph$ of valency $m\geq 2$ with incident edges $e_1,\dots, e_m$. The stalk of $\mathcal{F}$ at $v$ is the stable $\infty$-category $\mathcal{F}(v)$. It comes with $m$ functors $\mathcal{F}(v\to e_i)\colon \mathcal{F}(v)\to \mathcal{F}(e_i)$. These induce an $\infty$-categorical exact structure on $\mathcal{F}(v)$, where a fiber and cofiber sequence is declared to be exact if its image under each of the functors $\mathcal{F}(v\to e_i)$ is split exact. By general principles, the injective objects with respect to this exact structure are given by the images of objects in any stalk $\mathcal{F}(e_i)$ under the fully faithful left adjoint $\mathcal{F}(v\to e_i)^L\colon \mathcal{F}(e_i)\to \mathcal{F}(v)$ of $\mathcal{F}(v\to e_i)$. Dually, the projective objects are given by the images of objects in any $\mathcal{F}(e_i)$ under the fully faithful right adjoint $\mathcal{F}(v\to e_i)^R$. The axioms of perverse schobers on the $n$-spider imply that these injective and projectives objects coincide, with which one can show that $\mathcal{F}(v)$ is Frobenius exact.

The Frobenius exact structure on the $\infty$-category of global section $\mathcal{H}(\rgraph,\mathcal{F})$ arises in a similar way, using the restriction functors to the external edges of $\rgraph$. We consider cluster tilting subcategories always with respect to these $\infty$-categorical exact structures.\\

\noindent {\bf Gluing rigid objects}

Proving that an additive subcategory is cluster tilting involves verifying two properties: Firstly, the subcategory is rigid, meaning there are no exact extensions between objects. Secondly, every object in the ambient exact $\infty$-category admits an exact $2$-term resolution and coresolution by objects in the subcategory. Both of these properties glue separately along perverse schobers, see \Cref{prop:glue2termres,prop:rigidglue}. The gluing of $2$-term resolutions along induction is based on a rather formal argument about limits. Checking that rigidity is preserved under gluing relies on the geometric description of the induction functors and the results of \Cref{introprop:splitting}. An intuitive reason that rigidity is preserved under induction is as follows: Trajectories cannot cross, since the flow is injective. Typically, the exact extensions between objects count the number of crossings between the corresponding curves or webs. Hence, without crossings, rigidity is satisfied. See \Cref{fig:induction} for an example of a collection of non-crossing trajectories corresponding to a cluster tilting object.\\

\noindent {\bf Ice quivers and amalgamation}

Any cluster tilting subcategory $\mathcal{T}_v\subset \mathcal{F}(v)$ must contain all injective-projective objects, and hence the full subcategory $\mathcal{F}(e_i)\subset \mathcal{T}_v$ for all $1\leq i\leq m$. In the case that $\mathcal{F}(v)$ contains a cluster tilting objects $T_v$ with finite dimensional discrete endomorphism algebra, one can extract an ice quiver from this algebra. Under mild assumptions, see \Cref{subsec:amalgamation}, this ice quiver has $m$ frozen components. Each component corresponds to injective-projective objects arising from the summands of an additive generator of $\mathcal{F}(e_i)$. Given a collection of cluster tilting objects $T_v\subset \mathcal{F}(v)$, we thus obtain a collection of ice quivers whose frozen components can be paired according to the incidence of the edges of $\rgraph$ with these vertices. This allows to consider the amalgamation of these ice quivers. Amalgamation was introduced in \cite{FG06a}, and roughly speaking corresponds to the process of gluing ice quivers along their frozen parts, and then unfreezing the frozen part along which was glued. In \Cref{subsec:amalgamation}, we relate the gluing of cluster tilting objects via \Cref{introthm:1} to the amalgamation of their corresponding ice quivers.\\

\noindent {\bf Further remarks}

The gluing of cluster tilting subcategories has previously been studied in certain settings with semiorthogonal decompositions/recollements, see for instance \cite{Vas21,Vas20}. In the examples related with higher Teichm\"uller theory, the local cluster tilting subcategories lie in the exact $\infty$-category $\on{Fun}(\Delta^1,\C_\ADE)$, which describes derived representations of the $A_2$-quiver in the $1$-cluster category $\C_\ADE$ of the Dynkin quiver $\ADE$. Note that $\on{Fun}(\Delta^1,\C_\ADE)$ has a recollement into two copies of $\C_\ADE$. The existence of these cluster tilting subcategories is nevertheless non-trivial, see Keller--Liu \cite{KL25}, and, to the authors knowledge, does not follow from previous results. It would be understand these cluster tilting subcategories conceptually via this recollement, and to determine for which other stable $\infty$-categories $\D$ the exact $\infty$-category $\on{Fun}(\Delta^1,\D)$ admits cluster tilting objects.

The cluster categories of \cite{Ami09} are triangulated $2$-Calabi--Yau and the Higgs categories of \cite{Wu21} are extriangulated $2$-Calabi--Yau. In the setting of perverse schobers, one can obtain relative right Calabi--Yau structures on the $k$-linear stable $\infty$-categories of global sections using the gluing of relative Calabi--Yau structures, see \cite{BD19,Chr23}. These give rise to extriangulated $2$-Calabi--Yau properties, see \cite{Chr22b}. However, no Calabi--Yau conditions are necessary for \Cref{introthm:1}.

\newpage

\subsection{Examples}\label{introsec:examples}

\noindent{\bf $1$-periodic topological Fukaya categories}

The $1$-periodic topological Fukaya category of the marked surface arises as the global sections of a perverse schober with generic stalk given by the perfect derived category of $1$-periodic chain complexes $\D^{\on{perf}}(k[t_1^\pm])$ and no singularities. In \cite{Chr22b}, it is shown that the cluster tilting objects in this exact $\infty$-category are in bijection with ideal triangulations of the marked surface. It is further shown that the $1$-periodic topological Fukaya category is equivalent, as an exact $\infty$-category, to the Higgs category \cite{Wu21} arising from the relative Ginzburg algebras of \cite{Chr22}. 

\Cref{introthm:1} constructs all these cluster tilting objects: Every ideal triangulation is dual to a spanning ribbon graph with only $3$-valent vertices. The local exact $\infty$-category describing the perverse schober at the $3$-valent vertices is given by $\on{Fun}(\Delta^1,\D^{\on{perf}}(k[t_1^\pm])$ and admits a unique cluster tilting object with endomorphism ice quiver as follows: 
\begin{center}
\begin{tikzpicture}[scale=1.2]
\draw node[frvertex] (1) at (0,1) {};
\draw node[frvertex] (2) at (-1,0) {};
\draw node[frvertex] (3) at (1,0) {};

\draw[->] (2)--(1);
\draw[->] (3)--(2);
\draw[->] (1)--(3); 
\end{tikzpicture}
\end{center}
Gluing yields the corresponding cluster tilting object corresponding to the triangulation.\\

\noindent {\bf Topological Fukaya categories for higher Teichm\"uller theory}

The $1$-periodic perfect derived category $\D^{\on{perf}}(k[t_1^\pm])$ is equivalent to the cosingularity category $\D^{\on{perf}}(\Pi_2(A_1))/\D^{\on{fin}}(\Pi_2(A_1))$ of the $2$-Calabi-Yau completion $\Pi_2(A_1)$ of the  $A_1$-quiver in the sense of \cite{Kel11}. Replacing the $A_1$-quiver by a different ADE quiver $I$, we obtain a $2$-periodic stable $\infty$-category $\C_I\coloneqq \D^{\on{perf}}(\Pi_2(I))/\D^{\on{fin}}(\Pi_2(I))$ with finitely many equivalence classes of indecomposable objects. Note that $\C_I$ also describes the $1$-Calabi--Yau cluster category of type $I$, as well as the category of matrix factorizations of a type $I$ surface singularity.

We can obtain the $\C_I$-valued topological Fukaya category of a marked surface ${\bf S}$ in the sense of \cite{DK18} as the global sections of a perverse schober without singularities and generic stalk $\C_I$ and trivial monodromy, see \cite{Chr23}.

In the case of ${\bf S}=\Delta$ a $3$-gon, this topological Fukaya category is equivalent to $\on{Fun}(\Delta^1,\C_I)$. The work of Keller--Liu \cite{KL25} shows that $\on{Fun}(\Delta^1,\C_I)$ is equivalent to a canonical Higgs category. As a consequence $\on{Fun}(\Delta^1,\C_I)$ carries cluster tilting objects. Keller--Liu obtain a different cluster tilting object for each orientation of the Dynkin diagram $\ADE$. Conjecturally (and manifestly in type $A_n$), the ice quivers of these cluster tilting objects recover ice quivers underlying cluster algebras arising from the type $\ADE$ higher Teichm\"uller theory of a triangle in the sense of \cite{FG06,Le19,GS19}. Applying \Cref{introthm:1}, we obtain cluster tilting objects in the $\C_I$-valued topological Fukaya categories for arbitrary marked surfaces. The corresponding ice quivers arise via amalgamation and thus, in general conjecturally, coincide with ice quivers used for the cluster algebras arising from the higher Teichm\"uller theory of arbitrary surfaces in type $\ADE$. This class of examples, and its relation to the Higgs categories and cosingularity categories of $3$-Calabi--Yau categories, will be discussed in more detail in \cite{Chr25b}.\\

\noindent {\bf Marked surfaces with punctures}

A marked surface with punctures refers to a marked surface in which marked points in the interior (the punctures) are allowed. For every marked surface with punctures, there is an associated cluster algebra, whose clusters are in bijection with tagged triangulations of the surface \cite{FST08}. These generalize the cluster algebras associated with marked surfaces without punctures. In \Cref{subsec:puncturedsurf}, we discuss how to obtain additive categorifications of these cluster algebras, extending results of \cite{Chr22b} on marked surfaces without punctures. For this, we add singularities at the punctures to the non-singular perverse schobers from \cite{Chr22b}, the corresponding spherical adjunction is between the $1$-periodic and the $2$-periodic derived $\infty$-category. 

Most tagged triangulations give rise to a ice quivers arising from amalgamation, used for the cluster seeds. We construct corresponding cluster tilting objects in the $\infty$-category of global sections in \Cref{prop:puncturedsurfaceCTO}. We expect the Higgs categories associated with punctured marked surface to appear as a special case of this construction.

\subsection{Outlook}\label{introsec:outlook}

Finally, we list some open problems and topics for further research.

The full understanding of the cluster algebras of surface allowed to understand the corresponding cluster tilting theory of cluster categories of surfaces, in particular classifying the cluster tilting objects in terms of ideal triangulations. Based on this, one can understand the silting theory, or equivalently tilting theory of simple minded collections, in corresponding $3$-Calabi--Yau categories, from which the $2$-Calabi--Yau cluster category arises by passage to the cosingularity category of \cite{Ami11}. Based on this relation, \cite{KQ20} proved an identification of the space of Bridgeland stability condition on the proper $3$-Calabi--Yau category with framed quadratic differentials. This shows that cluster tilting theory can be used as a bridge between tilting and silting theory as well as the combinatorics of cluster algebras. The results of this paper construct, and interpret geometrically, cluster tilting objects in 'higher rank' generalizations of the surface cluster categories. The study of the corresponding cluster algebras remains an active field of research, see for instance \cite{IY23}. It would be very interesting to use these cluster tilting objects to transfer results between the tilting theory and the study of these cluster algebras.

In this paper, we treat marked surfaces with punctures only in type $A_1$ (meaning the corresponding Lie group is $\on{SL}_2$). A solution for the other Dynkin types remains an intriguing open problem. As input for higher Teichm\"uller theory can more generally serve any punctured marked surface with empty boundary. It would be interesting to extend the results of this paper to surfaces without boundary. 

Besides higher Teichm\"uller theory, an interesting class of cluster algebras which admit seeds arising from amalgamation arise from double Bott-Samelson varieties \cite{SW21}. Further cluster seeds arising from amalgamation associated with general marked surfaces together with a choice of Dynkin type and boundary decorations by braid words appear in \cite[Section 8]{GK21}. We hope to return to the question of the gluing of the Higgs categories and corresponding cluster tilting objects in this context in future work. An additional challenge may be that one cannot expect the $\infty$-categories underlying the corresponding Higgs categories to be stable in general, though they still carry $\infty$-categorical exact structures.

\subsection*{Acknowledgements}

I thank Bernhard Keller for many helpful discussions and Gustavo Jasso for helpful comments. The author is a member of the Hausdorff Center for Mathematics at the University of Bonn (DFG GZ 2047/1, project ID 390685813). This project has received funding from the European Union’s Horizon 2020 research and innovation programme under the Marie Skłodowska-Curie grant agreement No 101034255.

\section{Higher categorical preliminaries}

We freely use the language of stable $\infty$-categories, as developed in \cite{HTT,HA}. We refer to \cite{Ker,Cis} for introductory treatments of the theory of $\infty$-categories.

We denote the $\infty$-category of (small) $\infty$-categories by $\on{Cat}_\infty$. We denote the $\infty$-category of (small) stable $\infty$-categories and exact functors by $\on{St}$. In most of the paper, the perverse schobers will take values in $\on{St}$.  Given a stable $\infty$-category $\C$, we denote by \[ \on{Ext}^i_\C(\mhyphen,\mhyphen)\coloneqq \pi_0 \on{Map}_{\C}(\mhyphen,\mhyphen[n])\colon \C^{\on{op}}\times \C\to \on{Ab}\]
the $i$-th extension group functor. 

Let $k$ be a field. The $\infty$-category of $k$-linear (small) stable $\infty$-categories is defined as $\on{LinCat}_k^{\on{St}}\coloneqq \on{Mod}_{\on{Mod}_k^{\on{c}}}(\on{St})$, where $\on{Mod}_k^{\on{c}}\simeq \D^{\on{perf}}(k)$ is the symmetric monoidal stable $\infty$-category of compact $k$-modules.

\subsection{Limits of \texorpdfstring{$\infty$}{infinity}-categories via sections of the Grothendieck construction}\label{subsec:Grothendieckconstr}

The forgetful functors $\on{LinCat}_k^{\on{St}}\to\on{St}\to \on{Cat}_\infty$ preserve and reflect limits. 
In the following we explain how to describe limits in $\on{Cat}_\infty$.

 We will not distinguish in notation between $1$-categories and their nerve in $\on{Set}_\Delta$. Let $Z$ be a $1$-category. A diagram $Z\to \on{Cat}_\infty$ is said to be strictly commuting if it arises from a functor of $1$-categories $Z\to \on{Set}_\Delta$ by composition with the localization functor $\on{Set}_\Delta \to \on{Cat}_\infty$. Limits of strictly commuting diagrams can be explicitly described as follows.

Let $f\colon Z\to \on{Set}_\Delta$ be a diagram taking values in $\infty$-categories. The Grothendieck construction of $f$ refers to the coCartesian fibration $p\colon \Gamma(f)\to Z$ defined in \cite[\href{https://kerodon.net/tag/025X}{Def.~025X}]{Ker}. Here, $\Gamma(f)$ is an explicitly defined simplicial set. This coCartesian fibration is classified by the functor $F\colon Z\xrightarrow{f} \on{Set}_\Delta\to \on{Cat}_\infty$ arising from $f$. 
The objects of $\Gamma(f)$ (i.e.~$0$-simplicies) are given by pairs of objects $(z,X)$ with $z\in Z$ and $X\in f(z)$. The fibration $p$ maps the pair $(z,X)$ to $z\in Z$. A morphism (i.e.~$1$-simplex) $(z,X)\to (z',X')$ in $\Gamma(f)$ amounts to a morphism $\alpha\colon z\to z'$ in $Z$ together with a morphism $f(\alpha)(X)\to X'$ in the $\infty$-category $f(z')$. 

The $\infty$-category of sections of the Grothendieck construction $p\colon \Gamma(f)\to Z$ is defined as the pullback $\on{Fun}(Z,\Gamma(f))\times_{\on{Fun}(Z,Z)}\{\on{id}_Z\}$. This $\infty$-category of sections also describes the lax limit of $F$ in the $(\infty,2)$-category of $\infty$-categories.

A section $s:Z\to \Gamma(f)$ of $p$ is called coCartesian if for every morphism $\alpha\colon z\to z'$ in $Z$ the morphism $s(\alpha)\colon (z,X)\to (z',X')$ is $p$-coCartesian in the sense of \cite[\href{https://kerodon.net/tag/01T5}{Tag 01T5}]{Ker}. Unraveling the definition, one finds that an edge is coCartesian if and only if it describes an equivalence $f(\alpha)(X)\simeq X'$. The full subcategory of the $\infty$-category of sections of $p$ consisting of coCartesian sections describes the the limit of the functor $F\colon Z\to \on{Cat}_\infty$, see \cite[\href{https://kerodon.net/tag/05RX}{Prop.~05RX}]{Ker}. 

Suppose that $f\colon Z\to \on{Set}_\Delta$ takes values in $\infty$-categories admitting finite limits and colimits and finite limits and finite colimits preserving functors.
Then finite limits and colimits in the $\infty$-category of coCartesian sections of $\Gamma(f)$ exist and are computed pointwise in $Z$, meaning that the evaluation functor $\on{Fun}(Z,\Gamma(f))\times_{\on{Fun}(Z,Z)}\{\on{id}_Z\}\to f(z)$, for any $z\in Z$, preserves finite limits and colimits and furthermore together these functor reflect finite limits and colimits. This follows from \cite[5.1.2.3, 4.3.1.10, 4.3.1.16]{HTT}. 

\subsection{Exact \texorpdfstring{$\infty$}{infinity}-categories and cluster tilting subcategories}

We recall some aspects of the theory of exact $\infty$-categories. 

\begin{definition}[$\!\!$\cite{Bar15}]
An exact $\infty$-category is a triple $(\mathcal{C},\mathcal{C}_{\dagger},\mathcal{C}^{\dagger})$, where $\mathcal{C}$ is an additive $\infty$-category and $\mathcal{C}_{\dagger},\mathcal{C}^{\dagger}\subset \mathcal{C}$ are subcategories (called subcategories of inflations and deflations), satisfying that 
\begin{enumerate}[(1)]
\item every morphism $0\rightarrow X$ in $\mathcal{C}$ lies in $\mathcal{C}_{\dagger}$ and every morphism $X\rightarrow 0$ in $\mathcal{C}$ lies in $\mathcal{C}^\dagger$.
\item pushouts in $\mathcal{C}$ along morphisms in $\mathcal{C}_{\dagger}$ exist and lie in $\mathcal{C}_{\dagger}$. Dually, pullbacks in $\mathcal{C}$ along morphisms in $\mathcal{C}^{\dagger}$ exist and lie in $\mathcal{C}^{\dagger}$. 
\item  Given a commutative square in $\mathcal{C}$ of the form
\[
\begin{tikzcd}
X \arrow[r, "a"] \arrow[d, "b"] & Y \arrow[d, "c"] \\
X' \arrow[r, "d"]               & Y'              
\end{tikzcd}
\]
the following are equivalent.
\begin{itemize}
\item The square is pullback, $c\in \mathcal{C}_{\dagger}$ and $d\in \mathcal{C}^{\dagger}$.
\item The square is pushout, $b\in \mathcal{C}_{\dagger}$ and $a\in \mathcal{C}^{\dagger}$.
\end{itemize}
\end{enumerate}
We typically abuse notation and simply refer to $\mathcal{C}$ as the exact $\infty$-category.
\end{definition}

\begin{definition}
An exact sequence $X\rightarrow Y\rightarrow Z$ in an exact $\infty$-category $\C$ consists of a fiber and cofiber sequence in $\mathcal{C}$
\[
\begin{tikzcd}
X \arrow[r, "a"] \arrow[dr, "\square", phantom] \arrow[d] & Y \arrow[d, "b"] \\
0 \arrow[r]                & Z               
\end{tikzcd}
\]
with $a$ an inflation and $b$ a deflation. 

A functor between exact $\infty$-categories is called exact if it maps exact sequences to exact sequences.
\end{definition}

Given an exact $\infty$-category $\C$ and $X,Y\in\C$, the exact extensions $X\to Y[1]$ form a subgroup of the abelian group $\on{Ext}^1_\C(X,Y)$, which we will denote by $\on{Ext}^{1,\on{ex}}_\C(X,Y)$. 

\begin{remark}
Every exact $\infty$-category arises as an extension closed subcategory of a stable $\infty$-category \cite{Kle20}. Thus, if $\C$ is equipped with the structure of an exact $\infty$-category, its homotopy $1$-category $\on{ho}\C$ inherits the structure of an extriangulated category \cite{Kle20,NP20}. The exact extensions $\mathbb{E}_{\on{ho}\C}$ in the extriangulated structure parametrize exact sequences in $\C$. 
\end{remark}

\begin{definition}\label{def:frob}
Let $\C$ be an exact $\infty$-category.
\begin{enumerate}[1)]
\item An object $P\in \mathcal{C}$ is called projective if every exact sequence $X\rightarrow Y \rightarrow P$ splits. An object $I\in \mathcal{C}$ is called injective if every exact sequence $I\rightarrow Y \rightarrow Z$ splits.
\item We say that $\mathcal{C}$ has enough projectives if for each object $X\in \mathcal{C}$ there exists an exact sequence $X\rightarrow P\rightarrow Y$ with $P$ projective. Similarly, we say that $\mathcal{C}$ has enough injectives if for each object $Y\in \mathcal{C}$ there exists an exact sequence $Y\rightarrow I \rightarrow X$ with $I$ injective.
\item We say that $\mathcal{C}$ is Frobenius if $\mathcal{C}$ has enough projectives and injectives and the classes of projective and injective objects coincide. 
\end{enumerate}
\end{definition}

\begin{example}\label{ex:inducedexstr}
Let $F\colon \C\to \D$ be an exact functor (in the stable sense) between stable $\infty$-categories. Then there exists an exact structure on $\C$ where a fiber and cofiber sequence in $\mathcal{C}$ is exact if and only if its image under $G$ splits. We will refer to it as the exact structure on $\C$ induced by $F$.
\end{example}

\begin{proposition}[$\!\!$\cite{Chr22b}]\label{prop:Frob2CYexact}
Let $F\colon \C\to \D$ be a spherical functor between stable $\infty$-categories. Then the exact structure on $\C$ induced by $F$ is Frobenius. The injective-projective objects are given by the essential image of the right adjoint $G$ of $F$.
\end{proposition}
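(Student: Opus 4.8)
The plan is to establish both claims—that the induced exact structure is Frobenius, and that the injective-projective objects are exactly the essential image of $G$—by exploiting the two-sided adjunction underlying a spherical functor. Recall that $F\colon \C\to \D$ spherical means $F$ admits both adjoints $G$ (right) and $F^L$ (left), and that the cotwist/twist endofunctors built from the (co)units are equivalences; in particular $F$ also admits a left adjoint, and the left and right adjoints are related by composition with an auto-equivalence of $\C$ (the cotwist) or of $\D$ (the twist). The key structural consequence I would extract first is that the essential image of $G$ coincides with the essential image of $F^L$: since $F^L \simeq \Sigma_{\C}^{-1}\circ G \circ \Sigma_{\D}$ (or the analogous relation with the cotwist, depending on conventions) up to the twist equivalences, and twists are equivalences, the two essential images agree as full subcategories of $\C$.

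Next I would identify the injective and projective objects of the induced exact structure. By \Cref{ex:inducedexstr}, a fiber-cofiber sequence $X\to Y\to Z$ in $\C$ is exact iff its image under $F$ splits. First I would show every object of the form $G(d)$ is injective: given an exact sequence $G(d)\to Y\to Z$, applying $F$ gives a split sequence $FG(d)\to FY\to FZ$ in $\D$; the splitting is a map $FY\to FG(d)$, and by the adjunction $(F,G)$ this corresponds to a map $Y\to G(d)$ in $\C$, which one checks retracts the original inflation (using that the composite $G(d)\to Y\to G(d)$ corresponds under adjunction to the composite of $FG(d)\to FY$ with the splitting, which is the identity, together with faithfulness of the relevant adjunction data on the nose—here one needs the counit-unit identities, not full faithfulness of $G$). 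Dually, every object $F^L(d)$ is projective, using the adjunction $(F^L, F)$. Combined with the first paragraph, the essential images of $G$ and $F^L$ agree, so these injectives and projectives coincide; call this common full subcategory $\mathcal{I}$.

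Then I would verify "enough injectives and projectives". Given $X\in\C$, I want an exact sequence $X\to I\to X'$ with $I\in\mathcal{I}$. The natural candidate is the unit $X\to GF(X)$; I would take $I = GF(X)$ and $X'$ its cofiber, and must check (a) this fiber-cofiber sequence is exact, i.e. becomes split after applying $F$, and (b) $X\to GF(X)$ is an inflation. For (a), applying $F$ to the unit $X\to GF(X)$ yields $F(X)\to FGF(X)$, which is a split monomorphism in $\D$ because it is a section of the counit $FGF(X)\to F(X)$ (triangle identity); hence its cofiber sequence splits. This simultaneously gives that $X\to GF(X)$ is an inflation in the induced structure (its image under $F$ is a split inflation), so the sequence $X\to GF(X)\to \cof$ is genuinely exact with injective middle term. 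Dually, using the counit $F^LF(X)\to X$ of $(F^L,F)$ and the triangle identity, $F^LF(X)\to X$ is a deflation onto $X$ with projective source, giving enough projectives. Since projectives $=$ injectives $=\mathcal{I}$, and one checks $\mathcal{I}$ contains no objects beyond the essential image of $G$ (any injective $J$ embeds as a retract in $GF(J)$ via the exact sequence just constructed, hence lies in the essential image of $G$ as that subcategory is closed under retracts), we conclude $\C$ with this structure is Frobenius with injective-projectives exactly $\operatorname{im}(G)$.

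The main obstacle I anticipate is the bookkeeping of the $\infty$-categorical adjunctions: "splits" in a stable $\infty$-category means the cofiber sequence admits a splitting together with all higher coherences, so turning "the image under $F$ of the unit is a section of the counit" into "the fiber-cofiber sequence in $\C$ is exact" requires working with the exact structure of \Cref{ex:inducedexstr} at the level of the full subcategory of split sequences, not merely $\pi_0$. I expect this is handled cleanly by noting that split fiber-cofiber sequences in a stable $\infty$-category are precisely the ones equivalent to $A\to A\oplus B\to B$, that $F$ preserves finite (co)limits, and that the triangle identities are part of the data of the adjunction; so "image of the unit is split mono" is an honest statement about a $2$-simplex of sections. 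A secondary subtlety is making sure the identification $\operatorname{im}(G)\simeq\operatorname{im}(F^L)$ uses only that $F$ is spherical (so that the cotwist is an equivalence)—this is exactly where the spherical hypothesis, as opposed to a bare adjunction, is essential, since for a general adjunction the injectives and projectives of the induced structure need not coincide.
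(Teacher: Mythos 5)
The paper does not actually prove this proposition (it is imported from \cite{Chr22b}), so I can only assess your argument on its own terms; most of it is the standard proof and is fine. Showing that objects $G(d)$ are injective and objects $F^L(d)$ are projective by transposing the splitting of the $F$-image along the adjunctions, and producing enough injectives/projectives from the unit $X\to GF(X)$ and the counit $F^LF(X)\to X$ together with the triangle identities, is exactly the expected argument, and your handling of ``split'' at the level of fiber--cofiber sequences is adequate. One imprecision: the relation between the adjoints is not $F^L\simeq \Sigma_\C^{-1}\circ G\circ\Sigma_\D$ with plain suspensions; for a spherical adjunction one has $F^L\simeq G$ composed with the twist/cotwist \emph{autoequivalences}, and to conclude $\operatorname{im}(F^L)=\operatorname{im}(G)$ you should invoke the version in which the autoequivalence is precomposed on the $\D$-side (the version with an autoequivalence of $\C$ on the left only gives $\operatorname{im}(F^L)=C_\C(\operatorname{im}(G))$ a priori). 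This is a fixable bookkeeping point, not a gap.

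The genuine gap is the last step: ``any injective $J$ is a retract of $GF(J)$, hence lies in the essential image of $G$ as that subcategory is closed under retracts.'' The retract-closure is asserted, never proved, and nothing in your argument can deliver it: in a spherical adjunction with $\D\neq 0$ the counit is never an equivalence (the cotwist would be $0$), so $G$ is not fully faithful and the usual ``fully faithful into an idempotent-complete category'' argument is unavailable. In fact the essential image of $G$ need not be retract-closed for a spherical functor: take $E=k[t]/(t)$, a $1$-spherical object of $\D^{b}_{\operatorname{fin}}(k[t])$, and $F=-\otimes_k E\colon \D^{\operatorname{perf}}(k)\to \D^{b}_{\operatorname{fin}}(k[t])$ with $G=\operatorname{RHom}(E,-)$; since $\chi(E,M)=0$ for every finite-dimensional $M$, every object of $\operatorname{im}(G)$ has equal total even and odd dimension, so $k\notin\operatorname{im}(G)$, although $k$ is a retract of $GF(k)\simeq k\oplus k[-1]$ and is injective-projective for the induced exact structure (which here is the split structure). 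So what your argument actually establishes is that the injective-projective objects are the retracts of objects in $\operatorname{im}(G)$, i.e.\ the additive (Karoubi) closure of the essential image; to obtain the identification with the essential image taken literally you would need a further argument (or a reading of ``essential image'' that includes closure under summands, which is presumably how the cited statement is meant and used). You should either supply that missing argument in the settings where it holds, or weaken the conclusion accordingly.
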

 
A subcategory $\T\subset \C$ of an additive $\infty$-category $\C$ is called an additive subcategory if it is closed under finite direct sums and direct summands. 

\begin{definition}\label{def:clustertilting}
Let $\T$ be an additive subcategory of an exact $\infty$-category $\C$.
\begin{enumerate}[(1)]
\item We call $\T$ rigid if $\on{Ext}_\C^{1,\on{ex}}(T,T')\simeq 0$ for all $T,T'\in \T$. 
\item We say that $\T$ has the right $2$-term resolution property if for every object $X\in \C$ there exists an exact sequence $X\to T_0\to T_1$ in $\C$ with $T_0,T_1\in \T$.
\item We say that $\T$ has the left $2$-term resolution property if for every object $X\in \C$ there exists an exact sequence $T_0\to T_1\to X$ in $\C$ with $T_0,T_1\in \T$.
\item We say that $\T$ has the two-sided $2$-term resolution property if it has the left and the right $2$-term resolution property.
\item We call $\T$ a cluster tilting subcategory if $T$ is rigid and has the two-sided $2$-term resolution property. 
\end{enumerate}
\end{definition}

\begin{definition}
Let $\C$ be an exact $\infty$-category. Let $T\in \C$ be an object and denote by $\T=\on{Add}(T)$ its additive closure, meaning the smallest additive subcategory containing $T$. 
\begin{enumerate}[(1)]
\item We call $T$ basic if $T$ is the direct sum of finitely many indecomposable objects which are pairwise non-isomorphic.
\item We call $T$ a cluster tilting object if $T$ is basic and $\T\subset \C$ is cluster tilting. 
\end{enumerate}
\end{definition}

\begin{remark}
In the literature, cluster tilting subcategories of $\C$ are often defined as additive subcategories $\T$ satisfying that for any object $X\in \C$, $X$ lies in $\T$ if and only if $\on{Ext}^{1,\on{ex}}_{\C}(T,X)\simeq \on{Ext}^{\on{ex},1}_{\C}(X,T)\simeq 0$ for all $T\in \T$. In typical situations, this definition coincides with \Cref{def:clustertilting}, see \cite[Lem.~5.16]{Chr22b}.
\end{remark}

\begin{remark}\label{rem:stablecategory}
Given a Frobenius exact $\infty$-category $\D$, there is a corresponding (so-called) stable category (which is a stable $\infty$-category) $\underline{D}$, obtained by a localization removing the injective--projective objects. We refer to \cite{Chr22} for details. The homotopy category of $\underline{\D}$ can be identified with the stable category of the extriangulated homotopy category of $\D$, in the sense of \cite[Prop.~3.30, Cor.~7.4]{NP19}. For $X,Y\in \underline{\D}$, the abelian group $\on{Ext}^0_{\underline{\D}}(X,Y)$ is thus given by the quotient of $\on{Ext}^0_\D(X,Y)$ by the subgroup of morphisms which factor through an injective-projective. Further, the abelian group $\on{Ext}^1_{\underline{\D}}(X,Y)$ thus agrees with the abelian group of exact extensions between $X$ and $Y$ in $\D$.

As an immediate consequence of this, we see that $\T\subset \D$ is a cluster tilting subcategory if and only if it contains all injective-projective objects and its image in $\underline{\D}$ is a cluster tilting subcategory. If the discrete endomorphism algebra of a cluster tilting object $T$ is described by an ice quiver, then the discrete endomorphism algebra of the image of $T$ in $\underline{\D}$ is described by the arising non-ice quiver obtained by removing the frozen vertices.
\end{remark}

\section{Perverse schobers on marked surfaces}

\subsection{Surfaces, ribbon graphs and induced trajectories}\label{subsec:surfaces}

\begin{definition}\label{def:surf}
\begin{enumerate}[(1)]
\item By a surface, we will mean a smooth, oriented surface ${\bf S}$ with non-empty boundary $\partial {\bf S}$ and interior ${\bf S}^\circ$. We will also assume that ${\bf S}$ is connected and compact, unless stated otherwise. Note that if ${\bf S}$ is compact, the boundary $\partial {\bf S}$ consists of a disjoint union of circles.
\item A marked surface $({\bf S},M)$ consists of a surface ${\bf S}$ and a non-empty finite set $M\subset \partial {\bf S}$ of marked points, lying on the boundary of ${\bf S}$. We also require that each connected component of $\partial {\bf S}$ contains at least one marked point. We typically write ${\bf S}$ for $({\bf S},M)$.
\end{enumerate}
\end{definition}

\begin{definition}~
\begin{enumerate}[(1)]
\item A graph ${\rgraph}$ consists of two finite sets ${\rgraph}_0$ of vertices and $\on{H}_{\rgraph}$ of halfedges together with an involution $\tau\colon \on{H}_{\rgraph} \rightarrow \on{H}_{\rgraph}$ and a map $\sigma\colon \on{H}_{\rgraph}\rightarrow \rgraph_0$.
\item Let $\rgraph$ be a graph. An edge of $\rgraph$ is defined to be an orbit of $\tau$, we denote the set of edges by $\rgraph_1$. An edge is called internal if the orbit contains two elements and called external if the orbit contains a single element. 
We denote the set of external edges of $\rgraph$ by $\rgraph^\partial_1$. If $h$ is a halfedge, we denote by $e(h)\in \rgraph_1$ the edge which contains it.
\item A ribbon graph consists of a graph $\rgraph$ together with a choice of a cyclic order on the set $\on{H}_\rgraph(v)$ of halfedges incident to each vertex $v$.
\end{enumerate}
We typically assume graphs to be connected.
\end{definition}

\begin{definition}
Let $\rgraph$ be a graph. We define the exit path category $\on{Exit}(\rgraph)$ of $\rgraph$ to be the nerve of the $1$-category with
\begin{itemize}
\item objects the vertices and edges of $\rgraph$ and
\item the non-identity morphism are given by halfedges $h\colon v\rightarrow e$, where $v$ is a vertex incident to $h$ and $e$ is the edge containing $h$.
\end{itemize}
The geometric realization $|\rgraph|$ of $\rgraph$ is defined as the geometric realization $|\on{Exit}(\rgraph)|$ of $\on{Exit}(\rgraph)$. 
\end{definition}

\begin{remark}
A graph $\rgraph$ whose geometric realization $|\rgraph|$ is embedded into an oriented surface ${\bf S}$ inherits a canonical ribbon graph structure by requiring the halfedges at any vertex to be ordered in the counterclockwise direction.
\end{remark}

\begin{definition}\label{def:sg}
Let ${\bf S}$ be a marked surface. A spanning graph for ${\bf S}$ consists of a graph $\rgraph$ together with an embedding $i:|\rgraph|\subset {\bf S}\backslash M$ 
satisfying that 
\begin{itemize}
\item $i$ is a homotopy equivalence,
\item $i$ restricts to a homotopy equivalence $\partial |\rgraph|\rightarrow \partial {\bf S}\backslash M$.
\end{itemize}
We consider a spanning graph of ${\bf S}$ as endowed with the canonical ribbon graph structure arising from the embedding into ${\bf S}$.

Note that, since we assumed that every boundary component of ${\bf S}$ contains a marked point, spanning graphs of ${\bf S}$ cannot have loops, meaning edges whose two halfedges lie at the same vertex.
\end{definition}

\begin{definition}
Let $\Sigma$ be a possibly non-compact surface. A line field $\nu$ on $\Sigma$ is a section of the projectivized tangent bundle $\mathbb{P}T\Sigma$.
\end{definition}

A spanning graph $\rgraph$ of a marked surface ${\bf S}$ determines a canonical homotopy class of a line field on ${\bf S}\backslash \rgraph_0$ with the properties that the edges of $\rgraph$ define flow-lines, there are no other flow-lines asymptotic to the vertices of $\rgraph$, and whose winding number around an $n$-valent vertex is given by $-m$. We denote a choice of this line field by $\nu_\rgraph$. See \Cref{fig:exlinefield} for an illustration.

\begin{figure}[ht!]
\begin{center}
    \begin{tikzpicture}

            \draw[ao, very thick] (0,0) circle(3);    
            \fill[black] (0,0) circle(0.1);
        \foreach \n in {0,...,4}
        {
            \foreach \m in {0,0.5,1,1.5,2,2.5}
            {	
            	\coordinate (x) at (360/5*\n:3);
            	\coordinate (y) at (360/5*\n+360/5:3);
            	\coordinate (v) at (360/5*\n+4+6*\m:3);
            	\coordinate (w) at (360/5*\n+360/5-4-6*\m:3);
                \coordinate (u) at (360/5*\n+36:\m+0.15);
                \draw[gray] (v) .. controls (u) and (u) .. (w);
            }
            \coordinate (u) at (360/5*\n+36:3.3);
			\fill[orange] (360/5*\n+36:3) circle(0.1);
            \draw[very thick, black] (0,0) -- (360/5*\n:3);
		 }

    \end{tikzpicture}
\caption{The line field $\nu_\rgraph$ for a spanning graph $\rgraph$ of the $5$-gon.}\label{fig:exlinefield}\end{center}
\end{figure}

\begin{definition}
Let $\Sigma$ be a possibly non-compact surface equipped with a line field $\nu$. We call a curve $\gamma\colon U\to \Sigma$, with $U\subset [0,1]$, a trajectory if $\dot{\gamma}_t=\nu_{\gamma(t)}\in \mathbb{P}T\Sigma$ for all $t\in U$. 
\end{definition}

We fix a marked surface ${\bf S}$ with spanning graph $\rgraph$. We next define clockwise and counterclockwise trajectories in the non-compact surface ${\bf S}\backslash \rgraph_0$ equipped with the line field $\nu_\rgraph$. For this, we first discuss some conventions on directions of halfedges.

Each halfedge $h$ of an internal edge $e$ determines an orientation of the edge $e$, pointing in the direction away from the vertex of $h$. This fits with how we consider halfedges as exiting a vertices towards its edge.

Recall that an external edge $e$ has a unique halfedge $h$. Opposite to the case of internal edges, we say that the direction of $e$ induced by $h$ is the direction pointing towards the vertex of $h$, meaning inwards.

\begin{definition}\label{def:cwtrajectory}
Let ${\bf S}$ be a marked surface with a spanning graph $\rgraph$. Let $h$ be a halfedge of $\rgraph$, lying at a vertex $v$, and which is part of the edge $e$. We consider $e$ as oriented by $h$ as above. 
\begin{itemize}
\item Choose any point $p$ close to $e$ which lies on the right of the directed edge $e$. We define the clockwise halfedge trajectory $\cwt_h\colon [0,1]\to {\bf S}\backslash \rgraph_0$ of $h$ as any trajectory satisfying $\cwt_h(0)=p$,  $\cwt_h(1)\in \partial {\bf S}$ and such that $\cwt_h$ points at $p$ in the direction of $e$.
\item Choose a point $q$ close to $e$ which lies on the left of $e$. We define the counterclockwise halfedge trajectory $\ccwt_h\colon [0,1]\to {\bf S}\backslash \rgraph_0$ of $h$ as any trajectory satisfying $\ccwt_h(0)=q$,  $\ccwt_h(1)\in \partial {\bf S}$ and pointing at $q$ in the direction of $e$.
\end{itemize}
\end{definition}

We note that clockwise trajectories turn right at every vertex of the ribbon graph, whereas counterclockwise trajectories turn left.

\begin{definition}
Let ${\bf S}$ be a marked surface with a spanning graph $\rgraph$. Consider the topological space 
\[ U_n=([0,1]\times \{1,\dots,n\})/((0,1)\sim (0,i))_{2\leq i\leq n}\,,\] i.e.~an $n$-spider.
\begin{enumerate}[(1)]
\item Let $e$ be an external edge of $\rgraph$. We define the clockwise trajectory $\cwt_e\colon [0,1]\to {\bf S}$ as $\cwt_h$ with $h\in e$, choosing $\cwt_h$ such that $\cwt_e(0)\in \partial {\bf S}$. We define the counterclockwise trajectory $\ccwt_v$ similarly. 
\item Let $e$ be an internal edge of $\rgraph$. We define the clockwise trajectory $\cwt_e\colon U_2\to {\bf S}$ of $e$ as the composite of the two clockwise halfedge trajectories $\cwt_{h},\cwt_{h'}$ with $h,h'\in e$ (after perturbing their starting points to that they coincide and lie on $e$, instead of next to $e$).  We define the counterclockwise trajectory $\ccwt_e\colon U_2\to {\bf S}$ similarly. 
\end{enumerate}
We also call $\cwt_e,\ccwt_e$ curve trajectories.
\begin{enumerate}
\item[(3)] Let $v$ be an $n$-valent vertex of $\rgraph$ with incident halfedges $h_1,\dots,h_n$.  We define the clockwise web trajectory $\cwt_v\colon U_n\to {\bf S}$ as the composite of the $n$ trajectories $\cwt_{h_i}$ with a small $n$-spider emanating from $v$. We define the counterclockwise web trajectory $\ccwt_v\colon U_n\to {\bf S}$ similarly.
\end{enumerate}
\end{definition}

\begin{figure}[ht]
\begin{center}
\begin{tikzpicture}[scale=1.2]
    
            \draw[ao, very thick] (-4,-2)--(2,-2)--(2,2)--(-4,2)--(-4,-2); 
            \draw[very thick] (-2,0)--(0,0) (-2,-2)--(-2,0)--(-2,2) (-2,0)--(-4,0) (1,-2)--(0,0)--(1,2) ;
            \fill[black] (-2,0) circle(0.1);
            \fill[blue] (0,0) circle(0.1);
            \fill[orange] (-4,-2) circle(0.1);
            \fill[orange] (2,0) circle(0.1);
            \fill[orange] (-4,2) circle(0.1);
            \fill[orange] (-0.5,-2) circle(0.1);
            \fill[orange] (-0.5,2) circle(0.1);
\foreach \n in {1,2,3,4}{
\draw[gray] (-2-\n*0.3,2) .. controls (-2-\n*0.3,\n*0.3) .. (-4,\n*0.3);
\draw[gray] (-2-\n*0.3,-2) .. controls (-2-\n*0.3,-\n*0.3) .. (-4,-\n*0.3);
        }     
        
\foreach \n in {1,2,3}{
\draw[gray] (1+\n*0.3,2) .. controls (\n*0.3,0) .. (1+\n*0.3,-2);
}

\foreach \n in {1,2,3}{
\draw[gray] plot[smooth] coordinates {(-1.9+\n*0.3,2) (-1.7+\n*0.25,\n*0.3+0.1)  (-\n*0.15,\n*0.3+0.1) (1-\n*0.3,2)};
}

\foreach \n in {1,2,3}{
\draw[gray] plot[smooth] coordinates {(-1.9+\n*0.3,-2) (-1.7+\n*0.25,-\n*0.3+-0.1)  (-\n*0.15,-\n*0.3-0.1) (1-\n*0.3,-2)};
}

\draw[blue, very thick] plot[smooth] coordinates {(-1.7,2) (-1.55,0.4) (-0.5,0.14) (0,0)};
\draw[blue, very thick] (0,0) .. controls (-0.1,-0.15) .. (0.85,-2);
\draw[blue, very thick] (0,0) .. controls (0.1,-0.15) .. (1.15,2);
\draw[Turquoise, very thick] plot[smooth] coordinates {(-1.85,2) (-1.7,0.3) (-0.6,-0.2) (-0.05,-0.5) (0.7,-2)};

\node () at (0.4,-0.2){$\cwt_v$};
\node () at (-1.6,-0.3){$\cwt_e$};
    \end{tikzpicture}
\caption{The clockwise web trajectory $\cwt_v$ starting at a trivalent vertex $v$ and the clockwise curve trajectory $\cwt_e$ starting at an incident edge $e$.} \label{fig:trajectories}
\end{center}
\end{figure}
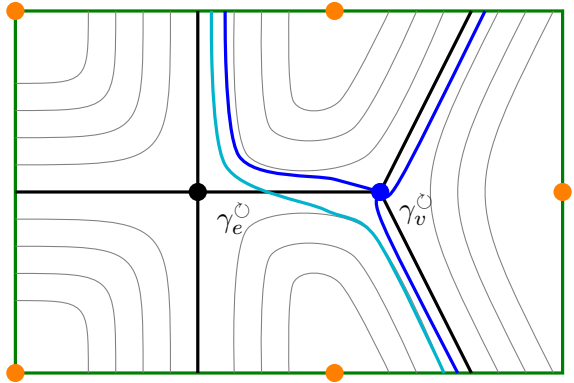

\begin{remark}
The web trajectories considered in the paper give very simple instances of possible general notions of webs. Not only should a notion of web allow for multiple brach points, they may also have $2$-dimensional faces. See \cite{HKS24} for instances of webs (there called spectral networks) in higher rank ($\D^{\on{perf}}(A_n)$-valued) topological Fukaya categories of surfaces. 
\end{remark}

We next introduce notation for counts of trajectories (up to homotopy) starting and ending at given halfedges or edges. 

\begin{definition}\label{def:trajectory}~
\begin{enumerate}[(1)]
\item Let $h$ be a halfedge of $\rgraph$ and $e$ an edge of $\rgraph$. Consider a trajectory $c$ in ${\bf S}\backslash \rgraph_0$ starting near $e$. We write $c\colon h\overset{\circlearrowright}{\shortrightarrow} e'$ if $c$ starts to the right of $e$ (with respect to the orientation induced by $h$) and ends near $e'$. If $h\in e'$, we allow $c$ to be a constant curve.
\item Let $e,e'$ be two edges of $\rgraph$. Let $h_1,h_2$ be the two halfedges of $e$. Given a trajectory $c$ in ${\bf S}\backslash \rgraph_0$, we write $c\colon e\xrightarrow{\circlearrowright} e'$ if either $c\colon h_1\xrightarrow{\circlearrowright} e'$ or $c\colon h_2\xrightarrow{\circlearrowright} e'$
\item Let $h,h'$ be halfedges of $\rgraph$. Let $v'$ be the vertex of $h'$. We write $c\colon h\overset{\circlearrowright}{\shortrightarrow} h'$ if $c\colon h\xrightarrow{\circlearrowright} e(h')$  and $c$ does not pass by the halfedge preceding $h'$ at $v'$. Stated differently, the latter means that $c$ stops before going one counterclockwise step around $v'$. If $h=h'$, we allow $c$ to be a constant curve
\item Let $e$ be an edge of $\rgraph$, with halfedges $h_1,h_2\in e$, and $h'$ be a halfedge of $\rgraph$. We write $c\colon e\overset{\circlearrowright}{\shortrightarrow} h'$ if $c$ is a trajectory such that either $c\colon h_1\overset{\circlearrowright}{\shortrightarrow} h'$ or $c\colon h_2\overset{\circlearrowright}{\shortrightarrow} h'$.
\end{enumerate}
We similarly write $c\colon x\overset{\circlearrowleft}{\shortrightarrow} y$, with $x=h,e$ and $y=h',e'$, if $c$ goes counterclockwise.

We consider all of the above trajectories up to homotopies that do not move the endpoints away from small neighborhoods of the corresponding edges.
\end{definition}

\begin{remark}
For two edges $e,e'$ of $\rgraph$, the set $\{c\colon e\xrightarrow{\circlearrowright} e'\}$ counts the number of times that the clockwise trajectory $\cwt_e$ starting at $e$ passes by $e'$ (these are either 0,1 or 2 times). For a vertex $v$, the set $\bigcup_{h\in H(v)} \{c\colon h\to e'\}$ counts the number of times that the clockwise web trajectory $\cwt_v$ passes by the edge $e'$ (these are at most $2n-1$ times, if $v$ is $n$-valent). Similar interpretations apply to unions of sets $\{c\colon h\xrightarrow{\circlearrowright} h'\}$ and $\{c\colon e\xrightarrow{\circlearrowright} h'\}$. 
\end{remark}

\subsection{Parametrized perverse schobers}\label{subsec:schobers}

We briefly recall the notion of a parametrized perverse schober, see \cite[Section 3]{CHQ23}, \cite[Sections 3,4]{Chr22}. 

For $n\in\mathbb{N}_{\geq 1}$, we let $\rgraph_{n}$ be the ribbon graph with a single vertex $v$ and $n$ incident external edges. We call $\rgraph_n$ the $n$-spider. 

\begin{definition}\label{def:schobernspider}
Let $n\geq 1$. A perverse schober on the $n$-spider consists of the following data: 
\begin{enumerate}
\item[(1)] If $n=1$, a spherical adjunction of stable $\infty$-categories
\[ F\colon \mathcal{V}\longleftrightarrow \mathcal{N}\cocolon G\,,\]
meaning an adjunction whose twist functor $C_{\mathcal{V}}=\on{cof}(\on{id}_{\mathcal{V}}\xrightarrow{\on{unit}}GF)\in \on{Fun}(\mathcal{V},\mathcal{V})$ and cotwist functor $C_{\mathcal{N}}=\on{fib}(FG\xrightarrow{\on{counit}}\on{id}_{\mathcal{N}})\in \on{Fun}(\mathcal{N},\mathcal{N})$ are autoequivalences (note that there exist different conventions for the twist and cotwists in the literature). We also call the functors $F,G$ spherical functors.
\item[(2)] If $n\geq 2$, a collection of adjunctions of stable $\infty$-categories
\[ (F_i\colon \mathcal{V}^n\longleftrightarrow \mathcal{N}_i\cocolon G_i)_{i\in \mathbb{Z}/n\mathbb{Z}}\]
satisfying that
\begin{enumerate}
    \item $G_i$ is fully faithful, which is equivalent to $F_iG_i\simeq \on{id}_{\mathcal{N}_i}$ via the counit,
    \item $F_{i}\circ G_{i+1}$ is an equivalence of $\infty$-categories,
    \item $F_i\circ G_j\simeq 0$ if $j\neq i,i+1$,
    \item $G_i$ admits a right adjoint $\on{radj}(G_i)$ and $F_i$ admits a left adjoint $\on{ladj}(F_i)$ and
    \item $\on{fib}(\on{radj}(G_{i+1}))=\on{fib}(F_{i})$ as full subcategories of $\mathcal{V}^n$.
\end{enumerate}
\end{enumerate}
We will consider a collection of functors $(F_i\colon \mathcal{V}^n\rightarrow \mathcal{N}_i)_{i\in \mathbb{Z}/n}$ a perverse schober on the $n$-spider if there exist adjunctions $(F_i\dashv \on{radj}(F_i))_{i\in \mathbb{Z}/n\mathbb{Z}}$ defining a perverse schober on the $n$-spider. 
\end{definition}

\begin{lemma}\label{lem:schobern-gonfromsphadj}
Let $n\geq 2$. A collection of adjunctions of stable $\infty$-categories
\[ (F_i\colon \mathcal{V}^n\longleftrightarrow \mathcal{N}_i\cocolon G_i)_{i\in \mathbb{Z}/n\mathbb{Z}}\]
defines a perverse schober on the $n$-spider if and only if they satisfy conditions (2).(a),(b),(c) from \Cref{def:schobernspider} and the functor 
\[
\prod_{i=1}^nF_i\colon \mathcal{V}^n\longrightarrow \prod_{i=1}^n \mathcal{N}_i
\]
is spherical.
\end{lemma}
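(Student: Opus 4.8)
The key point is that conditions (2).(d) and (2).(e) from Definition \ref{def:schobernspider} — the existence of the further adjoints and the identification $\fib(\radj(G_{i+1})) = \fib(F_i)$ — are automatically encoded in the sphericity of the single assembled functor $F \coloneqq \prod_i F_i\colon \V^n \to \prod_i \N_i$, \emph{once} conditions (a), (b), (c) are assumed. So the plan is to prove both implications by carefully unpacking what sphericity of $F$ means in terms of the components $F_i$, using that the target $\prod_i \N_i$ is a product and hence adjoints, (co)twists, units and counits all decompose coordinatewise.

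For the forward direction, I would assume a perverse schober on the $n$-spider and first observe that conditions (a)--(c) give $F_i G_j \simeq \delta^{\mathrm{adj}}_{ij}$ in the appropriate sense ($\id$ if $j=i$, an equivalence if $j = i+1$, zero otherwise). The right adjoint of $F = \prod_i F_i$ is $G \coloneqq (X_i)_i \mapsto \prod_i \radj(F_i)(X_i)$, which exists by (d). The unit and counit of $F \dashv G$ are assembled from those of $F_i \dashv \radj(F_i)$. Then one computes the twist $C_{\V^n} = \cof(\id \to GF) = \cof(\id \to \bigoplus_i \radj(F_i) F_i)$ and the cotwist $C_{\prod \N_i} = \fib(FG \to \id)$, which is the product over $i$ of $\fib(F_i\radj(F_i) \to \id)$. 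The heart of the argument is to show these are autoequivalences: for the cotwist this is essentially coordinatewise and follows because condition (e) forces $\radj(F_i)$ to identify $\N_i$ with a sufficiently large subcategory; for the twist one uses the semiorthogonality encoded in (a)--(c) plus (e) to see that $GF$ differs from the identity by something invertible. I would lean on the known characterization of spherical functors in terms of the twist/cotwist being equivalences together with one of them being the "inverse up to shift" of the other, rather than reproving it.

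For the converse — which I expect to be the \textbf{main obstacle} — I would assume (a), (b), (c) and that $F = \prod_i F_i$ is spherical, and must reconstruct (d) and (e). Sphericity of $F$ gives in particular that $F$ admits a left adjoint $\ladj(F)$ (a spherical functor has adjoints on both sides and the composites of the various adjunctions are related by the invertible (co)twists); since the source/target are as stated, $\ladj(F)$ decomposes and yields a left adjoint $\ladj(F_i)$ of each $F_i$. Getting $\radj(G_i)$ is the subtler half: here I would use condition (b), that $F_{i-1} G_i$ is an equivalence, to transport the adjoint $\radj(F_{i-1})$ (which exists because $F_{i-1}$, being a coordinate of a spherical functor, is part of an adjoint string) into a right adjoint of $G_i$; concretely $\radj(G_i) \simeq \radj(F_{i-1}) \circ (F_{i-1} G_i)$ up to the canonical equivalence, using $G_i$ fully faithful from (a). Finally, the equality $\fib(\radj(G_{i+1})) = \fib(F_i)$ must be extracted: this is where the \emph{cotwist} of $F$ being an equivalence gets used, as the cotwist measures exactly the failure of the counit $FG \to \id$ and, coordinatewise, pins down the essential image of $G_i$ and hence the kernel of $\radj(G_{i+1})$; one checks both sides are the full subcategory on objects sent to $0$ by $F_i$. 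Throughout, the technical care needed is that all these manipulations are happening in stable $\infty$-categories, so "fiber", "cofiber", "fully faithful", "adjoint" must be interpreted $\infty$-categorically and the decomposition-over-a-product statements (adjoints into a product are products of adjoints, a functor into a product is spherical iff — under (a)--(c) — the assembled data is) must be justified rather than taken for granted; I would isolate that product-decomposition bookkeeping as the one genuinely load-bearing computation.
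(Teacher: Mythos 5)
There are genuine gaps on both sides of the equivalence. For the ``only if'' direction, the whole content is that the twist $\on{cof}(\on{id}_{\mathcal{V}^n}\to \bigoplus_i G_iF_i)$ of the assembled adjunction is an equivalence; your sketch only says that ``$GF$ differs from the identity by something invertible,'' which restates the goal rather than proving it. (The paper does not reprove this either -- it cites \cite[Lem.~3.8]{Chr22} -- but your plan claims to establish it and supplies no argument at the crucial point.) You also misplace where the conditions are used: the cotwist is an equivalence from (a)--(c) alone, since the counit in the $i$-th component is $X_i\oplus F_iG_{i+1}(X_{i+1})\to X_i$, an equivalence on the first summand, so the cotwist is the cyclic rotation $(X_j)_j\mapsto (F_iG_{i+1}(X_{i+1}))_i$; condition (e) plays no role there, whereas it (together with (d)) is exactly what the missing twist argument needs.

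For the converse, your construction of $\on{radj}(G_i)$ does not work as stated. First, $\on{radj}(F_{i-1})$ is just $G_{i-1}$ (it is part of the given data, not something produced by sphericity), and the proposed formula $\on{radj}(G_i)\simeq \on{radj}(F_{i-1})\circ(F_{i-1}G_i)$ does not even typecheck: the composite lands in $\mathcal{V}^n$, while a right adjoint of $G_i$ must land in $\mathcal{N}_i$. More substantively, ``transporting'' an adjoint along the equivalence $F_{i-1}G_i$ is not a valid construction: knowing that $G_i$ is fully faithful and $F_{i-1}G_i$ is an equivalence does not by itself yield a right adjoint of $G_i$ (one would need $F_{i-1}$ to induce equivalences on mapping spaces out of the image of $G_i$, which is not implied by (a)--(c)). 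The correct mechanism, and the one the paper uses, is a spherical-adjunction identity: sphericity of $\prod_iF_i\dashv\prod_iG_i$ provides the adjoint string (giving (d), as you do note via the product decomposition), and identifies $\on{radj}$ of the product of the $G$'s with the inverse cotwist composed with the (rotated) product of the $F$'s; combined with the computation that the cotwist is the cyclic permutation, the $i$-th component of $\on{radj}(G_{i+1})$ becomes $(F_iG_{i+1})^{-1}\circ F_i$, and (e) follows immediately because post-composition with an equivalence does not change the fiber, so $\on{fib}(\on{radj}(G_{i+1}))=\on{fib}(F_i)$. You invoke the cotwist only to ``pin down the kernel'' after the fact, but without the identification of $\on{radj}(G_{i+1})$ itself your argument for (e) has no starting point.
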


\begin{proof}
If the functors $(F_i)_{1\leq i\leq n}$ define a perverse schober on the $n$-gon, the functor $\prod_{i=1}^nF_i$ is spherical by \cite[Lem.~3.8]{Chr22}.

Conversely, suppose conditions (a),(b),(c) are satisfied and the functor $\prod_{i=1}^nF_i$ is spherical. Then $\prod_{i=1}^nF_i$ admits repeated left and right adjoints, see \cite[Cor.~2.5.14]{DKSS21}, hence so do the functors $F_i$, showing condition (d). The cotwist functor $C_{\prod_i \N_i}$ of the adjunction $\prod_{i=1}^nF_i\dashv \prod_{i=1}^n G_i$ acts by (a),(b),(c) as a cyclic permutation on the components of $\prod_{i=1}^n\N_i$. The right adjoint $\prod_{i\in \mathbb{Z}/n}\on{radj}(G_{i+1})$ of $\prod_{i\in \mathbb{Z}/n}G_{i+1}$ is given by $C_{\prod_i \N_i}^{-1}\circ \prod_{i=1}^nF_{i+1}$, which is on the $i$-th component given $F_i$ composed with the equivalence $(F_{i}G_{i+1})^{-1}\colon \N_i\simeq \N_{i+1}$, from which condition (e) directly follows.
\end{proof}

Given a ribbon graph $\rgraph$, we denote by $\on{Exit}(\rgraph)$ its exit path category, whose objects are the vertices and edges of $\rgraph$ and non-identity morphisms go from the vertices to the edges according to incidence. Note that a functor $\on{Exit}(\rgraph_n)\to \on{LinCat}_R$ equivalently encodes a collection of functors $(F_i)_{i\in \mathbb{Z}/n}$ as in \Cref{def:schobernspider}.

\begin{definition}\label{def:schober}
Let $\rgraph$ be a ribbon graph. A functor $\mathcal{F}\colon \on{Exit}(\rgraph)\rightarrow \on{St}$ is called a $\rgraph$-parametrized perverse schober if for each vertex $v$ of $\rgraph$, the restriction of $\mathcal{F}$ to $\on{Exit}(\rgraph)_{v/}$ determines a perverse schober parametrized by the $n$-spider in the sense of \Cref{def:schobernspider}.
\end{definition}

\begin{definition}
Let $\rgraph$ be a ribbon graph and $\mathcal{F}$ a $\rgraph$-parametrized perverse schober. 
\begin{enumerate}[(1)]
\item The generic stalk of $\mathcal{F}$ is the equivalence class of the value of $\mathcal{F}$ at any edge of $\rgraph$. Note that the generic stalk does not depend on the choice of the edge. We typically denote the generic stalk by $\N$. 
\item Let $v$ be a vertex of $\rgraph$ of valency $n$ with incident edges halfedges $h_1,\dots,h_n$ and edges $e_1,\dots,e_n$. The $\infty$-category of vanishing cycles of $\mathcal{F}$ at $v$ is defined as the equivalence class of the fiber of the functor 
\[\prod_{i=1}^{n-1}\mathcal{F}(v\xrightarrow{h_i} e_i)\colon \mathcal{F}(v)\to \prod_{i=1}^{n-1}\mathcal{F}(e_i)\,.\]
The $\infty$-category of vanishing cycles is independent on the choice of ordering of the halfedges at $v$ and typically denoted by $\V$.
\item Let $v$ be a vertex as above. We call $v$ a singularity of $\mathcal{F}$ if the $\infty$-category of vanishing cycles at $v$ is non-trivial, i.e.~$\mathcal{V}\not \simeq 0$. 
\end{enumerate}
\end{definition}

\begin{remark}
If an $n$-valent vertex $v$ is not a singularity of a perverse schober $\mathcal{F}$, then $\mathcal{F}(v)\simeq \on{Fun}(\Delta^{n-2},\N)$ with $\N$ the generic stalk. If $v$ is a singularity, there exists a spherical functor $F\colon \V\to \N$ such that $\mathcal{F}(v)\simeq \V\overset{\rightarrow}{\times_F} \on{Fun}(\Delta^{n-2},\N)$ is the lax limit of $\V$ and $\on{Fun}(\Delta^{n-2},\N)$ along the functor $\V\xrightarrow{F} \N\xrightarrow{(\Delta^{\{0\}}\subset \Delta^{n-2})_*} \on{Fun}(\Delta^{n-2},\N)$, where the second functor is right Kan extension. This spherical functor $F\colon \V\to \N$ is unique in the appropriate sense and referred to as the spherical functor underlying $\mathcal{F}$ at $v$. The spherical functor underlying a non-singular vertex is given by $0\colon 0\to \N$. We refer to \cite{CHQ23} for proofs of the above statements.
\end{remark}

\begin{definition}\label{def:sections}
Let $\mathcal{F}$ be a $\rgraph$-parametrized perverse schober.
\begin{itemize}
    \item We denote by $\glsec(\rgraph,\mathcal{F})=\on{lim}(\mathcal{F})$ the limit of $\mathcal{F}$ in $\on{St}$. We call $\glsec(\rgraph,\mathcal{F})$ the $\infty$-category of global sections of $\mathcal{F}$. Note also that $\glsec(\rgraph,\mathcal{F})$ can be identified with the $\infty$-category of coCartesian sections of the Grothendieck construction $p\colon \Gamma(\mathcal{F})\to \on{Exit}(\rgraph)$ of $\mathcal{F}$, see \Cref{subsec:Grothendieckconstr}.
    \item We denote by $\losec(\rgraph,\mathcal{F})$ the $\infty$-category of all sections of the Grothendieck construction of $\mathcal{F}$ (which describes the lax limit of $\mathcal{F}$). We call $\losec(\rgraph,\mathcal{F})$ the $\infty$-category of lax sections of $\mathcal{F}$. 
\end{itemize}
Note that $\glsec(\rgraph,\mathcal{F})\subset \losec(\rgraph,\mathcal{F})$ is a full subcategory.
\end{definition}

Given a ribbon graph $\rgraph$, we denote by $\rgraph^\vee$ the ribbon graph whose underlying graph is $\rgraph$ and where the cyclic orderings of halfedges of $\rgraph^\vee$ is opposite to the cyclic orderings of the halfedges of $\rgraph$.

We denote by $(\mhyphen)^{\on{op}}\colon \on{St}\to \on{St}$ the autoequivalence that maps a stable $\infty$-category $\C$ to the opposite stable $\infty$-category $\C^{\on{op}}$.

\begin{lemma}\label{lem:dualschober}
Let $\mathcal{F}$ be a $\rgraph$-parametrized perverse schober. Then 
\[ \mathcal{F}^{\on{op}}\coloneqq (\mhyphen)^{\on{op}}\circ \mathcal{F}\colon \on{Exit}(\rgraph^\vee)=\on{Exit}(\rgraph)\to \on{St}\] 
is a $\rgraph^\vee$-parametrized perverse schober. Furthermore, there exists an equivalence of stable $\infty$-categories
\[ \mathcal{H}(\rgraph^\vee,\mathcal{F}^{\on{op}})\simeq \mathcal{H}(\rgraph,\mathcal{F})^{\on{op}}\,.\]
\end{lemma}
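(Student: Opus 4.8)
The plan is to verify the two claims of \Cref{lem:dualschober} separately, with the first being essentially a bookkeeping exercise and the second being the substantive content. For the first claim, that $\mathcal{F}^{\on{op}}$ is a $\rgraph^\vee$-parametrized perverse schober, I would observe that the axioms in \Cref{def:schobernspider} are self-dual under passage to opposite categories, once one accounts for the reversal of cyclic orders. Concretely, fix a vertex $v$ of valency $n$; the restriction of $\mathcal{F}$ to $\on{Exit}(\rgraph)_{v/}$ gives functors $F_i\colon \V^n\to \N_i$ with right adjoints $G_i$. Passing to opposites turns $F_i$ into a functor $F_i^{\on{op}}\colon (\V^n)^{\op}\to \N_i^{\op}$ which is now \emph{right} adjoint to $G_i^{\op}$; equivalently $G_i^{\op}$ is a functor admitting the right adjoint $F_i^{\op}$, and the hypotheses that $G_i$ is fully faithful, $F_i G_{i+1}$ is an equivalence, and $F_i G_j \simeq 0$ for $j \neq i, i+1$ translate, after relabeling $i \mapsto -i$ (the reversal of the cyclic order at $v$ induced by $\rgraph \rightsquigarrow \rgraph^\vee$), into exactly conditions (2).(a)--(e) for the collection $(G_i^{\op})$ at $v$ in $\rgraph^\vee$. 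It is cleanest to phrase this via \Cref{lem:schobern-gonfromsphadj}: the condition is that $\prod_i F_i$ is spherical together with (a),(b),(c), and sphericality is manifestly preserved by $(-)^{\op}$ (a functor $H$ is spherical iff $H^{\op}$ is, since twist and cotwist get swapped and autoequivalences are stable under $(-)^{\op}$), while (a),(b),(c) transform as just described. For $n=1$ one simply notes that an adjunction $F \dashv G$ is spherical iff the opposite adjunction $G^{\op} \dashv F^{\op}$ is, again because this swaps twist and cotwist.

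For the second claim, the key point is that $(-)^{\op}\colon \on{St}\to \on{St}$ is an autoequivalence, hence preserves all limits. Therefore
\[
\mathcal{H}(\rgraph^\vee,\mathcal{F}^{\on{op}}) = \lim\bigl((-)^{\op}\circ \mathcal{F}\colon \on{Exit}(\rgraph)\to \on{St}\bigr) \simeq (-)^{\op}\Bigl(\lim\bigl(\mathcal{F}\colon \on{Exit}(\rgraph)\to \on{St}\bigr)\Bigr) = \mathcal{H}(\rgraph,\mathcal{F})^{\on{op}}\,,
\]
where I have used that $\on{Exit}(\rgraph^\vee) = \on{Exit}(\rgraph)$ as $1$-categories (only the ribbon structure, i.e.\ the cyclic orders, differs, and the exit path category does not see this). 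One small thing to spell out is why $(-)^{\op}$ commutes with the limit: the cleanest justification is that $(-)^{\op}$ is an equivalence of $\infty$-categories $\on{St}\to \on{St}$ with inverse itself, and any equivalence preserves limits; alternatively one can invoke the description of $\mathcal{H}(\rgraph,\mathcal{F})$ as coCartesian sections of the Grothendieck construction from \Cref{subsec:Grothendieckconstr} and check that applying $(-)^{\op}$ fiberwise identifies the Grothendieck construction of $\mathcal{F}^{\op}$ (for the cofibration direction) appropriately, but the abstract argument is shorter and I would use it.

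The main obstacle, such as it is, lies entirely in the first claim: keeping careful track of which adjoint becomes which under $(-)^{\op}$ and, crucially, checking that the cyclic relabeling forced by $\rgraph \rightsquigarrow \rgraph^\vee$ is precisely the one that makes the perverse-schober conditions line up. The conditions (2).(a)--(e) are not individually symmetric in $i$ — they single out the successor $i+1$ — so one must confirm that reversing the cyclic order sends $i+1$ to the predecessor and that this matches the swap of $F$'s and $G$'s. I would handle this by fixing the convention once (e.g.\ $\mathcal{F}^{\op}(v \xrightarrow{h} e) = \mathcal{F}(v \xrightarrow{h} e)^{\op}$ as the functor, now playing the role of a "$G$" at $v$ in $\rgraph^\vee$) and then reading off (a)--(e) directly, citing \Cref{lem:schobern-gonfromsphadj} to reduce the verification to sphericality of $\prod_i \mathcal{F}(v \xrightarrow{h_i} e_i)^{\op} \simeq \bigl(\prod_i \mathcal{F}(v\xrightarrow{h_i} e_i)\bigr)^{\op}$ plus conditions (a),(b),(c), all of which are immediate. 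The second claim needs no real work beyond the one-line limit-preservation argument above.
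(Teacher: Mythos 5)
Your overall strategy coincides with the paper's: check perversity vertex by vertex, treat the $1$-valent case via the twist/cotwist swap, reduce the case $n\geq 2$ to \Cref{lem:schobern-gonfromsphadj} using that $(-)^{\on{op}}$ preserves sphericality, and deduce the statement about global sections from the fact that the autoequivalence $(-)^{\on{op}}$ of $\on{St}$ preserves limits. The second half of your argument is exactly the paper's and is fine.

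The problem is in the bookkeeping you yourself single out as the main obstacle: you verify the wrong collection of functors. The schober data of $\mathcal{F}^{\on{op}}$ at a vertex $v$ is the collection $(F_i^{\on{op}})_i$ of vertex-to-edge functors; the functors $G_i^{\on{op}}\colon \N_i^{\on{op}}\to (\V^n)^{\on{op}}$ go from the edges to the vertex, so "conditions (2).(a)--(e) for the collection $(G_i^{\on{op}})$" does not typecheck, and $\mathcal{F}^{\on{op}}(v\to e)$ plays the role of an $F$, not of a $G$. Moreover, the right adjoint of $F_i^{\on{op}}$ is $(\on{ladj}(F_i))^{\on{op}}$, not $G_i^{\on{op}}$ (the latter is its \emph{left} adjoint). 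Hence conditions (a),(b),(c) for $\mathcal{F}^{\on{op}}$ are statements about $F_i$ and $\on{ladj}(F_i)$: $\on{ladj}(F_i)$ is fully faithful, $F_i\circ\on{ladj}(F_{i-1})$ is an equivalence (the index shift being the reversal of the cyclic order, as you say), and $F_i\circ\on{ladj}(F_j)\simeq 0$ otherwise. These do hold — the last two by passing to left adjoints in conditions (b),(c) for $\mathcal{F}$, using $\on{ladj}(G_i)=F_i$, and the first from the adjoint triple $\on{ladj}(F_i)\dashv F_i\dashv G_i$, where the outer adjoints are simultaneously fully faithful — but they are not obtained by the relabeling of the $(G_i^{\on{op}})$ you describe, so your "all of which are immediate" is justified for the wrong adjoints. (The paper itself glosses this step as clear, so the missing content is modest, but your justification needs this correction.) The same slip occurs at $1$-valent vertices: the definition requires $F^{\on{op}}$ to be the \emph{left} adjoint of a spherical adjunction, i.e.\ $F^{\on{op}}\dashv(\on{ladj}F)^{\on{op}}$ spherical; your observation that $G^{\on{op}}\dashv F^{\on{op}}$ is spherical places $F^{\on{op}}$ on the right, and to close the gap one still needs that sphericalness passes to adjoints (the paper cites \cite[Cor.~2.5.16]{DKSS21} for exactly this), or equivalently one should argue that $\on{ladj}(F)\dashv F$ is spherical and then pass to opposites.
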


\begin{proof}
To show that $\mathcal{F}^{\on{op}}$ is a $\rgraph^\vee$-parametrized perverse schober, it suffices to show the perversity locally at each vertex of $\rgraph$. The passage to the opposite category exchanges left and right adjoints. At a $1$-valent vertex, the sphericalness of $\mathcal{F}^{\on{op}}$ thus follows from the fact that an adjunction $F\dashv G$ is spherical if and only if the adjunction $E\dashv F$ is spherical, which follows for instance from \cite[Cor.~2.5.16]{DKSS21}. At a vertex of valency $n\geq 2$, conditions (a),(b) and (c) from \Cref{def:schobernspider} are clearly satisfied by $\mathcal{F}^{\on{op}}$. Hence, it thus follows from \Cref{lem:schobern-gonfromsphadj} that $\mathcal{F}^{\on{op}}$ is also locally perverse. 

The equivalence $ \mathcal{H}(\rgraph^\vee,\mathcal{F}^{\on{op}})\simeq \mathcal{H}(\rgraph,\mathcal{F})^{\on{op}}$ follows from the fact that the autoequivalence $(\mhyphen)^{\on{op}}$ preserves limits.
\end{proof}

\noindent {\bf Transport of perverse schobers}

Let ${\bf S}$ be a marked surface with spanning ribbon graph $\rgraph$. We finally briefly recall the notion of transport $\mathcal{F}^\rightarrow(\gamma)\colon \mathcal{F}(e)\simeq \mathcal{F}(e')$ of a perverse schober $\mathcal{F}$ along a path $\gamma$ in ${\bf S}\backslash \rgraph_0$ starting at an edge $e$ and ending at an edge $e'$ of $\rgraph$ . We also consider $\gamma$ up to suitable homotopy. The notion of transport was introduced in \cite[Section 4.3]{Chr23}, as an intermediate step in the definition of the monodromy of perverse schobers. We refer to loc.~cit. for a full treatment of the notion of transport. 

We can write $\gamma$ as the composite of segments, meaning local paths, each of which is contained in a contractible neighborhood of a vertex of $\rgraph$. The transport is defined as the composite of the transports of these segments. A segment $\delta$ near a vertex $v$ with incident edges $e_1,\dots, e_n$ (ordered compatibly with their cyclic order) can be chosen to start at an edge $e_i$ and end at an edge $e_{i+j}$, going $j\in \mathbb{Z}$-steps counterclockwise. We can thus consider the segment $\delta$ as the composite of $|j|$ minimal segments which go exactly one step counterclockwise\footnote{Note that a $1$-step counterclockwise segment corresponds to a clockwise trajectory with respect to the line field of the ribbon graph, since the orientation of a trajectory is opposite to the direction it turns around a vertex.} ($j=1$) or clockwise ($j=-1$). We define the transport along the segment $\delta$ as the composite of these minimal segments. We thus suppose that $\delta$ is minimal. If $j=1$, we define the transport as the equivalence
\[ \mathcal{F}^{\rightarrow}(\delta)= \mathcal{F}(v\to e_{i+1})\circ \mathcal{F}(v\to e_i)^L\,.\]
If $j=-1$, we define the transport as the equivalence
\[ \mathcal{F}^{\rightarrow}(\delta)= \mathcal{F}(v\to e_{i-1})\circ \mathcal{F}(v\to e_i)^R\,.\]

\section{Induction functors for parametrized perverse schobers} 

Let $\rgraph$ be a spanning graph of a marked surface and let $\mathcal{F}$ be a $\rgraph$-parametrized perverse schober. Given an object $x\in \on{Exit}(\rgraph)$, meaning either $x=v$ a vertex or $x=e$ an edge, consider the functor $\on{ev}_x\colon \losec(\rgraph,\mathcal{F})\to \mathcal{F}(x)$ evaluating sections of the Grothendieck construction $p\colon \Gamma(\mathcal{F})\to \on{Exit}(\rgraph)$ of $\mathcal{F}$ at $x$. We will prove in this section that the functor $\on{ev}_x$ admits left and right adjoints, denoted $\indL_x,\indR_x$ and called the induction functors. We further describe the induction functors explicitly in terms of the clockwise and counterclockwise trajectories of \Cref{def:cwtrajectory} and the transport of $\mathcal{F}$. 

For the discussion of induction, we first consider the case that $\rgraph$ has no $1$-valent vertices. Note that one can always choose a spanning ribbon graph without $1$-valent vertices if the surface is not the $1$-gon. We generalize some of the results to general ribbon graphs in \Cref{subsec:1-valent}. In \Cref{subsec:restriction1}, we focus on a preliminary construction, which already allows to fully treat the case where $x$ is an external edge. In \Cref{subsec:restriction2}, we then discuss induction from edges and vertices. In \Cref{subsec:restriction3}, we further study the induction from sections supported on subgraphs/subsurfaces to global sections. Finally, we prove in \Cref{subsec:globaltwist} that left and right induction are related by spherical twists.

Note that by \Cref{lem:dualschober}, it will suffice to spell out the construction and study only for the left induction functors, as they then also follow for the right induction functors by passing to opposite $\infty$-categories.

\subsection{Boundary cap and cup functors}\label{subsec:restriction1}

We fix a marked surface ${\bf S}$ with a spanning graph $\rgraph$ without $1$-valent vertices and a $\rgraph$-parametrized perverse schober $\mathcal{F}$. This sections concerns a preliminary construction and study of a functor $U_h^L$ assigned to a halfedge of $\rgraph$, which will be used in the construction of the induction functors. In the case that $h$ is part of external edge $e$ of $\rgraph$, the functor $U_h^L$ already coincides with the induction functor $\indL_e$ from $e$.  

A first application of the construction and study of $U_h^L$ in this section will be a proof that the evaluation functors of a perverse schober at the boundary edges of any given boundary component themselves form a perverse schober on an $n$-gon, see \Cref{thm:globalsphericalness}. This perverse schober can be considered as the perverse pushforward of the original perverse schober along a map from the surface ${\bf S}$ to the $n$-gon. 

We turn to the construction of the functor $U_h^L$.

\begin{construction}\label{constr:Uh}
For $v$ a vertex of $\rgraph$ with incident edges $e_1,\dots,e_n$, the $\infty$-category $\mathcal{F}(v)$ is equivalent to the full subcategory of the $\infty$-category of lax sections $\mathcal{L}(\rgraph,\mathcal{F})$ consisting of sections $X\colon \on{Exit}(\rgraph)\to \Gamma(\mathcal{F})$ of the Grothendieck construction $p\colon \Gamma(\mathcal{F})\to \on{Exit}(\rgraph)$ of $\mathcal{F}$, which are $p$-relative left Kan extensions of their restriction to $v$, see \cite[Prop.~4.3.2.15]{HTT}. Concretely, being such a Kan extension amounts to the section $X$ satisfying:
\begin{itemize}
\item $X(v')\simeq 0$ for every vertex $v'\neq v$ and $X(e)\simeq 0$ for every edge $e\neq e_1,\dots,e_n$.
\item any morphism $X(v\to e_i)$ is $p$-coCartesian, meaning it describes an equivalence\newline  $\mathcal{F}(v\to e_i)(X(v))\simeq X(e_i)$. 
\end{itemize}
We denote the arising embedding of $\mathcal{F}(v)$ into $\mathcal{L}(\rgraph,\mathcal{F})$ by $\iota_v$. The right adjoint of $\iota_v$ is given by the functor $\on{ev}_v$ restricting sections to $v$, see \cite[Prop.~4.3.2.17]{HTT}. We similarly denote for every edge $f$ of $\rgraph$ the apparent embedding of $\mathcal{F}(f)$ into $\mathcal{L}(\rgraph,\mathcal{F})$, mapping an object to the corresponding section supported at $f$, by $\iota_f$. The right adjoint of $\iota_f$ is given by $\on{ev}_f$. 

We fix a choice of a halfedge $h$ of $\rgraph$, which determines an incident vertex $v$ and a corresponding edge $e$. \\

Consider the sequences of edges $e_1,\dots,e_m$ and vertices $v_1,\dots,v_{m-1}$ of $\rgraph$ obtained as follows. We will also later write $m(h)\coloneqq m$. We let $e_1=e$ and $v_1=v$. We recursively define $e_{i+1}$ as the edge incident to $v_i$ which follows $e_i$ in the counterclockwise order and $v_{i+1}\neq v_i$ as the other vertex incident to $e_{i+1}$. After finitely many steps, the external edge $e_m$ is reached ($e_m=e_1$ is permitted), by virtue of the assumption that every boundary component contains at least one marked point. Note that the edges $e_1,\dots,e_m$ trace along the clockwise trajectory $\cwt_h$.

 This external edge $e_m$ can be described as follows: If $e$ is an external edge lying on a boundary component $B$ of ${\bf S}$, then $e_m$ also lies on $B$ and follows $e$ in the clockwise orientation of $B$ by one step.
If $e$ is not external, consider the decomposition of ${\bf S}$ into polygons dual to $\rgraph$. Cutting  ${\bf S}$ along the edge dual to $e$ yields a new  marked surface $\on{cut}_e({\bf S})$, where $e$ is external. The edge $e_m$ is now obtained as before as the clockwise successor of $e$. 

\begin{figure}[ht]
\begin{center}
\begin{tikzpicture}[decoration={markings, 
	mark= at position 0.94 with {\arrow{stealth}}}]
\draw[color=ao][very thick] (0,0) circle(2);
\draw[color=ao][very thick] (0,0) circle(0.7);
\node (0) at (0.6,0.4){};
\node (1) at (2,0){};
\node (2) at (0,2){};
\node (3) at (0,-2){};
\node (4) at (-2,0){};

\node (a) at (-1.15,-1.15){};
\node (b) at (-1.15,1.15){};
\node (c) at (1.15,-1.15){};
\node (d) at (1.15,1.15){};
\node (e) at (-1.15,0){};
\node () at (-0.95,0.2){$e$};

\fill[orange] (0) circle(0.1);
\fill[orange] (1) circle(0.1);
\fill[orange] (2) circle(0.1);
\fill[orange] (3) circle(0.1);
\fill[orange] (4) circle(0.1);

\fill (a) circle(0.1);
\fill (b) circle(0.1);
\fill (c) circle(0.1);
\fill (d) circle(0.1);
\fill (e) circle(0.1);

\draw[very thick] (-1.15,0)--(-0.7,0) (-1.15,-1.15)--(-1.15,1.15)--(1.15,1.15)--(1.15,-1.15)--(-1.15,-1.15) (-1.15,-1.15)--(-1.412,-1.412) (1.15,1.15)--(1.412,1.412) (1.15,-1.15)--(1.412,-1.412) (-1.15,1.15)--(-1.412,1.412);
\draw[color=blue, very thick][postaction={decorate}]  plot [smooth] coordinates {(-0.7,0.05) (-0.75,0.9) (0.85,0.85) (0.85,-0.85) (-0.85,-0.85) (-0.7,-0.05)};
\end{tikzpicture}
\caption{An example of a ribbon graph where the external successor edge $e_m$ of the external edge $e$ coincides with $e$. The corresponding clockwise trajectory $\cwt_e$ is in blue.}
\end{center}
\end{figure}
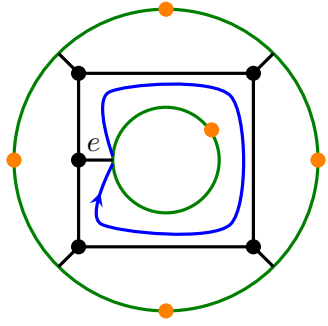

Consider the simplicial set $\Lambda^2_0$, given by the span with non-degenerate $1$-simplicies $0\to 1$ and $0\to 2$. For $i=1,2$, we denote by $\Delta^0\xrightarrow{\Delta^{\{i\}}}\Lambda^2_0$ the inclusion of the $0$-simplex $i$. We define $Z_m\in \on{Set}_\Delta$ as the coequalizer of the diagram
\[ 
\begin{tikzcd}[column sep=7em]
\coprod_{1<i<m}\Delta^0 \arrow[r, "\coprod_{1<i<m}\Delta^{\{1\}}\times \{i\}", shift left] \arrow[r, "\coprod_{1<i<m}\Delta^{\{2\}}\times \{i-1\}"', shift right] & \coprod_{1\leq i< m} \Lambda^2_0\times \{i\}
\end{tikzcd}
\]
informally describing a sequence of spans with identified endpoints.
For $1\leq i<m$, consider the apparent natural transformations
\[
\iota_{e_i}\to \iota_{v_i}\circ \mathcal{F}(v_i\to e_i)^L
\]
and 
\[
\iota_{e_{i+1}}\circ   \mathcal{F}(v_i\to e_{i+1}) \to  \iota_{v_i}
\]
which restrict at $e$ to the unit of $\mathcal{F}(v_i\to e_i)^L\dashv \mathcal{F}(v_i\to e_i)$ (which is an equivalence) and the identity 
on $\mathcal{F}(v_i\to e_{i+1})$, respectively. Using these natural transformations, we consider the functor $\underline{U}^L_h:Z^{\on{op}}_m\to \on{Fun}(\mathcal{F}(e),\losec(\rgraph,\mathcal{F}))$ which is uniquely determined by the property that it restricts on $(\Lambda^2_0\times \{i\})^{\on{op}}$, with $1\leq i<m$, to the composite of the diagram 
\begin{equation}\label{eq:evLdiag1}
\begin{tikzcd}
                       & \iota_{v_i}\circ \mathcal{F}(v_i\to e_i)^L &                                                                                            \\
\iota_{e_i} \arrow[ru] &                                            & \iota_{e_{i+1}}\circ  \mathcal{F}(v_i\to e_{i+1})\circ \mathcal{F}(v_i\to e_i)^L \arrow[lu]
\end{tikzcd}
\end{equation}
with the transport equivalence 
\begin{equation}\label{eq:transport} \mathcal{F}^\rightarrow(\cwt_i)= \mathcal{F}(v_{i-1}\to e_i)\circ \mathcal{F}(v_{i-1}\to e_{i-1})^L\circ\dots \circ \mathcal{F}(v_1\to e_2)\circ \mathcal{F}(v_1\to e_1)^L\,.
\end{equation} 

Passing to right adjoints yields the functor $\underline{U}_h\colon Z_m \to \on{Fun}(\losec(\rgraph,\mathcal{F}),\mathcal{F}(e))$, which restricts on $\Lambda^2_0\times \{i\}$, with $1\leq i<m$, to the composite of the diagram 
\begin{equation}\label{eq:evLdiag2}
\begin{tikzcd}
              & \mathcal{F}(v_i\to e_i)\circ \on{ev}_{v_i} \arrow[ld] \arrow[rd] &                             \\
\on{ev}_{e_i} &                                                                  &  \mathcal{F}(v_i\to e_i)\circ \mathcal{F}(v_i\to e_{i+1})^R \circ \on{ev}_{e_{i+1}}
\end{tikzcd}
\end{equation}
with $\mathcal{F}^\rightarrow(\cwt_i)^{-1}$.

We define 
\[ U_h^L\coloneqq \on{colim}_{Z^{\on{op}}_m}\underline{U}_h^L\colon \mathcal{F}(e)\to \mathcal{L}(\rgraph,\mathcal{F})\] 
and
\[ U_h\coloneqq \on{lim}_{Z_m}\underline{U}_h\colon \mathcal{L}(\rgraph,\mathcal{F})\to \mathcal{F}(e)\,.\] 
Note that the functors form an adjunction $U_h^L\dashv U_h$, since the passage to the adjoint is an exact functor.

By construction, there is a natural transformation $\eta\colon \iota_{e}\hookrightarrow U_h^L$ that restricts to an equivalence at $e\in \on{Exit}(\rgraph)$. Passing to right adjoints induces a natural transformation $\eta^R\colon U_h\to \on{ev}_e$.
\end{construction}

\begin{remark}
Each section in the image of $U_h^L\colon \mathcal{F}(e)\to \mathcal{L}(\rgraph,\mathcal{F})$ should be considered as associated with a decorated trajectory, namely the clockwise trajectory $\cwt_h$ decorated by an object in the generic stalk $\mathcal{F}(e)$. We will see below that the trajectory determines where the corresponding section is supported in $\on{Exit}(\rgraph)$.
\end{remark}

\begin{lemma}\label{lem:Uhproperties}
Let $h$ be a halfedge of $\rgraph$, contained in the edge $e$.
\begin{enumerate}[(1)]
\item The natural transformation $\eta^R\colon U_h\to \on{ev}_e$ 
from \Cref{constr:Uh} restricts on $\mathcal{H}(\rgraph,\mathcal{F})\subset \mathcal{L}(\rgraph,\mathcal{F})$ to a natural equivalence.
\item Given $0\not=X\in \mathcal{F}(e)$ and a non-degenerate $1$-simplex $\alpha\colon v'\to e'$ in $\on{Exit}(\rgraph)$, the $1$-simplex $U_h^L(X)(\alpha)$ 
is $p$-coCartesian, unless $\alpha\colon \sigma(\tau(h))\xrightarrow{\tau(h)} e$ arises from another halfedge $\tau(h)\neq h$ of $e$.
\item If $e$ is an external edge, the functor $U_h^L$ 
factors through $\mathcal{H}(\rgraph,\mathcal{F})\subset \mathcal{L}(\rgraph,\mathcal{F})$. In this case, $U_h^L$ restricts to define a left adjoint $\indL_e$ of $\on{ev}_e$.
\end{enumerate}
\end{lemma}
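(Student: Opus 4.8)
The plan is to analyze the three claims of \Cref{lem:Uhproperties} in order, each time reading off information from the explicit colimit/limit formulas defining $U_h^L$ and $U_h$ in \Cref{constr:Uh}. Throughout I will use the local description of global sections as coCartesian sections of the Grothendieck construction, and the fact that a section lies in $\mathcal{H}(\rgraph,\mathcal{F})$ iff it sends every non-identity morphism of $\on{Exit}(\rgraph)$ to a $p$-coCartesian edge.

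\textbf{Part (2) first.} I would begin with claim (2), since claims (1) and (3) will follow once we understand the support and coCartesian-ness of $U_h^L(X)$. Fix $0\neq X\in\mathcal{F}(e)$. Unwinding the colimit $U_h^L=\on{colim}_{Z_m^{\on{op}}}\underline{U}_h^L$: the indexing simplicial set $Z_m^{\op}$ is a chain of cospans, and the diagram \eqref{eq:evLdiag1} (composed with the transport equivalences) is built so that over the segment of the trajectory $\cwt_h$ passing through $v_i$ the relevant value is $\iota_{v_i}\circ\mathcal{F}(v_i\to e_i)^L$ glued to $\iota_{e_i}$ and $\iota_{e_{i+1}}\circ\mathcal{F}(v_i\to e_{i+1})$. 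Taking the colimit over the chain, the value $U_h^L(X)(x)$ at an object $x\in\on{Exit}(\rgraph)$ is $0$ unless $x$ is one of the vertices $v_i$ or edges $e_i$ appearing in the trajectory data; and on each morphism $v'\to e'$ the induced map is, by construction, obtained from a unit of an adjunction $\mathcal{F}(v'\to e'')^L\dashv\mathcal{F}(v'\to e'')$ or an identity. I would check that all of these are $p$-coCartesian, i.e.\ become equivalences after applying $\mathcal{F}(v'\to e')$, using precisely the local perversity axioms (2).(a),(b),(c) of \Cref{def:schobernspider}: the key points are $F_iG_i\simeq\on{id}$ (so units of $\mathcal{F}(v_i\to e_i)^L\dashv\mathcal{F}(v_i\to e_i)$ become equivalences after $\mathcal{F}(v_i\to e_i)$, giving coCartesian-ness along $v_i\to e_i$) and that $\mathcal{F}(v_i\to e_{i+1})\circ\mathcal{F}(v_i\to e_i)^L$ is the transport equivalence (giving coCartesian-ness along $v_i\to e_{i+1}$, after conjugating by the transport identification). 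The only morphism where coCartesian-ness can fail is the one where two ``ends'' of the trajectory data meet at the edge $e$: when $e=e_1$ also carries the halfedge $\tau(h)\neq h$, the arrow $\sigma(\tau(h))\xrightarrow{\tau(h)}e$ receives a map built from the \emph{identity} on $\mathcal{F}(v_i\to e_{i+1})$ at the last stage rather than an adjunction unit, and there is no reason for this to be an equivalence. This is exactly the asserted exception, and I expect pinning down this one morphism — keeping careful track of which leg of which span in the coequalizer $Z_m$ it comes from, especially in the degenerate case $e_m=e_1$ — to be the main technical obstacle.

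\textbf{Part (1).} Given (2), claim (1) is essentially formal. The natural transformation $\eta^R\colon U_h\to\on{ev}_e$ is the right adjoint of $\eta\colon\iota_e\hookrightarrow U_h^L$, which restricts to an equivalence at $e$. For $Y\in\mathcal{H}(\rgraph,\mathcal{F})\subset\mathcal{L}(\rgraph,\mathcal{F})$ a coCartesian section, I would compute $U_h(Y)=\on{lim}_{Z_m}\underline{U}_h(Y)$ using the explicit diagram \eqref{eq:evLdiag2}: each term is of the form $\mathcal{F}(v_i\to e_i)(\on{ev}_{v_i}Y)$ or $\mathcal{F}(v_i\to e_i)\mathcal{F}(v_i\to e_{i+1})^R(\on{ev}_{e_{i+1}}Y)$, and since $Y$ is coCartesian all of these are canonically identified with $\on{ev}_{e}Y$ after applying the transport equivalences $\mathcal{F}^\rightarrow(\cwt_i)^{-1}$ built into the diagram — the span maps become equivalences by the same axioms as above. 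Hence the limit over the connected indexing diagram $Z_m$ collapses to $\on{ev}_e(Y)$, and one checks this identification is exactly $\eta^R_Y$. Alternatively, and perhaps more cleanly, I would dualize: by adjunction $\eta^R$ is an equivalence on $\mathcal{H}$ iff $\eta$ exhibits $U_h^L$ as landing in $\mathcal{H}$ and agreeing there with a left adjoint to $\on{ev}_e$, reducing (1) to (3).

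\textbf{Part (3).} When $e$ is external it has a unique halfedge, so the exceptional morphism $\sigma(\tau(h))\xrightarrow{\tau(h)}e$ of part (2) does not exist; therefore by (2), $U_h^L(X)$ sends \emph{every} non-identity morphism to a $p$-coCartesian edge, i.e.\ $U_h^L(X)$ is a coCartesian section, so $U_h^L$ factors through $\mathcal{H}(\rgraph,\mathcal{F})$. It remains to identify the corestriction $U_h^L\colon\mathcal{F}(e)\to\mathcal{H}(\rgraph,\mathcal{F})$ with a left adjoint of $\on{ev}_e\colon\mathcal{H}(\rgraph,\mathcal{F})\to\mathcal{F}(e)$. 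For this I would exhibit unit and counit. The unit is $\on{id}_{\mathcal{F}(e)}\to\on{ev}_e\circ U_h^L$, which is an equivalence because $\eta\colon\iota_e\hookrightarrow U_h^L$ restricts to an equivalence at $e$ and $\on{ev}_e\circ\iota_e\simeq\on{id}$. The counit is $U_h^L\circ\on{ev}_e\to\on{id}_{\mathcal{H}(\rgraph,\mathcal{F})}$, which I get as the adjoint of $\eta$ restricted to $\mathcal{H}$: since on $\mathcal{H}$ we have $U_h\simeq\on{ev}_e$ by (1), the adjunction $U_h^L\dashv U_h$ from \Cref{constr:Uh} restricts to an adjunction $U_h^L\dashv\on{ev}_e$ on global sections, giving the counit and the triangle identities for free. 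Thus $U_h^L|_{\mathcal{H}}$ is left adjoint to $\on{ev}_e$, which is by definition $\indL_e$. I expect parts (1) and (3) to be short once (2) — and in particular the bookkeeping of the coequalizer $Z_m$ and the $e_m=e_1$ degeneration — is done carefully.
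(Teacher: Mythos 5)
Your proposal follows essentially the same route as the paper's proof: part (2) by computing $U_h^L(X)(\alpha)$ pointwise as a colimit of the diagram $\underline{U}_h^L(X)(\alpha)$ and checking coCartesian-ness from the local schober axioms, with the single possible failure at the morphism $\sigma(\tau(h))\xrightarrow{\tau(h)}e$; part (1) by noting that on a coCartesian section the left-hand legs of \eqref{eq:evLdiag2} become equivalences, so the limit defining $U_h$ collapses onto $\on{ev}_e$; and part (3) by combining (1) and (2) to restrict the adjunction $U_h^L\dashv U_h$ along the full subcategory $\mathcal{H}(\rgraph,\mathcal{F})$. Two small corrections to your write-up, neither of which affects the main argument: in part (3) the unit $\on{id}_{\mathcal{F}(e)}\to\on{ev}_e\circ U_h^L$ is in general only a split inclusion rather than an equivalence, since the case $e_m=e_1$ is permitted (the clockwise trajectory may return to the external edge $e$, as in the annulus example), in which case $\on{ev}_e\circ U_h^L\simeq \on{id}\oplus\mathcal{F}^\rightarrow(c)$; this is harmless because your final argument (restricting $U_h^L\dashv U_h$ using (1) and fullness of $\mathcal{H}(\rgraph,\mathcal{F})\subset\mathcal{L}(\rgraph,\mathcal{F})$) does not use it. Also, the suggested alternative of deducing (1) from (3) cannot work: (1) is asserted for every halfedge, including those of internal edges, whereas (3) applies only to external edges, and moreover (3) is itself proved using (1), so that reduction would be circular.
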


\begin{proof}
It follows from the characterization of coCartesian sections that, for all $1\leq i \leq m$, the natural transformation $\mathcal{F}(v_i\to e_i)\circ \on{ev}_{v_i}\to \on{ev}_{e_i}$ appearing on the left in \eqref{eq:evLdiag2} restricts on $\mathcal{H}(\rgraph,\mathcal{F})$ to a natural equivalence. Inspecting the diagram $\underline{U}_h$, and using that equivalences are preserved under pullback, we thus see that the inclusion $\eta^R$ of $\on{ev}_e$ into the limit $U_h$ restricts on $\mathcal{H}(\rgraph,\mathcal{F})$ to an equivalence, showing part (1).

For part (2), we first note that colimits in the $\infty$-category $\mathcal{L}(\rgraph,\mathcal{F})$ of sections of $p\colon \Gamma(\mathcal{F})\to \on{Exit}(\rgraph)$ are computed pointwise in $\on{Exit}(\rgraph)$, which follows from \cite[5.1.2.3, 4.3.1.10]{HTT} and the fact that a $p$-relative colimit in a fiber of $p$ is exactly a colimit in the fiber, see \cite[4.3.1.5.(2)]{HTT} for $\mathcal{E}=\ast$. Thus, $U^L_h(X)(\alpha)$ arises as the colimit of $\underline{U}^L(X)(\alpha)$. For $\alpha\colon \sigma(\tau(h))\xrightarrow{\tau(h)}e$, the morphism $U^L_h(X)(\alpha)$ contains a summand of the form $0\to X$, and is hence not coCartesian. Now suppose that $\alpha$ is a different non-degenerate $1$-simplex. The diagram $\underline{U}^L(X)(\alpha)$ vanishes for most $(l,i)\in Z^{\on{op}}$ with $l\in \Lambda^2_0$: it either vanishes for all elements $Z^{\on{op}}$ if $\alpha$ does not lie on the trajectory $\cwt_h$ or it vanishes for all except three or six elements of $Z^{\on{op}}$ if $\alpha$ lies on the trajectory $\cwt_h$. In the latter case, inspecting the diagram and using that the sections in the image of a functor $\iota_{v_i}$ are $p$-coCartesian near $v_i$, one readily sees that $U^L_h(X)(\alpha)$ is also $p$-coCartesian.

Part (2) shows that $U_h^L$ factors through $\mathcal{H}(\rgraph,\mathcal{F})\subset \mathcal{L}(\rgraph,\mathcal{F})$ in the case that $e$ is an external edge. It thus restricts on $\mathcal{H}(\rgraph,\mathcal{F})$ to the left adjoint of $U_h\simeq \on{ev}_e$, showing part (3).
\end{proof}

\begin{lemma}\label{lem:evUh}
Let $h$ be a halfedge of $\rgraph$, contained in the edge $e$.
\begin{enumerate}[(1)]
\item Let $f$ be an edge of $\rgraph$. Then 
\[ \on{ev}_f\circ U_h^L \simeq \bigoplus_{c\colon h\xrightarrow{\circlearrowright} f} \mathcal{F}^\rightarrow(c)\colon \mathcal{F}(e)\longrightarrow \mathcal{F}(f)\]
decomposes into the sum of the transport equivalences along clockwise trajectories $c\colon h\xrightarrow{\circlearrowright} f$, see \Cref{def:trajectory}. 
\item Let $v$ be a vertex of $\rgraph$. Then 
\[ \on{ev}_v\circ U_h^L\simeq \bigoplus_{h'\in H(v)} \bigoplus_{c\colon h\xrightarrow{\circlearrowright} h'} \mathcal{F}(h')^L\circ \mathcal{F}^\rightarrow(c)\colon \mathcal{F}(e)\longrightarrow \mathcal{F}(v)\]
decomposes into the sum of the transport equivalences along clockwise trajectories $c\colon h\xrightarrow{\circlearrowright} h'$ composed with $\mathcal{F}(v\xrightarrow{h'}e(h'))^L\colon \mathcal{F}(e')\to\mathcal{F}(v)$.
\item Suppose that the edge $e$ is external. The unit $\on{id}_{\mathcal{F}(e)}\to U_hU_h^L$ is a split inclusion.
\end{enumerate}
\end{lemma}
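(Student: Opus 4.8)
The plan is to establish the three evaluation formulas by unwinding Construction \ref{constr:Uh} and computing the colimit $U_h^L = \on{colim}_{Z^{\on{op}}_m} \underline{U}_h^L$ pointwise, as justified in the proof of Lemma \ref{lem:Uhproperties}(2). For part (1), I would fix an edge $f$ and evaluate the diagram $\underline{U}_h^L(X)$ at $f$. The simplicial set $Z_m$ is a chain of spans indexed by $1\le i<m$, and at the node $\Lambda^2_0\times\{i\}$ the diagram \eqref{eq:evLdiag1} composed with the transport \eqref{eq:transport} evaluates at $f$ to: $\iota_{e_i}$ contributes $\mathcal{F}^\rightarrow(\cwt_i)$ when $f=e_i$, the apex $\iota_{v_i}\circ\mathcal{F}(v_i\to e_i)^L$ evaluates to $0$ at an edge $f$ unless $f$ is among the edges incident to $v_i$ (the $p$-coCartesian sections in the image of $\iota_{v_i}$ are supported on $v_i$ and its edges), and the third vertex contributes $\mathcal{F}^\rightarrow(\cwt_{i+1})$ when $f=e_{i+1}$. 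The key combinatorial observation is that each clockwise trajectory $c\colon h\xrightarrow{\circlearrowright} f$ corresponds to exactly one index $i$ at which the trajectory $\cwt_h$ passes by $f$ (with the convention in Definition \ref{def:trajectory} that the constant trajectory is allowed when $h\in f$, matching $i=1$), and that the transport along the portion of $\cwt_h$ from $h$ to that passage equals $\mathcal{F}^\rightarrow(c)$. Taking the colimit over $Z^{\on{op}}_m$, the spans glue the consecutive copies of $\mathcal{F}^\rightarrow(\cwt_i)$; the surviving homotopy colimit of this zig-zag of equivalences and zero objects is precisely the direct sum $\bigoplus_{c\colon h\xrightarrow{\circlearrowright} f}\mathcal{F}^\rightarrow(c)$. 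Here one uses that a colimit of a diagram which is a disjoint union (after the pointwise evaluation) of "spans with one leg an equivalence" computes a coproduct.

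For part (2), I would apply the same pointwise analysis at a vertex $v$. Now the relevant contributions come from the apex terms $\iota_{v_i}\circ\mathcal{F}(v_i\to e_i)^L$: these evaluate non-trivially at $v$ precisely when $v_i=v$, i.e. when $\cwt_h$ passes through $v$, and in that case $\on{ev}_v(\iota_{v_i}\circ\mathcal{F}(v_i\to e_i)^L)\simeq \mathcal{F}(v\to e_i)^L$ applied to the transport. Unwinding, each passage of $\cwt_h$ through $v$ at the halfedge $h'$ (following the edge $e_i$) contributes $\mathcal{F}(h')^L\circ\mathcal{F}^\rightarrow(c)$ for the corresponding clockwise trajectory $c\colon h\xrightarrow{\circlearrowright}h'$; by Definition \ref{def:trajectory}(3), these $h'$ are exactly indexed by $\bigcup_{h'\in H(v)}\{c\colon h\xrightarrow{\circlearrowright}h'\}$. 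Again the colimit over $Z^{\on{op}}_m$ turns the zig-zag into the asserted direct sum.

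For part (3), assume $e$ is external. By Lemma \ref{lem:Uhproperties}(1) and (3), $U_h \simeq \on{ev}_e$ on $\mathcal{H}(\rgraph,\mathcal{F})$ and $U_h^L = \indL_e$ factors through $\mathcal{H}(\rgraph,\mathcal{F})$, so $U_h U_h^L \simeq \on{ev}_e\circ\indL_e$. By part (1) applied with $f=e$, this decomposes as $\bigoplus_{c\colon h\xrightarrow{\circlearrowright}e}\mathcal{F}^\rightarrow(c)$, and since $e$ is external the constant trajectory (with $h\in e$) is one of the summands and contributes $\on{id}_{\mathcal{F}(e)}$. The unit $\on{id}_{\mathcal{F}(e)}\to U_hU_h^L$ is the inclusion of this distinguished summand, hence a split inclusion; I would verify this by tracing the unit through Construction \ref{constr:Uh}, where by construction $\eta\colon\iota_e\hookrightarrow U_h^L$ restricts to an equivalence at $e$, so that the induced map $\on{ev}_e\to U_h U_h^L$ (equivalently the unit after identifying $U_h\simeq\on{ev}_e$) picks out the $i=1$, constant-trajectory summand.

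The main obstacle I expect is part (1): carefully identifying the homotopy colimit of the zig-zag diagram $\underline{U}_h^L$ evaluated at $f$ with a direct sum, and in particular checking that the combinatorics of the indexing set $Z_m$ matches the count of clockwise trajectories $c\colon h\xrightarrow{\circlearrowright} f$ exactly (including boundary/degenerate cases such as $e_m=e_1$, the constant trajectory when $h\in f$, and double passages). Once the pointwise colimit is correctly identified in part (1), parts (2) and (3) follow by the same mechanism together with the already-established Lemma \ref{lem:Uhproperties}.
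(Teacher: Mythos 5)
Your treatment of parts (1) and (2) is essentially the paper's: the paper also proves these by inspecting \Cref{constr:Uh} and computing the colimit of $\underline{U}_h^L$ pointwise in $\on{Exit}(\rgraph)$, and your elaboration of the zig-zag analysis is consistent with that (one small caveat: to get exactly one summand per passage you need the sharper vanishing $\mathcal{F}(v_i\to f)\circ\mathcal{F}(v_i\to e_i)^L\simeq 0$ for $f$ incident to $v_i$ with $f\neq e_i,e_{i+1}$, not merely that sections in the image of $\iota_{v_i}$ are supported on $v_i$ and its edges).

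Part (3), however, has a genuine gap. From \Cref{lem:Uhproperties} and part (1) you only get that $U_hU_h^L\simeq \on{ev}_e\circ\indL_e$ contains $\on{id}_{\mathcal{F}(e)}$ as a direct summand; this by itself does not make the unit a split inclusion — a priori the unit could fail to hit that summand isomorphically. The step you defer to ``tracing the unit through \Cref{constr:Uh}'' is precisely the nontrivial content: the adjunction $U_h^L\dashv U_h$ was obtained abstractly (pointwise passage to right adjoints, then colimit/limit), so its unit has no ready-made formula, and the map $\on{id}_{\mathcal{F}(e)}\simeq\on{ev}_e\iota_e\to\on{ev}_eU_h^L$ induced by $\eta$ is a different, explicitly constructed transformation whose identification with the adjunction unit must be proved, not read off ``by construction.'' The paper does exactly this: it assembles a levelwise transformation $\underline{u}\colon z\mapsto(\on{id}_{\mathcal{F}(e)}\to\on{ev}_e\underline{U}_h^L(z))$ (the unit equivalences over the first span, zero elsewhere), takes its colimit to obtain the candidate split inclusion $u$, and then verifies that $u$ is the unit by showing that for every $X\in\mathcal{F}(e)$ and $Y\in\mathcal{H}(\rgraph,\mathcal{F})$ the induced map $\on{Map}(U_h^L(X),Y)\to\on{Map}(X,\on{ev}_e(Y))$ is an equivalence; the key input is that the left-hand legs of \eqref{eq:evLdiag2} become equivalences on global sections, so the limit of mapping spaces over $Z_{m(h)}$ collapses to its value at a single vertex, where $\underline{u}$ is the unit of $\iota_e\dashv\on{ev}_e$. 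Without this (or an equivalent) universal-property verification, your proof of (3) asserts its conclusion; note the paper explicitly warns that for a general spherical adjunction such units/counits do not split, so the splitting cannot follow from formal adjunction facts alone.
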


\begin{proof}
Parts (1) and (2) follow readily from an inspection of the construction of $U_h^L$, using that colimits of sections of $p\colon \Gamma(\mathcal{F})\to \on{Exit}(\rgraph)$ are computed pointwise in $\on{Exit}(\rgraph)$.

We turn to part (3). Let $h\in e$ be the halfedge of $e$. 
Consider the apparent diagram of natural transformations $\underline{u}\colon Z^{\on{op}}_{m(h)}\to \on{Fun}(\Delta^1\times \mathcal{F}(e),\mathcal{F}(e))$, of the form 
\[ z\mapsto (\on{id}_{\mathcal{F}(e)}\to \on{ev}_eU_h^L(z))\,,\] 
which to $(0,1)$ and $(0,0)$ assigns the equivalences $\on{id}_{\mathcal{F}(e)}\simeq \on{ev}_e\circ\iota_e$ and $\on{id}_{\mathcal{F}(e)}\simeq \on{ev}_e\circ \iota_{v_1}\circ \mathcal{F}(v_1\to e)^L$ and assigns to all other $z\in Z^{\on{op}}_{m(h)}$ the zero natural transformation. Passing to the colimit over $Z^{\on{op}}_{m(h)}$ yields the split inclusion $u\colon \on{id}_{\mathcal{F}(e)}\to \on{ev}_e\circ U_h^L\simeq U_h\circ U_h^L$, which we want to show to be the unit of the adjunction. For this, we next verify that the natural transformation induces the relevant equivalence between mapping spaces. 

Given a $1$-simplex $\alpha\colon z=(i,0)\to z'=(i,1)$ in $Z_{m(h)}$, $X\in \mathcal{F}(e)$ and $Y\in \mathcal{H}(\rgraph,\mathcal{F})$, consider the commutative diagram:
\begin{equation}\label{eq:diagequiv}
\begin{tikzcd}
{ \on{Map}_{\mathcal{L}(\rgraph,\mathcal{F})}(\underline{U}_h^L(z)(X),Y)} \arrow[d, "\simeq"] \arrow[r]   & { \on{Map}_{\mathcal{L}(\rgraph,\mathcal{F})}(\underline{U}_h^L(z')(X),Y)} \arrow[d, "\simeq"] \\
{ \on{Map}_{\mathcal{F}(e)}(X,\underline{\on{ev}}_{e}(z)(Y))} \arrow[r, "\simeq"] & { \on{Map}_{\mathcal{F}(e)}(X,\underline{\on{ev}}_{e}(z')(Y))}         
\end{tikzcd}
\end{equation}
The fact that the lower horizontal morphism is an equivalence amounts to the fact that the left vertical morphism in \eqref{eq:evLdiag2} is an equivalence on the global section $Y$. This shows that the upper morphism is an equivalence as well. Consider the diagram $\underline{M}\colon Z_{m(h)}\to \on{Fun}(\Delta^1,\mathcal{S})$, assigning to $z\in Z_{m(h)}$ the morphism of spaces
\[ \on{Map}_{\mathcal{L}(\rgraph,\mathcal{F})}(\underline{U}_h^L(z)(X),Y)\xrightarrow{\on{ev}_e} \on{Map}_{\mathcal{F}(e)}(\on{ev}_e\circ \underline{U}_h^L(z)(X),\on{ev}_e(Y))\xrightarrow{(\mhyphen)\circ \underline{u}_z} \on{Map}_{\mathcal{F}(e)}(X,\on{ev}_e(Y))\,.\]
The fact that both horizontal morphisms in \eqref{eq:diagequiv} are equivalences shows that $\underline{M}$ maps each morphism $(i,0)\to (i,1)$ in $Z_{m(h)}$ to an equivalence. Thus the limit of $\underline{M}$ is equivalent to its value at $(0,1)\in Z_{m(h)}$, where $\underline{u}$ evaluates to the unit of $\iota_e\dashv \on{ev}_e$, and $\underline{M}$ is hence an equivalence.
\end{proof}

\begin{remark}\label{rem:infiniteinduction}
If we drop the assumption that each boundary component of ${\bf S}$ contains a marked point, some of the clockwise trajectories will be of infinite length. Thus $U_h^L$ must in this case be built by gluing possibly infinitely many corresponding local functors (supposing that sufficient colimits exist). \Cref{lem:Uhproperties,lem:evUh} then generalize accordingly.
\end{remark}

We end this section with a discussion of the perversity of the boundary restriction functors. Recall that $\rgraph^\partial_1$ denotes the set of external edges of $\rgraph$. Given a boundary component $B$ of ${\bf S}$, we let $\rgraph_1^{\partial,B}$ be the set of external edges ending on $B$.

\begin{theorem}\label{thm:globalsphericalness}
Let ${\bf S}$ be a marked surface with spanning ribbon graph $\rgraph$ (possibly with $1$-valent vertices) and let $\mathcal{F}$ be a $\rgraph$-parametrized perverse schober. Consider a boundary component $B$ of ${\bf S}$ containing $n$ marked points. The functors 
\[ (\on{ev}_e\colon \glsec(\rgraph,\mathcal{F})\longleftrightarrow \mathcal{F}(e) \noloc \indR_e)_{e\in  \rgraph_1^{\partial,B}}\]
define a perverse schober on the $n$-spider. 
\end{theorem}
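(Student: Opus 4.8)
The plan is to verify the axioms of \Cref{def:schobernspider} for the collection of adjunctions $(\on{ev}_e\dashv \indR_e)_{e\in \rgraph_1^{\partial,B}}$, after first fixing the correct cyclic ordering of the external edges on $B$. By \Cref{lem:dualschober} it suffices to treat the dual statement about the left inductions $(\indL_e \dashv \on{ev}_e)$ on $\mathcal{F}^{\on{op}}$, i.e. to check the conditions (2).(a)--(e) for the family $(\indL_e\colon \mathcal{F}(e)\to \glsec(\rgraph,\mathcal{F}))$; in fact by \Cref{lem:schobern-gonfromsphadj} it is enough to check (a),(b),(c) together with the sphericalness of the product functor $\prod_e \on{ev}_e$, which reduces the work to three verifications of vanishing/equivalence plus one global sphericalness statement. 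Throughout I would use the identification $\indL_e \simeq U_h^L$ from \Cref{lem:Uhproperties}(3), where $h$ is the unique halfedge of the external edge $e$, so that all the relevant composites can be computed via \Cref{lem:evUh} in terms of counts of clockwise trajectories $c\colon h\xrightarrow{\circlearrowright} h'$ and the transport equivalences $\mathcal{F}^\rightarrow(c)$.

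The three algebraic conditions then become combinatorial statements about clockwise trajectories between external edges of $B$. Order the external edges $e_1,\dots,e_n$ on $B$ so that $e_{j+1}$ is the clockwise successor of $e_j$ along $B$ in the sense of \Cref{constr:Uh} (the edge reached by following $\cwt_{h_j}$ out to $\partial{\bf S}$). For condition (a), $\on{ev}_{e_j}\circ \indL_{e_j} = U_{h_j}U_{h_j}^L$ must be the identity; this is exactly \Cref{lem:evUh}(3), which says the unit is a split inclusion, combined with the fact that the only clockwise trajectory from $h_j$ ending near $e_j$ is the constant one — here I must argue, using that $\rgraph$ has the property that each boundary component has a marked point and hence the trajectory $\cwt_{h_j}$ genuinely leaves $e_j$, that there is no returning clockwise trajectory, so $\on{ev}_{e_j}\circ U_{h_j}^L\simeq \on{id}$. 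For condition (b), $\on{ev}_{e_j}\circ \indL_{e_{j+1}} = \on{ev}_{e_j}\circ U_{h_{j+1}}^L$ should be an equivalence: by \Cref{lem:evUh}(1) this is $\bigoplus_{c\colon h_{j+1}\xrightarrow{\circlearrowright} e_j}\mathcal{F}^\rightarrow(c)$, and the point is that there is exactly one such clockwise trajectory, namely the minimal one realizing that $e_j$ is the clockwise successor step after $e_{j+1}$ would not be right — rather I want $e_j$ to be such that $\cwt_{h_{j+1}}$ hits $e_j$ exactly once and then continues; I would check that the first external edge met by $\cwt_{h_{j+1}}$ is precisely $e_j$ and no other $e_i$ with $i\neq j$ is met, giving a single transport equivalence. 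For condition (c), $\on{ev}_{e_i}\circ \indL_{e_{j+1}}\simeq 0$ for $i\neq j,j+1$: again by \Cref{lem:evUh}(1) this is the (empty) sum over clockwise trajectories $h_{j+1}\xrightarrow{\circlearrowright} e_i$, and the claim is that $\cwt_{h_{j+1}}$, after meeting $e_{j+1}$ itself and then $e_j$, terminates on $\partial{\bf S}$ without meeting any further external edge of $B$ — this is a consequence of the clockwise trajectory hitting $\partial {\bf S}$ right after passing the successor edge.

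For the sphericalness of $\prod_{e\in\rgraph_1^{\partial,B}}\on{ev}_e\colon \glsec(\rgraph,\mathcal{F})\to \prod_e \mathcal{F}(e)$, I would either invoke \Cref{lem:schobern-gonfromsphadj} in reverse — once (a),(b),(c) are known and one knows the functor admits the requisite iterated adjoints — or argue directly that the cotwist of the adjunction $\prod_e\on{ev}_e\dashv \prod_e\indR_e$ is an autoequivalence by identifying it, via the trajectory description, with a combinatorial rotation-type operator on $\prod_e\mathcal{F}(e)$ built from the transport equivalences $\mathcal{F}^\rightarrow(\cwt_{h_j})$; the twist on $\glsec(\rgraph,\mathcal{F})$ should likewise be controlled using \Cref{lem:Uhproperties}(1) and the pointwise-colimit description of $U_h^L$. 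The main obstacle, I expect, is this last point together with pinning down the combinatorics of the clockwise trajectories between external edges on a fixed boundary component $B$ cleanly — in particular ruling out that $\cwt_{h_{j+1}}$ wraps around and meets a second external edge of $B$, and correctly handling the degenerate case (illustrated in the figure in \Cref{constr:Uh}) where the clockwise successor edge coincides with the starting edge, which is exactly where $n=1$ and one is checking the $1$-spider (spherical adjunction) axiom instead. The combinatorial conditions (a)-(c) are essentially bookkeeping once the trajectory picture is set up, but assembling them into the sphericalness of the product functor, rather than checking it by hand, is the step that requires the most care.
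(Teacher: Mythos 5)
Your verification of conditions (2).(a),(b),(c) via the identification $\indL_e\simeq U_h^L$ and the trajectory counts of \Cref{lem:evUh} is exactly how the paper proceeds (this is \Cref{lem:boundaryfunctor}), but the rest of the proposal has genuine gaps. The main one is how you handle the remaining conditions of \Cref{def:schobernspider}. You never address condition (2).(e) at all: your first option ("invoke \Cref{lem:schobern-gonfromsphadj} in reverse once (a),(b),(c) are known and adjoints exist") does not work, since (a),(b),(c) together with condition (d) are strictly weaker than the definition — (e) is an extra hypothesis, and it is precisely the condition the lemma trades against sphericalness. Your second option, proving sphericalness of $\prod_{e}\on{ev}_e$ directly by "controlling" the twist on $\glsec(\rgraph,\mathcal{F})$, is left entirely vague and is the genuinely hard step; note that in the paper this sphericalness (\Cref{cor:globalsphadj}) is a \emph{consequence} of the theorem, not an input. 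The paper instead verifies the definition itself: condition (d) by passing to $\on{Ind}$-completions, where $\indL_e,\indR_e$ are finite colimits of limit- and colimit-preserving functors and the adjoint functor theorem applies (then descending to the small case using \Cref{lem:schobern-gonfromsphadj} and restriction of spherical adjunctions), and condition (e) by the observation that for the clockwise successor edge $e'$ of $e$ on $B$ one has $\indR_e(X)\simeq \indL_{e'}(Y)$ with $Y=\on{ev}_{e'}\indR_e(X)$, which identifies the relevant images and hence the relevant fibers. None of this appears in your proposal.

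Two further gaps: first, the case $n=1$ (a boundary component with a single marked point) is only flagged as "degenerate" but not proved; here the claim is that $\on{ev}_e\dashv\indR_e$ is a \emph{spherical adjunction}, and the paper handles it by adding an auxiliary marked point and external edge, lifting $\mathcal{F}$ to a schober $\mathcal{F}'$ with $\mathcal{F}(v)\simeq\on{fib}\,\mathcal{F}'(v\to e')$, applying the $n\geq 2$ case, and then citing a cutting result from \cite{CHQ23}. Second, the theorem allows $\rgraph$ to have $1$-valent vertices, while all the results you use ($\indL_e\simeq U_h^L$, \Cref{lem:Uhproperties}, \Cref{lem:evUh}) are established only for spanning graphs without $1$-valent vertices; the paper reduces to that case via ribbon graph contractions, a step your proposal omits.
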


\begin{proof}
Suppose first that $n\geq 2$. Then ${\bf S}$ admits a spanning graph without $1$-valent vertices. Using contractions of ribbon graphs, see \cite[Lemma 3.16]{CHQ23}, we may thus equivalently prove the theorem in the case that $\rgraph$ has no $1$-valent vertices.  Conditions (2) (a),(b),(c) of \Cref{def:schobernspider} are proven in \Cref{lem:boundaryfunctor} below. Passing to $\on{Ind}$-completions, we first assume that $\mathcal{F}$ takes values in presentable stable $\infty$-categories. Condition (d) then follows from the description of the functors $\indL_e,\indR_e$ from part (4) of \Cref{lem:Uhproperties}: they are expressed as finite colimits of functors that preserve all limits and colimits and thus themselves preserve all limits and colimits, and thus admit adjoints by the adjoint functor theorem. Condition (e) of \Cref{def:schobernspider} is equivalent to the assertion that the closures under colimits of the image of $\on{ladj}(F_i)$ and $G_{i+1}$ coincide. This follows from inspecting \Cref{constr:Uh}: for an external edge $e$ ending on $B$ with $e'$ the clockwise successor external edge on $B$ and any object $X\in \mathcal{F}(e)$, there exists an essentially unique object $Y=\on{ev}_{e'}\indR_e(X)\in \mathcal{F}(e')$ with $\indR_e(X)\simeq \indL_{e'}(Y)$. The statement in the case that $\mathcal{F}$ takes values in small stable $\infty$-categories then follows from the presentable case using \Cref{lem:schobern-gonfromsphadj}, that the adjunction $\on{ev}_e\colon \glsec(\rgraph,\on{Ind}\mathcal{F})\leftrightarrow \on{Ind}\mathcal{F}(e)\noloc \indR_e$ restricts to the corresponding adjunction of $\mathcal{F}$ and that a restriction of a spherical adjunction along full subcategories remains a spherical adjunction.

Consider now the case $n=1$. We add an additional marked point to $B$ and a corresponding external edge $e'$ to $\rgraph$, yielding $\rgraph'$. Let $v$ be the vertex incident to $e'$. We can lift $\mathcal{F}$ to a $\rgraph'$-parametrized perverse schober $\mathcal{F}'$ such that $\mathcal{F}(v)\simeq \on{fib}\mathcal{F}'(v\to e')$. By the above, the Theorem holds for $\mathcal{F}'$ and it follows for $\mathcal{F}$ by \cite[Prop.~3.6]{CHQ23}.
\end{proof}

\begin{corollary}[{$\!\!$\cite[Thm.~5.2.5]{CDW23}}]\label{cor:globalsphadj}
Let ${\bf S}$ be a marked surface with spanning ribbon graph $\rgraph$ (possibly with $1$-valent vertices) and let $\mathcal{F}$ be a $\rgraph$-parametrized perverse schober. The adjunction 
\[ \prod_{e\in \rgraph_1^\partial} \on{ev}_e\colon \glsec(\rgraph,\mathcal{F})\longleftrightarrow \prod_{e\in \rgraph_1^\partial} \mathcal{F}(e)\noloc  \prod_{e\in \rgraph_1^\partial}\indR_e\] 
is spherical. 

In particular, the $\infty$-categorical exact structure on $\mathcal{H}(\rgraph,\mathcal{F})$ induced by $\prod_{e\in \rgraph_1^\partial} \on{ev}_e$ is Frobenius by \Cref{prop:Frob2CYexact}.
\end{corollary}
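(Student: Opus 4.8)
The plan is to deduce Corollary~\ref{cor:globalsphadj} directly from Theorem~\ref{thm:globalsphericalness} together with Lemma~\ref{lem:schobern-gonfromsphadj}. First I would dispose of the degenerate cases: if $\rgraph_1^\partial$ is empty the statement is vacuous, and if $\partial{\bf S}$ has a single boundary component then $\prod_{e\in\rgraph_1^\partial}\on{ev}_e$ is literally the product $\prod_{e\in\rgraph_1^{\partial,B}}\on{ev}_e$ of Theorem~\ref{thm:globalsphericalness}, whose sphericalness (for $n\geq 2$) follows by Lemma~\ref{lem:schobern-gonfromsphadj} from the already-established conditions (2).(a),(b),(c) on the $n$-spider; the case $n=1$ is handled exactly as in the proof of Theorem~\ref{thm:globalsphericalness}, by adding an auxiliary marked point and applying \cite[Prop.~3.6]{CHQ23}. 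So the only real content is the passage from one boundary component to several.

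Next I would treat the general case by grouping the external edges according to their boundary components. Write $\partial{\bf S}=B_1\sqcup\cdots\sqcup B_r$ and set $\glsec=\glsec(\rgraph,\mathcal{F})$. For each $j$, Theorem~\ref{thm:globalsphericalness} gives that $(\on{ev}_e\colon \glsec\leftrightarrow\mathcal{F}(e)\noloc\indR_e)_{e\in\rgraph_1^{\partial,B_j}}$ is a perverse schober on the $n_j$-spider; by Lemma~\ref{lem:schobern-gonfromsphadj} this is equivalent to conditions (2).(a),(b),(c) plus sphericalness of $\prod_{e\in\rgraph_1^{\partial,B_j}}\on{ev}_e\colon\glsec\to\prod_{e\in\rgraph_1^{\partial,B_j}}\mathcal{F}(e)$. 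I would then invoke the standard fact (again \cite[Cor.~2.5.14]{DKSS21}, or \cite[Lem.~3.8]{Chr22}) that a functor $\phi\colon\glsec\to\prod_e\mathcal{F}(e)$ is spherical when each component adjunction and the cotwist are controlled as in the perverse-schober axioms; the key observation is that the cotwist of $\prod_{e\in\rgraph_1^\partial}\on{ev}_e$ is the product over $j$ of the cotwists of $\prod_{e\in\rgraph_1^{\partial,B_j}}\on{ev}_e$, each of which is an autoequivalence, hence the product cotwist is an autoequivalence; dually for the twist, whose fiber description globalizes since the units assemble componentwise. Concretely, one checks that $\big(\prod_{e}\on{ev}_e\big)\circ\big(\prod_e\indR_e\big)\simeq\prod_{j}\big(\prod_{e\in\rgraph_1^{\partial,B_j}}\on{ev}_e\big)\circ\big(\prod_{e\in\rgraph_1^{\partial,B_j}}\indR_e\big)$ because for $e\in B_j$, $e'\in B_{j'}$ with $j\neq j'$ no clockwise trajectory $\cwt_e$ can reach $e'$ without passing through another marked point on $B_j$ first — i.e.\ $\on{ev}_{e'}\indR_e\simeq 0$ by the trajectory description (Lemma~\ref{lem:evUh}(1)); hence the counit, twist, and cotwist of the total adjunction all decompose as products over $j$, and a product of spherical adjunctions is spherical.

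The final sentence of the Corollary is then immediate: by \Cref{prop:Frob2CYexact}, the exact structure on $\mathcal{H}(\rgraph,\mathcal{F})$ induced by the spherical functor $\prod_{e\in\rgraph_1^\partial}\on{ev}_e$ is Frobenius.

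I expect the main obstacle to be the bookkeeping needed to make the ``product of perverse schobers is spherical, and the pieces don't interact across boundary components'' argument precise. The non-interaction across components is geometrically obvious from the trajectory picture but needs to be phrased carefully at the level of twist/cotwist functors; one must verify that the global cotwist $C=\fib\big(\prod_e\on{ev}_e\circ\prod_e\indR_e\to\on{id}\big)$ really is the product of the per-component cotwists and not merely equivalent to it after some identification, so that the autoequivalence property is inherited. Once this block-diagonal structure is in hand, invoking Lemma~\ref{lem:schobern-gonfromsphadj} componentwise and assembling is routine; the small-versus-presentable reduction (pass to $\on{Ind}$-completions, use that a restriction of a spherical adjunction to extension-closed full subcategories stays spherical) is handled verbatim as in the proof of Theorem~\ref{thm:globalsphericalness}.
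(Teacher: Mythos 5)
Your proposal is correct and follows essentially the same route as the paper: per-boundary-component sphericalness from \Cref{thm:globalsphericalness} together with the spider-to-spherical implication (the paper cites \cite[Lem.~3.8]{Chr22}, which is the forward direction of \Cref{lem:schobern-gonfromsphadj}), the cross-component vanishing $\on{ev}_f\circ\indR_e\simeq 0$ from the trajectory description in \Cref{lem:evUh}, and the resulting block-diagonal decomposition of the twist and cotwist into products of equivalences. The extra care you flag about identifying the global cotwist with the product of per-component cotwists is exactly the one-line observation the paper makes, so no further work is needed.
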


\begin{proof}
Using \Cref{thm:globalsphericalness} and \cite[Lem.~3.8]{Chr22}, we find that for any boundary component $B$ of ${\bf S}$, the adjunction 
\begin{equation}\label{eq:sphadjB}
\prod_{e\in \rgraph_1^{\partial,B}} \on{ev}_e\colon \glsec(\rgraph,\mathcal{F})\longleftrightarrow \prod_{e\in \rgraph_1^{\partial,B}} \mathcal{F}(e)\noloc \indR_e\end{equation}
is spherical. If $B\not= B'$ are two different boundary components of ${\bf S}$ and $e\in \rgraph_{1}^{\partial, B}$ and $f\in \rgraph_1^{\partial, B'}$, we have by \Cref{lem:evUh} that $\on{ev}_f\circ \indR_e\simeq 0$. Hence the twist and cotwist functors of the adjunction $ \prod_{e\in \rgraph_1^\partial} \on{ev}_e\dashv \prod_{e\in \rgraph_1^\partial} \indR_e$ are equivalent to the commuting products of the twist and cotwist functors of the adjunctions \eqref{eq:sphadjB}, over all boundary components $B$, and therefore equivalences.
\end{proof}

\begin{lemma}\label{lem:boundaryfunctor}
Suppose that $e$ is a boundary edge intersecting a boundary component $B$ of ${\bf S}$ containing at least two marked points. 
\begin{enumerate}[(1)]
\item The functors $\indL_e,\indR_e\colon \mathcal{F}(e)\to \mathcal{H}(\rgraph,\mathcal{F})$ are fully faithful.
\item If $f$ is the successor edge of $e$ in the counterclockwise ordering of external edges intersecting $B$, then $\on{ev}_{f}\circ \indL_e$ is an equivalence. If $f'\neq f,e$ is a different external edge, then $\on{ev}_f\circ \indL_e\simeq 0$.
\item If $f$ is the successor edge of $e$ in the clockwise ordering of external edges intersecting $B$, then $\on{ev}_{f}\circ \indR_e$ is an equivalence. If $f'\neq f,e$ is a different external edge, then $\on{ev}_f\circ \indR_e\simeq 0$.
\end{enumerate}
\end{lemma}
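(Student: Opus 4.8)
The plan is to deduce everything from the explicit descriptions of the functor $U_h^L$ obtained in \Cref{lem:Uhproperties} and \Cref{lem:evUh}, combined with the combinatorics of the clockwise trajectory $\cwt_h$ recorded in \Cref{constr:Uh}. By \Cref{lem:dualschober} and the resulting duality between left and right induction (passing from $\mathcal{F}$ on $\rgraph$ to $\mathcal{F}^{\op}$ on $\rgraph^\vee$, which exchanges clockwise and counterclockwise trajectories), it suffices to prove parts (1) and (2) for $\indL_e$; part (3) then follows by applying the first two parts to $\mathcal{F}^{\op}$. So I would fix the unique halfedge $h$ of the external edge $e$; by \Cref{lem:Uhproperties}(3) the functor $U_h^L$ factors through $\mathcal{H}(\rgraph,\mathcal{F})$ and there restricts to $\indL_e$, which is left adjoint to $\on{ev}_e$.

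First I would pin down the combinatorics. Following \Cref{constr:Uh}, $\cwt_h=\cwt_e$ traces the sequence $e=e_1,e_2,\dots,e_m$, the recursion stopping at the first external edge $e_m$ reached, and $e_m$ is the external edge following $e$ by one step along $B$. Since $B$ carries at least two marked points it carries at least two external edges, so $e_m\neq e$; since $e_1=e$ is itself external, the recursion would have halted at any return to $e$, so $e_i\neq e$ for $1<i\le m$, and likewise $e_m$ occurs exactly once in the sequence. Hence, among the external edges of $\rgraph$, the trajectory $\cwt_h$ passes by $e$ only at its starting point — equivalently, the sole clockwise trajectory $c\colon h\xrightarrow{\circlearrowright}e$ is the constant one — it passes by $e_m$ exactly once, and it passes by no other external edge. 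One then checks that $e_m$ is the edge $f$ of part (2); this is the orientation-convention match between the ``clockwise along $B$'' of \Cref{constr:Uh} and the ``counterclockwise ordering of external edges on $B$'' of the statement.

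Parts (2) and (3) are now immediate from \Cref{lem:evUh}(1): for any edge $g$ one has $\on{ev}_g\circ\indL_e\simeq\bigoplus_{c\colon h\xrightarrow{\circlearrowright}g}\mathcal{F}^\rightarrow(c)$; taking $g=f=e_m$ the index set is a single trajectory, so $\on{ev}_f\circ\indL_e$ is one transport functor, hence an equivalence, while for external $f'\neq e,f$ the index set is empty and $\on{ev}_{f'}\circ\indL_e\simeq 0$. For part (1), since $\indL_e\dashv\on{ev}_e$ the functor $\indL_e$ is fully faithful iff the unit $\on{id}_{\mathcal{F}(e)}\to\on{ev}_e\circ\indL_e$ is an equivalence; by \Cref{lem:evUh}(1) and the combinatorics above $\on{ev}_e\circ\indL_e\simeq\bigoplus_{c\colon h\xrightarrow{\circlearrowright}e}\mathcal{F}^\rightarrow(c)\simeq\on{id}_{\mathcal{F}(e)}$, while \Cref{lem:evUh}(3) — together with \Cref{lem:Uhproperties}(1) and (3), which identify this unit with the restriction of the unit of $U_h^L\dashv U_h$ — shows it is a split monomorphism. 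A split monomorphism from $\on{id}_{\mathcal{F}(e)}$ to a functor equivalent to $\on{id}_{\mathcal{F}(e)}$ is automatically an equivalence, e.g.\ because $\pi_0$ of the endomorphism ring of $\on{id}_{\mathcal{F}(e)}$ in $\on{Fun}(\mathcal{F}(e),\mathcal{F}(e))$ is commutative, so that a one-sided inverse is two-sided; hence $\indL_e$ is fully faithful, and the same argument for $\mathcal{F}^{\op}$ gives full faithfulness of $\indR_e$.

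I expect the only genuinely nontrivial step to be the trajectory bookkeeping in the second paragraph — establishing that $\cwt_h$ meets the external edges exactly as described — since this is precisely where both standing hypotheses (no $1$-valent vertices, and at least two marked points on $B$) are used and where the orientation conventions must be reconciled with the statement. Everything else is a formal consequence of the already-established properties of $U_h^L$ in \Cref{lem:Uhproperties} and \Cref{lem:evUh}.
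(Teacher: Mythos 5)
Your proof is correct and takes essentially the same route as the paper, whose proof simply applies \Cref{lem:evUh} (with exactly the trajectory bookkeeping you spell out: $\cwt_h$ meets external edges only at $e$, via the constant trajectory, and at its clockwise successor $e_m\neq e$ on $B$) and then dualizes via \Cref{lem:dualschober} to get the statements for $\indR_e$. The only extra ingredient you supply is the Eckmann--Hilton argument in part (1) for upgrading the split unit $\on{id}_{\mathcal{F}(e)}\to \on{ev}_e\indL_e\simeq \on{id}_{\mathcal{F}(e)}$ to an equivalence, where the paper implicitly relies on the splitting of \Cref{lem:evUh}(3) being the inclusion of the constant-trajectory summand of the decomposition in \Cref{lem:evUh}(1); both ways of closing this step are valid.
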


\begin{proof}
We apply \Cref{lem:evUh} to obtain the statements about $\indL_e$. Passing to opposite $\infty$-categories and using \Cref{lem:dualschober} yields the statements about $\indR_e$.
\end{proof}

\subsection{Induction from vertices and internal edges}\label{subsec:restriction2}

We continue our study of the induction functors started in \Cref{subsec:restriction1}. We fix a spanning graph $\rgraph$ of a marked surface ${\bf S}$, such that $\rgraph$ has no $1$-valent vertices. We also fix a $\rgraph$-parametrized perverse schober $\mathcal{F}$.

We build the functors $\indL_e$ and $\indL_v$ by gluing functors $U_h^L,\iota_e,\iota_v$ from \Cref{constr:Uh}.

\begin{lemma}\label{lem:indLe}
Let $e$ be an internal edge of $\rgraph$ with halfedges $h_1,h_2$. Then $\on{ev}_e\colon \mathcal{H}(\rgraph,\mathcal{F})\to \mathcal{F}(e)$ admits a left adjoint $\indL_e$. Further, there is a pushout diagram in $\on{Fun}(\mathcal{F}(e),\mathcal{L}(\rgraph,\mathcal{F}))$
\[
\begin{tikzcd}
\iota_e \arrow[r, "\eta"] \arrow[d, "\eta"] \arrow[rd, "\ulcorner", phantom] & U_{h_1}^L \arrow[d] \\
U_{h_2}^L \arrow[r]                                          & \iota\circ \indL_e   
\end{tikzcd}
\]
where $\iota$ is the inclusion $\mathcal{H}(\rgraph,\mathcal{F})\subset \mathcal{L}(\rgraph,\mathcal{F})$.
\end{lemma}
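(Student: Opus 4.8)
The plan is to construct the left adjoint $\indL_e$ by gluing the three functors $U_{h_1}^L$, $U_{h_2}^L$, and $\iota_e$ along the natural transformation $\eta\colon \iota_e \hookrightarrow U_{h_i}^L$ from \Cref{constr:Uh}. Concretely, I would first \emph{define} a candidate functor $W\colon \mathcal{F}(e)\to \mathcal{L}(\rgraph,\mathcal{F})$ as the pushout $U_{h_1}^L \amalg_{\iota_e} U_{h_2}^L$ in the functor $\infty$-category $\on{Fun}(\mathcal{F}(e),\mathcal{L}(\rgraph,\mathcal{F}))$, using that $\mathcal{L}(\rgraph,\mathcal{F})$ is stable so colimits of functors into it are computed pointwise (as recalled in the proof of \Cref{lem:Uhproperties}(2), via \cite[5.1.2.3, 4.3.1.10]{HTT}). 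The two maps $\iota_e \xrightarrow{\eta} U_{h_i}^L$ restrict at $e$ to equivalences, so $\on{ev}_e\circ W$ is the pushout of $\on{ev}_e\iota_e \xleftarrow{\sim} \on{ev}_e\iota_e \xrightarrow{\sim} \on{ev}_e\iota_e$, hence $\on{ev}_e\circ W\simeq \on{id}_{\mathcal{F}(e)}$; I should record the corresponding unit transformation $u\colon \iota_e \to W$ obtained from the pushout cocone.

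The heart of the argument is to show two things: (i) $W$ factors through the full subcategory $\mathcal{H}(\rgraph,\mathcal{F})\subset \mathcal{L}(\rgraph,\mathcal{F})$, and (ii) the resulting functor, which we then name $\indL_e$, is left adjoint to $\on{ev}_e$. For (i) I would use \Cref{lem:Uhproperties}(2): for $0\neq X\in\mathcal{F}(e)$ and a non-degenerate $1$-simplex $\alpha\colon v'\to e'$ in $\on{Exit}(\rgraph)$, the simplex $U_{h_i}^L(X)(\alpha)$ is $p$-coCartesian unless $\alpha\colon \sigma(\tau(h_i))\xrightarrow{\tau(h_i)} e$ is the "other halfedge" direction at $e$. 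But $\tau(h_1)=h_2$ and $\tau(h_2)=h_1$, so the exceptional simplex for $U_{h_1}^L$ is exactly $\alpha\colon \sigma(h_2)\xrightarrow{h_2} e$, while $U_{h_2}^L$ \emph{is} coCartesian on that simplex (indeed near $h_2$ it agrees with $\iota_{v}\circ\mathcal{F}(v\to e)^L$, which is coCartesian there) — and symmetrically with the roles swapped. Taking the pushout of the two local pictures at that simplex: on the offending direction $\sigma(h_2)\xrightarrow{h_2}e$, $U_{h_1}^L(X)$ contributes a summand $0\to X$ while $U_{h_2}^L(X)$ contributes a coCartesian (equivalence-inducing) edge and $\iota_e(X)$ contributes $0\to X$; the pushout along $\iota_e$ cancels the "bad" summand of $U_{h_1}^L$ and leaves the coCartesian edge from $U_{h_2}^L$. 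One checks this carefully in both exceptional directions and sees that $W(X)(\alpha)$ is $p$-coCartesian for \emph{every} non-degenerate $\alpha$, i.e.\ $W(X)\in\mathcal{H}(\rgraph,\mathcal{F})$. I expect this pointwise bookkeeping — verifying that the two halfedge trajectories $\cwt_{h_1},\cwt_{h_2}$ overlap exactly in the way needed for the two defects to heal each other in the pushout — to be the main obstacle, and the place where the no-$1$-valent-vertices hypothesis and the precise shape of $Z_m$ in \Cref{constr:Uh} are used.

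For (ii), having $\indL_e\colon\mathcal{F}(e)\to\mathcal{H}(\rgraph,\mathcal{F})$ with $\on{ev}_e\circ\indL_e\simeq\on{id}$, I would verify the adjunction $\indL_e\dashv\on{ev}_e$ by exhibiting unit $u\colon\on{id}_{\mathcal{F}(e)}\to\on{ev}_e\indL_e$ (an equivalence, by the above) and checking it induces equivalences on mapping spaces, exactly as in the proof of \Cref{lem:evUh}(3): for $Y\in\mathcal{H}(\rgraph,\mathcal{F})$ and $X\in\mathcal{F}(e)$, compute
\[ \on{Map}_{\mathcal{H}(\rgraph,\mathcal{F})}(\indL_e(X),Y)\simeq \on{Map}_{\mathcal{H}(\rgraph,\mathcal{F})}\!\big(\on{colim}(U_{h_1}^L\amalg_{\iota_e}U_{h_2}^L)(X),Y\big)\simeq \lim\big(\on{Map}(U_{h_1}^L(X),Y)\times_{\on{Map}(\iota_e(X),Y)}\on{Map}(U_{h_2}^L(X),Y)\big), \]
and then use \Cref{lem:Uhproperties}(1), which says $\eta^R\colon U_{h_i}\to\on{ev}_e$ restricts to an equivalence on $\mathcal{H}(\rgraph,\mathcal{F})$, so each factor is $\on{Map}_{\mathcal{F}(e)}(X,\on{ev}_e(Y))$ and the transition maps are equivalences; the limit collapses to $\on{Map}_{\mathcal{F}(e)}(X,\on{ev}_e(Y))$, naturally in $X$ via $u$. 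This gives the adjunction and simultaneously identifies $\iota\circ\indL_e$ with the stated pushout, since $W$ was \emph{defined} as that pushout; so the displayed square commutes by construction. Finally I would remark that the same conclusion for the \emph{right} adjoint $\indR_e$ follows formally by passing to opposite $\infty$-categories and applying \Cref{lem:dualschober}, which is how the paper treats such dualizations.
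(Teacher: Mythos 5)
Your proposal takes essentially the same route as the paper: the paper also defines the candidate as the pushout $U_{h_1}^L\amalg_{\iota_e}U_{h_2}^L$, uses part (2) of \Cref{lem:Uhproperties} to see it factors through $\mathcal{H}(\rgraph,\mathcal{F})$, and then obtains the adjunction by observing that the pullback $U_{h_1}\times_{\on{ev}_e}U_{h_2}$ of right adjoints is right adjoint to the pushout and restricts to $\on{ev}_e$ on global sections by part (1) of \Cref{lem:Uhproperties}; your mapping-space computation is exactly this last step unwound, and your pointwise "healing" analysis at the two exceptional simplices makes explicit what the paper leaves as "readily follows". One side remark in your write-up is false, though not load-bearing: it is not true in general that $\on{ev}_e\circ W\simeq \on{id}_{\mathcal{F}(e)}$, nor that the unit $\on{id}_{\mathcal{F}(e)}\to \on{ev}_e\indL_e$ is an equivalence. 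The map $\eta\colon \iota_e\to U_{h_i}^L$ is at $e$ only a split inclusion onto the summand of the constant trajectory, since the clockwise trajectory $\cwt_{h_i}$ may return to $e$ (as in the annulus example after \Cref{constr:Uh}); accordingly $\on{ev}_e\circ\indL_e\simeq \bigoplus_{c\colon e\overset{\circlearrowright}{\shortrightarrow}e}\mathcal{F}^\rightarrow(c)$ and the unit is merely a split inclusion, exactly as recorded in \Cref{prop:eve}. Your adjunction argument survives because what you actually verify—that composition along the candidate unit induces equivalences $\on{Map}(\indL_e(X),Y)\simeq \on{Map}_{\mathcal{F}(e)}(X,\on{ev}_e(Y))$ for $Y$ a global section, via the adjunctions $U_{h_i}^L\dashv U_{h_i}$, $\iota_e\dashv\on{ev}_e$ and part (1) of \Cref{lem:Uhproperties}—does not require the unit to be invertible; simply delete the parenthetical claims that it is.
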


\begin{proof}
Let $P=U_{h_1}^L\amalg_{\iota_e} U_{h_2}^L$ be the pushout. It readily follows from part (2) of \Cref{lem:Uhproperties} that $P$ factors through $\iota\colon \mathcal{H}(\rgraph,\mathcal{F})\subset \mathcal{L}(\rgraph,\mathcal{F})$, hence $P\simeq \iota \circ P'$ for a functor $P'$. The pullback of right adjoints $P^R\coloneqq U_h\times_{\on{ev}_e}U_{h'}$ is right adjoint to $P$. Part (1) of \Cref{lem:Uhproperties} thus implies that $P^R\circ \iota$ is equivalent to $\on{ev}_e$. Since $P$ factors through $\iota$, the adjunction $P\dashv P^R$ restricts to the desired adjunction $\indL_e\coloneqq P'\dashv P^R\circ \iota\simeq \on{ev}_e$.
\end{proof}

\begin{proposition}\label{prop:eve}
Let $e$ be an edge of $\rgraph$.
\begin{enumerate}[(1)]
\item The unit $\on{id}_{\mathcal{F}(e)}\to \on{ev}_e\indL_e$ is a split inclusion and the counit $\on{ev}_e\indR_e\to \on{id}_{\mathcal{F}(e)}$ is a split projection.
\item Let $f$ be an edge of $\rgraph$. Then
\[ \on{ev}_f\circ \indL_e \simeq \bigoplus_{c:e\overset{\circlearrowright}{\shortrightarrow} f}\mathcal{F}^\rightarrow(c)\]
and
\[ 
\on{ev}_f\circ \indR_e \simeq \bigoplus_{c:e\overset{\circlearrowleft}{\shortrightarrow} f}\mathcal{F}^\rightarrow(c)
\]

see \Cref{def:trajectory} for the notation $c\colon e\xrightarrow{\circlearrowright} f$ and $c\colon e\xrightarrow{\circlearrowleft} f$. 
\item Let $v$ be a vertex of $\rgraph$. Then 
\[ \on{ev}_v\circ \indL_e\simeq \bigoplus_{h\in H(v)}\bigoplus_{c:e\overset{\circlearrowright}{\shortrightarrow}h} \mathcal{F}(v\xrightarrow{h}e(h))^L\circ \mathcal{F}^\rightarrow(c)\]
and 
\[ \on{ev}_v\circ \indR_e\simeq \bigoplus_{h\in H(v)}\bigoplus_{c:e\overset{\circlearrowleft}{\shortrightarrow}h} \mathcal{F}(v\xrightarrow{h}e(h))^R\circ \mathcal{F}^\rightarrow(c)\,.\]
\end{enumerate}
\end{proposition}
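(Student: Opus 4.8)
The plan is to reduce everything to \Cref{lem:indLe}, \Cref{lem:Uhproperties} and \Cref{lem:evUh}, together with the duality of \Cref{lem:dualschober}. By that duality it suffices to treat the left induction functors $\indL_e$; the statements for $\indR_e$ then follow by passing to $\mathcal{F}^{\on{op}}$ on $\rgraph^\vee$, which exchanges left and right adjoints, replaces $U_h^L$ by the dual cap functors, and sends clockwise trajectories to counterclockwise trajectories (because reversing the cyclic order at every vertex turns a left turn into a right turn). So I will only discuss the ``$\indL$'' halves below.

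\emph{Step 1: the two cases of $e$.} If $e$ is external, then by part (3) of \Cref{lem:Uhproperties} we have $\indL_e\simeq U_h^L$ with $h$ the unique halfedge of $e$, and also $\iota_e\simeq\iota$ (there is only one halfedge so the pushout square of \Cref{lem:indLe} is not needed). If $e$ is internal with halfedges $h_1,h_2$, then \Cref{lem:indLe} gives the pushout square
\[
\begin{tikzcd}
\iota_e \arrow[r,"\eta"]\arrow[d,"\eta"]\arrow[rd,"\ulcorner",phantom] & U^L_{h_1}\arrow[d]\\
U^L_{h_2}\arrow[r] & \iota\circ\indL_e
\end{tikzcd}
\]
in $\on{Fun}(\mathcal{F}(e),\mathcal{L}(\rgraph,\mathcal{F}))$.

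\emph{Step 2: applying evaluation functors.} Fix a target $x$, a vertex or an edge. Evaluation $\on{ev}_x\colon\mathcal{L}(\rgraph,\mathcal{F})\to\mathcal{F}(x)$ preserves colimits (colimits of sections are computed pointwise, as recalled in the proof of \Cref{lem:Uhproperties}(2) and in \Cref{subsec:Grothendieckconstr}), so applying $\on{ev}_x$ to the pushout square yields a pushout square in $\on{Fun}(\mathcal{F}(e),\mathcal{F}(x))$
\[
\begin{tikzcd}
\on{ev}_x\iota_e \arrow[r]\arrow[d] & \on{ev}_x U^L_{h_1}\arrow[d]\\
\on{ev}_x U^L_{h_2}\arrow[r] & \on{ev}_x\indL_e.
\end{tikzcd}
\]
Now I plug in \Cref{lem:evUh}. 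For $x=f$ an edge, $\on{ev}_f U^L_{h_i}\simeq\bigoplus_{c:h_i\xrightarrow{\circlearrowright}f}\mathcal{F}^\rightarrow(c)$ and $\on{ev}_f\iota_e$ is $\on{id}_{\mathcal{F}(e)}$ if $f=e$ and $0$ otherwise; for $x=v$ a vertex, $\on{ev}_v U^L_{h_i}\simeq\bigoplus_{h'\in H(v)}\bigoplus_{c:h_i\xrightarrow{\circlearrowright}h'}\mathcal{F}(h')^L\circ\mathcal{F}^\rightarrow(c)$ and $\on{ev}_v\iota_e=0$. In the vertex case the corner term $\on{ev}_v\iota_e$ vanishes, so the pushout is simply the direct sum $\on{ev}_v U^L_{h_1}\oplus\on{ev}_v U^L_{h_2}$; re-indexing the two sums over $h_1,h_2$ as one sum over the halfedges of $e$ gives exactly $\bigoplus_{h\in H(v)}\bigoplus_{c:e\xrightarrow{\circlearrowright}h}\mathcal{F}(h')^L\circ\mathcal{F}^\rightarrow(c)$, which is part (3). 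In the edge case ($f\ne e$) the corner again vanishes and the same bookkeeping gives part (2); for $f=e$ the corner is $\on{id}_{\mathcal{F}(e)}$, and here one must check that the pushout of $\on{id}\to\bigoplus_{c:h_i\xrightarrow{\circlearrowright}e}\mathcal{F}^\rightarrow(c)$ along the analogous map on the other side produces the sum over all clockwise trajectories $c:e\xrightarrow{\circlearrowright}e$ — i.e.\ that the two copies of the summand indexed by the constant trajectory (one from each $h_i$, corresponding to $c=\ast$ starting at $h_1$ and at $h_2$) collapse to the single summand $\on{id}_{\mathcal{F}(e)}$ together with the unit maps $\eta$ being split inclusions. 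This last point is where I must be careful and where \Cref{lem:evUh}(3) — that the unit $\on{id}_{\mathcal{F}(e)}\to U_hU^L_h$ is a split inclusion — gets used: it ensures the two maps $\on{ev}_e\iota_e\to\on{ev}_e U^L_{h_i}$ are split monomorphisms with the constant-trajectory summand as image, so the pushout along them is, by a standard computation with split inclusions in an additive (stable) category, the direct sum of $\on{id}_{\mathcal{F}(e)}$ with the remaining (non-constant) summands from each side. Combining the non-constant summands over $h_1$ and $h_2$ with the single shared constant summand gives precisely $\bigoplus_{c:e\xrightarrow{\circlearrowright}e}\mathcal{F}^\rightarrow(c)$ (with $\mathcal{F}^\rightarrow(\ast)=\on{id}$), completing part (2), and also showing the unit $\on{id}_{\mathcal{F}(e)}\to\on{ev}_e\indL_e$ is a split inclusion, which is part (1). (In the external-edge case part (1) is \Cref{lem:evUh}(3) directly and parts (2),(3) are \Cref{lem:evUh}(1),(2).)

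\emph{Step 3: the external-edge/boundary subtlety.} One technical wrinkle: \Cref{lem:evUh} is stated for $\rgraph$ without $1$-valent vertices, which is exactly the standing hypothesis of this subsection, so no extra work is needed here; but in part (1) for an internal edge I should double-check that the counit-side statement for $\indR_e$ is obtained not by a second direct argument but purely by the \Cref{lem:dualschober} dualization, since $\indR_e$ on $\rgraph$ corresponds to $\indL_e$ on $(\mathcal{F}^{\on{op}},\rgraph^\vee)$ and a split inclusion becomes a split projection under $(\mhyphen)^{\on{op}}$.

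\emph{Main obstacle.} The only genuinely non-formal step is the $f=e$ case in part (2): verifying that the pushout of the two unit maps correctly merges the two constant-trajectory contributions into a single $\on{id}_{\mathcal{F}(e)}$ rather than producing $\on{id}\oplus\on{id}$ or a nontrivial extension. Everything else is a matter of pushing the pushout square of \Cref{lem:indLe} through colimit-preserving evaluation functors and re-indexing trajectory sums, which is routine. The split-inclusion property from \Cref{lem:evUh}(3) is precisely the input that resolves this obstacle, so the proof hinges on invoking it at the right moment.
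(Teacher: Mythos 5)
Your reduction via \Cref{lem:dualschober}, and your treatment of parts (2) and (3), follow the paper's route exactly: the paper also proves these by evaluating the pushout square of \Cref{lem:indLe} pointwise in $\on{Exit}(\rgraph)$ and plugging in \Cref{lem:evUh}, and your direct-sum bookkeeping (pushout over a zero corner is a direct sum; pushout along two split inclusions of a common summand merges the two constant-trajectory contributions into a single copy of $\on{id}_{\mathcal{F}(e)}$) is the intended computation.

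The gap is in part (1) for an internal edge. Your argument produces a specific map $\on{id}_{\mathcal{F}(e)}\simeq \on{ev}_e\iota_e\to \on{ev}_e(\iota\circ\indL_e)$, namely $\on{ev}_e$ applied to the corner map of the pushout, and shows it is a split inclusion; but part (1) is a statement about the \emph{unit} of the adjunction $\indL_e\dashv\on{ev}_e$, and you never identify your map with that unit. This identification is not formal: it is exactly the content of the mapping-space verification in the proof of \Cref{lem:evUh}(3) (one checks that precomposition with the candidate map realizes the adjunction equivalence $\on{Map}(\indL_e X,Y)\simeq\on{Map}(X,\on{ev}_e Y)$, using that the restriction maps $\on{Map}(U^L_{h_i}X,\iota Y)\to\on{Map}(\iota_eX,\iota Y)$ are equivalences on global sections $Y$, via \Cref{lem:Uhproperties}(1)), and the paper's proof of \Cref{prop:eve}(1) consists precisely of repeating this argument rather than quoting the lemma. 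Relatedly, your citation of \Cref{lem:evUh}(3) to justify that $\on{ev}_e\eta\colon\on{ev}_e\iota_e\to\on{ev}_eU^L_{h_i}$ is the inclusion of the constant-trajectory summand is off target: that lemma is stated only for external $e$ and concerns the unit of $U_h^L\dashv U_h$, not this map; the splitting you need instead follows from inspecting \Cref{constr:Uh} and the pointwise computation of the colimit, as in the proof of \Cref{lem:evUh}(1). With the mapping-space step supplied (and the external-edge case and the $\indR_e$ statements handled by \Cref{lem:evUh}(3) and \Cref{lem:dualschober}, as you do), your proof is complete and coincides with the paper's.
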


\begin{proof}
The statements about $\indR_e$ follow from the statements for $\indL_e$ by passing to the opposite perverse schobers via \Cref{lem:dualschober}, so that we only need to prove the latter statements.

Parts (2) and (3) follow readily from \Cref{lem:evUh} and \Cref{lem:indLe}, by using that the pushout of functors valued in sections in \Cref{prop:eve} is computed pointwise in $\on{Exit}(\rgraph)$.

The proof of part (1) is similar to the proof of part (3) in \Cref{lem:evUh}
\end{proof}

\begin{lemma}\label{lem:constrofevvL}
Let $v$ be a vertex of $\rgraph$. Then $\on{ev}_v\colon \mathcal{H}(\rgraph,\mathcal{F})\to \mathcal{F}(v)$ admits a left adjoint $\indL_v$. Further, there is a pushout diagram in $\on{Fun}(\mathcal{F}(e),\mathcal{L}(\rgraph,\mathcal{F}))$
\[
\begin{tikzcd}[column sep=80]
\prod_{h\in H(v)}\iota_{e(h)}\circ \mathcal{F}(v\xrightarrow{h}e(h)) \arrow[r, "\prod_h \eta\circ \mathcal{F}(v\xrightarrow{h}e(h))"] \arrow[d] \arrow[rd, "\ulcorner", phantom] & \prod_{h\in H(v)} U_h^L\circ \mathcal{F}(v\xrightarrow{h}e(h)) \arrow[d] \\
\iota_v \arrow[r]                                                                                  & \iota\circ \indL_v                     
\end{tikzcd}
\]
where $\iota$ is the inclusion $\mathcal{H}(\rgraph,\mathcal{F})\subset \mathcal{L}(\rgraph,\mathcal{F})$.
\end{lemma}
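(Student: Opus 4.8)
The plan is to mimic the proof of \Cref{lem:indLe}, replacing the coequalizer of two copies of $U_h^L$ over $\iota_e$ by a pushout that glues together, along $\iota_v$, the functors $U_h^L\circ \mathcal{F}(v\xrightarrow{h}e(h))$ over all halfedges $h$ incident to $v$. First I would define $P$ to be the pushout in $\on{Fun}(\mathcal{F}(v),\mathcal{L}(\rgraph,\mathcal{F}))$ of the span
\[
\textstyle\iota_v\longleftarrow \prod_{h\in H(v)}\iota_{e(h)}\circ \mathcal{F}(v\xrightarrow{h}e(h)) \longrightarrow \prod_{h\in H(v)} U_h^L\circ \mathcal{F}(v\xrightarrow{h}e(h))\,,
\]
where the left leg is assembled from the $p$-coCartesian comparison morphisms $\iota_{e(h)}\circ \mathcal{F}(v\xrightarrow{h}e(h))\to \iota_v$ (these are exactly the natural transformations $\iota_{e_i}\circ \mathcal{F}(v\to e_i)\to \iota_v$ witnessing that sections in the image of $\iota_v$ are coCartesian near $v$), and the right leg is $\prod_h \eta\circ \mathcal{F}(v\xrightarrow{h}e(h))$ with $\eta\colon \iota_{e(h)}\hookrightarrow U_h^L$ the natural transformation from \Cref{constr:Uh}.

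The key steps, in order, are: \emph{(i)} show that $P$ factors through the full subcategory $\iota\colon \mathcal{H}(\rgraph,\mathcal{F})\subset \mathcal{L}(\rgraph,\mathcal{F})$, i.e.\ that $P(X)$ is a coCartesian section for every $X\in\mathcal{F}(v)$. Since colimits of sections are computed pointwise in $\on{Exit}(\rgraph)$, it suffices to check coCartesianness of $P(X)(\alpha)$ for each non-degenerate $1$-simplex $\alpha\colon v'\to e'$. Away from $v$ this is part (2) of \Cref{lem:Uhproperties}: each $U_h^L(X(e(h)))$ is coCartesian on $\alpha$ unless $\alpha$ is the ``returning'' halfedge $\tau(h)\colon\sigma(\tau(h))\to e(h)$, and at that single problematic edge the offending summand $0\to \mathcal{F}(v\xrightarrow{h}e(h))(X)$ inside $U_h^L$ is precisely cancelled in the pushout by the corresponding coCartesian morphism coming in from $\iota_v$ (this is the role of the left leg of the span and is why one glues $U_h^L$ pre-composed with $\mathcal{F}(v\xrightarrow{h}e(h))$, not $U_h^L$ alone). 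Near $v$ itself, the pushout forces $P(X)(v)\simeq X$ and forces the morphisms $P(X)(v\xrightarrow{h}e(h))$ to be the coCartesian ones, since $\iota_v$ contributes exactly that data and the $U_h^L$-legs agree with it there by construction of $\eta$. Hence $P\simeq\iota\circ P'$ for a functor $P'\colon\mathcal{F}(v)\to\mathcal{H}(\rgraph,\mathcal{F})$. \emph{(ii)} Form the right adjoint of $P$ as the pullback $P^R = \iota_v^R\times_{(\prod_h \mathcal{F}(v\xrightarrow{h}e(h))^R\circ\on{ev}_{e(h)})}\big(\prod_h U_h\circ \mathcal{F}(v\xrightarrow{h}e(h))^R\big)$ — equivalently, pass to right adjoints throughout the pushout square, using that $(\mhyphen)^R$ is an exact (limit-preserving) operation on presentable functor categories, turning the pushout into a pullback. \emph{(iii)} Restrict along $\iota$: by part (1) of \Cref{lem:Uhproperties} the natural transformation $U_h\to\on{ev}_{e(h)}$ is an equivalence on $\mathcal{H}(\rgraph,\mathcal{F})$, and $\iota_v^R\circ\iota=\on{ev}_v$; therefore the pullback defining $P^R$, restricted to $\mathcal{H}(\rgraph,\mathcal{F})$, collapses to $\on{ev}_v$ (the cospan becomes $\on{ev}_v\to \prod_h\mathcal{F}(v\xrightarrow{h}e(h))^R\circ\mathcal{F}(v\xrightarrow{h}e(h))\circ\on{ev}_v\leftarrow \prod_h\mathcal{F}(v\xrightarrow{h}e(h))^R\circ\mathcal{F}(v\xrightarrow{h}e(h))\circ\on{ev}_v$ with both maps equivalences, so its limit is $\on{ev}_v$). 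Since $P$ factors through $\iota$, the adjunction $P\dashv P^R$ restricts to $\indL_v\coloneqq P'\dashv P^R\circ\iota\simeq\on{ev}_v$, which is the claim.

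I expect step \emph{(i)} to be the main obstacle: one has to bookkeep carefully which edges each clockwise trajectory $\cwt_h$ traverses and verify that the only failure of coCartesianness in $\prod_h U_h^L\circ\mathcal{F}(v\xrightarrow{h}e(h))$ occurs at the edges $e(h)$ with the ``wrong'' halfedge orientation, and that the pushout with $\iota_v$ repairs exactly these and introduces no new failures. This amounts to a local computation in a contractible neighborhood of $v$ together with an application of \Cref{lem:Uhproperties}(2) everywhere else, but the pointwise analysis of the glued diagram is the delicate part; the remaining steps are formal adjunction manipulations of the kind already carried out in \Cref{lem:indLe}. (A small secondary point: to invoke the adjoint functor theorem for $P^R$ one may first pass to $\on{Ind}$-completions as in the proof of \Cref{thm:globalsphericalness}, then descend, since $U_h^L$ and the $\iota$'s restrict compatibly; alternatively $P^R$ exists directly because $P$ is a finite colimit of left adjoints.)
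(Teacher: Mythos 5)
Your overall strategy---glue $\iota_v$ to boundary-cap functors over $\prod_h\iota_{e(h)}\circ\mathcal{F}(v\to e(h))$, check coCartesianness pointwise, then pass to right adjoints and restrict to global sections via \Cref{lem:Uhproperties}---is exactly the intended argument (the paper's proof is just ``analogous to the proof of \Cref{lem:indLe}''). However, your step \emph{(i)}, which you rightly single out as the crux, is argued incorrectly and as written it fails. By \Cref{lem:Uhproperties}(2) the unique non-coCartesian $1$-simplex of $U_h^L(Y)$ is $\alpha\colon \sigma(\tau(h))\to e(h)$, i.e.\ it sits at the \emph{far} endpoint of $e(h)$, not at $v$. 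At this simplex $\iota_v(X)$ has value $0$ at the source and $\mathcal{F}(v\to e(h))(X)$ at the target, so it is equally non-coCartesian there; it cannot supply the ``corresponding coCartesian morphism'' you invoke to cancel the offending summand. Concretely, at $\alpha$ the left leg $\iota_{e(h)}\circ\mathcal{F}(v\to e(h))\to\iota_v$ is an equivalence, so the pointwise pushout is identified with the right-hand leg, which still contains the non-coCartesian component: a pushout of $M\leftarrow M\to M\oplus R$ along an identity and a split inclusion is $M\oplus R$, not $R$, so nothing is cancelled. Your claim that the pushout ``forces $P(X)(v)\simeq X$'' also fails: the middle term vanishes at $v$, so the pushout value at $v$ is $X\oplus\bigoplus_h \on{ev}_v U_h^L(\mathcal{F}(v\to e(h))(X))$, which contains the extra summands $\mathcal{F}(h)^L\mathcal{F}(h)(X)$ excluded in \Cref{prop:evv}(3); already for the $3$-spider, where $\mathcal{H}(\rgraph,\mathcal{F})\simeq\mathcal{F}(v)$ and $\indL_v=\on{id}$, this shows the glued functor is too big.

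The correct analogue of the two-halfedge repair in \Cref{lem:indLe} is asymmetric: $\iota_v$ repairs failures at the simplices $v\to e(h)$, while the failures of $\iota_v$ at the far vertices $\sigma(\tau(h))\to e(h)$ must be repaired by sections that continue \emph{through} those far vertices, i.e.\ by the functors $U^L_{\tau(h)}$ attached to the opposite halfedges (for $h$ with $e(h)$ internal; external edges need no continuation, and there $\tau(h)=h$ plays no role). With the right-hand leg built from these continuation functors, each piece is coCartesian exactly where the other fails, the pointwise pushout is coCartesian everywhere, and the evaluations match \Cref{prop:eve,prop:evv}; with $U_h^L$ for $h\in H(v)$ as in \Cref{constr:Uh} (whose associated path turns first at $v$), both legs fail at the same simplices and the pushout does not land in $\mathcal{H}(\rgraph,\mathcal{F})$. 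This indexing tension is arguably present in the printed statement itself, but your proof has to resolve it; your steps \emph{(ii)} and \emph{(iii)} (passing to right adjoints and collapsing the resulting pullback on $\mathcal{H}(\rgraph,\mathcal{F})$ via \Cref{lem:Uhproperties}(1)) are fine and formal once \emph{(i)} is in place.
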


\begin{proof}
Analogous to the proof of \Cref{lem:indLe}.
\end{proof}

\begin{proposition}\label{prop:evv}
Let $v$ be a vertex of $\rgraph$.
\begin{enumerate}[(1)]
\item The unit $\on{id}_{\mathcal{F}(v)}\to \on{ev}_v\indL_v$ is a split inclusion.
\item Let $f$ an edge of $\rgraph$. Then
\[ \on{ev}_f\circ \indL_v\simeq \bigoplus_{h\in H(v),\,c:h\overset{\circlearrowright}{\shortrightarrow}f}\mathcal{F}^\rightarrow(c)\circ \mathcal{F}(h)
\,.\]
\item Let $v'$ be a vertex of $\rgraph$. Then 
\[ \on{ev}_{v'}\circ \indL_v\simeq  \begin{cases} \on{id}_{\mathcal{F}(v)}\oplus \bigoplus_{h,h'\in H(v),c:h\overset{\circlearrowright}{\shortrightarrow}h',c\not= \ast} \mathcal{F}(h')^L\circ \mathcal{F}^\rightarrow(c)\circ \mathcal{F}(h) & \text{if }v'=v\\ \bigoplus_{h\in H(v),h'\in H(v'),c:h\overset{\circlearrowright}{\shortrightarrow}h'} \mathcal{F}(h')^L\circ \mathcal{F}^\rightarrow(c)\circ \mathcal{F}(h) & \text{if }v'\not=v\,.\end{cases}\]
In the upper sum above,  if $h=h'$, the trajectory $c$ is assumed not to be the constant trajectory $\ast$ at $h$.
\end{enumerate}
Similar results as above also apply to $\indR_v$, as stated in \Cref{introprop:splitting}.
\end{proposition}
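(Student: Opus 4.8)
The plan is to run the argument exactly as for \Cref{prop:eve}, now fed by the pushout presentation of $\indL_v$ from \Cref{lem:constrofevvL} in place of the one for internal edges. As in the earlier statements, the claims about $\indR_v$ reduce to those about $\indL_v$ via \Cref{lem:dualschober}: passing to $\mathcal{F}^{\on{op}}$ on $\rgraph^\vee$ identifies $\mathcal{H}(\rgraph,\mathcal{F})^{\on{op}}\simeq\mathcal{H}(\rgraph^\vee,\mathcal{F}^{\on{op}})$ and $\on{ev}_v$ with itself, turns $\indR_v$ into $\indL_v$ for $\mathcal{F}^{\on{op}}$, and interchanges clockwise with counterclockwise trajectories, so that the $\indL_v$-statement applied to $\mathcal{F}^{\on{op}}$ yields the $\indR_v$-statement for $\mathcal{F}$. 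Hence I would only spell out the case of $\indL_v$.

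The essential input is that finite colimits of sections of the Grothendieck construction $p\colon\Gamma(\mathcal{F})\to\on{Exit}(\rgraph)$ are computed pointwise in $\on{Exit}(\rgraph)$ (recalled in \Cref{subsec:Grothendieckconstr}). Consequently, applying $\on{ev}_f$ or $\on{ev}_{v'}$ to the pushout square of \Cref{lem:constrofevvL} again yields a pushout square, computing $\on{ev}_f\circ\indL_v$, resp.\ $\on{ev}_{v'}\circ\indL_v$, out of the evaluations of $\iota_v$, of $\iota_{e(h)}\circ\mathcal{F}(h)$, and of $U_h^L\circ\mathcal{F}(h)$. Into these I would substitute: $\on{ev}_f\circ\iota_{e(h)}\simeq\on{id}_{\mathcal{F}(f)}$ if $e(h)=f$ and $0$ otherwise; $\on{ev}_{v'}\circ\iota_{e(h)}\simeq 0$ always; $\on{ev}_f\circ\iota_v\simeq\mathcal{F}(v\to f)$ if $f$ is incident to $v$ (there being at most one such halfedge $h_f\in H(v)$ since $\rgraph$ has no loops) and $0$ otherwise; $\on{ev}_{v'}\circ\iota_v\simeq\on{id}_{\mathcal{F}(v)}$ if $v'=v$ and $0$ otherwise; and the descriptions of $\on{ev}_f\circ U_h^L$ and $\on{ev}_{v'}\circ U_h^L$ from \Cref{lem:evUh}(1)--(2). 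For part (2) the two left corners of the square are then both $\mathcal{F}(h_f)$ (or both $0$) and the top map between them is an equivalence onto the constant-trajectory summand; since a pushout along an equivalence discards that corner, $\on{ev}_f\circ\indL_v$ is the remaining corner, which reorganizes as $\bigoplus_{h\in H(v)}\bigoplus_{c\colon h\xrightarrow{\circlearrowright}f}\mathcal{F}^\rightarrow(c)\circ\mathcal{F}(h)$, with no double count of the constant trajectory at $h_f$. For part (3) the upper-left corner vanishes identically, so the pushout is the direct sum of $\on{ev}_{v'}\circ\iota_v$ with $\bigoplus_{h\in H(v)}\on{ev}_{v'}\circ U_h^L\circ\mathcal{F}(h)$; this is the displayed formula for $v'\ne v$, and for $v'=v$ it produces $\on{id}_{\mathcal{F}(v)}$ together with the halfedge-to-halfedge trajectories at $v$, once the constant-trajectory contributions are gathered with the $\on{id}_{\mathcal{F}(v)}$ summand as in part (2) and \Cref{prop:eve}. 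Part (1) is then read off from the part (3) computation: the image of the lower leg $\iota_v\to\iota\circ\indL_v$ under $\on{ev}_v$ exhibits $\on{id}_{\mathcal{F}(v)}\simeq\on{ev}_v\iota_v\to\on{ev}_v\indL_v$ as the inclusion of a direct summand, hence a split inclusion, and one identifies this transformation with the unit of $\indL_v\dashv\on{ev}_v$ by the mapping-space argument used in the proof of \Cref{lem:evUh}(3) --- pass to right adjoints, invoke \Cref{lem:Uhproperties}(1) to make the comparison $\mathcal{F}(v_i\to e_i)\circ\on{ev}_{v_i}\to\on{ev}_{e_i}$ an equivalence on global sections, and pass to the limit over the indexing shape $Z_{m(h)}$.

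The pushout manipulations are purely formal once pointwise-ness is in hand; the step I expect to be the genuine obstacle is the combinatorial bookkeeping after evaluation --- determining exactly which clockwise trajectories $c\colon h\xrightarrow{\circlearrowright}f$ and $c\colon h\xrightarrow{\circlearrowright}h'$ contribute, and with what multiplicity, once the three functors are glued, and in particular correctly isolating the $\on{id}_{\mathcal{F}(v)}$ summand produced by the constant trajectories. This is precisely where the hypothesis that $\rgraph$ has no $1$-valent vertices enters, as stressed after \Cref{introprop:splitting}: it is what makes these contributions split off cleanly rather than merely appearing as retracts.
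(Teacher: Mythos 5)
Your route is the paper's: the proof given there is literally ``analogous to \Cref{prop:eve}'', i.e.\ reduce $\indR_v$ to $\indL_v$ via \Cref{lem:dualschober}, evaluate the pushout of \Cref{lem:constrofevvL} pointwise in $\on{Exit}(\rgraph)$, substitute \Cref{lem:evUh}, and identify the unit in part (1) by the mapping-space argument from the proof of \Cref{lem:evUh}(3). In outline you have reconstructed exactly the intended argument.

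The step that is not right as written is the bookkeeping you defer in part (3). Since $\iota_{e(h)}$ vanishes at every vertex, the evaluation of the pushout at a vertex $v'$ is a plain direct sum $\on{ev}_{v'}\iota_v\oplus\bigoplus_{h\in H(v)}\on{ev}_{v'}U_h^L\circ\mathcal{F}(h)$; there is no mechanism by which constant-trajectory contributions get ``gathered with the $\on{id}_{\mathcal{F}(v)}$ summand''. Reading \Cref{lem:evUh}(2) with the convention of \Cref{def:trajectory} (which allows the constant trajectory when $h'=h$, and which is what the zig-zag colimit defining $U_h^L$ actually produces: the summand $\mathcal{F}(h)^L$ at the starting vertex), this direct sum contains the summands $\mathcal{F}(h)^L\circ\mathcal{F}(h)$, which the displayed formula for $v'=v$ explicitly omits; likewise, for halfedges $h$ lying on external edges the whole branch $\cwt_h$ contributes, whereas in the subgraph formulation (\Cref{lem:constrofresVL} and \Cref{prop:ressurface}, which via \Cref{ex:surface_vertex_induction} also compute $\indL_v$) the gluing runs only over $E_{\srgraph}$, i.e.\ over halfedges on internal edges. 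Reconciling these counts --- which trajectories occur, with what multiplicity, and why the constant ones drop out of the $v'=v$ formula --- is precisely what separates the pointwise pushout from the stated right-hand side, and your proposal leaves it unargued (the paper's one-line proof is admittedly silent on the same point). A smaller slip in part (2): the leg of the span that becomes an equivalence at $f$ is the one into $\on{ev}_f\iota_v$, not the one into $\on{ev}_f U_{h_f}^L$, which is the split inclusion of the constant-trajectory summand; your conclusion that the pushout is the remaining corner is nevertheless correct.
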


\begin{proof}
Analogous to the proof of \Cref{prop:eve}.
\end{proof}

\subsection{Induction for subgraphs/subsurfaces}\label{subsec:restriction3}

We next discuss the induction and restriction functors arising from subgraphs of the parametrizing ribbon graphs. We begin by specifying what we mean by subgraphs:

\begin{definition}
We call a graph $\srgraph$ a subgraph of a graph $\rgraph$, and write $\srgraph\subset \rgraph$, if 
\begin{itemize}
\item there is an inclusion of the set of vertices $\srgraph_0\subset \rgraph_0$,
\item for each vertex $v\in \srgraph_0$, there is an equality $H_{\srgraph}(v)=H_{\rgraph}(v)$ of the sets of incident halfedges,
\item the map $\sigma\colon H_{\srgraph}\to \srgraph_0$ arises as the restriction of the map $\sigma\colon H_{\rgraph}\to \rgraph_0$,
\item any internal edge of $\srgraph$ is also an internal edge of $\rgraph$. 
\end{itemize} 
Note that $\srgraph$ can thus have external edges which arise from internal halfedges of $\rgraph$.
\end{definition}

Note that a subgraph of a ribbon graph inherits a conical ribbon graph structure. A subgraph $\srgraph\subset \rgraph$ comes with a functor between the exit path $\infty$-categories $\on{Exit}(\srgraph)\to \on{Exit}(\rgraph)$. Note that this functor is not necessarily injective on objects, this happens if an internal edge of $\rgraph$ is cut into two external edges of $\srgraph$.

\begin{remark}
Given a spanning graph $\rgraph$ of a marked surface ${\bf S}$ together with a subgraph of $\rgraph$, we obtain a subsurface of ${\bf S}$ by thickening the subgraph. 
\end{remark}

\begin{lemma}
Let $\srgraph\subset \rgraph$ be a subgraph and $i:\on{Exit}(\srgraph)\to \on{Exit}(\rgraph)$ the induced functor. If $\mathcal{F}\colon \on{Exit}(\rgraph)\to \on{St}$ is a $\rgraph$-parametrized perverse schober, then 
\[ \mathcal{F}_{\srgraph}\coloneqq i^*\mathcal{F}
\] 
is an $\srgraph$-parametrized perverse schober.
\end{lemma}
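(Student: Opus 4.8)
I will treat this as a local statement at the vertices of $\srgraph$. By Definition~\ref{def:schober}, $\mathcal{F}_\srgraph = i^*\mathcal{F}$ is an $\srgraph$-parametrized perverse schober precisely when, for every vertex $v\in\srgraph_0$, the restriction of $\mathcal{F}_\srgraph$ to the coslice $\on{Exit}(\srgraph)_{v/}$ defines a perverse schober on the $n$-spider, where $n$ is the valency of $v$ in $\srgraph$. So the plan is to reduce to this per-vertex condition and then observe that, after an evident identification, it is literally the local perversity condition that $\mathcal{F}$ already satisfies at the same vertex regarded inside $\rgraph$.

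The key step is the identification of coslice categories. For a vertex $v$, the objects of $\on{Exit}(\srgraph)_{v/}$ are $\id_v$ together with the halfedges $h\colon v\to e_\srgraph(h)$ with $h\in H_\srgraph(v)$, and there are no non-identity morphisms among them; thus $\on{Exit}(\srgraph)_{v/}\cong\on{Exit}(\rgraph_n)$ with $n=|H_\srgraph(v)|$, and the same holds for $\rgraph$ in place of $\srgraph$. Since $v\in\srgraph_0$ forces $H_\srgraph(v)=H_\rgraph(v)$, and $\srgraph$ carries the ribbon structure induced from $\rgraph$ (so the cyclic order at $v$ is unchanged), the valency of $v$ agrees in $\srgraph$ and $\rgraph$ and $i$ restricts to an \emph{isomorphism} of categories $\on{Exit}(\srgraph)_{v/}\xrightarrow{\ \sim\ }\on{Exit}(\rgraph)_{v/}$, acting as the identity on $\id_v$ and on the incident halfedges. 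Consequently $\mathcal{F}_\srgraph|_{\on{Exit}(\srgraph)_{v/}}$ coincides, under this isomorphism, with $\mathcal{F}|_{\on{Exit}(\rgraph)_{v/}}$, which is a perverse schober on the $n$-spider by hypothesis; moreover the two cases $n=1$ and $n\geq 2$ of Definition~\ref{def:schobernspider} are the same for $\srgraph$ as for $\rgraph$, since the valency is unchanged. Letting $v$ range over $\srgraph_0$ gives the claim.

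The only point that genuinely needs care — and hence the ``main obstacle'', though it is a mild one — is to confirm that the phenomenon noted just before the lemma, namely that $i\colon\on{Exit}(\srgraph)\to\on{Exit}(\rgraph)$ need not be injective on objects because an internal edge of $\rgraph$ may be cut into two external edges of $\srgraph$, has no effect on the comparison of coslices. This holds because $\on{Exit}(\srgraph)_{v/}$ records only the halfedges incident to $v$, not how edges are shared between vertices, and $i$ acts as the identity on that halfedge data; so the comparison functor on coslices really is an isomorphism rather than merely a faithful functor. (Equivalently, one may phrase the whole argument in terms of the collections of functors $(F_i)_{i\in\mathbb{Z}/n}$ attached to $v$ via the remark following Definition~\ref{def:schobernspider}: these collections are visibly unchanged by passing from $\rgraph$ to $\srgraph$, so there is nothing further to check.)
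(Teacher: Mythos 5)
Your proof is correct and matches the paper's intent: the paper disposes of this lemma with the single word ``Immediate,'' the point being exactly what you spell out, namely that the perversity condition is checked vertex-by-vertex and that, since $H_{\srgraph}(v)=H_{\rgraph}(v)$ with the same cyclic order for each $v\in\srgraph_0$, the local data of $i^*\mathcal{F}$ at $v$ is literally that of $\mathcal{F}$ at $v$. Your extra remark that the failure of $i$ to be injective on objects does not affect the coslice comparison is a sensible, correct elaboration of why the reduction is harmless.
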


\begin{proof}
Immediate.
\end{proof}

Given a subgraph $\srgraph\subset \rgraph$, there are apparent functors
\[
\on{res}_{\srgraph}\colon \mathcal{H}(\rgraph,\mathcal{F})\to \mathcal{H}(\srgraph,\mathcal{F}_\srgraph)\] and 
\[ \on{res}_\srgraph^{\on{lax}}\colon \mathcal{L}(\rgraph,\mathcal{F})\to \mathcal{L}(\srgraph,\mathcal{F}_\srgraph)\,,\]
given by pulling back sections of the Grothendieck construction along $\on{Exit}(\srgraph)\to \on{Exit}(\rgraph)$. We show below that $\on{res}_{\srgraph}$ admits left and right adjoints $\indL_{\srgraph}$ and $\indR_{\srgraph}$.

The left adjoint $\iota_\srgraph$ of the functor $\on{res}_\srgraph^{\on{lax}}$ is given by left Kan extension and can concretely be described on objects as follows. Consider the functor between the Grothendieck constructions $\Gamma(\mathcal{F}_{\srgraph})\to \Gamma(\mathcal{F})$ arising from pulling back the coCartesian fibration $p\colon \Gamma(\mathcal{F})\to \on{Exit}(\rgraph)$ along the functor $\on{Exit}(\srgraph)\to \on{Exit}(\rgraph)$. Given a section $X\colon \on{Exit}(\srgraph)\to \Gamma(\mathcal{F}_{\srgraph})$, the section $\iota_\srgraph(X)\colon \on{Exit}(\rgraph)\to \Gamma(\mathcal{F})$ is obtained by extending $X$ on the complement of $\on{Exit}(\srgraph)$ in $\on{Exit}(\rgraph)$ with zero objects. If the functor $\on{Exit}(\srgraph)\to \on{Exit}(\rgraph)$ is not fully faithful, for each pair of two external edges $e',e''\in \srgraph_1$ that are mapped to the same internal edge $e\in \rgraph_1$, we further have $\iota_\srgraph(X)(e)\simeq X(e')\oplus X(e'')$. 

\begin{example}\label{ex:surface_vertex_induction}
Let $v\in \rgraph_0$ be a vertex. Then there exists a unique subgraph $\srgraph_v\subset \rgraph$ with $(\srgraph_v)_0=\{v\}$. Given a $\rgraph$-parametrized perverse schober $\mathcal{F}$, the $\srgraph_v$-perverse schober $\mathcal{F}_{\srgraph_v}$ satisfies 
\[
\mathcal{H}(\srgraph_v,\mathcal{F}_{\srgraph_v})\simeq \mathcal{F}(v)\,.
\]
Under this equivalence, the functor $\on{res}_{\srgraph_v}\colon \mathcal{H}(\rgraph,\mathcal{F})\to \mathcal{H}(\srgraph_v,\mathcal{F}_{\srgraph_v})$ identifies with $\on{ev}_v$, and the functor $\iota_\srgraph$ identifies with $\iota_v$.
\end{example}

Given a subgraph $\srgraph\subset \rgraph$, we denote by $E_{\srgraph}\subset \srgraph_1^\partial$ the subset of external edges that are mapped under the functor $\on{Exit}(\srgraph)\to \on{Exit}(\rgraph)$ to internal edges. We denote the unique halfedge of an external edge $e\in E_\srgraph$ by $h(e)$.

We fix a marked surface ${\bf S}$ with spanning graph $\rgraph$ and a $\rgraph$-parametrized perverse schober $\mathcal{F}$. As in \Cref{subsec:restriction2}, we obtain:

\begin{lemma}\label{lem:constrofresVL}
Let $\srgraph\subset \rgraph$ be a subgraph. Then $\on{res}_{\srgraph}$ admits a left adjoint $\indL_\rgraph$ and there is a pushout diagram in $\on{Fun}(\mathcal{H}(\srgraph,\mathcal{F}_\srgraph),\mathcal{L}(\rgraph,\mathcal{F}))$:
\[
\begin{tikzcd}[column sep=45pt]
\prod_{e\in E_\srgraph} \iota_e\circ \on{ev}_e \arrow[r, "\prod_{E_\srgraph} \eta\circ \on{ev}_e"] \arrow[d] \arrow[rd, "\ulcorner", phantom] & \prod_{e\in E_{\srgraph}} U_{h(e)}^L\circ \on{ev}_e \arrow[d] \\
\iota_\srgraph \arrow[r]                                               & \iota\circ \indL_{\srgraph}            
\end{tikzcd}
\]
\end{lemma}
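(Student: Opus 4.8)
The plan is to mimic the proof of \Cref{lem:indLe} (and its vertex analogue \Cref{lem:constrofevvL}), now with the subgraph $\srgraph$ playing the role previously played by a single vertex or a single internal edge. First I would form the pushout
\[
P \coloneqq \iota_\srgraph \amalg_{\prod_{e\in E_\srgraph}\iota_e\circ \on{ev}_e} \Bigl(\prod_{e\in E_\srgraph} U_{h(e)}^L\circ \on{ev}_e\Bigr)
\]
in the functor $\infty$-category $\on{Fun}(\mathcal{H}(\srgraph,\mathcal{F}_\srgraph),\mathcal{L}(\rgraph,\mathcal{F}))$, where the left-hand map is $\prod_{E_\srgraph}\eta\circ\on{ev}_e$ using the natural transformation $\eta\colon \iota_e\hookrightarrow U_{h(e)}^L$ of \Cref{constr:Uh}, and the top map is induced from the (co)unit of $\iota_e\dashv \on{ev}_e$. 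The key claim is that $P$ factors through the full subcategory $\iota\colon \mathcal{H}(\rgraph,\mathcal{F})\subset\mathcal{L}(\rgraph,\mathcal{F})$, say $P\simeq \iota\circ\indL_\srgraph$; since colimits of sections are computed pointwise in $\on{Exit}(\rgraph)$ and being a coCartesian section is checked on each non-degenerate $1$-simplex, this reduces to part (2) of \Cref{lem:Uhproperties}. Indeed, the only non-coCartesian $1$-simplices in each summand $U_{h(e)}^L(X(e))$ occur at the edge of $\rgraph$ carrying the opposite halfedge $\tau(h(e))$; but for $e\in E_\srgraph$ that edge is an internal edge $e_\rgraph$ of $\rgraph$ which, under $\on{Exit}(\srgraph)\to\on{Exit}(\rgraph)$, receives the two external edges of $\srgraph$ glued into it. The section $\iota_\srgraph(X)$ supplies exactly the missing summand $X(\tau(h(e)))$ at $e_\rgraph$, so in the pushout the defect $0\to X(e)$ is cancelled and the resulting morphism at $e_\rgraph$ becomes an equivalence $\mathcal{F}(v\to e_\rgraph)(P(v)) \simeq P(e_\rgraph)$. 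This is the point I expect to be the main obstacle: one has to check carefully that the gluing along $E_\srgraph$ matches up the two halfedges of every cut internal edge correctly, i.e.\ that the explicit description of $\iota_\srgraph$ given above (with $\iota_\srgraph(X)(e)\simeq X(e')\oplus X(e'')$) is compatible with the decomposition $\on{ev}_{e_\rgraph}\circ U_{h(e')}^L \simeq \bigoplus_{c\colon h(e')\overset{\circlearrowright}{\shortrightarrow} e_\rgraph}\mathcal{F}^\rightarrow(c)$ from \Cref{lem:evUh}(1), so that no coCartesianness is lost and none is spuriously introduced.

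Once $P\simeq\iota\circ\indL_\srgraph$ is established, it remains to identify $\indL_\srgraph$ as the left adjoint of $\on{res}_\srgraph$. For this I would pass to right adjoints: the pullback $P^R \coloneqq \iota_\srgraph^R \times_{\prod_e\on{ev}_e} \prod_e U_{h(e)}$ is right adjoint to $P$ because passage to adjoints is exact (sends pushouts of left adjoints to pullbacks of right adjoints), and $\iota_\srgraph^R\simeq\on{res}_\srgraph^{\on{lax}}$ by construction. By \Cref{lem:Uhproperties}(1) each $\eta^R\colon U_{h(e)}\to\on{ev}_e$ restricts on $\mathcal{H}(\rgraph,\mathcal{F})$ to an equivalence, and $\on{res}_\srgraph^{\on{lax}}$ restricts to $\on{res}_\srgraph$ there; since equivalences are stable under pullback, $P^R\circ\iota\simeq \on{res}_\srgraph$. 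As $P$ factors through $\iota$, the adjunction $P\dashv P^R$ restricts to the adjunction $\indL_\srgraph \dashv P^R\circ\iota \simeq \on{res}_\srgraph$, giving the claimed left adjoint together with the asserted pushout square. (The analogous statement for the right adjoint $\indR_\srgraph$ then follows formally by applying the already-proven statement to the opposite perverse schober $\mathcal{F}^{\on{op}}$ via \Cref{lem:dualschober}, although that is not part of the present lemma's statement.)
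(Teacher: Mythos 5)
Your overall route is the intended one: the paper gives no separate argument for this lemma and simply runs the proof of \Cref{lem:indLe} again (form the pushout $P$, check it lands in coCartesian sections using \Cref{lem:Uhproperties}, pass to right adjoints, identify $P^R\circ\iota$ with $\on{res}_\srgraph$ via part (1) of \Cref{lem:Uhproperties} and stability of equivalences under pullback, and restrict the adjunction). The adjoint-passing half of your proposal is correct and matches this template (up to the cosmetic swap of which arrow in the square is labelled $\prod\eta\circ\on{ev}_e$).

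The gap is in the central coCartesianness check, which is the only nontrivial point. You verify it solely in the case where the cut edge $i(e)$, $e\in E_\srgraph$, ``receives the two external edges of $\srgraph$ glued into it'', i.e.\ both halfedges of $i(e)$ lie at vertices of $\srgraph$. But $E_\srgraph$ generically contains edges whose opposite halfedge $\tau(h(e))$ sits at a vertex $w\notin \srgraph_0$: for $\srgraph=\srgraph_v$ a single vertex, as in \Cref{ex:surface_vertex_induction} (the case the lemma must recover, since there $\on{res}_{\srgraph_v}=\on{ev}_v$), \emph{every} element of $E_\srgraph$ is of this type. In that case $\iota_\srgraph(X)(i(e))\simeq X(e)$ has no second summand and $\iota_\srgraph(X)(w)\simeq 0$, so your cancellation mechanism (``$\iota_\srgraph(X)$ supplies exactly the missing summand $X(\tau(h(e)))$ at $i(e)$'') does not apply: by \Cref{lem:Uhproperties}(2) the non-coCartesian simplex of $U^L_{h(e)}(X(e))$ is exactly $w\to i(e)$, where $\iota_\srgraph(X)$ vanishes, and nothing in the pushout as you describe it visibly repairs the summand $0\to X(e)$ there. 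The cancellation has to come instead from the coCartesian structure morphism of $\iota_\srgraph(X)$ over the image of $\on{Exit}(\srgraph)$, i.e.\ at the $\srgraph$-side simplex $\sigma(h(e))\to i(e)$, matched against the start of the glued $U^L$-functor; making this work forces one to pin down which halfedge of the cut edge indexes the functor $U^L$ attached to $e$ (equivalently, in which direction the glued trajectory leaves the subsurface, comparing the indexing of \Cref{constr:Uh} with the location of the defect in \Cref{lem:Uhproperties}(2)), so that the defect ends up on the $\srgraph$-side rather than at $w$. This bookkeeping, which is precisely the content of the verification and also governs the correct matching of copies in your two-preimage case, is absent from the proposal, so the key step ``$P$ factors through $\mathcal{H}(\rgraph,\mathcal{F})$'' is not established as written.
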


\begin{proposition}\label{prop:ressurface}
Let $\srgraph\subset \rgraph$ be a subgraph. 
\begin{enumerate}[(1)]
\item The unit $\on{id}_{\mathcal{F}(v)}\to \on{res}_{\srgraph}\indL_\srgraph$ is a split inclusion and the counit $\on{res}_\srgraph\indR_\srgraph\to \on{id}_{\mathcal{H}(\srgraph,\mathcal{F}_\srgraph)}$ is a split projection.
\item Let $f$ an edge of $\rgraph$. If $f$ does not lie in the image of $\on{Exit}(\srgraph)\to \on{Exit}(\rgraph)$, then
\[ \on{ev}_f\circ \indL_\srgraph \simeq \bigoplus_{e\in E_\srgraph,c:h(e)\overset{\circlearrowright}{\shortrightarrow} f} \mathcal{F}^\rightarrow(c)\circ \on{ev}_e\,.\]
If $f$ lies in the image of $\on{Exit}(\srgraph)\to \on{Exit}(\rgraph)$, with unique preimage $f'$, then
\[ \on{ev}_f\circ \indL_\srgraph \simeq \on{ev}_{f'}\oplus \bigoplus_{f'\not= e\in E_\srgraph,c:h(e)\overset{\circlearrowright}{\shortrightarrow} f} \mathcal{F}^\rightarrow(c)\circ \on{ev}_e\,.\]\\
If $f$ lies in the image of $\on{Exit}(\srgraph)\to \on{Exit}(\rgraph)$, with two preimages $f',f''$, then $f',f''\in E_\srgraph$, and thus
\[ \on{ev}_f\circ \indL_\srgraph \simeq \bigoplus_{e\in E_\srgraph,c:h(e)\overset{\circlearrowright}{\shortrightarrow} f} \mathcal{F}^\rightarrow(c)\circ \on{ev}_e\,.\]
\item Let $v$ be a vertex of $\rgraph$. Then 
\[ \on{ev}_{v}\circ \indL_\srgraph\simeq  \begin{cases} \on{res}_{v}\oplus \bigoplus_{e\in E_\srgraph,h'\in H(v),c:h(e)\overset{\circlearrowright}{\shortrightarrow}h'} \mathcal{F}(v\xrightarrow{h'}e(h'))^L\circ \mathcal{F}^\rightarrow(c)\circ \on{ev}_e & \text{if }v\in \srgraph_0\\ \bigoplus_{e\in E_\srgraph,h'\in H(v),c:h(e)\overset{\circlearrowright}{\shortrightarrow}h'} \mathcal{F}(v\xrightarrow{h'}e(h'))^L\circ \mathcal{F}^\rightarrow(c)\circ \on{ev}_e & \text{if }v\not\in \srgraph_0\,.\end{cases}\]
\end{enumerate}
Similar results apply to $\indR_\srgraph$.
\end{proposition}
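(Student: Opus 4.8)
The plan is to follow the pattern of the proofs of \Cref{prop:eve} and \Cref{prop:evv}: read off the asserted equivalences from the pushout square presenting $\indL_\srgraph$ in \Cref{lem:constrofresVL}, using that colimits of sections of $\Gamma(\mathcal{F})$ are computed pointwise over $\on{Exit}(\rgraph)$ (by \cite[5.1.2.3, 4.3.1.10]{HTT}), the formulas of \Cref{lem:evUh} for $\on{ev}_f\circ U_h^L$ and $\on{ev}_v\circ U_h^L$, and the Kan-extension description of $\iota_\srgraph$ recalled before \Cref{ex:surface_vertex_induction}. The statements for $\indR_\srgraph$ then follow from those for $\indL_\srgraph$ by passing to the opposite perverse schober via \Cref{lem:dualschober}, which exchanges left with right induction and clockwise with counterclockwise trajectories; so it suffices to treat $\indL_\srgraph$.

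For parts (2) and (3) I would apply $\on{ev}_f$, resp.\ $\on{ev}_v$, to the pushout square of \Cref{lem:constrofresVL}, obtaining a pushout square of functors valued in $\mathcal{F}(f)$, resp.\ $\mathcal{F}(v)$, and then compute each corner. From the Kan-extension formula, $\on{ev}_v\circ\iota_\srgraph$ is $\on{res}_v$ if $v\in\srgraph_0$ and $0$ otherwise, while $\on{ev}_f\circ\iota_\srgraph$ is $\bigoplus_{f'}\on{ev}_{f'}$ over the preimages $f'$ of $f$ in $\srgraph$ (so $\on{ev}_{f'}$, or $\on{ev}_{f'}\oplus\on{ev}_{f''}$, or $0$ in the three cases of the statement). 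The corner $\on{ev}_x\circ\iota_e\circ\on{ev}_e$ is $\on{ev}_e$ if $e$ maps to $x$ and $0$ otherwise, and $\on{ev}_x\circ U_{h(e)}^L\circ\on{ev}_e$ is the relevant direct sum of transport equivalences by \Cref{lem:evUh}. Because $\eta\colon\iota_e\hookrightarrow U_{h(e)}^L$ restricts at $e$ to an equivalence, the upper horizontal map of the square becomes, after applying $\on{ev}_x$, the inclusion of a direct summand; the pushout is therefore the direct sum of $\on{ev}_x\circ\iota_\srgraph$ with the complementary summands of $\bigoplus_{e\in E_\srgraph}\on{ev}_x\circ U_{h(e)}^L\circ\on{ev}_e$. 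Matching summands — using that $\on{ev}_{e(h)}\circ U_h^L\simeq\on{id}$, so that the constant trajectory $h(e)\xrightarrow{\circlearrowright}f$ is the one contribution of $U_{h(e)}^L$ over $f$ and is exactly what gets absorbed into the $\iota_\srgraph$-term — gives the three displayed formulas in part (2) and, analogously, part (3).

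Part (1) I would prove by the colimit-and-mapping-space argument used for part (3) of \Cref{lem:evUh}: apply $\on{res}_\srgraph^{\on{lax}}$ to the pushout square, build the candidate unit $\on{id}\to\on{res}_\srgraph\indL_\srgraph$ from the compatible diagram of units of the $\underline{U}_{h(e)}^L$, and check on mapping spaces that it is the adjunction unit; the retraction splitting it is visible in the defining colimit because the relevant structure maps restrict to equivalences on global sections, as in part (1) of \Cref{lem:Uhproperties}. The counit $\on{res}_\srgraph\indR_\srgraph\to\on{id}$ is handled dually. Alternatively, part (1) follows from parts (2)--(3) together with the fact that a coCartesian section over $\on{Exit}(\srgraph)$ is determined by its restriction to vertices, since part (3) exhibits $\on{res}_v$ as a natural direct summand of $\on{ev}_v\circ\indL_\srgraph$ for every $v\in\srgraph_0$.

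The formal skeleton above is routine; I expect the actual obstacle to be the combinatorial bookkeeping in part (2) when $\on{Exit}(\srgraph)\to\on{Exit}(\rgraph)$ is not an equivalence onto its image --- that is, when an internal edge $f$ of $\rgraph$ is cut into two external edges of $\srgraph$, or meets $\srgraph$ in a single endpoint. There one must check carefully that $\on{ev}_f\circ\iota_\srgraph$ accounts for precisely the constant-trajectory contributions of the $U_{h(e)}^L$ with $e$ lying over $f$, which is what makes the three index sets in the statement (ranging over all of $E_\srgraph$, over $E_\srgraph$ with one edge omitted, or over $E_\srgraph$) simultaneously correct; keeping the constant/initial-segment conventions consistent across these sub-cases is the one genuinely delicate point.
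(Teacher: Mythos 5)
Your proposal follows essentially the same route the paper intends: the paper states this proposition without a separate proof, "as in" \Cref{subsec:restriction2}, i.e.\ by evaluating the pushout square of \Cref{lem:constrofresVL} pointwise over $\on{Exit}(\rgraph)$, computing the corners via \Cref{lem:evUh} and the description of $\iota_\srgraph$, handling part (1) as in part (3) of \Cref{lem:evUh}, and deducing the $\indR_\srgraph$-statements from \Cref{lem:dualschober} --- exactly your plan. One small caveat: your parenthetical claim $\on{ev}_{e(h)}\circ U_h^L\simeq \on{id}$ is not true in general (the clockwise trajectory may return past $e(h)$, as in the paper's annulus example), but this does not damage the argument, since what the pushout identifies is only the constant-trajectory summand, which is the point you actually use.
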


\begin{remark}
The results of \Cref{prop:ressurface} geometrically characterize when the functor $\indL_\srgraph\colon \mathcal{H}(\srgraph,\mathcal{F}_\srgraph)\to \mathcal{H}(\rgraph,\mathcal{F})$ is fully faithful. This is the case if and only if the clockwise trajectories starting at the boundary of the thickening ${\bf S}_{\srgraph}$ of $\srgraph$ in ${\bf S}$ do not pass into the interior of ${\bf S}_{\srgraph}$.
\end{remark}

\subsection{Induction for spanning graphs with \texorpdfstring{$1$}{1}-valent vertices}\label{subsec:1-valent}

\begin{lemma}\label{lem:induction_1_valent}
Let ${\bf S}$ be a marked surface with spanning graph $\rgraph$ and let $\mathcal{F}$ be a $\rgraph$-parametrized perverse schober.
\begin{enumerate}[(1)]
\item Let $e$ be an edge of $\rgraph$. Then the functor $\on{ev}_e\colon \mathcal{H}(\rgraph,\mathcal{F})\to \mathcal{F}(e)$ admits left and right adjoints $\indL_e,\indR_e$. 
\item Let $v$ be a vertex of $\rgraph$. Then the functor $\on{ev}_v\colon \mathcal{H}(\rgraph,\mathcal{F})\to \mathcal{F}(v)$ admits left and right adjoints $\indL_v,\indR_v$.
\item Let $\srgraph\subset \rgraph$ be a subgraph. Then the functor $\on{res}_{\srgraph}\colon \mathcal{H}(\rgraph,\mathcal{F})\to \mathcal{H}(\srgraph,\mathcal{F}_{\srgraph})$ admits left and right adjoints $\indL_{\srgraph},\indR_{\srgraph}$. 
\end{enumerate}
\end{lemma}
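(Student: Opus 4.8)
The strategy is to reduce the case of a spanning graph $\rgraph$ with $1$-valent vertices to the case without $1$-valent vertices, which has already been handled in \Cref{lem:indLe}, \Cref{lem:constrofevvL}, \Cref{prop:eve}, \Cref{prop:evv}, and \Cref{lem:constrofresVL}. First I would recall the operation of \emph{smoothing} a $1$-valent vertex: if $v$ is a $1$-valent vertex of $\rgraph$ incident to the edge $e$, with $w$ the other vertex of $e$, then near $v$ the perverse schober amounts to a spherical adjunction $\mathcal{F}(v\to e)\colon \mathcal{F}(v)\leftrightarrow \mathcal{F}(e)$. One can form a new ribbon graph $\rgraph'$ by deleting $v$ and $e$ (so $w$ loses one incident halfedge), and one can lift $\mathcal{F}$ to a $\rgraph'$-parametrized perverse schober $\mathcal{F}'$ with $\mathcal{F}'(w)\simeq \mathcal{F}(w)$ on the nose; this is the inverse of the ``add a marked point / add an external edge'' operation used in the proof of \Cref{thm:globalsphericalness} (invoking \cite[Prop.~3.6]{CHQ23} to see that global sections are unchanged, $\mathcal{H}(\rgraph,\mathcal{F})\simeq \mathcal{H}(\rgraph',\mathcal{F}')$, compatibly with all evaluation functors at objects surviving in $\rgraph'$). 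Iterating, after finitely many smoothings we reach a spanning graph $\rgraph''$ with no $1$-valent vertices, together with an equivalence $\mathcal{H}(\rgraph,\mathcal{F})\simeq \mathcal{H}(\rgraph'',\mathcal{F}'')$. (The only exception is ${\bf S}$ the $1$-gon; there $\rgraph$ must be a single $1$-valent vertex with one external edge, and $\mathcal{H}(\rgraph,\mathcal{F})\simeq \mathcal{F}(v)$ with $\on{ev}_v$ an equivalence, so all adjoints trivially exist.)

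Next I would transport the adjoints across this equivalence. For part~(2), if $v$ survives in $\rgraph''$, then $\on{ev}_v$ on $\mathcal{H}(\rgraph,\mathcal{F})$ corresponds under the equivalence to $\on{ev}_v$ on $\mathcal{H}(\rgraph'',\mathcal{F}'')$, which admits left and right adjoints by \Cref{lem:constrofevvL} and its dual; conjugating by the equivalence gives $\indL_v,\indR_v$ for $\rgraph$. If $v$ is a $1$-valent vertex that got smoothed, let $e$ be its unique edge and $w$ the other endpoint. The composite $\on{ev}_v\colon \mathcal{H}(\rgraph,\mathcal{F})\to \mathcal{F}(v)$ factors (up to the equivalence $\mathcal{H}(\rgraph,\mathcal{F})\simeq \mathcal{H}(\rgraph',\mathcal{F}')$) through $\on{ev}_e\colon \mathcal{H}(\rgraph',\mathcal{F}')\to \mathcal{F}(e)$ followed by the left adjoint $\mathcal{F}(v\to e)^L$ of the spherical functor $\mathcal{F}(v\to e)$; since a spherical functor admits both adjoints and iterated adjoints, and $\on{ev}_e$ on $\rgraph'$ admits adjoints by part~(1) applied to $\rgraph'$ (which has strictly fewer $1$-valent vertices — so an induction on the number of $1$-valent vertices is the cleanest bookkeeping), the composite admits left and right adjoints as a composite of adjointable functors. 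Part~(1) itself is handled the same way: if $e$ survives in $\rgraph''$, conjugate \Cref{lem:indLe} and its dual; if $e$ is incident to a smoothed $1$-valent vertex it became either an external edge of $\rgraph'$ incident to $w$ (still an internal/external edge after possibly further smoothings) and one again reduces to the adjoint of $\on{ev}_e$ on a graph with fewer $1$-valent vertices, handled by induction.

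For part~(3), the subgraph case, I would argue analogously: given $\srgraph\subset\rgraph$, one may first smooth all $1$-valent vertices of $\rgraph$ that do not lie in $\srgraph_0$, and then observe that a $1$-valent vertex of $\rgraph$ lying in $\srgraph_0$ is also a $1$-valent vertex of $\srgraph$, so smoothing it on both sides simultaneously replaces the pair $(\srgraph\subset\rgraph)$ by a pair with fewer $1$-valent vertices and an identification of both global-section categories and of the functor $\on{res}_\srgraph$ with the corresponding restriction functor for the smoothed pair (again via \cite[Prop.~3.6]{CHQ23}). Inducting down to a pair of graphs without $1$-valent vertices, \Cref{lem:constrofresVL} and its dual supply $\indL_\srgraph,\indR_\srgraph$, and conjugating back gives the claim. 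Throughout, the dual (right adjoint) statements follow as usual by passing to opposite $\infty$-categories via \Cref{lem:dualschober}, so it suffices to produce the left adjoints.

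\textbf{Main obstacle.} The only genuinely delicate point is the lifting step: verifying that smoothing a $1$-valent vertex produces a perverse schober $\mathcal{F}'$ on $\rgraph'$ with $\mathcal{H}(\rgraph,\mathcal{F})\simeq \mathcal{H}(\rgraph',\mathcal{F}')$ in a way that is \emph{functorial in the evaluation maps} — i.e.\ that under this equivalence $\on{ev}_x$ for $x$ an object of $\on{Exit}(\rgraph'')$ really does correspond to $\on{ev}_x$ on the other side, and $\on{ev}_v$ at a smoothed vertex $v$ corresponds to $\mathcal{F}(v\to e)^L\circ \on{ev}_e$. This compatibility is essentially the content of \cite[Prop.~3.6]{CHQ23} together with the local analysis at a $1$-valent vertex of \Cref{def:schobernspider}(1), so once that is invoked carefully the rest is a straightforward induction on $|\{1\text{-valent vertices}\}|$ plus the standard fact that composites of functors admitting one-sided adjoints admit the same one-sided adjoints.
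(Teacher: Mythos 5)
Your reduction step is the problem: the ``smoothing'' you propose (delete a $1$-valent vertex $v$ together with its unique edge $e$, keep $\mathcal{F}'(w)\simeq\mathcal{F}(w)$ on the nose) neither produces a perverse schober nor preserves global sections. At $w$ the perversity conditions of \Cref{def:schobernspider} are conditions on the \emph{full} cyclically ordered collection of functors to the incident edges (e.g.\ that $F_iG_{i+1}$ is an equivalence and $F_iG_j\simeq 0$ otherwise); dropping one member of the collection while keeping $\mathcal{F}(w)$ unchanged breaks them in general. More importantly, $\mathcal{H}(\rgraph,\mathcal{F})$ is the limit over all of $\on{Exit}(\rgraph)$, and the stalk $\mathcal{F}(v)$ at a $1$-valent vertex carries genuine data (the vanishing cycles; think of a thimble in the Fukaya--Seidel example) which is forgotten when $v$ and $e$ are removed, so $\mathcal{H}(\rgraph,\mathcal{F})\not\simeq\mathcal{H}(\rgraph',\mathcal{F}')$ for any schober $\mathcal{F}'$ with the values you prescribe. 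The results you cite do not supply such an operation: \cite[Prop.~3.6]{CHQ23} concerns adding/removing an \emph{external} edge at a vertex while replacing the vertex value by a fiber (this is how the paper uses it in \Cref{thm:globalsphericalness}), and the contraction statement \cite[Lemma 3.16]{CHQ23} merges $v$ into $w$ but changes the value at the merged vertex to a limit over the subgraph, not $\mathcal{F}(w)$. Relatedly, your factorization $\on{ev}_v\simeq\mathcal{F}(v\to e)^L\circ\on{ev}_e$ is false: on coCartesian sections one has $\on{ev}_e\simeq\mathcal{F}(v\to e)\circ\on{ev}_v$, and since $\mathcal{F}(v\to e)$ is merely spherical (not fully faithful, not invertible), $\on{ev}_v$ cannot be recovered from $\on{ev}_e$ by applying an adjoint; also note that in your smoothing the edge $e$ is deleted outright, so $\on{ev}_e$ does not even exist on $\rgraph'$, which makes the induction in part (1) collapse.

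For comparison, the paper argues in the opposite direction: it \emph{enlarges} $\rgraph$ to $\tilde{\rgraph}$ by adding an external edge at each $1$-valent vertex, lifts $\mathcal{F}$ to $\tilde{\mathcal{F}}$ with $\mathcal{F}(v)\simeq\on{fib}(\tilde{\mathcal{F}}(v)\to\tilde{\mathcal{F}}(e))$, and identifies $\mathcal{H}(\rgraph,\mathcal{F})$ with the full subcategory of $\mathcal{H}(\tilde{\rgraph},\tilde{\mathcal{F}})$ of sections vanishing on the added edges. The induction functors of $\tilde{\mathcal{F}}$ (which exist by \Cref{lem:indLe,lem:constrofevvL}) do not land in this subcategory, so they are corrected by an iterated cofiber construction along the clockwise trajectories, using \Cref{prop:evv}, until the trajectories reach genuine external edges of $\rgraph$; the corrected functor is then left adjoint to a functor restricting to $\on{ev}_v$. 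If you want to salvage a shrinking argument, the viable route would be genuine edge contraction (with the merged vertex carrying the glued category), but then $\on{ev}_v$ only factors through a projection out of the merged vertex category whose adjointability is again a statement of the same kind as the lemma, so it does not come for free by conjugating along an equivalence.
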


\begin{proof}
We only prove part (2), the proofs of part (1) and (3) are very similar.
Let $\tilde{\rgraph}$ be the ribbon graph obtained from $\rgraph$ by adding an external edge to each $1$-valent vertex. We denote by $E\subset \tilde{\rgraph}_1^\partial$ the subset of these added edges.
We can lift $\mathcal{F}$ to a $\tilde{\rgraph}$-parametrized perverse schober $\tilde{\mathcal{F}}$ with the property that for each $1$-valent vertex $v$ of $\rgraph$ with additional external edge $e$ in $\tilde{\rgraph}$, we have $\mathcal{F}(v)\simeq \on{fib}(\tilde{\mathcal{F}}(v)\to \tilde{\mathcal{F}}(e))$. The $\infty$-category of global sections $\mathcal{H}(\rgraph,\mathcal{F})\subset \mathcal{H}(\tilde{\rgraph},\tilde{\mathcal{F}})$ describes the full subcategory of global sections of $\tilde{\mathcal{F}}$ which evaluate trivially at each edge $e\in E$. The perverse schober $\tilde{\mathcal{F}}$ admits by \Cref{lem:indLe,lem:constrofevvL} left induction functors, which we denote by $\tilde{\on{ind}}_x^L$. 

Fix a vertex $v$ of $\rgraph$.  Let $h\in H_{\tilde{\rgraph}}(v)$. Consider the clockwise trajectory $\cwt_h$ (with respect to the line field $\nu_{\tilde{\rgraph}}$) starting at $h$. Denote by $t(h)$ the external edge of $\tilde{\rgraph}$ near which $\cwt_h$ ends. Consider the natural transformation 
$\alpha\colon \tilde{\on{ind}}^L_{t(e)}\circ \tilde{\mathcal{F}}^\rightarrow(\cwt_h)\circ \mathcal{F}(h)\to\tilde{\on{ind}}^L_v$ adjoint to the inclusion $\tilde{\mathcal{F}}^\rightarrow(\cwt_h)\circ \mathcal{F}(h)\subset \on{ev}_{t(e)}\circ \tilde{\on{ind}}^L_v$ from \Cref{prop:evv}. We pass to the cofiber $\on{cof}(\alpha)$ and distinguish two cases: the first case is that the trajectory $\cwt_{t(h)}$ ends at an external edge in $\tilde{\rgraph}_1^\partial \backslash E=\rgraph_1^\partial$. In this case, we proceed in the same way with the next halfedge of $v$. The second case is that the trajectory $\cwt_{t(h)}$ ends at an external edge $e_2$ in $E$. We then pass to the cofiber of the natural transformation 
\[\tilde{\on{ind}}^L_{e_2}\circ \tilde{\mathcal{F}}^\rightarrow(\cwt_{t(h)})\circ \tilde{\mathcal{F}}^\rightarrow(\cwt_h)\circ \mathcal{F}(h)[1]\to \on{cof}(\alpha)\,.\]
If $\cwt_{e_2}$ ends near an external edge in $E$, we again pass to the next cofiber, until eventually an external edge in $\rgraph_1^\partial$ is reached from the composite of clockwise trajectories. Note that this always happens after finitely many steps for topological reasons, the composite of these clockwise trajectories recovers the clockwise trajectory starting at $h$ with respect to the line field induced by $\rgraph$ (which loops around all $1$-valent vertices).  

After passing to cofibers as above for all halfedges $h\in H(v)$, the resulting functor $\tilde{\mathcal{F}}(v)\to  \mathcal{H}(\tilde{\rgraph},\tilde{\mathcal{F}})$ takes values in the full subcategory $\mathcal{H}(\rgraph,\mathcal{F})\subset \mathcal{H}(\tilde{\rgraph},\tilde{\mathcal{F}})$. If $v$ is $1$-valent, we further compose the functor with the inclusion $\mathcal{F}(v)\subset \tilde{\mathcal{F}}(v)$, denoting the result by $\indL_v$. By construction, the functor $\indL_v$ is left adjoint to a functor that restricts on $\mathcal{H}(\rgraph,\mathcal{F})$ to $\on{ev}_v$.  
\end{proof} 

\begin{remark}
The proof of \Cref{lem:induction_1_valent} shows that the support of objects in the image of $\indL_v$ is described by the clockwise web trajectory $\cwt_v$. While the splitting results from \Cref{prop:eve,prop:evv,prop:ressurface} can fail in the presence of $1$-valent vertices, they can be replaced by descriptions in terms of extensions of the given direct summands.
\end{remark}

\begin{example}
In the case where the marked surface ${\bf S}$ is the $1$-gon, the formalism of parametrized perverse schobers can be seen as an algebraic reformulation and generalization of the formalism of Fukaya--Seidel categories of \cite{Sei08}. As spanning ribbon graph, we consider the linear graph $\rgraph$:
\[
\begin{tikzcd}
\times \arrow[r, no head] & \times \arrow[r, no head] & \dots \arrow[r, no head] & \times \arrow[r, no head] & {}
\end{tikzcd}
\]
Thimbles then arise from induction from the vertices as follows: Let $\mathcal{F}$ be a $\rgraph$-parametrized perverse schober. Let $v$ be a $2$-valent vertex of $\rgraph$, $e^L$ the edge to its left and $e^R$ the edge to its right. Let $F\colon \V\to \N$ be the spherical functor underlying $\mathcal{F}$ at $v$. The case of Fukaya--Seidel categories corresponds to $\V=\D^{\on{perf}}(k)$ and $\N$ the perfect derived $\infty$-category of the wrapped Fukaya category of the fiber of the Lefschetz fibration. 

Then $\mathcal{F}(v)\simeq \V\times^{\rightarrow}_F\N$ is equivalent to the lax sum, containing the $\infty$-category of vanishing cycles $\V$ as a full subcategory. A vanishing cycle $C\in \V\subset \mathcal{F}(v)$ evaluates trivially at $e^L$ and as $F(C)$ at $e^R$. Thus, the support of the global section $\indL_v(C)$ is not described by the entire web trajectory $\cwt_v$, but only by the right-pointed part of the web trajectory. This amounts to the vanishing path that starts at $v$ and passes below the vertices to the right of $v$. The object $\indL_v(C)$ can be considered as the thimble associated with this vanishing path with vanishing cycle $C$. We depict these vanishing paths in \Cref{fig:thimbles}. Similarly, right induction gives rise to thimbles that pass along the top.

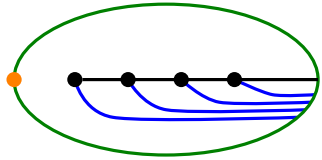
\begin{figure}[ht!]
\begin{center}
\begin{tikzpicture}
\draw[color=blue, very thick]  plot [smooth] coordinates {(-1.2,0) (-0.7,-0.5) (1.73,-0.5)};
\draw[color=blue, very thick]  plot [smooth] coordinates {(-0.5,0) (0,-0.4) (1.83,-0.4)};
\draw[color=blue, very thick]  plot [smooth] coordinates {(0.2,0) (0.7,-0.3) (1.9,-0.3)};
\draw[color=blue, very thick]  plot [smooth] coordinates {(0.9,0) (1.4,-0.2) (1.95,-0.2)};	
	
\draw[color=ao][very thick] (0,0) ellipse(2 and 1);

\node (4) at (-2,0){};
\node (a) at (-1.2,0){};
\node (c) at (-0.5,0){};
\node (d) at (0.2,0){};
\node (e) at (0.9,0){};

\fill[orange] (4) circle(0.1);

\fill (a) circle(0.1);
\fill (c) circle(0.1);
\fill (d) circle(0.1);
\fill (e) circle(0.1);

\draw[very thick] (-1.1,0)--(-0.5,0)--(0,0)--(0.5,0)--(1,0)--(2,0);

\end{tikzpicture}
\caption{The vanishing paths associated with thimbles arising from left induction are described by clockwise trajectories.}\label{fig:thimbles}
\end{center}
\end{figure}
\end{example}

\subsection{Global twisting and left and right induction}\label{subsec:globaltwist}

Let ${\bf S}$ be a marked surface with spanning graph $\rgraph$ and $\mathcal{F}$ a $\rgraph$-parametrized perverse schober. By \Cref{cor:globalsphadj}, the adjunction arising from boundary restriction of global sections
\[ \prod_{e\in \rgraph_1^\partial} \on{ev}_e\colon \glsec(\rgraph,\mathcal{F})\longleftrightarrow \prod_{e\in \rgraph_1^\partial} \mathcal{F}(e)\noloc  \prod_{e\in \rgraph_1^\partial}\indR_e\] 
is spherical. We denote by $C_{\bf S}\colon \glsec(\rgraph,\mathcal{F})\to \glsec(\rgraph,\mathcal{F})$ the corresponding twist functor. 

Given a vertex $v$ of $\rgraph$, we denote by $C_v\colon \mathcal{F}(v)\to \mathcal{F}(v)$ the twist functor of the spherical adjunction 
\[
\prod_{h\in H(v)} \mathcal{F}(v\xrightarrow{h}h(e))\colon \mathcal{F}(v)\longleftrightarrow \prod_{h\in H(v)} \mathcal{F}(e(h))\noloc  \prod_{h\in H(v)} \mathcal{F}(v\xrightarrow{h}h(e))^R\,.
\]

\begin{proposition}\label{prop:twist_indL_indR}~
\begin{enumerate}[(1)]
\item Let $e$ be an edge of $\rgraph$. Then there exists an equivalence of functors
\[
C_{\bf S}\circ \indL_e\simeq \indR_e\,.
\]
\item Let $v$ be a vertex of $\rgraph$. Then there exists an equivalence of functors
\[
C_{\bf S}\circ \indL_v\simeq \indR_e\circ C_v\,.
\]
\item Let $\srgraph\subset \rgraph$ be a subgraph. The thickening of $\srgraph$ defines a marked surface denoted ${\bf S}_{\srgraph}$.
Then there exists an equivalence of functors
\[
C_{\bf S}\circ \indL_{\srgraph}\simeq \indR_{\srgraph}\circ C_{{\bf S}_{\srgraph}}\,.
\]
\end{enumerate}
\end{proposition}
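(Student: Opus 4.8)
The plan is to deduce parts (1) and (2) from part (3), and then prove part (3) from the pushout presentations of the induction functors.

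\emph{First reductions.} Part (2) is the special case $\srgraph=\srgraph_v$ of part (3): by \Cref{ex:surface_vertex_induction} the functor $\on{res}_{\srgraph_v}$ is identified with $\on{ev}_v$, so $\indL_{\srgraph_v}\simeq\indL_v$ and $\indR_{\srgraph_v}\simeq\indR_v$, while the thickening ${\bf S}_{\srgraph_v}$ is the $m$-gon whose boundary spherical adjunction (\Cref{cor:globalsphadj}) is precisely $\prod_{h\in H(v)}\mathcal{F}(v\xrightarrow{h}e(h))\dashv\prod_h\mathcal{F}(v\xrightarrow{h}e(h))^R$, so that $C_{{\bf S}_{\srgraph_v}}\simeq C_v$. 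For part (1) with $e$ external I would invoke the standard rotation of the four-periodic structure of a spherical adjunction \cite{DKSS21}: for spherical $F\dashv G$ with twist $T$ on the source of $F$, the left adjoint $F^L$ of $F$ satisfies $T\circ F^L\simeq G$. Applied to the boundary spherical adjunction $\prod_{f\in\rgraph_1^\partial}\on{ev}_f\dashv\prod_f\indR_f$, whose twist is $C_{\bf S}$ and whose left adjoint is $\bigoplus_f\indL_f$, this gives $C_{\bf S}\circ\bigoplus_f\indL_f\simeq\bigoplus_f\indR_f$, and precomposing with the inclusion of the $\mathcal{F}(e)$-summand yields $C_{\bf S}\circ\indL_e\simeq\indR_e$. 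For part (1) with $e$ internal I would subdivide $e$ by a bivalent vertex $w$, obtaining a spanning graph $\rgraph'$ of the same surface and an extension of $\mathcal{F}$ to a perverse schober which at $w$ is non-singular with both structure functors the identity; by invariance of global sections under ribbon graph contractions (\cite[Lemma~3.16]{CHQ23}, as in the proof of \Cref{thm:globalsphericalness}) this identifies $\on{ev}_e,\indL_e,\indR_e,C_{\bf S}$ with $\on{ev}_w,\indL_w,\indR_w,C_{\bf S}$, and a direct computation shows the twist $C_w$ of a bivalent non-singular vertex is the identity ($C_w=\on{cof}(\mathcal{F}(w)\xrightarrow{\Delta}\mathcal{F}(w)\oplus\mathcal{F}(w))\simeq\on{id}$), so part (1) for internal $e$ follows from part (2) for $w$. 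This reduces everything to part (3).

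\emph{Proof of part (3).} I would start from the pushout presentation of $\iota\circ\indL_\srgraph$ in \Cref{lem:constrofresVL} together with its mirror for $\indR_\srgraph$ (clockwise building blocks $U^L_{h(e)}$ replaced, via \Cref{lem:dualschober}, by their counterclockwise analogues), so that both $\indL_\srgraph$ and $\indR_\srgraph$ are assembled from $\iota_\srgraph$ and the functors $U^L_{h(e)}\circ\on{ev}_e$ indexed by the cut halfedges $e\in E_\srgraph$. Inserting $\indL_\srgraph$ into the cofibre sequence $\on{id}_{\mathcal{H}(\rgraph,\mathcal{F})}\to\bigoplus_{f\in\rgraph_1^\partial}\indR_f\circ\on{ev}_f\to C_{\bf S}$ defining $C_{\bf S}$, using \Cref{prop:ressurface} to expand $\on{ev}_f\circ\indL_\srgraph$ into a sum of transports of the $\on{ev}_e$ ($e\in E_\srgraph$), and recognizing each composite $\indR_f\circ\mathcal{F}^\rightarrow(c)$ (for $c\colon h(e)\xrightarrow{\circlearrowright}f$ with $f$ external in $\rgraph$) via the external case of part (1) as $C_{\bf S}\circ\indL_f\circ\mathcal{F}^\rightarrow(c)$, one finds that concatenating $c$ with $\ccwt_f$ produces — up to the allowed homotopies — a counterclockwise trajectory; matching these contributions with the $E_\srgraph$-components of $C_{{\bf S}_\srgraph}$, which effect the same concatenations inside ${\bf S}_\srgraph$, and the $(\srgraph_1^\partial\setminus E_\srgraph)$-components of $C_{{\bf S}_\srgraph}$ with the corresponding components of $C_{\bf S}$ (again by the external case), identifies $C_{\bf S}\circ\indL_\srgraph$ with $\indR_\srgraph\circ C_{{\bf S}_\srgraph}$. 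A convenient simplification is that part (3) is transitive along chains $\srgraph\subset\srgraph'\subset\rgraph$ — since $\on{res}_\srgraph\simeq\on{res}^{\srgraph'}_\srgraph\circ\on{res}_{\srgraph'}$, the left and right adjoints of $\on{res}_\srgraph$ are the respective composites, and $C_{({\bf S}_{\srgraph'})_\srgraph}=C_{{\bf S}_\srgraph}$ — so one may reduce to the atomic case where $\rgraph_0\setminus\srgraph_0$ is a single vertex and argue exactly as in \Cref{lem:constrofevvL}.

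\emph{Main obstacle.} The delicate point is producing the equivalence coherently rather than merely on objects. The elementary pieces $U^L_{h(e)}$ do not take values in $\mathcal{H}(\rgraph,\mathcal{F})$, so $C_{\bf S}$ cannot be applied term by term to the pushout of \Cref{lem:constrofresVL}; one must first assemble the global object and then construct the equivalence compatibly with the colimit presentation, which amounts to tracking how the ``rotation'' encoded by $C_{\bf S}$ — visible a priori only along the boundary edges $\rgraph_1^\partial$ — propagates along the clockwise trajectories $\cwt_{h(e)}$ ($e\in E_\srgraph$) that exit ${\bf S}_\srgraph$, cross the complement, and return to the boundary, matching the rotation performed inside ${\bf S}_\srgraph$ by $C_{{\bf S}_\srgraph}$. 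I would organize this by building the comparison map $C_{\bf S}\circ\indL_\srgraph\to\indR_\srgraph\circ C_{{\bf S}_\srgraph}$ out of the units and counits of the adjunctions in play, so that naturality is automatic, and then checking it is an equivalence stalkwise: on each $\on{ev}_x$, $x\in\on{Exit}(\rgraph)$, both sides split by \Cref{prop:ressurface} into sums of transport equivalences indexed by (necessarily non-crossing) trajectories, and the comparison reduces to the trajectory bookkeeping described above.
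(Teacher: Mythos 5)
Your reductions are sound and largely parallel the paper's: part (2) is indeed the case $\srgraph=\srgraph_v$ of part (3) via \Cref{ex:surface_vertex_induction}, with $C_{{\bf S}_{\srgraph_v}}\simeq C_v$; your route to part (1) (external edges via the rotation identity for the boundary spherical adjunction of \Cref{cor:globalsphadj}, internal edges via subdivision by a non-singular bivalent vertex whose twist is trivial, plus contraction invariance) is more roundabout than, but a workable alternative to, the paper's deduction of (1) from (2) using $\indL_e\simeq\indL_v\circ\mathcal{F}(h)^L$ and $\indR_e\simeq\indR_v\circ\mathcal{F}(h)^R$; and the transitivity remark for nested subgraphs is fine. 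There is no circularity, since your proof of (3) only invokes the external case of (1).

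The genuine gap is in part (3), exactly where you flag your ``main obstacle'': the equivalence is never actually produced. The proposed construction of a comparison map ``out of the units and counits, so that naturality is automatic'' is not available in the direction you need: by the adjunction $\on{res}_\srgraph\dashv\indR_\srgraph$, a map $C_{\bf S}\circ\indL_\srgraph\to\indR_\srgraph\circ C_{{\bf S}_\srgraph}$ is a map $\on{res}_\srgraph\circ C_{\bf S}\circ\indL_\srgraph\to C_{{\bf S}_\srgraph}$, and to compare the defining cofiber sequences $\on{id}\to\prod_f\indR_f\on{ev}_f\to C_{\bf S}$ (precomposed with $\indL_\srgraph$ and restricted) and $\on{id}\to\prod_{e}\indR_{e,\srgraph}\on{ev}_e\to C_{{\bf S}_\srgraph}$ you would need a natural map $\on{res}_\srgraph\indL_\srgraph\to\on{id}$; the canonical unit points the other way, and the retraction provided by \Cref{prop:ressurface} requires an argument that it is natural and fits into the requisite commuting squares. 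Likewise, ``checking it is an equivalence stalkwise'' presupposes computing the components of your (unspecified) map with respect to the splittings of \Cref{prop:ressurface} and seeing an invertible (e.g.\ triangular) matrix of transports; that bookkeeping is the actual content and is not carried out. The paper avoids constructing any direct comparison map: it exhibits each of $\indR_\srgraph\circ C_{{\bf S}_\srgraph}$ and $C_{\bf S}\circ\indL_\srgraph$ as the cofiber of an explicit upper-triangular natural transformation ($\alpha$, resp.\ $\beta$) built from the pushout presentations of \Cref{lem:constrofresVL} and its right-handed analogue, identifies the cofibers of the two key inclusions $i,i'$ with the common functor $\on{cof}(\iota_e\hookrightarrow U^R_{\tau(h(e))})$, deduces $\on{cof}(\alpha)\simeq\on{cof}(\beta)$ compatibly with the residual inclusions of $\prod_{e\in E_\srgraph}\iota_e\circ\on{ev}_e[1]$, and takes one further cofiber. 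To complete your route you would either have to carry out this cofiber identification or genuinely construct the comparison map and verify its components; as written, the central step of (3) is asserted rather than proved.
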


\begin{proof}
Part (1) follows from part (2) using the equivalences of functors $\indL_e\simeq \indL_v\circ \mathcal{F}(h)^L$ and $\indR_e\simeq \indR_v\circ \mathcal{F}(h)^R$ for any edge $e$ with halfedge $h\in e$ lying at a vertex $v$. Part (2) is in turn a special case of part (3) by \Cref{ex:surface_vertex_induction}. 

We turn to the proof of part (3). To simplify the discussion, we will restrict ourselves to the case that $\rgraph$ has no $1$-valent vertices. The general case follows from a similar argument. The functor $\indR_\srgraph\circ C_{{\bf S}_{\srgraph}}$ appears as the cofiber of the right vertical morphism in the following diagram with horizontal fiber and cofiber sequences:
\[
\begin{tikzcd}
\prod_{e\in E_{\srgraph}}\iota_e\circ \on{ev}_e \arrow[r] \arrow[d] & \prod_{e\in E_{\srgraph}}U_{h(e)}^R\circ \on{ev}_e \oplus \iota_{\srgraph} \arrow[r] \arrow[d, "\alpha"] & \indR_{\srgraph} \arrow[d]                                                                \\
0 \arrow[r]                                                        & \prod_{e\in \srgraph_1^\partial} \indR_{e}\circ \on{ev}_e \arrow[r, "\simeq"]                   & {\indR_{\srgraph}\circ \prod_{e\in \srgraph_1^\partial} \indR_{e,\srgraph}\circ \on{ev}_e}
\end{tikzcd}
\]
Here and below, we denote for $e\in E_\srgraph$  by $h(e)\in e$ its unique halfedge. The functor $U_{h}^R$ denotes the variant of the functor $U_{h}^L$ obtained by performing \Cref{constr:Uh} in the opposite perverse schober. The functor $U_{h}^R$ is thus the right adjoint to $U_{h}$. The functor $\indR_{e,\srgraph}$ denotes the right induction functor from $e$ to global sections of $\mathcal{F}_{\srgraph}$. 

Similarly, the functor $T_{\bf S} \indL_{\srgraph}$ appears as the cofiber of the right vertical morphism in the following diagram with horizontal fiber and cofiber sequences:
\[
\begin{tikzcd}[column sep=5]
\underset{e\in E_{\srgraph}}{\prod}\iota_e\circ \on{ev}_e \arrow[r] \arrow[d] & \underset{e\in E_{\srgraph}}{\prod}U_{h(e)}^L\circ \on{ev}_e \oplus \iota_{\srgraph} \arrow[r] \arrow[d, "\beta"]                                                                                                        & \indL_{\srgraph} \arrow[d]                                                     \\
0 \arrow[r]                                                         & \underset{e\in E_{\srgraph}}{\prod} \indR_{t(e)}\circ \mathcal{F}^{\rightarrow}(\cwt_{h(e)})\circ \on{ev}_e \oplus \underset{e\in H_1^\partial \backslash E_{\srgraph}}{\prod} \indR_e\circ \on{ev}_e \arrow[r, "\simeq"] & \underset{e\in \rgraph_1^\partial}{\prod} \indR_{e}\circ \on{ev}_e\circ \indL_{\srgraph}
\end{tikzcd}
\]
We denote for $e\in E_{\srgraph}$ by $t(e)\in \rgraph_1^\partial$ the external edge where $\cwt_{h(e)}$ ends. The lower equivalence follows from	 the observation that for an external edge $f\in \rgraph_1^\partial$, we have 
\[ \on{ev}_f\circ \indL_{\srgraph}\simeq \begin{cases} \bigoplus_{e\in E_{\srgraph}, t(e)=f} \mathcal{F}^{\rightarrow}(\cwt_{h(e)})\circ \on{ev}_e & f\not \in \srgraph_1^\partial \\ 
\on{ev}_f\oplus \bigoplus_{e\in E_{\srgraph}, t(e)=f} \mathcal{F}^{\rightarrow}(\cwt_{h(e)})\circ \on{ev}_e & f\in \srgraph_1^\partial\,.
\end{cases}
\,.\]

Consider the splitting
\[ \prod_{e\in \srgraph_1^\partial} \indR_{e}\circ \on{ev}_e\simeq \prod_{e\in E_{\srgraph}} \indR_{e}\circ \on{ev}_e\oplus \prod_{e\in \srgraph_1^\partial\backslash E_{\srgraph}} \indR_{e}\circ \on{ev}_e\,.\] 
With respect to this splitting, the morphism $\alpha$ is of an upper triangular form, containing the apparent inclusion of sections $i\colon \prod_{e\in E_{\srgraph}}U_{h(e)}^R\circ \on{ev}_e \hookrightarrow \prod_{e\in E_{\srgraph}} \indR_{e}\circ \on{ev}_e$. Similarly, the morphism $\beta$ is also upper triangular and restricts on the first summands to the apparent inclusion $i'\colon \prod_{e\in E_{\srgraph}}U_{h(e)}^L\circ \on{ev}_e\hookrightarrow \prod_{e\in E_{\srgraph}} \indR_{t(e)}\circ \mathcal{F}^{\rightarrow}(\cwt_{h(e)})\circ \on{ev}_e$. Unraveling the constructions of these functors, we observe that the cofibers of $i$ and $i'$ are equivalent, they can also be described as the cofiber of the inclusion $\iota_e\hookrightarrow U^R_{\tau(h(e))}$, where $\tau(h(e))$ is the halfedge in $\rgraph$ opposite to $h(e)$. The lower right components of the upper triangular morphisms $\alpha$ and $\beta$ agree, we denote the corresponding morphim by $j\colon i_{\srgraph}\to \prod_{e\in H_1^\partial \backslash E_{\srgraph}} \indR_e\circ \on{ev}_e$. We obtain fiber and cofiber sequences
\[
\on{cof}(j)[-1]\to \on{cof}(i)\to \on{cof}(\alpha)
\]
and 
\[
\on{cof}(j)[-1]\to \on{cof}(i')\to \on{cof}(\beta)\,.
\]
Under the equivalence $\on{cof}(i)\simeq \on{cof}(i')$, the two first morphisms above are identified, they both arise from the morphism $\iota_{\srgraph}\to U^R_{\tau(h(e))}$. We thus find $\on{cof}(\alpha)\simeq \on{cof}(\beta)$. The induced inclusions of section of $\prod_{e\in E_{\srgraph}}\iota_e\circ \on{ev}_e[1]$ into both of these cofibers also agree, hence so do the cofibers along these inclusions. In total, this shows the desired equivalence $C_{\bf S}\circ \indL_{\srgraph}\simeq \indR_{\srgraph}\circ C_{{\bf S}_{\srgraph}}$.
\end{proof}

\section{Gluing cluster tilting subcategories}\label{subsec:gluingCTO}

The goal of this section is to prove the following Theorem, stating that a collection of cluster tilting subcategories of a perverse schober glues to a global cluster tilting subcategory. For this, we use the $\infty$-categorical Frobenius exact structure of the $\infty$-category of global sections of a perverse schober $\mathcal{H}(\rgraph,\mathcal{F})$ induced by the spherical functor $\prod_{e\in \rgraph_1^\partial} \on{ev}_e\colon \glsec(\rgraph,\mathcal{F})\rightarrow \prod_{e\in \rgraph_1^\partial} \mathcal{F}(e)$, see \Cref{prop:Frob2CYexact} and \Cref{cor:globalsphadj}. Throughout the section, we will assume that the spanning graph $\rgraph$ has no $1$-valent vertices, this assumption is needed to have the splitting results of \Cref{prop:eve,prop:evv,prop:ressurface}.

\begin{theorem}\label{thm:clustertiltingschober}
Let $\rgraph$ be a spanning ribbon graph of a marked surface without $1$-valent vertices. Let $\mathcal{F}$ be a $k$-linear $\rgraph$-parametrized perverse schober.  For each vertex $v$ with incident halfedges $h_1,\dots,h_n$ and edges $e_1,\dots,e_n$, consider $\mathcal{F}(v)$ as equipped with the $\infty$-categorical Frobenius exact structure induced by the spherical functor 
\[ \prod_{1\leq i\leq n}\mathcal{F}(v\xrightarrow{h_i}e_i) \colon \mathcal{F}(v)\to \prod_{1\leq i\leq n}\mathcal{F}(e_i)\,.\]
Given for every vertex $v\in \rgraph_0$ a cluster tilting subcategory $\mathcal{T}_v\subset \mathcal{F}(v)$, the additive closure of
\[ \bigcup_{v\in \rgraph_0} \indL_v(\mathcal{T}_v)\] 
is a cluster tilting subcategory of the Frobenius exact $\infty$-category $\mathcal{H}(\rgraph,\mathcal{F})$.
\end{theorem}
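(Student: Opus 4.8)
The strategy is to reduce the claim to two separately-provable statements, following the abstract's indication that rigidity and the two-sided $2$-term resolution property glue independently. Write $\mathcal{T}=\on{Add}\big(\bigcup_{v}\indL_v(\mathcal{T}_v)\big)\subset\mathcal{H}(\rgraph,\mathcal{F})$. First I would establish that $\mathcal{T}$ contains all injective-projective objects of $\mathcal{H}(\rgraph,\mathcal{F})$: by \Cref{prop:Frob2CYexact} applied to the spherical functor $\prod_{e\in\rgraph_1^\partial}\on{ev}_e$, these are the essential images of the $\indR_e$ for external edges $e$; using \Cref{prop:twist_indL_indR}(1), $\indR_e\simeq C_{\bf S}\circ\indL_e$, and $\indL_e\simeq\indL_v\circ\mathcal{F}(h)^L$ where $h\in e$ sits at a vertex $v$ — but more directly, every $\mathcal{T}_v$ contains the image of $\mathcal{F}(v\to e_i)^L$ (the local injective-projectives, as noted in \Cref{introsec:gluingCT}), so $\indL_v(\mathcal{T}_v)$ already contains $\indL_v\mathcal{F}(v\xrightarrow{h}e)^L(\mathcal{F}(e))=\indL_e(\mathcal{F}(e))$, and for external $e$ one checks via \Cref{lem:evUh}(3)/\Cref{lem:boundaryfunctor} that $\indL_e$ lands in the injective-projectives. (In fact it suffices to know $\mathcal{T}$ is cluster tilting after passing to the stable category $\underline{\mathcal{H}}$, via \Cref{rem:stablecategory}, provided the injective-projectives are contained in $\mathcal{T}$.)

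Second, the $2$-term resolution property: I would invoke the gluing statement flagged as \Cref{prop:glue2termres}. Concretely, given $X\in\mathcal{H}(\rgraph,\mathcal{F})$, one wants an exact sequence $X\to T_0\to T_1$ with $T_i\in\mathcal{T}$. The idea is to build this vertex-by-vertex: for each $v$, the restriction $\on{ev}_v(X)\in\mathcal{F}(v)$ has a $2$-term right resolution by $\mathcal{T}_v$ since $\mathcal{T}_v$ is cluster tilting; applying $\indL_v$ and assembling these over all vertices via a limit/colimit argument over $\on{Exit}(\rgraph)$ — exactly the "rather formal argument about limits" the introduction mentions, using that finite limits and colimits of coCartesian sections are computed pointwise (\Cref{subsec:Grothendieckconstr}) and that exactness in $\mathcal{H}(\rgraph,\mathcal{F})$ is detected on the external edges — one produces the global resolution. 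The counit $\indL_v\on{ev}_v\to\id$ and the splitting in \Cref{prop:evv}(1) control the error terms. The left resolution is dual, or obtained from $\indR_v$ and \Cref{lem:dualschober}; one must check the two families of resolutions are compatible, i.e.\ that $\mathcal{T}$ has \emph{both} one-sided properties simultaneously, which is where using $\indL$ throughout (rather than mixing $\indL$ and $\indR$) and \Cref{prop:twist_indL_indR} to pass between them matters.

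Third — and this is the main obstacle — rigidity: I must show $\on{Ext}^{1,\on{ex}}_{\mathcal{H}}(\indL_v(T),\indL_{v'}(T'))\simeq 0$ for $T\in\mathcal{T}_v$, $T'\in\mathcal{T}_{v'}$. By adjunction, $\on{Map}_{\mathcal{H}}(\indL_v(T),\indL_{v'}(T'))\simeq\on{Map}_{\mathcal{F}(v)}(T,\on{ev}_v\indL_{v'}(T'))$, and here the splitting results of \Cref{prop:evv} (and \Cref{prop:eve}, \Cref{prop:ressurface}) are essential: $\on{ev}_v\circ\indL_{v'}$ decomposes as a direct sum of composites of restriction functors, transport equivalences, and the adjoints $\mathcal{F}(h)^L$, indexed by clockwise trajectories $c\colon h\overset{\circlearrowright}{\shortrightarrow}h'$ from halfedges at $v'$ to halfedges at $v$. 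For the $v=v'$ diagonal term one gets an $\id$ summand plus trajectory terms. The key points to verify are: (a) the $\on{Ext}^1$-contribution of the identity summand vanishes because $\mathcal{T}_v$ is itself rigid in $\mathcal{F}(v)$; (b) each trajectory summand $\mathcal{F}(h')^L\circ\mathcal{F}^\rightarrow(c)\circ\mathcal{F}(h)$, when plugged into $\on{Ext}^{1,\on{ex}}_{\mathcal{F}(v)}(T,-)$, contributes zero — here one uses that $\mathcal{F}(h')^L$ has image in the injective-projectives of $\mathcal{F}(v)$, so $\on{Ext}^{1,\on{ex}}_{\mathcal{F}(v)}(T,\mathcal{F}(h')^L(-))=0$ for \emph{any} $T$, since exact extensions into an injective object split; and (c) one must confirm that the direct-sum decomposition of $\on{ev}_v\indL_{v'}$ is compatible with the \emph{exact} structures, so that $\on{Ext}^{1,\on{ex}}_{\mathcal{H}}$ really sees only the exact part of each summand — this follows from the functors in the decomposition being exact (in the exact-$\infty$-category sense) and the exact structure on $\mathcal{H}$ being induced from boundary restrictions compatibly with induction. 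The geometric intuition from the introduction — trajectories cannot cross because the flow is injective, and $\on{Ext}^1$ counts crossings — is what makes (b) plausible, but the actual argument is the algebraic one: every trajectory summand factors through an injective-projective via some $\mathcal{F}(h')^L$, killing exact extensions out of it. Assembling (a)–(c) over all pairs $v,v'$ and taking additive closure (using that $\on{Ext}^{1,\on{ex}}$ is additive in each variable and vanishes on summands, hence on direct sums and summands of direct sums) completes rigidity, and combining the three steps gives that $\mathcal{T}$ is cluster tilting.
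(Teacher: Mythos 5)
Your overall skeleton matches the paper's: the theorem is proved by combining a rigidity gluing statement (\Cref{prop:rigidglue}) with a $2$-term resolution gluing statement (\Cref{prop:glue2termres}), the latter done vertex-by-vertex and using \Cref{prop:twist_indL_indR} to obtain the second one-sided resolution property, which you correctly flag. For that part your sketch is plan-level but in the right spirit (the paper's actual mechanism is the biCartesian unit square of \Cref{lem:unitsquare}, producing a candidate sequence $X\to\bigoplus_i j_i^R(T_0^i)\to\bigoplus_i j_i^R(T_1^i)\oplus k^Rk(X)$, and the nontrivial check that its first map is an inflation, which uses the split deflation of the counit from \Cref{prop:ressurface}).

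The genuine gap is in your rigidity argument, at step (c), which is asserted rather than proved and is exactly the heart of the matter. The adjunction $\on{Ext}^1_{\mathcal{H}}(\indL_v(T),\indL_{v'}(T'))\simeq\on{Ext}^1_{\mathcal{F}(v)}(T,\on{ev}_v\indL_{v'}(T'))$ identifies \emph{total} Ext groups, not the subgroups of exact extensions: exactness in $\mathcal{H}(\rgraph,\mathcal{F})$ is tested by evaluation at the \emph{external} edges, whereas your step (b) invokes exactness for the local structure on $\mathcal{F}(v)$, tested at the edges incident to $v$, which are typically internal. So the implication you need — that a globally exact class has locally exact (hence vanishing, by injectivity) components into the injective-projective summands — is not a formal consequence of the functors in the splitting being exact; it is precisely the content of \Cref{lem:extensions_between_indutions_1}. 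There the component $\beta_l$ valued in an injective-projective summand $P_l$ is shown to give a global class whose evaluation at the external edge where the clockwise trajectory $\cwt_{h_{i_l}}$ ends is $\mathcal{F}^\rightarrow(\cwt_{h_{i_l}})(\beta_l)$, a transport \emph{equivalence} applied to $\beta_l$; global exactness therefore forces $\beta_l\simeq 0$ outright, not merely ``exact into an injective''. Moreover you must also rule out cancellation: a priori a sum $\alpha=\sum_l\alpha_l$ could be globally exact without each $\alpha_l$ being so; the paper excludes this by exhibiting the splitting of $\bigoplus_{e\in\rgraph_1^\partial}\on{Ext}^1(\on{res}_e\indL_v(T_v),\on{res}_e\indL_v(T_v'))$ into summands matching the decomposition, so that the boundary evaluation of $\alpha$ vanishes iff that of every $\alpha_l$ does. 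Without these two points (the trajectory/transport evaluation and the no-cancellation splitting), your (a)–(c) assembly assumes what is to be proved; the same issue affects the case $v\neq v'$, where the paper's \Cref{lem:extensions_between_indutions_2} runs the identical boundary-evaluation argument.
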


\begin{remark}
Applying \Cref{thm:clustertiltingschober} to the opposite perverse schober $\mathcal{F}^{\on{op}}$ from \Cref{lem:dualschober}, we find that $\bigcup_{v\in \rgraph_0} \indR_v(\mathcal{T}_v)$ is also a cluster tilting subcategory of $\mathcal{H}(\rgraph,\mathcal{F})$. Using \Cref{prop:twist_indL_indR}, we find that in the stable $\infty$-category of the Frobenius exact $\infty$-category $\mathcal{H}(\rgraph,\mathcal{F})$, see \Cref{rem:stablecategory}, the two cluster tilting subcategories $\bigcup_{v\in \rgraph_0} \indL_v(\mathcal{T}_v)$ and $\bigcup_{v\in \rgraph_0} \indR_v(C_v(\mathcal{T}_v))$ differ by a shift. In favorable situations, the corresponding clusters of a corresponding cluster algebra thus differ by the action of the Donaldson--Thomas transformation.
\end{remark}

We split the proof of \Cref{thm:clustertiltingschober} in two parts, discussed in \Cref{subsec:2-termglue,subsec:rigidglue}.

\subsection{Gluing rigid objects}\label{subsec:rigidglue}

The goal of this section is to prove the following.

\begin{proposition}\label{prop:rigidglue} 
Let ${\bf S}$ be a marked surface. Let $\rgraph$ be a spanning graph of ${\bf S}$ without $1$-valent vertices and $\mathcal{F}$ a $\rgraph$-parametrized perverse schober. For each vertex $v$ with incident halfedges $h_1,\dots,h_n$ and corresponding edges $e_1,\dots,e_n$, consider $\mathcal{F}(v)$ as equipped with the $\infty$-categorical Frobenius exact structure induced by the spherical functor 
\[ \prod_{1\leq i\leq n}\mathcal{F}(v\xrightarrow{h_i}e_i) \colon \mathcal{F}(v)\to \prod_{1\leq i\leq n}\mathcal{F}(e_i)\,.\]
Given a collection of rigid additive subcategories $\{\mathcal{T}_v\subset \mathcal{F}(v)\}_{v\in \rgraph_0}$, the additive closure of 
\[ \bigcup_{v\in \rgraph_0} \indL_v(\mathcal{T}_v)\subset \mathcal{H}(\rgraph,\mathcal{F})\] 
is also rigid with respect to the $\infty$-categorical exact structure induced by $\prod_{e\in \rgraph_1^\partial} \on{ev}_e$.
\end{proposition}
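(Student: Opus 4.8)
The plan is to reduce the statement to a computation of exact extension groups between objects of the form $\indL_v(T)$ and $\indL_{v'}(T')$, and then to evaluate these using adjunction together with the splitting formulas for $\on{ev}_{x}\circ\indL_{v'}$ from \Cref{prop:evv}. First, observe that it suffices to check rigidity on a pair of objects $X=\indL_v(T)$, $Y=\indL_{v'}(T')$ with $T\in\T_v$, $T'\in\T_{v'}$, since $\on{Ext}^{1,\on{ex}}$ is additive in each variable and the additive closure of the union is generated by such objects. Recall (\Cref{rem:stablecategory}, and the discussion preceding \Cref{prop:Frob2CYexact}) that for the Frobenius exact structure on $\mathcal{H}(\rgraph,\mathcal{F})$ induced by $\prod_{e}\on{ev}_e$, the exact extensions $\on{Ext}^{1,\on{ex}}_{\mathcal{H}(\rgraph,\mathcal{F})}(X,Y)$ are computed in the stable category $\underline{\mathcal{H}(\rgraph,\mathcal{F})}$, the Verdier quotient by the injective-projectives; equivalently, $\on{Ext}^{1,\on{ex}}(X,Y)$ is the cokernel of the map $\bigoplus_{e\in\rgraph_1^\partial}\on{Ext}^0(X,\indR_e\on{ev}_e Y)\to \on{Ext}^1(X,Y)$, or can be accessed via the fiber sequence defining the stable category. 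So the task is to show that every exact extension $X\to Y[1]$ is zero in $\underline{\mathcal{H}(\rgraph,\mathcal{F})}$.

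The key step is the adjunction $\indL_v\dashv\on{ev}_v$, which gives
\[
\on{Map}_{\mathcal{H}(\rgraph,\mathcal{F})}\bigl(\indL_v(T),\,\indL_{v'}(T')\bigr)\;\simeq\;\on{Map}_{\mathcal{F}(v)}\bigl(T,\;\on{ev}_v\indL_{v'}(T')\bigr).
\]
By \Cref{prop:evv}(3), $\on{ev}_v\circ\indL_{v'}$ splits as a direct sum of functors of the form $\mathcal{F}(h')^{L}\circ\mathcal{F}^{\rightarrow}(c)\circ\mathcal{F}(h)$ indexed by clockwise web-trajectory segments $c:h\overset{\circlearrowright}{\shortrightarrow}h'$ (with $h\in H(v')$, $h'\in H(v)$), plus the summand $\on{id}_{\mathcal{F}(v)}$ when $v=v'$. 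Now I use the following two observations. First, since $\T_{v'}$ is rigid in the induced Frobenius structure on $\mathcal{F}(v')$, the summand coming from the $\on{id}$-term (relevant only when $v=v'$, where additionally $T,T'\in\T_v$) contributes nothing to exact extensions after passing to the stable category $\underline{\mathcal{F}(v)}$. Second — and this is the heart of the argument — each remaining summand $\mathcal{F}(h')^{L}\circ\mathcal{F}^{\rightarrow}(c)\circ\mathcal{F}(h)$ factors through $\mathcal{F}(h')^L\colon\mathcal{F}(e(h'))\to\mathcal{F}(v)$, whose essential image is precisely the subcategory of injective-projective objects of the Frobenius exact structure on $\mathcal{F}(v)$ (by \Cref{prop:Frob2CYexact} and the perverse-schober axioms at $v$, the image of $\mathcal{F}(v\xrightarrow{h'}e(h'))^L$ equals the injective-projectives). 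Hence on the level of the stable category $\underline{\mathcal{F}(v)}$, all of these summands vanish. Dually/symmetrically, applying the adjunction the other way ($\on{ev}_{v'}\dashv\indR_{v'}$ is not what we want, but $\indL_{v'}\dashv\on{ev}_{v'}$ together with the analogous splitting for $\on{ev}_{v'}\indL_v$) one sees the symmetric vanishing. Putting this together: the mapping space $\on{Map}_{\mathcal{H}(\rgraph,\mathcal{F})}(\indL_v(T),\indL_{v'}(T')[1])$, after killing maps factoring through injective-projectives of $\mathcal{H}(\rgraph,\mathcal{F})$, is computed by $\on{Map}_{\underline{\mathcal{F}(v)}}(T,(\on{ev}_v\indL_{v'}(T'))[1])$ where only the $\on{id}$-summand survives, and that summand's contribution vanishes by rigidity of $\T_v$. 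Therefore $\on{Ext}^{1,\on{ex}}_{\mathcal{H}(\rgraph,\mathcal{F})}(\indL_v(T),\indL_{v'}(T'))\simeq 0$, which is the claim.

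The main obstacle I anticipate is bookkeeping the exact structures correctly: one must verify that the identification of exact extensions with morphisms in the stable category is compatible with the adjunction $\indL_v\dashv\on{ev}_v$ — i.e., that $\on{ev}_v$ sends injective-projectives of $\mathcal{H}(\rgraph,\mathcal{F})$ into injective-projectives of $\mathcal{F}(v)$ (so that it descends to stable categories), and conversely that the summands $\mathcal{F}(h')^L\circ(\cdots)$ genuinely land in injective-projectives of $\mathcal{F}(v)$ rather than merely in some larger subcategory. The first follows since $\on{ev}_v\circ\indR_e\simeq\bigoplus\mathcal{F}(h')^R\circ\mathcal{F}^\rightarrow(c)$ by \Cref{lem:evUh}/\Cref{prop:eve}, landing in the image of the various $\mathcal{F}(h')^R$, i.e. injective-projectives; the second is the perverse-schober axiom (2).(a) combined with \Cref{prop:Frob2CYexact}. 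A secondary subtlety is the edge case $v=v'$, $h=h'$, $c=\ast$: here the surviving summand is exactly $\on{id}$, and one needs that no \emph{other} constant or short trajectory sneaks in — but \Cref{prop:evv}(3) explicitly excludes the constant trajectory from the non-identity sum, so this is handled. Geometrically, the vanishing of all the non-identity summands in the stable category is the precise incarnation of the heuristic in the introduction that "trajectories cannot cross because the flow is injective": the contributions that would record crossings are exactly the ones that factor through injective-projectives and hence die.
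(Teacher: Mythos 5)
Your first step coincides with the paper's: by the adjunction $\indL_{v}\dashv\on{ev}_{v}$ and the splitting of $\on{ev}_{v}\circ\indL_{v'}$ from \Cref{prop:evv}, every extension class decomposes into a component valued in $\on{Ext}^1_{\mathcal{F}(v)}(T,T')$ (when $v=v'$) and components valued in $\on{Ext}^1_{\mathcal{F}(v)}(T,P_l)$ with $P_l$ injective-projective in $\mathcal{F}(v)$. The gap is in the decisive step, where you pass to stable categories: your identification $\on{Ext}^{1,\on{ex}}_{\mathcal{H}(\rgraph,\mathcal{F})}(\indL_v T,\indL_{v'}T')\simeq\on{Ext}^1_{\underline{\mathcal{F}(v)}}(T,\on{ev}_v\indL_{v'}T')$ presupposes that the adjunction $\indL_v\dashv\on{ev}_v$ descends to the stable categories. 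You check one half (that $\on{ev}_v$ preserves injective-projectives), but the other half fails: $\indL_v$ does \emph{not} send injective-projectives of $\mathcal{F}(v)$ to injective-projectives of $\mathcal{H}(\rgraph,\mathcal{F})$, since $\indL_v\circ\mathcal{F}(h)^L\simeq\indL_{e(h)}$ and, for an internal edge $e(h)$, the objects $\indL_{e(h)}(Z)$ are precisely the ``unfrozen'' glued objects (they lie in the image of no boundary induction $\indR_e$, $e\in\rgraph_1^\partial$). Consequently, the fact that a summand of $\on{ev}_v\indL_{v'}(T')$ factors through an injective-projective of $\mathcal{F}(v)$ does \emph{not} imply that the corresponding class in $\on{Ext}^1_{\mathcal{H}(\rgraph,\mathcal{F})}(\indL_vT,\indL_{v'}T')$ dies in $\underline{\mathcal{H}(\rgraph,\mathcal{F})}$, i.e.\ is never an exact extension; that is exactly the point that needs proof, and your argument assumes it.

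What is missing is any use of the boundary exact structure on $\mathcal{H}(\rgraph,\mathcal{F})$ beyond quoting \Cref{rem:stablecategory}: the paper shows that a class $\alpha_l$ corresponding to $\beta_l\in\on{Ext}^1_{\mathcal{F}(e_{i_l})}(\on{res}_{e_{i_l}}(T),\on{res}_{e_{i_l}}(P_l))$ evaluates, at the external edge where the clockwise trajectory $\cwt_{h_{i_l}}$ terminates, to $\mathcal{F}^\rightarrow(\cwt_{h_{i_l}})(\beta_l)$; since the transport is an equivalence, exactness of $\alpha_l$ (i.e.\ vanishing of all boundary evaluations) forces $\beta_l=0$, and a final splitting of the boundary Ext groups reduces a general class to such components. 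Your proposal never evaluates extensions at external edges, never uses that transports are equivalences, and never uses that trajectories terminate on a marked boundary. That this geometric input is unavoidable is confirmed by the paper's own remark that the rigidity statement of \Cref{lem:extensions_between_indutions_1} typically \emph{fails} when a boundary circle carries no marked point (trajectories of infinite length, cf.\ \Cref{rem:infiniteinduction}), even though your formal adjunction-plus-splitting argument would apply verbatim there. So the conclusion you reach is correct, but the route through a descended adjunction of stable categories does not exist, and the heart of the paper's proof — detecting non-exactness of the $P_l$-components via transport to the boundary — is absent from your proposal.
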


\begin{lemma}\label{lem:extensions_between_indutions_1}
Let ${\bf S}$ be a marked surface with  a spanning graph $\rgraph$ without $1$-valent vertices. Let $\mathcal{F}$ be a $\rgraph$-parametrized perverse schober. Let $T_v,T_v'\in \mathcal{F}(v)$ for some vertex $v\in \rgraph_0$. The functor $\indL_v$ induces an isomorphism
\[ \on{Ext}^{1,\on{ex}}_{\mathcal{F}(v)}(T_v,T_v')\simeq \on{Ext}^{1,\on{ex}}_{\mathcal{H}(\rgraph,\mathcal{F})}(\indL_v(T_v),\indL_v(T_v'))\]
between the abelian groups of exact extensions. In particular, $T_v$ is rigid if and only if $\indL_v(T_v)$ is rigid.
\end{lemma}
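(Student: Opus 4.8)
The plan is to deduce the statement formally from the adjunction $\indL_v\dashv\on{ev}_v$ together with the splitting results of \Cref{prop:evv}, once it is checked that $\indL_v$ is compatible with the two exact structures. First I would record the purely categorical input: by adjunction $\on{Ext}^i_{\mathcal{H}(\rgraph,\mathcal{F})}(\indL_v(T_v),\indL_v(T_v'))\simeq \on{Ext}^i_{\mathcal{F}(v)}(T_v,\on{ev}_v\indL_v(T_v'))$ for every $i$, and \Cref{prop:evv}(1) and (3) give a natural splitting $\on{ev}_v\indL_v(T_v')\simeq T_v'\oplus I_{T_v'}$, where $I_{T_v'}$ is a finite direct sum of objects in the essential images of the fully faithful left adjoints $\mathcal{F}(v\xrightarrow{h'}e(h'))^L$, hence an injective object of $\mathcal{F}(v)$; in particular $\on{Ext}^{1,\on{ex}}_{\mathcal{F}(v)}(T_v,I_{T_v'})=0$. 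Thus the degree-$1$ adjunction isomorphism reads $\on{Ext}^1_{\mathcal{H}(\rgraph,\mathcal{F})}(\indL_v T_v,\indL_v T_v')\simeq\on{Ext}^1_{\mathcal{F}(v)}(T_v,T_v')\oplus\on{Ext}^1_{\mathcal{F}(v)}(T_v,I_{T_v'})$, and it remains to match the exact substructures under it.

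Next I would show that $\indL_v$ is an exact functor which moreover reflects exact sequences. Let $s$ be a fibre--cofibre sequence in $\mathcal{F}(v)$. By \Cref{prop:evv}(2), for each external edge $f$ there is a decomposition $\on{ev}_f\indL_v(s)\simeq\bigoplus_{h\in H(v)}\bigoplus_{c:h\overset{\circlearrowright}{\shortrightarrow}f}\mathcal{F}^\rightarrow(c)\bigl(\mathcal{F}(v\xrightarrow{h}e(h))(s)\bigr)$ as a direct sum of fibre--cofibre sequences, each an equivalence applied to $\mathcal{F}(v\xrightarrow{h}e(h))(s)$. Since the clockwise trajectory issuing from any halfedge $h$ of $v$ reaches an external edge after finitely many steps (the number $m(h)$ of \Cref{constr:Uh}), every edge incident to $v$ occurs among the $e(h)$ in such a decomposition; as a direct sum of fibre--cofibre sequences is split precisely when each summand is, we obtain: $s$ is exact in $\mathcal{F}(v)$ (i.e.\ all $\mathcal{F}(v\xrightarrow{h}e(h))(s)$ split) if and only if all $\on{ev}_f\indL_v(s)$ split, i.e.\ $\indL_v(s)$ is exact in $\mathcal{H}(\rgraph,\mathcal{F})$. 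Consequently $\indL_v$ carries $\on{Ext}^{1,\on{ex}}_{\mathcal{F}(v)}(T_v,T_v')$ into $\on{Ext}^{1,\on{ex}}_{\mathcal{H}(\rgraph,\mathcal{F})}(\indL_v T_v,\indL_v T_v')$, and by the triangle identities this map is transported by the adjunction isomorphism of the previous paragraph to post-composition with the split monomorphism $\on{id}_{\mathcal{F}(v)}\to\on{ev}_v\indL_v$ of \Cref{prop:evv}(1); hence it is split injective.

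It remains to prove surjectivity, equivalently that the adjunction isomorphism $\on{Ext}^1_{\mathcal{H}(\rgraph,\mathcal{F})}(\indL_v T_v,\indL_v T_v')\xrightarrow{\ \sim\ }\on{Ext}^1_{\mathcal{F}(v)}(T_v,\on{ev}_v\indL_v T_v')$ restricts to an isomorphism on exact classes. Using $\mathcal{F}(v\xrightarrow{h_i}e_i)\circ\on{ev}_v\simeq\on{ev}_{e_i}$ on $\mathcal{H}(\rgraph,\mathcal{F})$, the image class $\hat\xi$ of $\xi$ is exact in $\mathcal{F}(v)$ iff, for every edge $e_i$ incident to $v$, the restriction of $\on{ev}_{e_i}(\xi)$ to the distinguished summand $\mathcal{F}(v\xrightarrow{h_i}e_i)(T_v)\subset\on{ev}_{e_i}\indL_v T_v$ vanishes. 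For external $e_i$ this is immediate once $\xi$ is exact; for internal $e_i$ I would propagate along the clockwise trajectory $\cwt_{h_i}$ from $h_i$ to the external edge $f_0$ at which it terminates: by the construction of $\indL_v$ in \Cref{constr:Uh} and \Cref{lem:constrofevvL} (the image section is glued along $\cwt_v$ by transport equivalences), the comparison between the matching summands of $\on{ev}_{e_i}\indL_v$ and $\on{ev}_{f_0}\indL_v$ is given by the transport equivalence $\mathcal{F}^\rightarrow(\cwt_{h_i})$ and is natural, so $\on{ev}_{f_0}(\xi)=0$ forces the $e_i$-component to vanish. Granting this, an exact $\xi$ corresponds under the isomorphism to a pair $(\tilde\xi,0)\in\on{Ext}^{1,\on{ex}}_{\mathcal{F}(v)}(T_v,T_v')\oplus\on{Ext}^{1,\on{ex}}_{\mathcal{F}(v)}(T_v,I_{T_v'})$, so $\xi=\indL_v(\tilde\xi)$ with $\tilde\xi$ exact since $\indL_v$ reflects exactness; this establishes the asserted isomorphism, and the final assertion on rigidity is the case $T_v'=T_v$. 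The main obstacle is precisely this last propagation step, i.e.\ reconciling the exact structure on $\mathcal{H}(\rgraph,\mathcal{F})$, which is tested only on the external boundary edges, with the exact structure on $\mathcal{F}(v)$, which is tested on all edges incident to $v$; everything else is a formal consequence of the adjunction and \Cref{prop:evv}.
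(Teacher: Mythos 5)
Your first two paragraphs are fine and run parallel to the paper: the adjunction isomorphism, the splitting of $\on{ev}_v\indL_v$ from \Cref{prop:evv}, the observation that $\indL_v$ preserves and reflects exactness of fibre--cofibre sequences (this is a clean version of the paper's treatment of the component $\beta_0\in\on{Ext}^1_{\mathcal{F}(v)}(T_v,T_v')$), and the resulting injectivity. The problem is the surjectivity step, and the sentence you offer there does not close it. What you need is: if $\on{ev}_f(\xi)=0$ for every external edge $f$, then for each halfedge $h_i$ at $v$ the restriction of $\on{ev}_{e_i}(\xi)$ to the distinguished summand $\mathcal{F}(h_i)(T_v)$ vanishes. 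Unwinding via the adjunction, this is exactly the statement that the adjunct $\hat\xi=(\beta_0,(\beta_l)_l)$ has $\beta_0$ exact and all components $\beta_l$ into the injective-projectives equal to zero --- i.e.\ it is the entire hard direction of the lemma, not a reduction of it. The justification ``the comparison between the matching summands of $\on{ev}_{e_i}\indL_v$ and $\on{ev}_{f_0}\indL_v$ is given by the transport equivalence and is natural'' fails for an arbitrary morphism of global sections: there is no morphism in $\on{Exit}(\rgraph)$ comparing $e_i$ and $f_0$ directly, and the only naturality available is $\on{ev}_e(\xi)\simeq\mathcal{F}(w\to e)(\on{ev}_w(\xi))$ for a vertex $w$ of $e$. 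Propagating backwards from $f_0$ to $e_i$ therefore passes through intermediate vertices $w$, where one knows the vanishing of a component of $\mathcal{F}(w\to e_{j+1})(\on{ev}_w(\xi)\circ(\text{summand inclusion}))$ and needs that of $\mathcal{F}(w\to e_j)$ applied to the same restricted map; these are independent conditions, since the restricted map can land in summands of $\on{ev}_w\indL_vT_v'$ killed by one of the two edge functors but not the other. The transport equivalences identify summands of the section $\indL_vT_v$ itself; a general $\xi$ is not compatible with them unless it is induced from $\mathcal{F}(v)$, which is what is being proved.

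This is precisely why the paper argues per \emph{target} component rather than per source halfedge: for each injective-projective summand $P_l$ it uses the pointwise split sub-sections $U^L_{h_{i_l}}\circ\on{res}_{e_{i_l}}(T_v)\hookrightarrow\indL_v(T_v)$ coming from \Cref{constr:Uh} to build a commutative square on which the component $\alpha_l$ restricts to $U^L_{h_{i_l}}(\beta_l)$, evaluates at the external edge where $\cwt_{h_{i_l}}$ ends to read off $\mathcal{F}^\rightarrow(\cwt_{h_{i_l}})(\beta_l)$, and then verifies (the final splitting displayed in the paper's proof) that the boundary contributions of the different components $\alpha_0,\alpha_1,\dots$ cannot cancel, so that exactness of the sum forces exactness of each piece. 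Your proposal contains neither the sub-section squares nor any argument excluding such cancellation between the $T_v'$-component and the $P_l$-components at a given boundary edge; as you yourself note, this propagation step is ``the main obstacle'', and as written it is asserted rather than proved.
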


\begin{proof}
By adjunction, we have 
\begin{equation}\label{eq:adjindev}  \on{Ext}^1(\indL_{v}(T_v),\indL_{v}(T_v'))\simeq \on{Ext}^1(T_v,\on{ev}_{v}\indL_{v}(T_v'))\,.\end{equation}
By \Cref{prop:evv}, $\on{ev}_{v}\indL_{v}(T_v')$ splits into the direct sum of $T_v'$ and injective-projective objects in $\mathcal{F}(v)$, which we denote by $P_1,\dots,P_m$. Thus 
\[
\on{Ext}^1(\indL_{v}(T_v),\indL_{v}(T_v'))\simeq \on{Ext}^1(T_v,T_v')\oplus \bigoplus_{i=1}^m \on{Ext}^1(T_v,P_i)\,.
\]
 Let $P_l$ be any such indecomposable injective-projective direct summand, it evaluates non-trivially at exactly two consecutive edges $e_{i_l-1},e_{i_l}$ (in the counterclockwise orientation) incident to $v$ (as follows from properties (2).(a)-(c) of \Cref{def:schobernspider}). We denote $\on{res}_{e_{i_l}}=\mathcal{F}(v\xrightarrow{h_{i_l}}e_{i_l})$ for brevity. 

Note that $P_l\simeq \on{res}_{e_{i_l}}^R\circ \on{res}_{e_{i_l}}(P_l)$. We thus have 
\[ \on{Ext}^1_{\mathcal{F}(v)}(T_v,P_l)\simeq \on{Ext}^{1}_{\mathcal{F}(e_{i_l})}(\on{res}_{e_{i_l}}(T_v),\on{res}_{e_{i_l}}(P_l))\,.\]

The (inverse of the) equivalence \eqref{eq:adjindev} arises from applying $\on{ind}_{v}^L$ and composing with the counit $\eta_{v}$ of $\on{ind}_{v}^L\dashv \on{res}_{v}$. Inspecting the construction of $\on{ind}_{v}^L$, one finds canonical pointwise split inclusions of sections of the Grothendieck construction (which extend to natural transformations)
\begin{equation}\label{eq:incl1}
U_{h_{i_l}}^L(P_l)\hookrightarrow \on{ind}_{v}^L(T_v')\,,
\end{equation}
\begin{equation}\label{eq:incl2}
U_{h_{i_l}}^L(P_l)\hookrightarrow  \on{ind}_{v}^L(P_l)\,,
\end{equation}
and
\[
U_{h_{i_l}}^L\circ \on{res}_{e_{i_l}}(T_v)\hookrightarrow \on{ind}_{v}^L(T_v)\,.
\]
One readily sees that the map $\indL_v(P_l)\subset \on{ind}_{v}^L\circ \on{res}_{v}\circ \on{ind}_{v}^L(T_v')\xrightarrow{\eta_v} \on{ind}_{v}^L(T_v')$ arising from the counit restricts to an equivalence on the images of the pointwise inclusions \eqref{eq:incl1} and \eqref{eq:incl2}.

Consider an extension $\alpha_l\colon \indL_{v}(T_v)\to \indL_{v}(T_v')[1]$ arising as the image of 
\[
\beta_l\in \on{Ext}^{1}_{\mathcal{F}(e_{i_l})}(\on{res}_{e_{i_l}}(T_v),\on{res}_{e_{i_l}}(P_l))\subset \on{Ext}^1(\indL_{v}(T_v),\indL_{v}(T_v'))\,.
\]
By the above, there is a commutative diagram of sections of the Grothendieck construction, with horizontal pointwise split inclusions, as follows:
\[
\begin{tikzcd}
{U_{h_{i_l}}^L\circ \on{res}_{e_{i_l}}(T_v)} \arrow[r, hook] \arrow[d,"{U_{h_{i_l}}^L(\beta_l)}"] & \indL_{v}(T_v) \arrow[d, "\alpha_l"] \\
{U_{h_{i_l}}^L\circ \on{res}_{e_{i_l}}(P_l)[1]} \arrow[r, hook]          & {\indL_{v}(T_v')[1]}                      
\end{tikzcd}
\]
Let $e_{B_l}$ be the external edge of $\rgraph$ at which the trajectory $\cwt_{h_{i_l}}$ ends. Then $\on{ev}_{e_{B_l}}(\alpha_l)\simeq \on{ev}_{e_{B_l}}U_{h_{i_l}}^L(\beta_l)\simeq \mathcal{F}^\rightarrow(\cwt_{h_{i_l}})(\beta_l)$. Since the transport $\mathcal{F}^\rightarrow(\cwt_{h_{i_l}})$ is an equivalence and $\alpha_l$ is exact, we see that $\alpha_l$ is exact if and only if $\beta_l\simeq 0$. 

A similar argument shows that an extension $\alpha_0\colon \indL_v(T_v)\to \indL_v(T_v')[1]$ arising as the image of 
\[
\beta_0\in \on{Ext}^{1}_{\mathcal{F}(v)}(T_v,T_v')\subset \on{Ext}^1(\indL_{v}(T_v),\indL_{v}(T_v'))
\]
is exact if and only if $\beta_0$ is exact with respect to the exact structure of $\mathcal{F}(v)$.

To conclude the argument, we observe that an arbitrary morphism 
\[ \alpha=\sum_{0\leq l\leq m}\alpha_l\colon \indL_{v}(T_v)\to \indL_{v}(T_v')\,,\] 
split as a sum of morphisms as above, is exact if and only if $\alpha_l$ is exact for all $0\leq l\leq m$. For this, it suffices to note the splitting
\begin{align*}
\bigoplus_{e\in \rgraph_1^\partial} \on{Ext}^1(\on{res}_e\indL_v(T_v),\on{res}_e\indL_v(T_v')) \simeq &\bigoplus_{h\in H(v)}\!\! \on{Ext}^1(\mathcal{F}^\rightarrow(\gamma^\circlearrowright_{h})\circ \mathcal{F}(h)(T_v),\mathcal{F}^\rightarrow(\gamma^\circlearrowright_{h})\circ\mathcal{F}(h)(T_v'))\\
&\oplus \bigoplus_{l=1}^n \on{Ext}^1(\mathcal{F}^\rightarrow(\gamma^\circlearrowright_{h_{i_l}})\circ \mathcal{F}(h_{i_l})(T_v),\mathcal{F}^\rightarrow(\gamma^\circlearrowright_{h_{i_l}})(P_l))\,,
\end{align*}
which implies that the boundary evaluation of $\alpha$ vanishes if and only if the boundary evaluation of each $\alpha_l$ vanishes. 
\end{proof}

\begin{lemma}\label{lem:extensions_between_indutions_2}
Let ${\bf S}$ be a marked surface with  a spanning graph $\rgraph$ without $1$-valent vertices. Let $\mathcal{F}$ be a $\rgraph$-parametrized perverse schober. Let $v,v'\in \rgraph_0$ be two distinct vertices, $T_v\in \mathcal{F}(v)$ and $T_{v'}\in \mathcal{F}(v')$. Then 
\[ \on{Ext}^{1,\on{ex}}_{\mathcal{H}(\rgraph,\mathcal{F})}(\indL_v(T_v),\indL_{v'}(T_{v'}))\simeq 0\,.\]
\end{lemma}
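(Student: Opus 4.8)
The plan is to reduce, via the adjunction $\indL_v\dashv\on{ev}_v$, to a computation inside $\mathcal{F}(v)$ and then to imitate the proof of \Cref{lem:extensions_between_indutions_1}; the decisive simplification for $v'\neq v$ is that no ``diagonal'' summand $\on{Ext}^1_{\mathcal{F}(v)}(T_v,T_{v'})$ occurs, only injective--projective ones.

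First I would use $\indL_v\dashv\on{ev}_v$ to obtain a natural isomorphism
\[
\on{Ext}^1_{\mathcal{H}(\rgraph,\mathcal{F})}\bigl(\indL_v(T_v),\indL_{v'}(T_{v'})\bigr)\simeq\on{Ext}^1_{\mathcal{F}(v)}\bigl(T_v,\on{ev}_v\indL_{v'}(T_{v'})\bigr)\,.
\]
Since $v\neq v'$, \Cref{prop:evv}(3), applied with the roles of $v$ and $v'$ interchanged, gives
\[
\on{ev}_v\indL_{v'}(T_{v'})\simeq\bigoplus_{h\in H(v'),\,h'\in H(v),\,c:h\xrightarrow{\circlearrowright}h'}\mathcal{F}(h')^L\circ\mathcal{F}^\rightarrow(c)\circ\mathcal{F}(h)(T_{v'})=:\bigoplus_l P_l\,,
\]
with $P_l=\mathcal{F}(h'_l)^L(Y_l)$, $h'_l\in H(v)$, $Y_l=\mathcal{F}^\rightarrow(c_l)\circ\mathcal{F}(h_l)(T_{v'})\in\mathcal{F}(e(h'_l))$. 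Each $P_l$ lies in the essential image of the fully faithful left adjoint $\mathcal{F}(v\xrightarrow{h'_l}e(h'_l))^L$ and is therefore an injective--projective object of $\mathcal{F}(v)$; so $\on{Ext}^1_{\mathcal{H}(\rgraph,\mathcal{F})}(\indL_v(T_v),\indL_{v'}(T_{v'}))\simeq\bigoplus_l\on{Ext}^1_{\mathcal{F}(v)}(T_v,P_l)$, and (unlike in \Cref{lem:extensions_between_indutions_1}) no $\on{Ext}^1_{\mathcal{F}(v)}(T_v,T_{v'})$ summand appears.

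Next, for an exact extension $\alpha\colon\indL_v(T_v)\to\indL_{v'}(T_{v'})[1]$, I would write $\alpha=\sum_l\alpha_l$ with $\alpha_l$ the image of $\beta_l\in\on{Ext}^1_{\mathcal{F}(v)}(T_v,P_l[1])$. As in the final paragraph of the proof of \Cref{lem:extensions_between_indutions_1}, \Cref{cor:globalsphadj} says an extension in $\mathcal{H}(\rgraph,\mathcal{F})$ is exact iff its image under every boundary evaluation $\on{ev}_f$ ($f\in\rgraph_1^\partial$) vanishes; combined with the direct-sum descriptions of $\on{ev}_f\indL_v$ and $\on{ev}_f\indL_{v'}$ from \Cref{prop:evv}(2), the classes $\on{ev}_f(\alpha_l)$ occupy mutually independent summands, so $\alpha$ exact forces each $\alpha_l$ exact. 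Then, arguing as for the summands indexed by $l\geq1$ in \Cref{lem:extensions_between_indutions_1}: $\alpha_l$ factors through $\indL_v(P_l)\simeq\indL_{e(h'_l)}(Y_l)$, which contains the pointwise split subsection $U_{h'_l}^L(Y_l)$, together with a compatible pointwise split inclusion $U_{h'_l}^L(Y_l)\hookrightarrow\indL_{v'}(T_{v'})$ of sections of the Grothendieck construction on whose image the counit $\indL_v\on{ev}_v\to\on{id}$ restricts to an equivalence; evaluating at the external edge $e_{B_l}$ where the clockwise trajectory $\cwt_{h'_l}$ terminates then yields $\on{ev}_{e_{B_l}}(\alpha_l)\simeq\mathcal{F}^\rightarrow(\cwt_{h'_l})(\beta_l)$, up to the evident identification of $\on{Ext}^1_{\mathcal{F}(v)}(T_v,P_l)$ with an $\on{Ext}^1$-group in $\mathcal{F}(e(h'_l))$. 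Since $\mathcal{F}^\rightarrow(\cwt_{h'_l})$ is an equivalence and $\on{ev}_{e_{B_l}}(\alpha_l)=0$, we get $\beta_l=0$ for every $l$, hence $\alpha=0$ and $\on{Ext}^{1,\on{ex}}_{\mathcal{H}(\rgraph,\mathcal{F})}(\indL_v(T_v),\indL_{v'}(T_{v'}))\simeq0$.

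The step I expect to be the main obstacle is the construction, from \Cref{constr:Uh}, \Cref{lem:indLe} and \Cref{lem:constrofevvL}, of the commuting square of pointwise split inclusions of sections that identifies $\on{ev}_{e_{B_l}}(\alpha_l)$ with the transport $\mathcal{F}^\rightarrow(\cwt_{h'_l})(\beta_l)$ — concretely, checking that the branch of the clockwise web trajectory $\cwt_v$ through $h'_l$ meets the section $\indL_{v'}(T_{v'})$ only in the summand $P_l$ sitting at $v$ and then carries it out to the boundary undisturbed. Geometrically this is precisely the fact that the web trajectories $\cwt_v$ and $\cwt_{v'}$ cannot cross, the flow of the line field $\nu_\rgraph$ being injective; converting that picture into the required natural equivalences of functors is the delicate point, but it runs formally parallel to the corresponding part of the proof of \Cref{lem:extensions_between_indutions_1}.
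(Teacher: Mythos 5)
Your proposal is correct and is precisely the argument the paper intends: its own proof of this lemma is just ``analogous to the proof of \Cref{lem:extensions_between_indutions_1}'', and you have carried out that analogy faithfully — adjunction to $\on{Ext}^1_{\mathcal{F}(v)}(T_v,\on{ev}_v\indL_{v'}(T_{v'}))$, the splitting of \Cref{prop:evv}(3) into purely injective--projective summands $P_l=\mathcal{F}(h'_l)^L(Y_l)$ (so no diagonal term), factorization through $U^L_{h'_l}$ and evaluation at the boundary edge where $\cwt_{h'_l}$ ends, where the transport equivalence forces each $\beta_l=0$. Only cosmetic slips (e.g.\ ``$\on{Ext}^1_{\mathcal{F}(v)}(T_v,P_l[1])$'' should read $\on{Ext}^1_{\mathcal{F}(v)}(T_v,P_l)$), nothing affecting the argument.
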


\begin{proof}
Analogous to the proof of \Cref{lem:extensions_between_indutions_1}.
\end{proof}

\begin{proof}[Proof of \Cref{prop:rigidglue}]
Combine \Cref{lem:extensions_between_indutions_1} and \Cref{lem:extensions_between_indutions_2}.
\end{proof}

\begin{remark}
Note that the marked surface in \Cref{thm:clustertiltingschober} is assumed to have a marked point on each boundary component. Without this assumptions, objects in $\on{ind}_v^L(\mathcal{T}_v)$ arise from curves of infinite length, see \Cref{rem:infiniteinduction}. The rigidity statement of \Cref{lem:extensions_between_indutions_1} typically fails in this more general setting. The gluing of $2$-term resolutions as in \Cref{prop:glue2termres} however works the same. 
\end{remark}

\subsection{Gluing \texorpdfstring{$2$}{2}-term resolutions}\label{subsec:2-termglue}

We prove the gluing of $2$-term resolutions by an iterative argument. We first show that right $2$-term resolutions glue along right induction, or equivalently (via passage to opposite $\infty$-categories) that the left $2$-term resolution property glues along left induction. The case of gluing left $2$-term resolutions along right induction (or right $2$-term resolutions along left induction) does not seem to follow from the same argument, and we will deduce this using \Cref{prop:twist_indL_indR}.

Consider a commutative diagram in $\on{St}$ of the following form
\begin{equation}\label{eq:pbdiag}
\begin{tikzcd}
\D \arrow[r, "j_1"] \arrow[d, "j_2"'] \arrow[rd, "k"] & \D_1 \arrow[d, "i_1"] \arrow[r, "f_{1}"] & \C_1 \\
\D_2 \arrow[d, "f_{2}"] \arrow[r, "i_2"']                                               & \C_2                       &      \\
\C_3                                                                   &                          &     
\end{tikzcd}
\end{equation}
such that the square is pullback, and such that $i_i,j_i,k_i$ admit right adjoints $i_i^R,j_i^R,k_i^R$. We will further assume that the $\infty$-categorical exact structures on $\D_1,\D_2,\D$ induced by the functors $\D_i\to \C_i\times \C_{i+1}$, for $i=1,2$, and $\D\to \C_1\times \C_3$, see \Cref{ex:inducedexstr}, are Frobenius. This follows for instance if these functors are spherical. 

The following Lemma is the central ingredient for the gluing of $2$-term resolutions.

\begin{lemma}\label{lem:unitsquare}
Under the above assumptions, the units of the adjunctions assemble into a biCartesian square in $\on{Fun}(\D,\D)$:
\[
\begin{tikzcd}
\on{id}_\D \arrow[r] \arrow[d] \arrow[rd, "\square", phantom] & j_1^Rj_1 \arrow[d] \\
j_2^Rj_2 \arrow[r]                                            & k^Rk              
\end{tikzcd}
\]
\end{lemma}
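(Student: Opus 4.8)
The plan is to identify the four functors $\mathrm{id}_\D, j_1^R j_1, j_2^R j_2, k^R k$ as sections of the relevant Grothendieck constructions and to check the biCartesian property pointwise, using the explicit pointwise formulas for the induction functors $U_h^L, \iota_v, \iota_e$ and their splittings. However, since the statement is phrased abstractly in terms of the pullback square \eqref{eq:pbdiag} with all functors admitting right adjoints and all induced exact structures Frobenius, the cleaner route is to argue purely formally. First I would unwind what the morphisms in the square are: the morphism $\mathrm{id}_\D \to j_1^R j_1$ is the unit $\eta^{j_1}$, similarly $\mathrm{id}_\D \to j_2^R j_2$ is $\eta^{j_2}$, and the two morphisms into $k^R k$ are obtained from the factorizations $k = i_1 j_1 = i_2 j_2$, giving $j_1^R j_1 \to j_1^R i_1^R i_1 j_1 = k^R k$ (whiskering the unit $\eta^{i_1}$ by $j_1$ on the right and $j_1^R$ on the left) and similarly on the other side. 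Commutativity of the square is then the standard compatibility of units under composite adjunctions. So the content is exactly that this commuting square is biCartesian, i.e.\ both a pushout and a pullback in the stable functor $\infty$-category $\mathrm{Fun}(\D,\D)$.

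**The core computation.**

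Since $\mathrm{Fun}(\D,\D)$ is stable, the square is biCartesian iff the total cofiber vanishes, equivalently iff $\mathrm{cof}(\mathrm{id}_\D \to j_1^R j_1) \to \mathrm{cof}(j_2^R j_2 \to k^R k)$ is an equivalence. I would compute both sides. Writing $C^{j_i} = \mathrm{cof}(\mathrm{id}_\D \to j_i^R j_i)$ for the twist endofunctor of the adjunction $j_i \dashv j_i^R$, the left-hand cofiber is $C^{j_1}$. For the right-hand cofiber, I would use that $k^R k = j_2^R (i_2^R i_2) j_2$ and that the map $j_2^R j_2 \to k^R k$ is $j_2^R$ applied to $j_2$ whiskered into the unit $\mathrm{id}_{\D_2} \to i_2^R i_2$; hence its cofiber is $j_2^R \circ C^{i_2} \circ j_2$ where $C^{i_2} = \mathrm{cof}(\mathrm{id}_{\D_2} \to i_2^R i_2)$. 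So the claim reduces to an equivalence
\[
C^{j_1} \;\simeq\; j_2^R \circ C^{i_2} \circ j_2
\]
in $\mathrm{Fun}(\D,\D)$, compatibly with the evident maps from $\mathrm{id}_\D$. The analogous computation on the other pair gives $C^{j_2} \simeq j_1^R \circ C^{i_1} \circ j_1$. Here is where the pullback hypothesis on the square in \eqref{eq:pbdiag} enters: base change for the Beck–Chevalley square should give $j_1^R \circ i_1^R \circ i_1 \circ j_1 \simeq$ the corresponding composite through $\D_2$, and more precisely the relevant mate transformation is an equivalence. Concretely, from $\D = \D_1 \times_{\C_2} \D_2$ one gets that the square of right adjoints also commutes and satisfies base change, which identifies $j_2^R C^{i_2} j_2$ with the twist $C^{j_1}$ of the complementary leg — this is essentially the statement that, in a pullback square of stable categories with adjoints, the "new" part of $\D_1$ not seen by $\D$ is exactly the part of $\C_2$ not seen by $\D_2$.

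**Main obstacle and how to handle it.**

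The main obstacle is making the Beck–Chevalley / base-change step precise and checking that the resulting equivalence $C^{j_1} \simeq j_2^R C^{i_2} j_2$ is compatible with the canonical natural transformations out of $\mathrm{id}_\D$, so that the four maps really do assemble into the single square claimed rather than merely abstractly equivalent objects. I would handle this by working with the Grothendieck-construction model of the pullback: $\D$ is the $\infty$-category of coCartesian sections of $\Gamma(\mathcal{F})$ over the relevant subdiagram, $j_1^R j_1$ and $j_2^R j_2$ are the "relative left Kan extension then restrict" endofunctors supported on the two halves, $k^R k$ is supported on the overlap, and $\mathrm{id}_\D$ is supported everywhere; the square is then the evident pushout–pullback of supports, which is biCartesian pointwise in $\mathrm{Exit}(\rgraph)$ because colimits and limits of sections are computed pointwise (as recalled in \Cref{subsec:Grothendieckconstr}) and in each stable fiber a square of the form $\mathrm{id} \to A, \mathrm{id}\to B, A\to A\amalg_{\mathrm{id}} B$ is biCartesian. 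The Frobenius hypothesis is used to guarantee the induced exact structures are well-behaved but is not needed for the biCartesian statement itself, which is purely a statement about the stable functor categories; I would remark on this. A secondary, more routine point is checking that all four functors indeed preserve the relevant finite limits/colimits so that "biCartesian in $\mathrm{Fun}(\D,\D)$" is meaningful and can be checked after evaluation at objects of $\D$ — this follows from stability of all categories in sight and exactness of the functors.
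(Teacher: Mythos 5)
Your reading of the square is right (the horizontal and vertical maps are the units, the maps into $k^Rk$ are the whiskered units coming from $k\simeq i_1j_1\simeq i_2j_2$), your reduction of biCartesianness to the assertion that the canonical map $\on{cof}(\on{id}_\D\to j_1^Rj_1)\to \on{cof}(j_2^Rj_2\to k^Rk)\simeq j_2^R\circ \on{cof}(\on{id}_{\D_2}\to i_2^Ri_2)\circ j_2$ is an equivalence is correct, and so is your remark that the Frobenius hypothesis is not needed. The gap is that this central equivalence is never actually established. The one base-change identity you write out, $j_1^Ri_1^Ri_1j_1\simeq$ ``the composite through $\D_2$'', is just $k^Rk\simeq k^Rk$ and carries no information; the base change you would actually need, an equivalence of the mate $j_2j_1^R\to i_2^Ri_1$, is not a formal consequence of the pullback hypothesis and typically fails in exactly the situations where the lemma is applied: by \Cref{introprop:splitting}, $j_2j_1^R$ (restriction of a right induction) decomposes into summands indexed by \emph{all} counterclockwise trajectories from the first region into the second, including those that re-enter it, whereas $i_2^Ri_1$ only sees the one-step trajectories across the shared edges. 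Since the cofiber identification you want is, after unwinding, equivalent to the lemma itself, invoking an unproved ``relevant mate is an equivalence'' is circular. The fallback via the Grothendieck construction does not repair this: $j_i^Rj_i$ is restriction followed by its \emph{right} adjoint (right induction), not ``relative left Kan extension then restrict'' (that describes the other adjunction $\iota_v\dashv\on{ev}_v$), $k^Rk$ is not supported on the overlap (right induction spreads along counterclockwise trajectories through the whole surface), and asserting that the square is ``the evident pushout of supports'' assumes precisely what is to be proved.

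There is a short argument at the stated level of generality which your reduction is implicitly equivalent to, but which you do not give: since $\D\simeq\D_1\times_{\C_2}\D_2$, for all $X,Y\in\D$ the mapping spectrum $\on{map}_\D(X,Y)$ is the pullback of $\on{map}_{\D_1}(j_1X,j_1Y)$ and $\on{map}_{\D_2}(j_2X,j_2Y)$ over $\on{map}_{\C_2}(kX,kY)$; rewriting the three corners by adjunction as $\on{map}_\D(X,j_1^Rj_1Y)$, $\on{map}_\D(X,j_2^Rj_2Y)$ and $\on{map}_\D(X,k^RkY)$, and checking that the comparison maps are induced by the unit square, Yoneda shows that $Y\to j_1^Rj_1Y\times_{k^RkY}j_2^Rj_2Y$ is an equivalence for every $Y$, so the square is Cartesian and hence biCartesian. (The paper itself only refers to the proof of \cite[Lem.~3.10]{Chr23} for this lemma, so there is no in-paper argument to compare with; but as written, your proposal has a genuine gap at its key step.)
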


\begin{proof}
See the proof of \cite[Lem.~3.10]{Chr23}.
\end{proof}

For $i=1,2$, let $\mathcal{T}_i\subset \D_i$ be an additive subcategory containing the essential image of $i_i^R$. We define $\T\coloneqq j_1(\mathcal{T}_1)\cup  j_2(\mathcal{T}_2)\subset \D$.

\begin{lemma}\label{lem:gluing2termresolutions}
Suppose that for $i=1,2$ and $X\in \D_i$, the image under $f_i$ of the counit map $j_ij_i^R(X)\to X$ is a split deflation. If $\T_1\subset \D_1$ and $\T_2\subset\D_2$ satisfy the right $2$-term resolution property, then $\T\subset \D$ also satisfies the right $2$-term resolution property.
\end{lemma}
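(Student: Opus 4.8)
The plan is to prove Lemma~\ref{lem:gluing2termresolutions} by combining the biCartesian square of units from Lemma~\ref{lem:unitsquare} with the local right $2$-term resolution properties. Fix an object $X \in \D$ and apply Lemma~\ref{lem:unitsquare} to $X$ to obtain the biCartesian square
\[
\begin{tikzcd}
X \arrow[r] \arrow[d] \arrow[rd, "\square", phantom] & j_1^Rj_1(X) \arrow[d] \\
j_2^Rj_2(X) \arrow[r] & k^Rk(X)
\end{tikzcd}
\]
in $\D$. Setting $X_i \coloneqq j_i(X) \in \D_i$ for $i=1,2$ and $X_0 \coloneqq k(X)$, I want to resolve each of $j_1^R(X_1)$, $j_2^R(X_2)$, $k^R(X_0)$ by objects of $\T$, in a way compatible with the maps in the square, and then conclude a resolution of $X$ itself by taking a suitable total complex.

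First I would resolve $X_1 \in \D_1$ using the right $2$-term resolution property of $\T_1$: there is an exact sequence $X_1 \to T_0^{(1)} \to T_1^{(1)}$ in $\D_1$ with $T_\bullet^{(1)} \in \T_1$. I would like to apply $j_1^R$ to this, but $j_1^R$ need not be exact as a functor of stable $\infty$-categories in a way that preserves the \emph{exact} structure; instead I would rather use the \emph{left} adjoint-type picture and push forward via $j_1$. So the cleaner route: apply $j_1$ to the resolution to get an exact sequence $j_1(X_1) \to j_1(T_0^{(1)}) \to j_1(T_1^{(1)})$ in $\D$ with the last two terms in $\T$ (here I need that $j_1$ preserves exactness, which holds because $j_1$ is exact in the stable sense and the exact structure on $\D$ is induced by $\D \to \C_1 \times \C_3$, and $j_1$ composed with these evaluation functors factors through the evaluation functors on $\D_1$ — this compatibility is exactly what the diagram \eqref{eq:pbdiag} encodes). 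The key observation I would exploit is the hypothesis that the image under $f_i$ of the counit $j_ij_i^R(Y) \to Y$ is a split deflation: this lets me relate $j_i^R j_i(X)$ back to $X$ in a controlled way, and in particular implies that the horizontal and vertical maps out of $X$ in the unit square are (images of) inflations, so I can glue the local resolutions.

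The concrete iterative step I would carry out: use $\T_2 \supset \mathrm{ess.im}(i_2^R)$ and the right $2$-term resolution property in $\D_2$ to resolve the ``error term'' measuring the difference between $X$ and $j_1(T_0^{(1)})$; because the square is biCartesian, the fiber of $X \to j_1^Rj_1(X)$ is computed as the fiber of $j_2^Rj_2(X) \to k^Rk(X)$, and the split-deflation hypothesis on $f_2$ guarantees this fiber is (up to injective-projective summands) controlled by an object already resolvable in $\D_2$. Concatenating the two $2$-term resolutions and using that $\T$ is closed under extensions of the relevant type — together with the fact that any $3$-term resolution with middle and outer terms in a rigid-enough additive subcategory can be truncated to a $2$-term one when the subcategory contains the injective-projectives — yields the desired exact sequence $X \to T_0 \to T_1$ with $T_0, T_1 \in \T$. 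The main obstacle I anticipate is precisely this truncation/assembly step: a priori the naive concatenation gives a length-$2$ \emph{complex} of the $j_i(T_\bullet^{(i)})$ but one must verify it is genuinely a $2$-term \emph{exact} resolution in the induced Frobenius exact structure on $\D$, which requires checking that the relevant connecting map is a deflation after applying $f_1$ and $f_2$ — this is where the split-deflation hypotheses on $j_ij_i^R \to \mathrm{id}$ do the real work, and where one must be careful that ``split'' is used to absorb injective-projective summands contributed by the $j_i^R j_i$ terms without disturbing exactness.
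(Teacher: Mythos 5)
Your proposal correctly identifies the two external ingredients (the biCartesian unit square of \Cref{lem:unitsquare} and the split-deflation hypothesis on $f_i\circ(\on{counit})$), but the assembly has a genuine gap, and there is also a variance confusion. In the setup of the lemma, $j_1\colon \D\to\D_1$ is the restriction-type functor, so "applying $j_1$ to the resolution $X_1\to T_0^{(1)}\to T_1^{(1)}$ in $\D_1$" does not typecheck; more importantly, the glued subcategory $\T$ consists of objects \emph{induced} from the $\D_i$ via the right adjoints $j_i^R$, so any candidate resolution of $X$ by objects of $\T$ must have terms of the form $j_i^R(T)$ with $T\in\T_i$. You cannot sidestep applying $j_i^R$ by worrying that it "need not be exact"; the paper applies $j_i^R$ to the (rotated) local resolutions and then \emph{verifies} exactness of the resulting global sequence at the end, using exactly the split-deflation hypothesis to show that $f_l j_l(X)\to f_l j_l j_l^R(T_0^l)$ is a split inflation.

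The second and more serious gap is the final step of your plan: concatenating a resolution of $j_1(X)$ with a resolution of an "error term" and then "truncating" a three-term resolution to a two-term one. There is no such truncation principle available here — having short resolutions by $\T$ is precisely what is being proved, so one cannot invoke any cluster-tilting-style reduction. The paper's proof never needs it: applying $j_i^R$ to the rotated local resolutions, summing over $i=1,2$, and combining with the unit fiber/cofiber sequence $X\to \bigoplus_{i}j_i^Rj_i(X)\to k^Rk(X)$ from \Cref{lem:unitsquare} in a $3\times 3$ diagram (where a connecting map is shown to vanish because the local resolutions are exact, in particular killed by $i_i$), one rotates to obtain directly
\[
X\longrightarrow \bigoplus_{i=1,2}j_i^R(T_0^i)\longrightarrow \bigoplus_{i=1,2}j_i^R(T_1^i)\oplus k^Rk(X)\,,
\]
which is already two-term: the correction term $k^Rk(X)\simeq j_i^R i_i^R i_i j_i(X)$ lies in $\T$ precisely because $\T_i$ contains the essential image of $i_i^R$. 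That standing hypothesis is what makes a two-term resolution possible, and your proposal never uses it for this purpose; without it (or the $3\times 3$ argument replacing your concatenate-and-truncate step), the proof does not close.
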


\begin{proof}[Proof of \Cref{lem:gluing2termresolutions}]
Let $X\in \D$ and pick exact fiber and cofiber sequences for $i=1,2$
\begin{equation}\label{eq:coresolution} j_i(X)\to T_0^i\to T_1^i\,,\end{equation}
with $T_0^i,T_1^i\in \T_i$. Note that the exactness amounts to the vanishing of the images of the morphisms $T_1^i[-1]\to j_i(X)$ under $i_i$ and $f_i$. Rotating these fiber and cofiber sequences, applying $j_i^R$ and taking their direct sum yields the middle horizontal fiber and cofiber sequence in the following diagram:
\begin{equation}\label{eq:3x3diag}
\begin{tikzcd}
{\bigoplus_{i=1,2} j_i^R(T_0^i)[-1]} \arrow[r] \arrow[d, "\on{id}"] & {\bigoplus_{i=1,2} j_i^R(T_1^i)[-1] \oplus k^Rk(X)[-1]} \arrow[r] \arrow[d] & X \arrow[d]                               \\
{\bigoplus_{i=1,2} j_i^R(T_0^i)[-1]} \arrow[r] \arrow[d] & {\bigoplus_{i=1,2} j_i^R(T_1^i)[-1]} \arrow[r] \arrow[d, "0"]               & {\bigoplus_{i=1,2} j_i^Rj_i(X)} \arrow[d] \\	
0 \arrow[r]                                        & k^Rk(X) \arrow[r, "\on{id}"]                                                     & k^Rk(X)                                  
\end{tikzcd}
\end{equation}
The rightmost vertical sequence arises from the units of the adjunctions $j_i\dashv j_i^R$, $k\dashv k^R$ and is a fiber and cofiber sequence by \Cref{lem:unitsquare}. The naturality of the unit of the adjunction $i_i\dashv i_i^R$ implies that the following diagram commutes.
\[
\begin{tikzcd}
{j_i^R(T_1^i)[-1]} \arrow[d] \arrow[r] & j_i^Rj_i(X) \arrow[d]                              &         \\
{j_i^Ri_i^Ri_i(T_1^i)[-1]} \arrow[r]   & j_i^Ri_i^Ri_ij_i(X) \arrow[r, Rightarrow, no head] & k^Rk(X)
\end{tikzcd}
\]
The bottom arrow in this diagram vanishes by the exactness of the fiber and cofiber sequences \eqref{eq:coresolution}. This shows that the arising vertical morphism $\bigoplus_{i=1,2} j_i^R(T_0^i)\to k^Rk(X)$ in the diagram \eqref{eq:3x3diag} vanishes. Since we thus find that all the other sequences are fiber and cofiber sequences, the top horizontal sequence in the above diagram is also one.

Rotating the top fiber and cofiber sequence yields the fiber and cofiber sequence
\begin{equation}\label{eq:X2termres}
X\longrightarrow \bigoplus_{i=1,2} j_i^R(T_0^i) \longrightarrow \bigoplus_{i=1,2} j_i^R(T_1^i) \oplus k^Rk(X)\,.
\end{equation}
It remains to show that the above sequence is exact, for which we prove that the first morphism is an inflation. Fix $l\in \mathbb{Z}/2\mathbb{Z}$. The morphism $\beta\colon X\to j_l(T_0^l)$ contained in the first morphism above factors by the commutativity of the diagram \eqref{eq:3x3diag} as $X\xrightarrow{\on{unit}} j_l^Rj_l(X) \xrightarrow{j_l^R(\alpha)} j_l^R(T_0^l)$, where the second morphism is the image under $j_l^R$ of the inflation $\alpha\colon j_l(X)\to T_0^l$ appearing in \eqref{eq:coresolution}.

The image of the morphism $\beta\colon X\to j_l^R(T_0^l)$ under $f_l\circ j_l$ composes by the commutativity of the following diagram
\[
\begin{tikzcd}[column sep=huge]
f_lj_l(X) \arrow[r, "f_lj_l\circ \on{unit}"']\arrow[rd, "{f_lj_l(\beta)}"', bend right=10] \arrow[rr, "\on{id}", bend left=15] & f_lj_lj_l^Rj_l(X) \arrow[r, "f_l\circ \on{cu}\circ j_l"'] \arrow[d, "f_lj_lj_l^R(\alpha)"'] & f_lj_l(X) \arrow[d, "f_l(\alpha)"] \\
                                                                                   & f_lj_lj_l^R(T_0^l) \arrow[r, "f_l\circ \on{cu}"]                                                & f_l(T_0^l)                          
\end{tikzcd}
\]
with the counit $f_l\circ \on{cu}\colon f_lj_lj_l^R(T_0^l)\to f_l(T_0^l)$ to the split inflation $f_l(\alpha)\colon f_lj_l(X)\to f_l(T_0^l)$ in $\C_l$. By the assumption that $f_l\circ \on{cu}\colon f_lj_lj_l^R(T_0^l)\to f_l(T_0^l)$ is a split deflation, we see that $f_lj_l(X)\to f_lj_lj_l^R(T_0^l)$ is also a split inflation, hence so is 
$f_lj_l(X)\to f_lj_lj_l^R(T_0^l)\oplus f_lj_lj_{l+1}^R(T_0^{l+1})$. This shows that $X\to \bigoplus_{i=1,2} j_i^R(T_0^i)$ is an inflation, as desired.
\end{proof}

\begin{proposition}\label{prop:glue2termres}
Let ${\bf S}$ be a marked surface with  a spanning graph $\rgraph$ without $1$-valent vertices. Let $\mathcal{F}$ be a $\rgraph$-parametrized perverse schober.
\begin{enumerate}[(1)] 
\item Given additive subcategories $\mathcal{T}_v\subset \mathcal{F}(v)$ for all $v\in \rgraph_0$ with the left $2$-term resolution property, the additive closure of 
\[ \bigcup_{v\in \rgraph_0} \indL_v(\mathcal{T}_v)\] 
in $\mathcal{H}(\rgraph,\mathcal{F})$ also has the left $2$-term resolution property. 
\item Given additive subcategories $\mathcal{T}_v\subset \mathcal{F}(v)$ for all $v\in \rgraph_0$ with the right $2$-term resolution property, the additive closure of 
\[ \bigcup_{v\in \rgraph_0} \indL_v(\mathcal{T}_v)\] 
in $\mathcal{H}(\rgraph,\mathcal{F})$ also has the right $2$-term resolution property. 
\end{enumerate}
Similar statements also hold for right induction.
\end{proposition}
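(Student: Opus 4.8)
The plan is to derive all four combinations (left or right $2$-term resolution property, glued along left or right induction) from a single application of \Cref{lem:gluing2termresolutions}, iterated over the vertices of $\rgraph$, together with the opposite-category symmetry of \Cref{lem:dualschober} and the comparison of left and right induction in \Cref{prop:twist_indL_indR}. The first task is to show that the \emph{right} $2$-term resolution property glues along \emph{right} induction, i.e.\ that the additive closure of $\bigcup_{v}\indR_v(\mathcal{T}_v)$ has the right $2$-term resolution property when every $\mathcal{T}_v$ does. I would prove this by induction on $|\rgraph_0|$. Fix a vertex $v$ and let $\rgraph'\subset\rgraph$ be the subgraph on $\rgraph_0\setminus\{v\}$, so that each internal edge of $\rgraph$ incident to $v$ becomes an external edge of $\rgraph'$; write $I_v$ for the set of these edges and $E_v^\partial$ for the set of external edges of $\rgraph$ at $v$. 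Decomposing $\on{Exit}(\rgraph)$ as the union of the full subcategory on $v$ and its incident edges with $\on{Exit}(\rgraph')$, glued along the discrete subcategory on $I_v$, identifies global sections with an iterated pullback
\[ \mathcal{H}(\rgraph,\mathcal{F})\simeq \mathcal{F}(v)\times_{\prod_{e\in I_v}\mathcal{F}(e)}\mathcal{H}(\rgraph',\mathcal{F}_{\rgraph'})\,. \]
This is precisely the square of \eqref{eq:pbdiag} with $\D_1=\mathcal{F}(v)$, $\D_2=\mathcal{H}(\rgraph',\mathcal{F}_{\rgraph'})$, $j_1=\on{ev}_v$, $j_2=\on{res}_{\rgraph'}$, $k$ the restriction to $\prod_{e\in I_v}\mathcal{F}(e)$, $\C_1=\prod_{e\in E_v^\partial}\mathcal{F}(e)$, $\C_3=\prod_{e\in \rgraph_1^\partial\setminus E_v^\partial}\mathcal{F}(e)$, and $f_1,f_2$ the evident boundary restrictions; the exact structures induced on $\D,\D_1,\D_2$ are the Frobenius exact structures of the statement, Frobenius because the relevant boundary evaluation functors are spherical by \Cref{cor:globalsphadj} and the perverse schober axioms.

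It then remains to check the hypotheses of \Cref{lem:gluing2termresolutions}. The functors $i_1=\prod_{e\in I_v}\mathcal{F}(v\xrightarrow{h}e)$, $i_2$, $j_1$, $j_2$, $k$ admit right adjoints by the perverse schober axioms and \Cref{lem:induction_1_valent}. The subcategory $\mathcal{T}_v$ contains the essential image of $i_1^R=\prod_e\mathcal{F}(v\xrightarrow{h}e)^R$: having the right $2$-term resolution property, $\mathcal{T}_v$ contains every injective object of $\mathcal{F}(v)$ (an injective $I$ is a direct summand of $T_0$ in any resolution $I\to T_0\to T_1$), and the objects in the image of $i_1^R$ are injective-projective by \Cref{prop:Frob2CYexact}; the same reasoning, using the inductive hypothesis applied to $\rgraph'$, shows that the glued subcategory $\mathcal{T}_2\subset\mathcal{H}(\rgraph',\mathcal{F}_{\rgraph'})$ contains the essential image of $i_2^R$. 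The requirement that $f_i$ sends the counit $j_ij_i^R\to\on{id}$ to a split deflation is exactly the assertion that the counits $\on{ev}_v\indR_v\to\on{id}$ and $\on{res}_{\rgraph'}\indR_{\rgraph'}\to\on{id}$ are split projections, which is \Cref{introprop:splitting}(3) (see also \Cref{prop:evv}) and \Cref{prop:ressurface}(1). \Cref{lem:gluing2termresolutions} now gives that the additive closure of $\indR_v(\mathcal{T}_v)\cup\indR_{\rgraph'}(\mathcal{T}_2)$ has the right $2$-term resolution property; since $\indR_{\rgraph'}\circ\indR_{v'}\simeq\indR_{v'}$ for $v'\in\rgraph'_0$ (by taking right adjoints of $\on{ev}_{v'}=\on{ev}_{v'}\circ\on{res}_{\rgraph'}$) this additive closure equals that of $\bigcup_{v'\in\rgraph_0}\indR_{v'}(\mathcal{T}_{v'})$, completing the induction; the base case $|\rgraph_0|=1$ is immediate since there $\on{ev}_v$ is an equivalence.

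Statement (1) then follows by applying the statement just proven to the opposite perverse schober $\mathcal{F}^{\on{op}}$ of \Cref{lem:dualschober}: the left $2$-term resolution property for $\mathcal{T}_v\subset\mathcal{F}(v)$ is the right one for $\mathcal{T}_v^{\on{op}}\subset\mathcal{F}^{\on{op}}(v)$, the functor $\indL_v$ of $\mathcal{F}$ is $\indR_v$ of $\mathcal{F}^{\on{op}}$, and $\mathcal{H}(\rgraph^\vee,\mathcal{F}^{\on{op}})\simeq\mathcal{H}(\rgraph,\mathcal{F})^{\on{op}}$ carries the opposite Frobenius exact structure. For statement (2) I would use the equivalence $C_{\bf S}\circ\indL_v\simeq\indR_v\circ C_v$ of \Cref{prop:twist_indL_indR}(2), where $C_{\bf S}$ and $C_v$ are the twist autoequivalences of the spherical adjunctions of \Cref{cor:globalsphadj} and at $v$. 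These twist functors are \emph{exact} autoequivalences of the respective Frobenius exact $\infty$-categories: for the twist $C$ of a spherical functor $F\colon\C\to\D$ one has $FC\simeq C_{\D}F$ with $C_{\D}$ the cotwist, so $F$ sends $C$-translates of split sequences to split sequences; hence $C$ (and likewise $C^{-1}$) preserves the induced exact structure. Consequently, if each $\mathcal{T}_v$ has the right $2$-term resolution property then so does each $C_v(\mathcal{T}_v)$ (apply $C_v$ to a resolution of $C_v^{-1}(X)$), whence by the statement already proven the additive closure of $\bigcup_v\indR_v(C_v(\mathcal{T}_v))=C_{\bf S}\!\left(\text{additive closure of }\bigcup_v\indL_v(\mathcal{T}_v)\right)$ has the right $2$-term resolution property; applying the exact autoequivalence $C_{\bf S}^{-1}$ transports this to the additive closure of $\bigcup_v\indL_v(\mathcal{T}_v)$, which is (2). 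The two remaining statements ``for right induction'' are obtained by a final application of \Cref{lem:dualschober}.

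The main obstacle I anticipate is the opening step: making the identification of $\mathcal{H}(\rgraph,\mathcal{F})$ with the iterated pullback precise in the exact shape demanded by \eqref{eq:pbdiag} (matching all of $f_i,i_i,j_i,k$ and verifying that the induced exact structures agree with the ones in the statement), and managing the bookkeeping when the auxiliary graph $\rgraph'$ obtained by deleting a vertex is no longer a spanning graph of a marked surface with a marked point on every boundary component --- it may also fail to be connected. One can either add auxiliary marked points as in the proof of \Cref{thm:globalsphericalness}, or work throughout with this slightly larger class of ribbon graphs, noting --- as recorded in \Cref{rem:infiniteinduction} --- that the relevant induction functors still exist (possibly built from infinitely many local pieces, assuming enough colimits, e.g.\ after passing to $\on{Ind}$-completions) and that the splitting statements and the $2$-term resolution gluing used above remain valid in that generality.
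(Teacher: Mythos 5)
Your proposal is correct and follows essentially the same route as the paper: iterate \Cref{lem:gluing2termresolutions} one vertex at a time (with the split-counit hypotheses supplied by \Cref{prop:evv} and \Cref{prop:ressurface}, and the containment of the injective-projectives checked exactly as you do), dualize via \Cref{lem:dualschober} to trade left for right induction, and deduce part (2) from \Cref{prop:twist_indL_indR} together with the exactness of the twist autoequivalences. Your anticipated obstacle is in fact vacuous: deleting a vertex creates no $1$-valent vertices and no boundary cycle of $\rgraph'$ free of external edges (since halfedge sets and cyclic orders at the remaining vertices are unchanged, such a cycle would already be a boundary cycle of $\rgraph$ avoiding all marked points, contradicting the spanning-graph condition), so the thickening ${\bf S}_{\rgraph'}$ is again a (possibly disconnected) marked surface and no infinite trajectories or $\on{Ind}$-completions are needed.
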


\begin{proof}
For part (1), we repeatedly apply \Cref{lem:gluing2termresolutions}, gluing in one vertex of $\rgraph$ at a time. The assumptions for \Cref{lem:gluing2termresolutions} are satisfied by part (1) of \Cref{prop:ressurface}. 

We turn to the proof of part (2). The twist autoequivalence $C_{\mathcal{H}(\rgraph,\mathcal{F})}\colon \mathcal{H}(\rgraph,\mathcal{F})\to \mathcal{H}(\rgraph,\mathcal{F})$ of the spherical adjunction 
\[ \prod_{e\in \rgraph_1^\partial} \on{ev}_e\colon \glsec(\rgraph,\mathcal{F})\longleftrightarrow \prod_{e\in \rgraph_1^\partial} \mathcal{F}(e)\noloc \prod_{e\in \rgraph_1^\partial} \indR_e\] defines an exact functor with respect to the $\infty$-categorical exact structure of $\mathcal{H}(\rgraph,\mathcal{F})$ induced by $\prod_{e\in \rgraph_1^\partial} \on{ev}_e$: this simply follows from the fact that (co)twist functors commute with their spherical functors, see \cite[Lem.~2.2]{Chr20}. Similarly, for every vertex $v$ of $\rgraph$, the autoequivalence $C_v\colon\mathcal{F}(v)\to \mathcal{F}(v)$ is an exact functor. An exact autoequivalence preserves the right $2$-term resolution property of additive subcategories. The opposite categorical version of part (1) shows that the additive hull of $\bigcup_{v\in \rgraph_0} \indR_v\circ C_v^{-1}(\mathcal{T}_v)$ has the right $2$-term resolution property, hence so does the additive hull of $\bigcup_{v\in \rgraph_0} C_{\bf S}\circ \indR_v\circ C_v^{-1}(\mathcal{T}_v)\simeq \bigcup_{v\in \rgraph_0} \indL_v(\mathcal{T}_v)$, see \Cref{prop:twist_indL_indR}.
\end{proof}

Finally, we note that \Cref{thm:clustertiltingschober} is a direct consequence of the above: 

\begin{proof}[Proof of \Cref{thm:clustertiltingschober}.]
Combine \Cref{prop:glue2termres,prop:rigidglue}.
\end{proof}

\subsection{Ice quivers from cluster tilting objects and amalgamation}\label{subsec:amalgamation}

In this section, we describe the ice quivers of the cluster tilting objects constructed in \Cref{thm:clustertiltingschober} via the amalgamation of ice quivers. Recall that an ice quiver consists of a quiver $Q$ together with a (not necessarily full) subquiver $F\subset Q$, consisting of so-called frozen vertices and frozen arrows between frozen vertices.

We fix a marked surface ${\bf S}$ with a spanning graph $\rgraph$ without $1$-valent vertices. Consider a $\rgraph$-parametrized perverse schober $\mathcal{F}$ taking values in $k$-linear stable $\infty$-categories for a field $k$. Suppose we are given for all vertices $v\in \rgraph_0$ a cluster tilting object $T_v\in \mathcal{F}(v)$ whose discrete endomorphism algebra $\on{Ext}^0(T_v,T_v)$ is finite dimensional over $k$. The additive hull $\mathcal{T}_v=\on{Add}(T_v)$ is the corresponding cluster tilting subcategory. 

For each vertex $v$, we denote by $I_v\subset T_v$ the maximal injective-projective summand in $\mathcal{F}(v)$ and by $T_v^\circ$ its complement. We denote for each edge $e$ by $T_e\in \mathcal{F}(e)$ the sum of the indecomposable objects. 

By \Cref{thm:clustertiltingschober}, there is a cluster tilting object in the $\infty$-category of global sections, given by the maximal basic direct summand of $\bigoplus_{v\in \rgraph_0}\on{ind}_v^L(T_v)\in \mathcal{H}(\rgraph,\mathcal{F})$. Consider the summand
\[ 
T^{\on{ind}}=\bigoplus_{v\in \rgraph_0}\on{ind}_v^L(T_v^\circ)\oplus \bigoplus_{e\in \rgraph_1} \on{ind}_e^L(T_e)\subset \bigoplus_{v\in \rgraph_0}\on{ind}_v^L(T_v)\in \mathcal{H}(\rgraph,\mathcal{F})\,.
\]
The summand $T^{\on{ind}}$ is basic unless there are different edges $e,e'$ for which $\on{ind}_e^L(T_e),\on{ind}_{e'}^L(T_{e'})$ share a non-zero summand. We note that this cannot happen if $\mathcal{F}$ has only vertices of valency $3$ or higher (due to the zero relation in \Cref{def:schobernspider}.(2).(c)). We will assume in the following discussion that $T^{\on{ind}}$ is basic, and hence cluster tilting.

Any finite dimensional algebra gives rise to a quiver. Given a vertex $v$, we denote the quiver of the cluster tilting object $T_v$, meaning of the finite dimenisonal algebra $\on{Ext}^0(T_v,T_v)$, by $Q_{T_v}$. Recall that the indecomposable summands of $T_v$ give rise to the vertices of $Q_{T_v}$ and arrows arise from irreducible morphisms, roughly meaning morphisms which are not the composite of two non-trivial morphisms (counted in a suitable $k$-linear sense). The quiver does not encode the relations between composites of irreducible morphisms.  

We now define a frozen subquiver of $Q_{T_v}$, turning $Q_{T_v}$ into an ice quiver.
We declare a vertex of $Q_{T_v}$ to be frozen if the corresponding indecomposable object is injective-projective. Let $e_1,\dots,e_m$ be the edges incident to $v$. Note that injective-projective objects in $\mathcal{F}(v)$ are exactly images of the indecomposable objects under the functor $\prod_{i=1}^m\mathcal{F}(v\to e_i)^L\colon \prod_{i=1}^m\mathcal{F}(e_i)\to \mathcal{F}(v)$. An arrow between two frozen vertices is declared frozen if the corresponding morphism lies in the image of the functor $\prod_{i=1}^m\mathcal{F}(v\to e_i)^L$. Since this functor is componentwise fully faithful, we see that given two frozen vertices, either all or none of their connecting arrows are frozen. There are $m$ connected component of the frozen quiver, corresponds to the $m$ edges incident to $v$.

There is a similar ice quiver associated with $T^{\on{ind}}$, and ice quiver $Q_{T_e}$ associated with $T_e$, for any $e\in \rgraph_1$, where all vertices and arrows in $Q_{T_e}$ are declared to be frozen.

There can be irreducible endomorphisms of $T_{e_i}$ which do not extend to irreducible endomorphisms of $T_v$, see for instance \Cref{ex:A2}. Hence, the quiver $Q_{T_{e_i}}$ of $T_{e_i}$ is not necessarily a subquiver of the $Q_{T_v}$. Instead, the functor $\mathcal{F}(v\to e_i)^L$ gives rise to a morphism of quivers $Q_{T_{e_i}}\to Q_{T_v}$ (in the sense below) that is injective on vertices and full but not faithful on arrows. The image of $Q_{T_{e_i}}\to Q_{T_v}$ is the frozen component of $Q_{T_v}$ corresponding to $e_i$. \\

We next define the amalgamation ice quiver $\tilde{Q}$. Let $\on{Quiv}$ be the $1$-category of quivers, meaning 
\begin{itemize}
\item the objects of $\on{Quiv}$ are quivers, which we can consider as additive $1$-categories with finitely many objects (the vertices) and morphisms freely generated by a finite collection of morphisms (the arrows).
\item morphisms of quivers can be defined as functors between the corresponding $1$-categories that map arrows to arrows or zero morphisms. 
\end{itemize}
The quiver underlying $\tilde{Q}$ is defined as the colimit of the diagram of quivers 
\[
\on{Exit}(\rgraph)^{\on{op}}\longrightarrow \on{Quiv},\quad  x\mapsto Q_{T_x}
\]
that maps an incidence $e\to v$ to the morphism $Q_{T_e}\to Q_{T_v}$ described above. Concretely, this colimit can be described as follows.
\begin{itemize}
\item The set of vertices of $\tilde{Q}$ is the union of the sets of vertices the ice quivers $Q_{T_v}$, $v\in \rgraph_0$, where for each internal edge $e$ incident to vertices $v_1,v_2$, we identify the vertices lying the images of the morphisms $Q_{T_e}\to Q_{T_{v_1}},Q_{T_{v_2}}$. 
\item Each non-frozen arrow of $Q_{T_v}$ yields an arrow in $\tilde{Q}$. 
\item Each frozen arrow of $Q_{T_v}$ lying in a frozen component corresponding to an external edge of $\rgraph$ yields an arrow in $\tilde{Q}$.
\item Let $e$ be an internal edge incident to vertices $v_1,v_2$. An arrow in $Q_{T_e}$ yields an arrow in $\tilde{Q}$ if the images under $Q_{T_{e}}\to Q_{T_{v_1}},Q_{T_{v_2}}$ are non-zero.
\end{itemize}
The frozen vertices and morphisms of $\tilde{Q}$ are declared to be those which lie in the image of a morphism $Q_{T_e}\to \tilde{Q}$ from the colimit diagram, with $e$ an external edge of $\rgraph$. 

\begin{proposition}\label{prop:amalgamationCTO}
The ice quiver of the cluster tilting object $T^{\on{ind}}\in \mathcal{H}(\rgraph,\mathcal{F})$ is given by the amalgamation ice quiver $\tilde{Q}$.
\end{proposition}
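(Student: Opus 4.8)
The plan is to compute the quiver of the discrete endomorphism algebra $\on{Ext}^0_{\mathcal{H}(\rgraph,\mathcal{F})}(T^{\on{ind}},T^{\on{ind}})$ directly and match it against the combinatorial description of $\tilde Q$. First I would unpack the summand structure: $T^{\on{ind}}$ decomposes, as an object of $\mathcal{H}(\rgraph,\mathcal{F})$, into the pieces $\indL_v(T_v^\circ)$ over vertices $v$ and $\indL_e(T_e)$ over edges $e$, the latter subsuming the injective-projective summands $\indL_v(I_v)$ since $I_v\simeq \bigoplus_i \mathcal{F}(v\xrightarrow{h_i}e_i)^R\mathcal{F}(v\xrightarrow{h_i}e_i)(I_v)$ and $\indL_e\simeq \indL_v\circ \mathcal{F}(v\xrightarrow{h}e(h))^L$ (together with the twist relation $C_{\bf S}\indL_e\simeq \indR_e$ of \Cref{prop:twist_indL_indR} identifying $\indL_{e'}(T_{e'})$ for the two halves of an internal edge). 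The vertices of the endomorphism quiver are thus in natural bijection with $\coprod_{v}(\text{vertices of }Q_{T_v}^\circ)\sqcup \coprod_{e\in\rgraph_1}(\text{vertices of }Q_{T_e})$, with the vertices of $Q_{T_e}$ for an internal edge $e$ identified with the corresponding frozen components of $Q_{T_{v_1}}$ and $Q_{T_{v_2}}$ via the fully faithful functors $\mathcal{F}(v_i\xrightarrow{h_i}e)^L$; this already matches the first bullet in the definition of $\tilde Q$.

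Next I would compute the relevant $\on{Ext}^0$-groups between summands using the adjunctions and the splitting results of \Cref{prop:eve,prop:evv}. For two vertex-summands at the same vertex $v$, adjunction $\indL_v\dashv \on{ev}_v$ and part (3) of \Cref{prop:evv} give $\on{Ext}^0_{\mathcal{H}}(\indL_v(X),\indL_v(Y))\simeq \on{Ext}^0_{\mathcal{F}(v)}(X,\on{ev}_v\indL_v(Y))\simeq \on{Ext}^0_{\mathcal{F}(v)}(X,Y)\oplus(\text{contributions through injective-projectives})$; the first term recovers the non-frozen arrows of $Q_{T_v}$. For summands at distinct vertices $v\ne v'$, the analogue of \Cref{lem:extensions_between_indutions_2} at the level of $\on{Ext}^0$, combined with part (3) of \Cref{prop:evv} (the case $v'\ne v$), shows that all such morphisms factor through injective-projective objects and hence contribute no irreducible arrows in $\underline{\mathcal{H}}$; geometrically, distinct clockwise web trajectories $\cwt_v,\cwt_{v'}$ do not cross. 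The edge-summands $\indL_e(T_e)$ contribute exactly the frozen vertices and arrows, with the identification at internal edges forcing the "non-zero in both $Q_{T_{v_1}}$ and $Q_{T_{v_2}}$" condition of the last bullet, since an arrow of $Q_{T_e}$ becomes an irreducible morphism of $T^{\on{ind}}$ iff its image survives irreducibility after induction to each adjacent vertex. The frozen structure is then read off: by \Cref{rem:stablecategory}, removing the injective-projective summands of $T^{\on{ind}}$ — which are precisely $\bigoplus_e\indL_e(T_e)$ together with the images of the boundary restriction right adjoints — corresponds to deleting exactly the frozen vertices of $\tilde Q$ arising from external edges, and unfreezing those from internal edges; this is the amalgamation recipe.

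The cleanest way to organize the argument is via the colimit description: since $\mathcal{H}(\rgraph,\mathcal{F})$ is built as a limit over $\on{Exit}(\rgraph)$, and the inductions $\indL_v,\indL_e$ assemble the local cluster tilting objects compatibly with the incidence morphisms $\on{ev}_f\indL_v$, I would exhibit a cocone from the diagram $x\mapsto Q_{T_x}$ to the endomorphism quiver $Q_{T^{\on{ind}}}$ and check it is universal. Concretely: the morphism $Q_{T_e}\to Q_{T^{\on{ind}}}$ is induced by $\indL_e$, the morphism $Q_{T_v}\to Q_{T^{\on{ind}}}$ by $\indL_v$ on non-frozen parts and by the factorization $\indL_v\circ\mathcal{F}(v\xrightarrow{h}e(h))^L\simeq \indL_{e(h)}$ on the frozen component corresponding to each incident edge; commutativity with $Q_{T_e}\to Q_{T_v}$ is exactly this factorization. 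Universality reduces to the three $\on{Ext}^0$-computations above: every vertex and every irreducible arrow of $Q_{T^{\on{ind}}}$ comes from a unique $Q_{T_x}$ (vertices by the summand decomposition; arrows by checking no irreducible morphism mixes summands at distinct vertices except through the shared frozen components at edges), and the only identifications are those imposed along internal edges.

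The main obstacle will be the arrow-level bookkeeping, specifically proving that no \emph{new} irreducible morphisms appear after induction beyond those coming from the local quivers — equivalently, that $\on{Ext}^0_{\underline{\mathcal{H}}}(\indL_v(T_v^\circ),\indL_{v'}(T_{v'}^\circ))$ vanishes for $v\ne v'$ and that the map $\on{Ext}^0_{\mathcal{F}(v)}(T_v^\circ,T_v^\circ)\to \on{Ext}^0_{\underline{\mathcal{H}}}(\indL_v(T_v^\circ),\indL_v(T_v^\circ))$ is an isomorphism on the relevant quotient (so that irreducibility is preserved and reflected). The first vanishing is the $\on{Ext}^0$-companion of \Cref{lem:extensions_between_indutions_2} and should follow from the same geometric input — the images of $\on{ev}_{v'}\indL_v$ being built from non-crossing trajectories passing through injective-projectives — while the second requires that $\on{ev}_v\indL_v$ splits off $T_v^\circ$ \emph{with no additional non-injective morphisms between the $T_v^\circ$-parts}, which is precisely the content of the splitting in \Cref{prop:evv}(3) once one passes to the stable category and kills the injective-projective contributions. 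A secondary subtlety is verifying that $T^{\on{ind}}$ is basic under the stated valency-$\ge 3$ hypothesis, which uses the zero relation in \Cref{def:schobernspider}(2)(c) to rule out shared summands of $\indL_e(T_e)$ and $\indL_{e'}(T_{e'})$ for $e\ne e'$.
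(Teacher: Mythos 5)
Your plan follows the paper's proof quite closely: the same summand decomposition of $T^{\on{ind}}$, the same use of the adjunctions $\indL_v\dashv\on{ev}_v$, $\indL_e\dashv\on{ev}_e$ together with the splittings of \Cref{prop:eve,prop:evv} to compute $\on{Ext}^0$ between summands, and the same factorization-through-$\indL_e(T_e)$ argument to rule out arrows between non-frozen vertices attached to distinct vertices of $\rgraph$. (The colimit/universality packaging and the detour through $\underline{\mathcal{H}}$ are cosmetic; the latter is legitimate for non-frozen-to-non-frozen arrows by \Cref{rem:stablecategory}, but cannot be used for anything involving frozen vertices, which you do treat separately.)

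The one place where your proposal is genuinely under-argued is the frozen-arrow criterion at an internal edge, which you state as an "iff" but do not prove, and whose hard direction is where the paper's proof does its real work. The easy direction (image zero in $Q_{T_{v_i}}$ implies the induced endomorphism of $T^{\on{ind}}$ is reducible) is immediate; the hard direction is that if an arrow $\alpha$ of $Q_{T_e}$ has non-zero image in both $Q_{T_{v_1}}$ and $Q_{T_{v_2}}$, then $\indL_e(\alpha)$ remains irreducible in $\on{Add}(T^{\on{ind}})$. Your "survives irreducibility after induction to each adjacent vertex" does not by itself exclude a factorization of $\indL_e(\alpha)$ through a mixed object built from summands at \emph{both} adjacent vertices. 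The paper's argument is: suppose $\indL_e(\alpha)$ factors through $X=X_1\oplus X_2$ with $X_i$ in the additive hull of $\indL_{v_i}(T_{v_i})$; restricting to $\mathcal{F}(v_1)$ gives a factorization of $\mathcal{F}(v_1\to e)^L(\alpha)$ through $\on{Add}(T_{v_1})$, which must be trivial since the arrow persists in $Q_{T_{v_1}}$, forcing $\on{ev}_{v_1}(X)$ into $\on{Add}(\mathcal{F}(v_1\to e)^L(T_e))$ and hence $X_1\in\on{Add}(\indL_e(T_e))$; symmetrically for $X_2$; but then the factorization descends to $\mathcal{F}(e)$, contradicting irreducibility of $\alpha$. (One also needs the observation that morphisms from $\indL_e(T_e)$ to summands at vertices not adjacent to $e$ factor through $\on{Add}(\indL_{v_1}(T_{v_1})\oplus\indL_{v_2}(T_{v_2}))$, so this case suffices.) Your "main obstacle" paragraph instead locates the difficulty in the vertex-summand bookkeeping, which the splittings of \Cref{prop:evv} already dispose of; to complete the proof you need to supply the above argument, which is the only step not formally covered by the Ext-computations you list.
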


\begin{proof} 
It is apparent that the vertices of the ice quiver of $T^{\on{ind}}$ match the vertices of $\tilde{Q}$. We must thus identify the irreducible endomorphisms of $T^{\on{ind}}$ with the arrows in $\tilde{Q}$.

Consider two vertices $v_1,v_2$ of $\rgraph$. We first consider the case $v_1\not =v_2$. All morphisms $\on{ind}_{v_1}^L(T_{v_1}^\circ)\to\on{ind}_{v_2}^L(T_{v_2}^\circ)$ factor through a sum of objects of the form $\on{ind}_{e}^L(T_e)$ with $e$ incident to $v_2$, which can be seen as follows: The object $\on{ev}_{v_1}\indL_{v_2}(T_{v_2}^\circ)$ splits into the direct sum of injective-projective objects $P_1,\dots,P_m\in \mathcal{F}(v)$, where each $P_i$ is equivalent to $\mathcal{F}(v\to e)^L\circ \on{res}_e(P_i)$ for some edge $e$ incident to $v$. The inclusion $\on{res}_e(P_i)\subset \on{res}_e\on{ind}_{v_2}^L(T_2^\circ)$ induces a morphism $\on{ind}_{v_1}^L(P_i)\to \on{ind}_{v_2}^L(T_2^\circ)$, through which any morphism in 
\[ \on{Ext}^1(T_1^\circ,P_i)\subset \on{Ext}^1(\on{ind}_{v_1}^L(T_1^\circ),\on{ind}_{v_2}^L(T_2^\circ))\]
factors. There are hence no irreducible morphisms $\on{ind}_{v_1}^L(T_{v_1}^\circ)\to\on{ind}_{v_2}^L(T_{v_2}^\circ)$. 

Consider now the case $v_1=v_2$. By \Cref{prop:evv}, we have an inclusion 
\[ \on{Ext}^0(T_{v_1}^\circ,T_{v_1}^\circ)\subset \on{Ext}^0(\on{ind}_{v_1}^L(T_{v_1}^\circ),\on{ind}_{v_1}^L(T_{v_1}^\circ))\,.\] 
A similar argument as above shows that the irreducible endomorphisms of $\on{ind}_{v_1}^L(T_{v_1}^\circ)$ which extend to irreducible morphisms of $T^{\on{ind}}$ arise from irreducible endomorphisms of the summand $\on{Ext}^0(T_{v_1}^\circ,T_{v_1}^\circ)$.

Let $v\in \rgraph_0$ and $e\in \rgraph_1$. If $e$ is not incident to $v$, then no morphisms $\on{ind}_v^LT_v^\circ\to \on{ind}_e^L(T_e)$ or $\on{ind}_e^L(T_e)\to \on{ind}_v^LT_v^\circ$ extend to irreducible endomorphisms of $T^{\on{ind}}$. If $e$ is incident to $v$, then the irreducible morphisms contributing to the ice quiver of $T^{\on{ind}}$ are exactly those arising from irreducible morphisms $T_v^\circ \to \mathcal{F}(v\to e)^L(T_e)$ or $\mathcal{F}(v\to e)^L(T_e)\to T_v^\circ$ in $\mathcal{F}(v)$. A similar argument applies to morphisms $\on{ind}_{e_1}^L(T_{e_1})\to \on{ind}_{e_2}^L(T_{e_2})$ for $e_1,e_2\in \rgraph_1$ with $e_1\not= e_2$. 

It remains to describe which endomorphisms of $T_e$, for $e\in \rgraph_1$, extend to irreducible endomorphisms of $T^{\on{ind}}$. Note that there is by \Cref{prop:eve} an inclusion 
\[ \on{Ext}^0(T_e,T_e)\subset \on{Ext}^0(\on{ind}_e^L(T_e),\on{ind}_e^L(T_e))\,.\]
Let $v_1,v_2$ be the two vertices incident to $e$ and consider the quiver morphisms $Q_{T_{e}}\to Q_{T_{v_1}},Q_{T_{v_2}}$. Consider an arrow in $Q_{T_e}$ with corresponding endomorphism $\alpha\colon T_e\to T_e$. If its image in $Q_{T_{v_1}}$ or $Q_{T_{v_2}}$ vanishes, the morphism $\alpha$ becomes reducible in $\mathcal{F}(v_1)$ or $\mathcal{F}(v_2)$, and thus also induces a reducible endomorphism of $T^{\on{ind}}$. If its image in $Q_{T_{v_i}}$, $i=1,2$, does not vanish, then $\mathcal{F}(v_i\to e)^L(\alpha)$ induces an irreducible endomorphism of $T_{v_i}$. We claim that if this is the case for both $i=1,2$, then $\on{ind}_{e}^L(\alpha)$ induces an irreducible endomorphism of $T^{\on{ind}}$. First, suppose that $\on{ind}_e^L(\alpha)$ factors through an object $X=X_1\oplus X_2$ with $X_1$ in the additive hull of $\on{ind}_{v_1}^L(T_{v_1})$ and $X_2$ in the additive hull of $\on{ind}_{v_2}^L(T_{v_2})$. Then restricting the global sections to $\mathcal{F}(v_1)$ yields a factorization of $\mathcal{F}(v_1\to e)^L(\alpha)$ through an object in the additive hull of $T_{v_1}$. This factorization must be trivial by assumption that the frozen arrow of $Q_{T_e}$ persists in $Q_{T_{v_1}}$. It thus follows that $\on{res}_{v_1}(X)$ lies in the additive hull of $\mathcal{F}(v_1\to e)^L(T_e)$, hence $X_1$ lies in the additive hull of $\on{ind}_e^L(T_e)$. A similar argument shows that $X_2$ lies in the additive hull of $\on{ind}_e^L(T_e)$. It then follows that the factorization already exists in $\mathcal{F}(e)$, which contradicts the assumption that $\alpha$ is irreducible. 
 
Morphisms from $\on{ind}_e^L(T_e)$ to summands of $T^{\on{ind}}$ arising from vertices other than $v_1,v_2$ always factor through the additive hull of $\on{ind}_{v_1}^L(T_{v_1})\oplus \on{ind}_{v_1}^L(T_{v_2})$, so that we may restrict to the above case. 
\end{proof}

\begin{example}\label{ex:A2}
Let $\Pi_2(A_2)$ be the $2$-Calabi--Yau completion of the $A_2$-quiver over a field $k$, i.e.~the $2$-Calabi--Yau Ginzburg algebra, and 
\[ \C_{A_2}\coloneqq \on{CoSing}(\Pi_2(A_2))=\D^{\on{perf}}(\Pi_2(A_2))/\D^{\on{fin}}(\Pi_2(A_2))\] 
the cosingularity category. Then $\C_{A_2}$ has two indecomposable objects up to equivalence, which we denote by $1,2$. Up to $k$-linear multiple, there are unique non-zero morphisms $1\to 2$ and $2\to 1$. The stable $\infty$-category $\C_{A_2}$ is $2$-periodic and there are fiber and cofiber sequences $1\to 2\to 1$ and $2\to 1 \to 2$. 

The trivial spherical adjunction $0\leftrightarrow \C_{A_2}$ gives rise to a perverse schober $\mathcal{F}$ parametrized by the $3$-spider, with vertex $v$ and incident external edges $e_1,e_2,e_3$, with
\[
\mathcal{F}(v)\simeq \on{Fun}(\Delta^1,\C_{A_2}))
\]
and 
\[
\mathcal{F}(e_i)\simeq \C_{A_2}\,.
\]

As one can check with a direct computation, see also \cite{KL25} for a more general result, that a cluster tilting object $T_v$ in $\mathcal{F}(v)$ is given by the direct sum of the injective-projective objects
\[
1\to 0,\quad 2\to 0,\quad 1\simeq 1,\quad 2\simeq 2,\quad 0\to 2,\quad 0\to 2
\]
and the object 
\[ 
1\to 2\,.
\]
We can depict the summands of this cluster tilting object in terms of decorated curve and web trajectories (in blue) in the $3$-gon (with the $3$-spider in black), where boundary intersections correspond to non-trivial restrictions of these objects to $\mathcal{F}(e_i)$, $i=1,2,3$.

\begin{center}
\begin{tikzpicture}[rotate=-30]

\fill (0,0) circle (0.15);
\foreach \n in {0,1,2}
            {	          	
\draw[color=blue, very thick] plot [smooth] coordinates {(360/3*\n-10:1.55) (360/3*\n:1.44) (360/3*\n+10:1.55)};
\draw[color=blue, very thick] plot [smooth] coordinates {(360/3*\n-20:1.3) (360/3*\n:1.1) (360/3*\n+20:1.3)};

\draw[very thick] (0,0)--(360/3*\n+60:1);
            }
            
            \foreach \n in {0,1,2}
            {	
            \draw[color=blue, very thick] plot [smooth] coordinates {(0,0) (360/3*\n+30:0.3) (360/3*\n+50.6:1)};
            }

\draw[color=blue, very thick] (0,0) circle (0.05);

\foreach \n in {0,1,2}
            {	
            	\coordinate (x) at (360/3*\n:2);
            	\coordinate (y) at (360/3*\n+120:2);
            	\draw[color=ao, very thick] (x)--(y);
            }
            
            \foreach \n in {0,1,2}
            {	
            	\coordinate (x) at (360/3*\n:2);
            	\coordinate (y) at (360/3*\n+120:2);
            	\fill[color=orange] (x) circle (0.1);
            }
\node() at (105:1.7){$1$};
\node() at (94:1.45){$2$};
\node() at (225:1.7){$1$};
\node() at (214:1.45){$2$};
\node() at (345:1.7){$1$};
\node() at (334:1.45){$2$};
\node() at (0.5,0){\scriptsize $1\to 2$};
\end{tikzpicture}
\end{center}

The ice quiver of $T_v$ can be depicted as follows:
\begin{center}

\begin{tikzpicture}[scale=1.5]
\draw node[frvertex] (1) at (0,2) {};
\draw node[frvertex] (3) at (-1,1.5) {};
\draw node[frvertex] (2) at (1,1.5) {};
\draw node[vertex] (4) at (0,1) {};
\draw node[frvertex] (7) at (-1,0.5) {};
\draw node[frvertex] (5) at (1,0.5) {};
\draw node[frvertex] (8) at (0,0) {};

\draw[->] (2)--(1);
\draw[->] (1)--(4);
\draw[->] (4)--(2); 
\draw[->] (4)--(8);
\draw[->] (7)--(4);
\draw[->] (5)--(4);
\draw[->] (4)--(3);
\draw[->] (3)--(7);
\draw[->, blue] (8)--(7);
\draw[->] (8)--(5);
\draw[->, blue] (3)--(1);
\draw[->, blue] (2)--(5);
\end{tikzpicture}
\end{center}
The ice quiver of $1\oplus 2\in \C_{A_2}$ is given by 
\begin{center}
\begin{tikzpicture}[scale=1.5]
\draw node[frvertex] (1) at (-0.5,2) {};
\draw node[frvertex] (2) at (0.5,2) {};
\draw[->, blue] (2) to [bend right=25] (1);
\draw[->, blue]  (1) to [bend right=25] (2);
\end{tikzpicture}
\end{center}
The morphism $Q_{T_{e_i}}\to Q_{T_v}$ thus forgets one of the two frozen arrows. In the endomorphism algebras, the frozen arrow is sent to the composite of two non-frozen arrows.

Next, we glue two of the above triangle to a $4$-gon. We obtain a cluster tilting object in $\on{Fun}(\Delta^1,\C)\times_{\C}\on{Fun}(\Delta^1,\C)\simeq \on{Fun}(\Delta^2,\C)$ by left induction, which we can graphically represent as follows (omitting the decorations). 

\begin{figure}[h!]
\begin{center}
\begin{tikzpicture}[scale=0.8]
            
\begin{scope}[yscale=1.5]
\foreach \n in {0,1,2}
            {	          	
\draw[color=blue, very thick] plot [smooth] coordinates {(360/3*\n-10:1.56) (360/3*\n:1.45) (360/3*\n+10:1.56)};
\draw[color=blue, very thick] plot [smooth] coordinates {(360/3*\n-20:1.31) (360/3*\n:1.05) (360/3*\n+20:1.31)};

\draw[very thick] (0,0)--(360/3*\n+60:1);
            }
            
            \foreach \n in {0,1,2}
            {	
            \            \draw[color=blue, very thick] plot [smooth] coordinates {(0,0) (360/3*\n+30:0.3) (360/3*\n+50.6:1)};
            }

\foreach \n in {0,2}
            {	
            	\coordinate (x) at (360/3*\n:2);
            	\coordinate (y) at (360/3*\n+120:2);
            	\draw[color=ao, very thick] (x)--(y);
            }

\draw[color=blue, very thick] plot [smooth] coordinates {(-171.2:1.05) (-140:0.5) (280:1.07)};
\end{scope}

\begin{scope}[xshift=-2cm, rotate=180, yscale=1.5]
\foreach \n in {0,1,2}
            {	          	
\draw[color=blue, very thick] plot [smooth] coordinates {(360/3*\n-10:1.55) (360/3*\n:1.45) (360/3*\n+10:1.55)};
\draw[color=blue, very thick] plot [smooth] coordinates {(360/3*\n-20:1.3) (360/3*\n:1.05) (360/3*\n+20:1.3)};

\draw[very thick] (0,0)--(360/3*\n+60:1.01);
            }
            
            \foreach \n in {0,1,2}
            {	
                        \draw[color=blue, very thick] plot [smooth] coordinates {(0,0) (360/3*\n+30:0.3) (360/3*\n+50.6:1)};
            }

\foreach \n in {0,2}
            {	
            	\coordinate (x) at (360/3*\n:2);
            	\coordinate (y) at (360/3*\n+120:2);
            	\draw[color=ao, very thick] (x)--(y);
            }

\draw[color=blue, very thick] plot [smooth] coordinates {(-171.2:1.05) (-140:0.5) (280:1.07)};
\end{scope}

\fill (0,0) circle (0.15);
\draw[color=blue, very thick] (0,0) circle (0.05);
\begin{scope}[xshift=-2cm, rotate=180]
\fill (0,0) circle (0.15);
\draw[color=blue, very thick] (0,0) circle (0.05);
\end{scope}

\fill[color=orange] (2,0) circle (0.1);
\fill[color=orange] (-4,0) circle (0.1);
\fill[color=orange] (-1,2.55) circle (0.1);
\fill[color=orange] (-1,-2.55) circle (0.1);

\end{tikzpicture}\caption{A collection of curve and web trajectories in the $4$-gon corresponding to a cluster tilting object in $\on{Fun}(\Delta^2,\on{CoSing}(\Pi_2(A_2)))$.}\label{fig:induction}
\end{center} 
\end{figure}
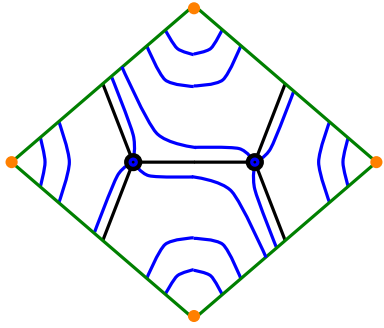

The corresponding ice quiver arises from amalgamation and is given as follows:

\begin{center}
\begin{tikzpicture}[scale=1.5]
\draw node[frvertex] (1) at (0,2) {};
\draw node[frvertex] (3) at (-1,1.5) {};
\draw node[vertex] (2) at (1,1.5) {};
\draw node[vertex] (4) at (0,1) {};
\draw node[frvertex] (7) at (-1,0.5) {};
\draw node[vertex] (5) at (1,0.5) {};
\draw node[frvertex] (8) at (0,0) {};

\draw node [frvertex] (10) at (2,2){};
\draw node [frvertex] (11) at (3,1.5){};
\draw node [vertex] (12) at (2,1){};

\draw node[frvertex] (9) at (3,0.5) {};
\draw node[frvertex] (6) at (2,0) {};

\draw[->] (2)--(1);
\draw[->] (1)--(4);
\draw[->] (4)--(2); 
\draw[->] (4)--(8);
\draw[->] (7)--(4);
\draw[->] (5)--(4);
\draw[->] (4)--(3);
\draw[->] (3)--(7);
\draw[->, blue] (8)--(7);
\draw[->] (8)--(5);
\draw[->, blue] (3)--(1);
\draw[->] (2)--(12);
\draw[->] (12)--(5);
\draw[->] (10)--(2);
\draw[->, blue] (10)--(11);
\draw[->] (12)-- (10);
\draw[->] (11)--(12);
\draw[->] (12)--(9);
\draw[->] (6)--(12);
\draw[->]  (9)--(11);
\draw[->, blue] (9)--(6);
\draw[->] (5)--(6);
\end{tikzpicture}
\end{center}
\end{example}

\subsection{Example: marked surfaces with punctures} \label{subsec:puncturedsurf}

A marked surface with punctures (or also punctured marked surface) $({\bf S},M,P)$ consists of a marked surface $({\bf S},M)$ together with a set of points $P$ in the interior of ${\bf S}$ (the punctures). An arc in a punctured marked surface ${\bf S}$ is an embedded curve $\gamma\colon [0,1]\to {\bf S}\backslash M$ in ${\bf S}$ with endpoints in $\partial {\bf S}\backslash M$ or $P$, considered up to homotopies relative $\partial {\bf S}\backslash M$ and $P$. Arcs are assumed to be non-constant. A tagging of an arc refers to an additional binary labeling of the endpoints at the punctures, subject to certain conditions. A tagged triangulation is a maximal collection of compatible tagged arcs. For every punctured marked surface, there is an associated cluster algebra, whose clusters are in bijection with tagged triangulations. Choosing the boundary arcs as coefficients (i.e.~as frozen cluster variables), the tagged arcs become in bijection with the cluster variables of the cluster algebra. We refer to \cite{FST08,FT12} for background. 

Additive categorifications of cluster algebras of punctured marked surfaces without coefficients are well understood. They arise from the quivers equipped with the non-degenerate potentials defined in \cite{Lab09,GLS16} and were studied for instance in \cite{QZ17,AP21}. 

In the case that the coefficients are the boundary arcs, \cite{Wu25} proves that the corresponding Higgs category arises via the $\mathbb{Z}/2$-quotient of the Higgs category of the unpunctured marked double cover of the punctured surface. 

In this section, we discuss how to construct additive categorifications of the cluster algebras of marked surfaces with coefficients the boundary arcs using perverse schobers. We expect the resulting exact $\infty$-categories of global sections to coincide with the Higgs categories, we hope to return to this question in future work.\\

We begin with the basic building block, given by a once-punctured $2$-gon. For $i\in \mathbb{Z}$ let $k[t_i^\pm]$ be the graded Laurent algebra with generator in degree $|t_i|=i$. Consider the morphism of dg algebra $\phi\colon k[t_2^\pm]\xrightarrow{t_2\mapsto t_1^2} k[t_1^\pm]$.

\begin{lemma}
The functor $\phi_!\colon \D^{\on{perf}}(k[t_2^\pm])\to \D^{\on{perf}}(k[t_1^\pm])$ is spherical. 
\end{lemma}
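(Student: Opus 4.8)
The plan is to exhibit the adjunction $\phi_!\dashv \phi^*$ (extension of scalars along $\phi$ left adjoint to restriction) and to compute the twist and cotwist functors directly, using that $k[t_1^\pm]$ is free of rank $2$ as a module over $k[t_2^\pm]$ via $\phi$. Concretely, write $A=k[t_2^\pm]$ and $B=k[t_1^\pm]$, so that $B\simeq A\oplus A[-1]$ as an $A$-module (basis $1,t_1$, with $|t_1|=1$). The functor $\phi_!=B\otimes_A(-)$ has right adjoint $\phi^*=$ restriction, and $\phi^*$ in turn has a right adjoint $\phi_*=\operatorname{Hom}_A(B,-)$; since $B$ is a finitely generated projective (indeed free) $A$-module, $\phi_*\simeq\phi_!(-)\otimes\omega$ for an invertible shift, so all the requisite adjoints exist and $\phi_!$ is already a candidate spherical functor in the sense of \Cref{def:schobernspider}(1).

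First I would compute the cotwist $C_{\D^{\on{perf}}(B)}=\operatorname{fib}(\phi_!\phi^*\to\operatorname{id})$. Here $\phi_!\phi^*(N)=B\otimes_A N$ where $N$ is viewed as an $A$-module by restriction; since $B\otimes_A B\simeq B\otimes_k k[t_1^\pm]/(\text{relation})$, the counit $B\otimes_A N\to N$ is the multiplication map, and its fiber is computed from the two-term free resolution of $B$ over $A$: the relation $t_1\cdot t_1 = \phi(t_2)\cdot 1$ gives $B\simeq \operatorname{cof}(A[-2]\xrightarrow{t_2} A[-1]\oplus\ldots)$, more precisely one gets a Koszul-type presentation exhibiting $C_{\D^{\on{perf}}(B)}(N)\simeq N[-1]$, an autoequivalence. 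Dually, I would compute the twist $C_{\D^{\on{perf}}(A)}=\operatorname{cof}(\operatorname{id}\to\phi^*\phi_!)$: here $\phi^*\phi_!(M)=B\otimes_A M$ restricted back to $A$, which as an $A$-module is $M\oplus M[-1]$, and the unit is the inclusion of the first summand; its cofiber is $M[-1]$, again an autoequivalence. So both twist and cotwist are (up to a shift) the identity, hence invertible, and $\phi_!$ is spherical.

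An alternative, cleaner route — and probably the one I would actually write — is to invoke a known structural result rather than grind the resolutions by hand: the morphism $\phi\colon k[t_2^\pm]\to k[t_1^\pm]$ realizes $k[t_1^\pm]$ as a ``$\mathbb{Z}/2$-graded'' or ``square-root'' extension, and the induction functor along a finite flat extension of (graded) Laurent algebras of this shape is spherical because $B$ is a self-dual $A$-module up to shift ($\operatorname{Hom}_A(B,A)\simeq B[-1]$ as $A$-bimodules, since $B$ is Frobenius over $A$ with Nakayama twist a shift). This self-duality immediately gives $\phi_!\simeq\phi_*[\,1\,]$, which by the standard criterion (e.g.\ \cite[Cor.~2.5.16]{DKSS21} or the characterization of spherical functors via $4$-periodic adjunctions) forces the adjunction $\phi_!\dashv\phi^*$ to be spherical, with cotwist the shift functor $[-1]$ on $\D^{\on{perf}}(k[t_1^\pm])$.

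The main obstacle is bookkeeping the grading shifts correctly: one must track that $|t_1|=1$ while $|t_2|=2$, so that $B$ decomposes as $A\oplus A[-1]$ (not $A\oplus A$), and verify that the resulting twist/cotwist are genuine autoequivalences (shifts) rather than merely idempotent or zero functors. Everything else — existence of iterated adjoints, $k$-linearity, the identification of $\phi_*$ with a shifted $\phi_!$ — is formal once the module structure of $B$ over $A$ is pinned down, so I would state the $A$-module decomposition of $B$ explicitly at the outset and let the two cofiber/fiber computations fall out of it.
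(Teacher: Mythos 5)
Your first, computational route is essentially correct and genuinely different from the paper's argument. You verify the definition of sphericalness directly: the twist of $\phi_!\dashv\phi^*$ is tensoring with the $A$-bimodule $\cof(A\to B)\simeq A[\pm 1]$ (a shift, hence invertible), and the cotwist is tensoring with the $B$-bimodule $\fib(B\otimes_A B\xrightarrow{\mu}B)=\ker\mu$, which is free of rank one on $t_1\otimes 1-1\otimes t_1$ and hence invertible. The paper instead computes only the twist (it is $[1]$) and then invokes a monadicity criterion (\cite[Prop.~4.5]{Chr20}: $\phi^*$ is monadic and the twist commutes with the unit of the monad), thereby avoiding the cotwist computation entirely; your route buys an explicit identification of the cotwist, the paper's buys brevity. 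Two slips in your write-up, neither fatal: no Koszul-type resolution of $B$ over $A$ is needed ($B$ is already free over $A$; what you need is the kernel of the multiplication map $B\otimes_AB\to B$), and the cotwist is not literally a shift: $\ker\mu$ is the diagonal bimodule twisted by the involution $t_1\mapsto -t_1$, up to a shift which is invisible since $\D^{\on{perf}}(k[t_1^\pm])$ is $1$-periodic. It is still an autoequivalence, which is all you need, but the identification with $[-1]$ is only correct in characteristic $2$.

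Your second route, the one you say you would actually write, has a genuine gap. The mere existence of an equivalence $\phi_*\simeq\phi_![1]$ (equivalently, that $B$ is a relative Frobenius extension of $A$ with invertible Nakayama twist) does not force the adjunction to be spherical. For instance, $k\to k\times k\times k$ and $k\to M_2(k)$ are Frobenius extensions with trivial Nakayama twist, yet induction along them is not spherical: the twist is $\cof\bigl(V\to V^{\oplus n}\bigr)\simeq V^{\oplus (n-1)}$ with $n\geq 3$, which is not an equivalence. The criteria you gesture at (four-periodicity of adjoints, two-out-of-four theorems, \cite[Cor.~2.5.16]{DKSS21} — which in this paper is only used to pass between the adjunctions $E\dashv F$ and $F\dashv G$) all require specific canonical natural transformations to be equivalences, not just an abstract identification of $\phi_*$ with a twist of $\phi_!$. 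To make a ``clean'' argument rigorous you would either have to verify that the canonical map realizes the self-duality, or supplement it as the paper does with monadicity of $\phi^*$ and compatibility of the twist with the monad unit — or simply fall back on your first route and check both twist and cotwist by hand.
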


\begin{proof}
Let $\phi^*$ be the right adjoint of $\phi_!$. The twist functor of this adjunction is equivalent to $[1]$. The functor $\phi^*$ is monadic and $[1]$ commutes with the unit of the monad, hence the adjunction is spherical by \cite[Prop.~4.5]{Chr20}.
\end{proof}

We next consider a perverse schober $\mathcal{F}$ parametrized by the $2$-spider $\rgraph_2$ with underlying spherical functor $\phi_!$. Let $v$ be the unique vertex of $\rgraph_2$. Then $\mathcal{F}(v)\simeq \mathcal{V}^2_{\psi_!}=\D^{\on{perf}}(k[t_2^\pm])\times^{\rightarrow}_{\psi_!}\D^{\on{perf}}(k[t_1^\pm])$ is given by the lax sum, whose objects are triples $(a,b,\eta)$ with $a\in \D^{\on{perf}}(k[t_2^\pm]$, $b\in\D^{\on{perf}}(k[t_1^\pm])$ and $\eta\colon \phi_!(a)\to b$.  

There are six indecomposable objects of $\mathcal{V}^2_{\psi_!}$ up to equivalence. We list them, identifying each object with a tagged arc in the once-punctured $2$-gon:

\begin{itemize}
\item The two objects 
\[ (k[t_2^\pm],0,0),\quad (k[t_2^\pm][1],0,0)\]
corresponding to the tagged arcs
\begin{center}\begin{tikzpicture}[anchor=base, baseline]
\draw[color=blue, very thick] (-1,0)--(0,0);
\fill[color=red] (0,0) circle (0.1);
\draw[color=ao, very thick] (0,0) circle (1);
\fill[color=orange] (0,1) circle (0.1);
\fill[color=orange] (0,-1) circle (0.1);
\end{tikzpicture}\quad\quad
\begin{tikzpicture}[anchor=base, baseline]
\draw[color=blue, very thick, decoration={markings, mark=at position 0.82 with {\arrow[black]{|}}}, postaction={decorate}] (-1,0)--(0,0);
\fill[color=red] (0,0) circle (0.1);
\draw[color=ao, very thick] (0,0) circle (1);
\fill[color=orange] (0,1) circle (0.1);
\fill[color=orange] (0,-1) circle (0.1);
\end{tikzpicture}
\end{center}
\item The two objects 
\[\ (k[t_2^\pm],k[t_1^\pm],\phi_!(k[t_2^\pm])\simeq k[t_1^\pm]),\quad (k[t_2^\pm][1],k[t_1^\pm],\phi_!(k[t_2^\pm][1])\simeq k[t_1^\pm]) \]
corresponding to the tagged arcs
\begin{center}\begin{tikzpicture}[anchor=base, baseline]
\draw[color=blue, very thick] (1,0)--(0,0);
\fill[color=red] (0,0) circle (0.1);
\draw[color=ao, very thick] (0,0) circle (1);
\fill[color=orange] (0,1) circle (0.1);
\fill[color=orange] (0,-1) circle (0.1);
\end{tikzpicture}\quad\quad
\begin{tikzpicture}[anchor=base, baseline]
\draw[color=blue, very thick, decoration={markings, mark=at position 0.82 with {\arrow[black]{|}}}, postaction={decorate}] (1,0)--(0,0);
\fill[color=red] (0,0) circle (0.1);
\draw[color=ao, very thick] (0,0) circle (1);
\fill[color=orange] (0,1) circle (0.1);
\fill[color=orange] (0,-1) circle (0.1);
\end{tikzpicture}
\end{center}
\item The two objects
\[ (0,k[t_1^\pm],0),\ (k[t_2^\pm]\oplus k[t_2^\pm][1],k[t_1^\pm],\on{counit}_{\phi_!\dashv \phi^*})\]
corresponding to the (trivially tagged) boundary arcs 
\begin{center}\begin{tikzpicture}[anchor=base, baseline]
\draw[color=blue, very thick] (150:1)--(30:1);
\fill[color=red] (0,0) circle (0.1);
\draw[color=ao, very thick] (0,0) circle (1);
\fill[color=orange] (0,1) circle (0.1);
\fill[color=orange] (0,-1) circle (0.1);
\end{tikzpicture}\quad\quad
\begin{tikzpicture}[anchor=base, baseline]
\draw[color=blue, very thick] (210:1)--(330:1);
\fill[color=red] (0,0) circle (0.1);
\draw[color=ao, very thick] (0,0) circle (1);
\fill[color=orange] (0,1) circle (0.1);
\fill[color=orange] (0,-1) circle (0.1);
\end{tikzpicture}
\end{center}
\end{itemize}

Let $e_1,e_2$ be the two external edges of $\rgraph_2$. Up to equivalence of perverse schobers, we have that $\mathcal{F}(v\to e_1)\simeq \varrho_1$ and $\mathcal{F}(v\to e_2)\simeq \varrho_2$ with $\varrho_1,\varrho_2\colon \mathcal{V}^2_{\psi_!}\to \D^{\on{perf}}(k[t_1^\pm])$ the apparent functors given on an object $(a,b,\eta)\in \mathcal{V}^2_{\psi_!}$by
\[
\varrho_1(a,b,\eta)=b
\]
and 
\[
\varrho_2(a,b,\eta)=\on{cof}(\eta)\,.
\]

\begin{lemma}\label{lem:CTO2-gon}
Consider $\mathcal{V}^2_{\psi_!}$ as equipped with the $\infty$-categorical Frobenius exact structure, induced by 
\[(\varrho_1,\varrho_2)\colon \mathcal{V}^2_{\psi_!}\to \D^{\on{perf}}(k[t_1^\pm])^{\times 2}\,.\]  
Then there is a canonical bijection between cluster tilting objects in $\mathcal{V}^2_{\psi_!}$ and tagged triangulation of the once-punctured $2$-gon. 
\end{lemma}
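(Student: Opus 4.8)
The plan is to prove the bijection by a direct enumeration on both sides, matching the combinatorics of tagged triangulations of the once-punctured $2$-gon against the cluster tilting objects of $\mathcal{V}^2_{\psi_!}$. First I would recall the structure of the surface side: the once-punctured $2$-gon has two boundary marked points, two boundary arcs (which are frozen and must appear in every tagged triangulation), and the tagged arcs connecting a boundary marked point to the puncture. There are exactly two isotopy classes of non-frozen arcs from a marked point to the puncture (going to the left or right boundary marked point), each of which can be tagged plain or notched, giving four tagged arcs; the compatibility conditions of \cite{FST08} (two arcs incident to the same puncture must carry the same tag unless they are the two tagged versions of the same underlying arc, and one cannot mix) then cut down the maximal compatible collections. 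I would carefully list these: the valid tagged triangulations consist of one non-frozen tagged arc together with the two boundary arcs, plus possibly the exceptional configuration using both taggings of a single underlying arc. This yields a finite explicit list.

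Next I would pin down the cluster tilting side. By \Cref{prop:Frob2CYexact}, the exact structure on $\mathcal{V}^2_{\psi_!}$ induced by $(\varrho_1,\varrho_2)$ is Frobenius, with injective-projectives the essential image of the right adjoint of $(\varrho_1,\varrho_2)$; I would identify these concretely as the last two objects in the displayed list (the ones assigned to the boundary arcs), since these are exactly $\varrho_i^R$ applied to the generator of $\D^{\on{perf}}(k[t_1^\pm])$. By \Cref{rem:stablecategory}, a cluster tilting subcategory must contain all injective-projectives, so the content is the cluster tilting objects in the stable category $\underline{\mathcal{V}^2_{\psi_!}}$. Using the $2$-periodicity of $\C_{A_2}$-type behavior here — more precisely, the fact that $k[t_1^\pm]$ and $k[t_2^\pm]$ are graded fields and the twist functor of $\phi_!$ is $[1]$ — the stable category is small and can be computed by hand: its indecomposables are the images of the four non-injective-projective objects in the list, and one computes $\on{Ext}^1$ between them directly from the fiber/cofiber sequences $1\to 2\to 1$ analogue. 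I would then check which pairs have vanishing exact $\on{Ext}^1$ (rigidity) and which single objects, together with the injective-projectives, satisfy the two-sided $2$-term resolution property; this is a finite check producing exactly the cluster tilting objects.

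Having both finite lists, I would exhibit the bijection object-by-object: each non-frozen tagged arc corresponds to one indecomposable, the plain/notched distinction corresponds to the shift $a \mapsto a[1]$ in the first component, and the left/right distinction corresponds to whether the structure map $\eta$ is zero (arc avoiding the relevant edge) or an equivalence (arc crossing it), matching the $\varrho_i$-supports against boundary intersections exactly as in the pictures. The main obstacle I anticipate is the bookkeeping of the tagging compatibility on the surface side versus the shift-compatibility on the categorical side: one must make sure the exceptional tagged triangulation (both taggings of one underlying arc, which is legal precisely because the puncture has only those arcs incident) matches the cluster tilting object $(k[t_2^\pm],0,0)\oplus(k[t_2^\pm][1],0,0)$ plus injective-projectives, and that this object is genuinely rigid and resolving in the exact structure — i.e.\ that $\on{Ext}^{1,\on{ex}}$ between $(k[t_2^\pm],0,0)$ and $(k[t_2^\pm][1],0,0)$ vanishes even though the ambient (non-exact) $\on{Ext}^1$ does not. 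Verifying this vanishing, which reflects that the corresponding tagged arcs do not ``cross'' at the puncture, is the crux; everything else is routine enumeration, and I would present it as a short table rather than belabor each case.
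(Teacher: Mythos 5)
Your overall strategy is the same as the paper's: identify the injective-projectives with the two boundary arcs, match the four remaining indecomposables $X_3,X_4,X_5,X_6$ with the four tagged arcs (shift $=$ change of tagging, value of $\eta$ $=$ which boundary edge the arc crosses), and then do a finite check of rigidity and two-sided $2$-term resolutions with respect to the exact structure induced by $(\varrho_1,\varrho_2)$. You also correctly isolate one crux: the extensions between the two taggings of the same underlying arc (e.g.\ $X_3=(k[t_2^\pm],0,0)$ and $X_5=(k[t_2^\pm][1],0,0)$) are nonzero in the ambient stable category but are \emph{not} exact, because they evaluate to equivalences under $\varrho_2$; this is exactly how the paper verifies that $T_3=X_1\oplus X_2\oplus X_3\oplus X_5$ is rigid.

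However, your enumeration on both sides is wrong as stated, and this is a genuine gap rather than a slip of wording. A tagged triangulation of the once-punctured $2$-gon contains exactly \emph{two} tagged arcs (the rank here is $2$), not one: a single arc from a boundary marked point to the puncture together with the two boundary arcs is not a maximal compatible collection, since one can always add either the other plain/notched arc on the same side or the arc to the other marked point with matching tag. The four tagged triangulations are $\{X_3,X_4\}$, $\{X_5,X_6\}$, $\{X_3,X_5\}$, $\{X_4,X_6\}$ (plus boundary arcs), and correspondingly every cluster tilting object has two non-injective-projective summands; your plan to test ``which single objects, together with the injective-projectives, satisfy the two-sided $2$-term resolution property'' would return the empty answer (e.g.\ $X_4$ admits no exact $2$-term resolution by $X_1,X_2,X_3$ alone), so the bijection as you set it up cannot be exhibited. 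Relatedly, to see that the four objects above exhaust the cluster tilting objects you must also prove the \emph{converse} of your crux: the mixed-tag pairs $X_3\oplus X_6$ and $X_4\oplus X_5$ are not rigid because the degree-$1$ extension between them \emph{is} exact --- in the paper this is read off from the exact sequence $X_6\to X_2\to X_3$, whose images under $\varrho_1$ and $\varrho_2$ split. This exclusion is the categorical shadow of the tagging-compatibility rule at the puncture and is not covered by the checks you describe. Finally, the resolution property for the genuine candidates is not automatic either; the paper verifies it by exhibiting explicit exact sequences such as $X_4\to X_2\to X_5$ and $X_5\to X_1\to X_4$, and your write-up should do the same.
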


\begin{proof}
We first list the four possible tagged triangulations of the once-punctured $2$-gon. Recall that two distinct tagged arcs with coinciding endpoints can appear in a tagged triangulation if their endpoints are tagged identically, whereas identical arcs intersecting the boundary must have different taggings. 

\begin{center}
\begin{tikzpicture}[anchor=base, baseline]
\draw[color=blue, very thick] (-1,0)--(0,0);
\draw[color=blue, very thick] (1,0)--(0,0);
\draw[color=blue, very thick] (210:1)--(330:1);
\draw[color=blue, very thick] (150:1)--(30:1);
\fill[color=red] (0,0) circle (0.1);
\draw[color=ao, very thick] (0,0) circle (1);
\fill[color=orange] (0,1) circle (0.1);
\fill[color=orange] (0,-1) circle (0.1);
\node () at (170:0.5){$X_3$};
\node () at (10:0.5){$X_4$};
\node () at (90:0.6){$X_1$};
\node () at (270:0.82){$X_2$};
\end{tikzpicture}\quad\quad
\begin{tikzpicture}[anchor=base, baseline]
\draw[color=blue, very thick, decoration={markings, mark=at position 0.82 with {\arrow[black]{|}}}, postaction={decorate}] (-1,0)--(0,0);
\draw[color=blue, very thick, decoration={markings, mark=at position 0.82 with {\arrow[black]{|}}}, postaction={decorate}] (1,0)--(0,0);
\draw[color=blue, very thick] (210:1)--(330:1);
\draw[color=blue, very thick] (150:1)--(30:1);
\fill[color=red] (0,0) circle (0.1);
\draw[color=ao, very thick] (0,0) circle (1);
\fill[color=orange] (0,1) circle (0.1);
\fill[color=orange] (0,-1) circle (0.1);
\node () at (170:0.5){$X_5$};
\node () at (10:0.5){$X_6$};
\node () at (90:0.6){$X_1$};
\node () at (270:0.82){$X_2$};
\end{tikzpicture}\quad\quad
\begin{tikzpicture}[anchor=base, baseline]
\draw[color=blue, very thick] plot [smooth] coordinates {(178:1) (130:0.4) (0,0)};
\draw[color=blue, very thick, decoration={markings, mark=at position 0.82 with {\arrow[black]{|}}}, postaction={decorate}] (180:1)--(0,0);
\draw[color=blue, very thick] (210:1)--(330:1);
\draw[color=blue, very thick] (150:1)--(30:1);
\fill[color=red] (0,0) circle (0.1);
\draw[color=ao, very thick] (0,0) circle (1);
\fill[color=orange] (0,1) circle (0.1);
\fill[color=orange] (0,-1) circle (0.1);
\node () at (220:0.5){$X_5$};
\node () at (0.2,0.1){$X_3$};
\node () at (90:0.6){$X_1$};
\node () at (270:0.82){$X_2$};
\end{tikzpicture}\quad\quad
\begin{tikzpicture}[anchor=base, baseline]
\draw[color=blue, very thick] plot [smooth] coordinates {(2:1) (50:0.4) (0,0)};
\draw[color=blue, very thick, decoration={markings, mark=at position 0.82 with {\arrow[black]{|}}}, postaction={decorate}] (0:1)--(0,0);
\draw[color=blue, very thick] (210:1)--(330:1);
\draw[color=blue, very thick] (150:1)--(30:1);
\fill[color=red] (0,0) circle (0.1);
\draw[color=ao, very thick] (0,0) circle (1);
\fill[color=orange] (0,1) circle (0.1);
\fill[color=orange] (0,-1) circle (0.1);
\node () at (-0.15,0.2){$X_6$};
\node () at (0.5,-0.3){$X_4$};
\node () at (90:0.6){$X_1$};
\node () at (270:0.82){$X_2$};
\end{tikzpicture}
\end{center}

It amounts to a standard computation to match the cluster tilting objects with the above four tagged triangulations, but spelling out the details may be instructive to the reader.

We first check that the four above tagged triangulations indeed give rise to cluster tilting objects. The first cluster tilting object is given by the direct sum of 
\begin{align*} 
X_1&=(0,k[t_1^\pm],0)\\
X_2&=  (k[t_2^\pm]\oplus k[t_2^\pm][1],k[t_1^\pm],\on{counit}_{\phi_!\dashv \phi^*})\\
X_3&=(k[t_2^\pm],0,0)\\
X_4&=(k[t_2^\pm],k[t_1^\pm],\phi_!(k[t_2^\pm])\simeq k[t_1^\pm])\,.
\end{align*} 
The objects $X_1, X_2$ are injective-projective as they arise from induction from the two external edges $e_1,e_2$. We have 
\[ \on{Ext}^i_{\mathcal{V}^2_{\psi_!}}(X_3,X_4)\simeq \on{Ext}^i_{\mathcal{V}^2_{\psi_!}}(X_4,X_3)\simeq \begin{cases} k& i\text{ is even}\\ 0 & i\text{ is odd}\,,\end{cases}
\] so that $T_1=X_1\oplus X_2\oplus X_3\oplus X_4$ is rigid. 
Let 
\begin{align*} 
X_5&= (k[t_2^\pm][1],0,0)\\
X_6&= (k[t_2^\pm][1],k[t_1^\pm],\phi_!(k[t_2^\pm][1])\simeq k[t_1^\pm])\,.
\end{align*}
There is a fiber and cofiber sequence
\begin{equation}\label{eq:exseq}
X_6\to X_2\to X_3
\end{equation}
which evaluates under $\varrho_1$ to the split fiber and cofiber sequence
\[
k[t_1^\pm]\xrightarrow{\on{id}} k[t_1^\pm]\to 0
\]
and under $\varrho_2$ to the split fiber and cofiber sequence
\[
0\rightarrow k[t_1^\pm]\xrightarrow{\on{id}} k[t_1^\pm]\,.
\]
Hence, the sequence \eqref{eq:exseq} is exact. Similarly, there are exact sequences
\[ X_3 \to X_1 \to X_6\]
\[ X_5\to X_1 \to X_4\]
\[ X_4 \to X_2 \to X_5\]
showing that $T_1$ has the $2$-term resolution property. Thus $T_1$ is cluster tilting. 

Note that $T_2= T_1[1]$ corresponding to the second tagged triangulation, since suspension reveres the tagging. Thus $T_2$ is also cluster tilting.

We next check that $T_3=X_1\oplus X_2\oplus X_3\oplus X_5$ is rigid. We have 
\[ \on{Ext}^i_{\mathcal{V}^2_{\psi_!}}(X_3,X_5)\simeq \on{Ext}^i_{\mathcal{V}^2_{\psi_!}}(X_5,X_3)\simeq \begin{cases} 0& i\text{ is even}\\ k & i\text{ is odd}\,,\end{cases}\] 
so there are degree $1$ extensions. However, these are not exact: any non-trivial morphism 
\[ X_3=(k[t_2^\pm],0,0) \to X_5[1]\simeq (k[t_2^\pm],0,0)\]
evaluates under $\varrho_2$ to an equivalence $k[t_1^\pm]\simeq k[t_1^\pm]$, which is no-zero. Similarly, any non-trivial morphism $X_5\to X_3[1]$ also evaluates under $\varrho_2$ to an equivalence. We thus see that $T_3$ is rigid. The $2$-term resolution property is checked as for $T_1$, there are exact sequences
\[
X_4\to X_2 \to X_5
\]
\[ 
X_6\to X_2\to X_3
\]
\[ X_5\to X_1 \to X_4\]
\[ X_3\to X_1 \to X_6\,.\]
The object $T_4$ arises from $T_3$ under the $\mathbb{Z}/2$-rotation symmetry, given by the twist functor $C_{\mathcal{V}^2_{\phi_!}}\colon\mathcal{V}^2_{\phi_!}\simeq \mathcal{V}^2_{\phi_!}$ of the adjunction $(\varrho_1,\varrho_2)\dashv (\varrho_1^R,\varrho_2^R)$. Thus $T_4$ is also cluster tilting. 

To conclude the argument, it remains to check that there can be no cluster tilting objects containing both $X_3$ and $X_6$ or both $X_4$ and $X_5$. This is because $\on{Ext}^1_{\mathcal{V}^2_{\psi_!}}(X_3,X_6)\simeq k$, and the degree $1$ extension $X_6 \to X_2 \to X_3$ is exact (as explained above), showing that $X_3\oplus X_6$ is not rigid. Similarly, $X_4\oplus X_6$ is not rigid.
\end{proof}

\begin{remark}\label{rem:icequiv}
The ice quivers of the cluster tilting objects $T_1$ and $T_2$ from the proof of \Cref{lem:CTO2-gon} are given by 
\begin{center}
\begin{tikzpicture}
\draw node[frvertex] (1) at (0,1) {};
\draw node[frvertex] (4) at (0,-1) {};
\draw node[vertex] (2) at (-1,0) {};
\draw node[vertex] (3) at (1,0) {};

\draw[->] (1)--(2);
\draw[->] (2)--(4);
\draw[->] (4)--(3); 
\draw[->] (3)--(1);
\end{tikzpicture}
\end{center}
and the ice quivers of the cluster tilting objects $T_3,T_4$ are given by 
\begin{center}
\begin{tikzpicture}
\draw node[frvertex] (1) at (0,1) {};
\draw node[frvertex] (4) at (0,-1) {};
\draw node[vertex] (2) at (-1,0) {};
\draw node[vertex] (3) at (1,0) {};

\draw[->] (1)--(4);
\draw[->] (4)--(2);
\draw[->] (2)--(1);
\draw[->] (4)--(3); 
\draw[->] (3)--(1);
\end{tikzpicture}
\end{center}
\end{remark}

\begin{remark}
The two object $k[t_2^\pm],k[t_2^\pm][1]\in \D^{\on{perf}}(k[t_2^\pm])$ also define a cluster tilting object with respect to the exact structure on $ \D^{\on{perf}}(k[t_2^\pm])$ induced by $\phi_!$. 
\end{remark}

\begin{construction}\label{constr:taggedtrianglfromgraph}
We suppose that ${\bf S}$ is not a punctured monogon. We can choose a spanning graph $\rgraph$ of ${\bf S}$ with $2$-valent vertices at the punctures and all other vertices $3$-valent. We further suppose that we made a choice for each $2$-valent vertex of $\rgraph$ one of the four tagged triangulations of the once-punctured $2$-gon. There is an induced tagged triangulation $\mathcal{T}$ of ${\bf S}$ whose tagged arcs are obtained as follows:

Near each $3$-valent vertex of $\rgraph$, there are three local trajectories, which we extend to clockwise trajectories in ${\bf S}$. We can consider each of these trajectories  as a tagged arc (with trivial tagging), so that these three arcs form an ideal triangle. 

Similarly, at each $2$-valent vertex of $\rgraph$, the two tagged arcs in the once-punctured $2$-gon ending at the puncture in the chosen tagged triangulation can be extended to tagged arcs in ${\bf S}$ as clockwise trajectories (away from the puncture). 
\end{construction}

\begin{remark}
The tagged triangulation obtained in \Cref{constr:taggedtrianglfromgraph} gives rise to an ice quiver, used for the seed of the corresponding cluster algebra. This ice quiver is the amalgamation of the ice quivers from \Cref{rem:icequiv} associated with the $2$-valent vertices of $\rgraph$ and the ice quiver 
\begin{center}
\begin{tikzpicture}[scale=1.2]
\draw node[frvertex] (1) at (0,1) {};
\draw node[frvertex] (2) at (-1,0) {};
\draw node[frvertex] (3) at (1,0) {};

\draw[->] (2)--(1);
\draw[->] (3)--(2);
\draw[->] (1)--(3); 
\end{tikzpicture}
\end{center}
associated with the $3$-valent vertices of $\rgraph$. A similar observation appears in \cite[Rem.~4.2]{FST08}.
\end{remark}

\begin{proposition}\label{prop:puncturedsurfaceCTO}
Let ${\bf S}$ be a punctured marked surface which is not a punctured monogon. Consider a spanning graph $\rgraph$ and tagged triangulation $\mathcal{T}$ of ${\bf S}$ as in \Cref{constr:taggedtrianglfromgraph}. Let $\mathcal{F}$ be a $\rgraph$-parametrized perverse schober, such that 
\begin{itemize}
\item  the generic stalk of $\mathcal{F}$ is the $1$-periodic perfect derived $\infty$-category $\D^{\on{perf}}(k[t_1^\pm])$,
\item at each $3$-valent vertex of $\rgraph$, the spherical functor underlying $\mathcal{F}$ is trivial, meaning that $\mathcal{F}$ is locally constant at that vertex, and
\item at each $2$-valent vertex of $\rgraph$ the spherical functor underlying $\mathcal{F}$ is given by 
\[\phi_!\colon \D^{\on{perf}}(k[t_2^\pm])\to \D^{\on{perf}}(k[t_1^\pm])\,.\]
\end{itemize}
Then the Frobenius exact $\infty$-category $\mathcal{H}(\rgraph,\mathcal{F})$ admits a cluster tilting object whose corresponding ice quiver arises from the tagged triangulation.
\end{proposition}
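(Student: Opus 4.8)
The plan is to obtain the cluster tilting object by gluing local cluster tilting objects via \Cref{thm:clustertiltingschober}, and then to read off its ice quiver from \Cref{prop:amalgamationCTO}. Since ${\bf S}$ is not a punctured monogon, the spanning graph $\rgraph$ and the tagged triangulation $\mathcal{T}$ of \Cref{constr:taggedtrianglfromgraph} are available, and $\rgraph$ has no $1$-valent vertices, so \Cref{thm:clustertiltingschober,prop:amalgamationCTO} apply once suitable local data has been fixed.

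First I would fix the local cluster tilting objects. At a $3$-valent vertex $v$ of $\rgraph$ the underlying spherical functor is trivial, so $v$ is non-singular and $\mathcal{F}(v)\simeq\on{Fun}(\Delta^1,\D^{\on{perf}}(k[t_1^\pm]))$ with the exact structure induced by the three boundary evaluations $\mathcal{F}(v\to e_i)$; as recalled in the introduction and shown in \cite{Chr22b} (reflecting that a triangle has a unique ideal triangulation), this exact $\infty$-category has a cluster tilting object $T_v$ all of whose three summands are injective-projective, so that $T_v^\circ=0$, with endomorphism ice quiver the oriented $3$-cycle on three frozen vertices, one for each incident edge. At a $2$-valent vertex $v$ (a puncture) we have $\mathcal{F}(v)\simeq\mathcal{V}^2_{\phi_!}$, and the exact structure induced by $\mathcal{F}(v\to e_1),\mathcal{F}(v\to e_2)$ agrees with the one of \Cref{lem:CTO2-gon} under the identifications $\mathcal{F}(v\to e_1)\simeq\varrho_1$, $\mathcal{F}(v\to e_2)\simeq\varrho_2$ recorded before that lemma; I would let $T_v$ be the cluster tilting object which, under \Cref{lem:CTO2-gon}, corresponds to the tagged triangulation of the once-punctured $2$-gon chosen at $v$ in \Cref{constr:taggedtrianglfromgraph}, so that its ice quiver is one of the two quivers of \Cref{rem:icequiv}, with two frozen vertices (one per incident edge) and two non-frozen vertices. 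All of these local endomorphism algebras are finite dimensional.

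Then, by \Cref{thm:clustertiltingschober}, the additive closure of $\bigcup_{v\in\rgraph_0}\indL_v(\mathcal{T}_v)$ is a cluster tilting subcategory of $\mathcal{H}(\rgraph,\mathcal{F})$. Using $\indL_v\circ\mathcal{F}(v\xrightarrow{h}e(h))^L\simeq\indL_{e(h)}$, the object $T_v$ contributes, after induction, the summand $\indL_v(T_v^\circ)$ together with the summands $\indL_e(T_e)$ for $e$ incident to $v$, so that an additive generator of this cluster tilting subcategory is the maximal basic summand of $T^{\on{ind}}=\bigoplus_{v\in\rgraph_0}\indL_v(T_v^\circ)\oplus\bigoplus_{e\in\rgraph_1}\indL_e(T_e)$, exactly as in \Cref{subsec:amalgamation}. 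I would then verify that $T^{\on{ind}}$ is already basic: for two distinct edges meeting a common $3$-valent vertex this follows from the zero relation of \Cref{def:schobernspider}.(2).(c), and in general the summands of $T^{\on{ind}}$ have as supports the curve trajectories $\cwt_e$ (for $e\in\rgraph_1$) and, at each $2$-valent vertex, the two trajectories emanating from it, which are precisely the tagged arcs of $\mathcal{T}$ from \Cref{constr:taggedtrianglfromgraph}; since $\mathcal{T}$ is a triangulation these arcs are pairwise non-homotopic, so no two summands of $T^{\on{ind}}$ can be isomorphic. Granting this, \Cref{prop:amalgamationCTO} identifies the ice quiver of $T^{\on{ind}}$ with the amalgamation of the local ice quivers described above, and this amalgamation is exactly the ice quiver attached to $\mathcal{T}$ by \cite{FST08} (cf.\ the remark preceding the proposition and \cite[Rem.~4.2]{FST08}), which completes the argument.

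The hard part will be establishing that $T^{\on{ind}}$ is basic in the presence of the $2$-valent vertices, since the clean argument of \Cref{subsec:amalgamation} covers only ribbon graphs all of whose vertices have valency at least $3$. To make this rigorous I would either invoke that an object in the image of $\indL_e$ or $\indL_v$ is determined up to equivalence by its trajectory together with the rank-one decoration coming from $\D^{\on{perf}}(k[t_1^\pm])$ (so that non-homotopic tagged arcs of $\mathcal{T}$ force non-isomorphic summands), or, more robustly, compute $\on{Ext}^0$ between all pairs of summands of $T^{\on{ind}}$ directly from the splitting formulas of \Cref{prop:eve,prop:evv} together with the explicit list of indecomposables of $\mathcal{V}^2_{\phi_!}$, checking that each indecomposable summand occurs with multiplicity one.
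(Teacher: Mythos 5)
Your proposal matches the paper's proof, which simply combines \Cref{thm:clustertiltingschober}, \Cref{prop:amalgamationCTO} and \Cref{lem:CTO2-gon} with exactly the local cluster tilting objects you describe (the all-frozen $3$-cycle at $3$-valent vertices and the choice from \Cref{lem:CTO2-gon} at punctures). Your additional verification that $T^{\on{ind}}$ is basic despite the $2$-valent vertices addresses a point the paper's one-line proof leaves implicit, since the automatic argument in \Cref{subsec:amalgamation} only covers vertices of valency at least $3$; this is a careful refinement of, not a departure from, the paper's route.
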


\begin{proof}
This follows from combining \Cref{thm:clustertiltingschober}, \Cref{prop:amalgamationCTO} and \Cref{lem:CTO2-gon}. We also point out that there is an ambiguity in the construction of the cluster tilting object without fixing the specific perverse schober, since the shift functor $[1]$ acts by reversing the taggings at punctures.
\end{proof}

\bibliography{biblio} 
\bibliographystyle{alpha}

\textsc{Mathematisches Institut, Universit\"at Bonn, Endenicher Allee 60, 53115 Bonn, Germany}

\textit{Email address:} \texttt{christ@math.uni-bonn.de}

\end{document}